\let\cl@chapter\undefined
\crefname{question}{Question}{Questions}
\crefname{observation}{Observation}{Observations}
\newenvironment{prf}[1][]
{\begin{proof}}
{\qed \end{proof}}
\journalname{Mathematical Programming A}
\newtheorem{theorem}{Theorem}
\newtheorem{lemma}{Lemma}
\newtheorem{definition}{Definition}
\newtheorem{example}{Example}
\newtheorem{corollary}{Corollary}
\newtheorem{question}{Question}
\newenvironment{prf}[1][]
{\begin{proof}}
{\end{proof}}
\newtheorem{observation}{Observation}
\newcommand{\abs}[1]{\left\lvert#1\right\rvert}
\newcommand{\pare}[1]{\left(#1\right)}
\newcommand{\pares}[1]{(#1)}
\newcommand{\bra}[1]{\left\{#1\right\}}
\newcommand{\norm}[1]{\left\lVert#1\right\rVert}
\newcommand{\norms}[1]{\lVert#1\rVert}
\DeclareMathOperator\intr{int}
\DeclareMathOperator\obj{obj}
\DeclareMathOperator\argmax{argmax}
\DeclareMathOperator\argmin{argmin}
\DeclareMathOperator\OPT{OPT}
\newcommand{\R}{\mathbb R}
\newcommand{\E}{\mathbb E}
\renewcommand{\Pr}{\mathbb P}
\newcommand{\Ex}{\mathbb E}
\newcommand{\ball}{B^m}
\newcommand{\sphere}{S^{m-1}}
\newcommand{\stt}{\textnormal{s.t.}}
\newcommand\inner[2]{\langle #1, #2 \rangle}
\begin{document}

\title{$k$-median: exact recovery in the extended stochastic ball model\thanks{\textbf{Funding: } This work is supported by ONR grant N00014-19-1-2322. Any opinions, findings, and conclusions or recommendations expressed in this material are those of the authors and do not necessarily reflect the views of the Office of Naval Research.}}

\ifthenelse {\boolean{MPA}}
{
\titlerunning{$k$-median: exact recovery in the extended stochastic ball model}
\authorrunning{Alberto Del Pia, Mingchen Ma}

\author{Alberto Del Pia \and Mingchen Ma}
\institute{Alberto~Del~Pia \at
              Department of Industrial and Systems Engineering 
              \& Wisconsin Institute for Discovery \\
              University of Wisconsin-Madison, Madison, WI, USA \\
              \email{delpia@wisc.edu}
              \and
              Mingchen~Ma \at
              Department of Computer Sciences \\
              University of Wisconsin-Madison, Madison, WI, USA \\
              \email{mma54@wisc.edu}}
}
{
\author{Alberto Del Pia
\thanks{Department of Industrial and Systems Engineering \& Wisconsin Institute for Discovery,
             University of Wisconsin-Madison, Madison, WI, USA.
             E-mail: {\tt delpia@wisc.edu}.}
\and
Mingchen Ma
\thanks{Department of Computer Sciences,
             University of Wisconsin-Madison, Madison, WI, USA.
             E-mail: {\tt mma54@wisc.edu}.}
             }
}

\date{July 19, 2022}

\maketitle


\begin{abstract}
    We study exact recovery conditions for the linear programming relaxation of the $k$-median problem in the stochastic ball model (SBM).
    In Awasthi et al.~(2015),
    the authors give a tight result for the $k$-median LP in the SBM, saying that exact recovery can be achieved as long as the balls are pairwise disjoint. 
    We give a counterexample to their result, thereby showing that the $k$-median LP is not tight in low dimension. 
    Instead, we give a near optimal result showing that the $k$-median LP in the SBM is tight in high dimension. 
    We also show that, if the probability measure satisfies some concentration assumptions, then the $k$-median LP in the SBM is tight in every dimension. 
    Furthermore, we propose a new model of data called extended stochastic ball model (ESBM), which significantly generalizes the well-known SBM.
    We then show that exact recovery can still be achieved in the ESBM.
    \ifthenelse {\boolean{MPA}}
{
\keywords{$k$-median \and stochastic ball model \and linear programming relaxation \and recovery guarantee}
\subclass{MSC 90C05 \and 90C10 \and 68Q87}
} {}
\end{abstract}

\ifthenelse {\boolean{MPA}}
{}{
\emph{Key words:} $k$-median; stochastic ball model; linear programming relaxation; recovery guarantee
}

\section{Introduction}

Clustering problems form a fundamental class of problems in data science with a wide range of applications in computational biology, social science, and engineering. 
Although clustering problems are often NP-hard in general, recent results in the literature show that we may be able to solve these problems efficiently if the data exhibits a good structure.
More specifically, we may be able to solve these problems in polynomial time if the problem data is generated according to some reasonable model of data.
These models of data are defined in such a way that there is a ground-truth that reveals which cluster a data point comes from. 
In this way, for each instance of the clustering problem generated according to such model of data, it is clear which optimal solution our algorithm should return.
If the algorithm returns the correct solution, we say that the algorithm ``achieves exact recovery''.
Examples of models of data include the stochastic block model and the stochastic ball model.

One of the most successful types of algorithms to achieve exact recovery in polynomial time are convex relaxation techniques, including linear programming (LP) relaxations and semidefinite programming (SDP) relaxations.
When these algorithms achieve exact recovery, the optimal solution to the convex relaxation is an integer vector which is also the optimal solution to the underlying integer programming problem which models the clustering problem. Recently, much work has been done to understand the phenomenon of exact recovery for convex relaxation methods, with diverse clustering problems and models.
Recent LP relaxations that achieve exact recovery for clustering problems include \cite{NELLORE2015165,awasthi2015relax,delpia2020linear,derosa2020ratiocut}, while some SDP relaxations that achieve exact recovery are \cite{Agarwal2017,Ames2014,AAmes2014,awasthi2015relax,10.1214/17-AOS1545,DBLP:conf/colt/FeiC18,7440870,IguMixPetVil17,li2020when,Ling2020,7298436,chen2014improved,pirinen2019exact,li2021convex}.

In this paper we study the \emph{$k$-median problem,} which is one of the most well-known and studied clustering problems.
We are given a set $P$ of $n$ different points in a metric space $(X,d)$ and a positive integer $k \le n$, and our goal is to partition these $n$ points into $k$ different sets $A_1,A_2,\dots, A_k$, also known as clusters.
Each cluster $A_i$ has a center $a_i \in P,$
which satisfies $\sum_{p \in A_i}d(a_i,p)=\min \{ \sum_{p \in A_i}d(q,p) \mid q \in A_i\},$ and each point in $P$ is assigned to the cluster with the closest center. 
Formally, the \emph{$k$-median problem} is defined as the following optimization problem:
\begin{align*}
\min \ & \sum_{p \in P} \min_{i \in [k]} d(p,a_i) \\
\stt \ & a_1,\dots,a_k \in P.
\end{align*}
The $k$-median problem is NP-hard even in some very restrictive settings, like the Euclidean $k$-median problem on the plane \cite{MegSup84},
and only few very special cases of the $k$-median problem are known to be solvable in polynomial time, like the $k$-median problem on trees \cite{KarHak79,Tam96}.
Several papers study approximation algorithms for the $k$-median problem, including \cite{CHARIKAR2002129,LIN1992245,Koll2007,arya2004local,arora1998polynomial,charikar1999improved}.


The model of data that we consider in this paper, and that is arguably the one used the most in the study of the $k$-median problem, is the stochastic ball model (SBM), formally introduced in \cref{def SBM}.
In the SBM, we consider $k$ probability measures, each one supported on a unit ball in $\R^m$, and $n$ data points are sampled from each of them.
In this paper we study the effectiveness of the LP relaxation to achieve exact recovery.
The main goal of this paper is then to seek for the minimum pairwise distance $\Delta$ between the ball centers which is needed for the LP relaxation to achieve exact recovery with high probability when the number of the input data points $n$ is large enough. 
To the best of our knowledge, the only known result in this direction is Theorem~7 in \cite{awasthi2015relax} (or Theorem~6 in the conference version of the paper \cite{awasthi2015relaxc}).
Unfortunately, as will be discussed later, this result is false.

The SBM has also been used as a model of data for other closely related clustering problems, including \emph{$k$-means} and \emph{$k$-medoids clustering}.
In \cref{tab tab} we summarize the known exact recovery results for clustering problems in the SBM, including some of our results that will discuss later.
For more details about the results in the table, including the additional assumptions required, we refer the reader to the corresponding cited paper.
We remark that the problem considered in \cite{NELLORE2015165} differs from the $k$-median defined in this paper because in the objective function the sum of the \emph{squared} distances is considered.



\begin{table}[htbp]
	\centering
	\begin{small}
	\begin{tabular}{llll}
		\toprule  
		Problem & Method & Sufficient Condition & Reference \\ 
		\bottomrule
		\toprule  
		$k$-means / $k$-median & Thresholding & $\Delta > 4$ & Simple Algorithm \\
		\midrule
		$k$-means & SDP & $\Delta>2\sqrt{2}(1+1/\sqrt{m})$ & Theorem 3 in \cite{awasthi2015relax} \\
		 & SDP & $\Delta>2+k^2/m$ & Theorem 9 in \cite{IguMixPetVil17} \\
		 & SDP & $\Delta>2+O(\sqrt{k/m})$ & Corollary 2 in \cite{li2020when}\\
		 & SDP & $\Delta>O(\sqrt{\log n /m})$ & Corollary of Theorem 3 in \cite{DBLP:conf/colt/FeiC18} \\
		 \midrule
		 $k$-means & LP & $\Delta>4$ & Theorem 9 in \cite{awasthi2015relax} \\
		 & LP & $\Delta>1+\sqrt{3}$ & Theorem 4 in \cite{derosa2020ratiocut} \\
		 \midrule
		 $k$-median & LP & $\Delta>3.75$ & Theorem 6 in \cite{NELLORE2015165} \\
		 & LP & \sout{$\Delta>2$} & \sout{$\text{Theorem 7 in~\cite{awasthi2015relax}}$} \\
		 & LP & $\Delta>3.29$ & \cref{th main3}  \\
		 & LP & $\Delta>2+O(\sqrt{k\log m/m})$ & \cref{th main4} \\
		\bottomrule  
	\end{tabular}
	\caption{Exact recovery results for clustering problems in the SBM.}
	\label{tab tab}
	\end{small}
\end{table}

\subsection{Our contribution}
In \cite{awasthi2015relax}, the authors study the $k$-median problem in the SBM. 
In the model of data considered in the paper, there are $k$ unit balls and $n$ points are sampled from each ball.
The probability measures, supported on each ball, are translations of each other.
Moreover, each probability measure is invariant under rotations centered in the ball center and every neighborhood of each ball center has positive probability measure.
In Theorem 7 in \cite{awasthi2015relax}, the authors claim that, if the unit balls are pairwise disjoint, then the LP-relaxation of the $k$-median problem achieves exact recovery with high probability.
Unfortunately this result is false.
In \cref{ex counter} in \cref{app counter}, we present an example in $\R^2$ where the balls are pairwise disjoint and the probability measures satisfy the assumption of Theorem~7 in \cite{awasthi2015relax}, but when $n$ is large enough, with high probability the LP relaxation does not achieve exact recovery.
Our example implies that to achieve exact recovery, a significant distance between the ball centers is needed.
In \cref{sec problem} we also point out the key problem in the proof of Theorem~7 in \cite{awasthi2015relax}.
Furthermore, we notice that the techniques used in \cite{awasthi2015relax} highly depend on the assumptions that we draw the same number of points from each ball, and that the balls have the same radius and the same probability measure. 
These observations naturally lead to two questions, which are at the heart of this paper.
\begin{question}
\label{Q1}
What is the minimum pairwise distance $\Delta$ between the ball centers which guarantees that the $k$-median LP relaxation in the SBM achieves exact recovery with high probability?
\end{question}

\begin{question}
\label{Q2}
If we relax some of the assumptions in the model of data, 
will exact recovery still happen for the $k$-median LP relaxation?
\end{question}

In this paper, we provide the first answers to \cref{Q1} and \cref{Q2}.
We propose a more general version of the SBM called ESBM, which is a natural model for \cref{Q2} formally defined in \cref{def ESBM}.
In the ESBM, the number of points drawn from each ball can be different, the balls can have different radii and different probability measures.
We study exact recovery for the $k$-median problem in the ESBM. 
Informally, we obtain the following results, where we denote by $c_i$ the center and by $r_i$ the radius of ball $i$.
\begin{itemize}
    \item \cref{th main2}: In the ESBM, if for every $i \neq j$ we have $d(c_i,c_j)>(1+\beta)R+\max\{r_i,r_j\}+O(\sqrt{k \log m /m})$, then the $k$-median LP achieves exact recovery with high probability. 
    Here, $R:=\max_{i \in [k]}r_i$ and $\beta$ is a parameter that measures the difference between the numbers of points sampled from the balls.
    \item \cref{th main3}: In the SBM, if $\Delta>3.29$, then the $k$-median LP achieves exact recovery with high probability. 
    \item \cref{th main4}: 
    In the SBM, if $\Delta>2+O(\sqrt{k\log m/m})$, then the $k$-median LP achieves exact recovery with high probability. 
    \item \cref{th main5}: In the SBM, if $\Delta>2$ and the density function decreases as we increase the distance from the center, then the $k$-median LP achieves exact recovery with high probability.
\end{itemize}
We remark that exact recovery can only be considered when the balls are pairwise disjoint.
Moreover, we need to assume that $d(c_i,c_j)>2\max\{r_i,r_j\}$ for every $i \neq j$, otherwise the ground-truth solution may not be optimal to the $k$-median problem.
In particular, in the SBM we need to have $\Delta >2$.

For the ESBM, \cref{th main2} provides sufficient conditions for exact recovery.
For the SBM, \cref{th main3,th main4} provide the condition $\Delta>\min\{3.29,2+O(\sqrt{k \log m/m})\}$ to guarantee exact recovery. 
This result implies that the $k$-median LP is tight in high dimension.
Furthermore, \cref{th main5} implies that, if we add strong assumptions on the probability measures, then $\Delta>2$ also guarantees exact recovery. 


The rest of the paper is organized as follows.
In \cref{sec IP} we introduce the integer programming formulation \eqref{pr IP} of the $k$-median problem and the corresponding linear programming relaxation \eqref{pr LP}. 
We then provide deterministic necessary and sufficient conditions which guarantee that a feasible solution to \eqref{pr IP} is optimal to \eqref{pr LP} (\cref{th deterministic}).
In \cref{sec Model} we introduce the definition of SBM, ESBM, and exact recovery. 
In \cref{sec Probabilistic condition}, 
we introduce a very general sufficient condition
which ensures that exact recovery happens with high probability (\cref{th prob}).
In \cref{sec truth ESBM}, we will present our main theorems for exact recovery (\cref{th one dim gen,th one dim 1,th main2,th main3,th main4,th main5}). 
Finally, in \cref{sec num}, we perform numerical experiments to illustrate the empirical performance of \eqref{pr LP} under the SBM and the ESBM.


We conclude this section with what we believe is an interesting open question. 
As we already mentioned, in the SBM, exact recovery can only be considered when the balls are pairwise disjoint, otherwise the ground-truth solution may not be optimal to the $k$-median problem. 
In this case, we can set aside the concept of exact recovery and focus instead simply on seeking an optimal solution to the $k$-median problem.
A natural question is whether, in this scenario, we are still able to find an optimal solution to the $k$-median problem by simply solving the LP relaxation.
Interesting models of data that can be considered for this question are the SBM and ESBM with intersecting balls, as well as subgaussian mixture models (SGMMs), where data points are drawn from a mixture of $k$ subgaussian distributions and certain overlaps are allowed.
In fact, SBM can be viewed as a special case of SGMMs. 
Previous work such as \cite{DBLP:conf/colt/FeiC18,10.1093/imaiai/iax001} show that SDP relaxations still have desirable theoretical guarantees for clustering data points under SGMMs. 
On the contrary, to the best of our knowledge, there is no theoretical understanding of the performance of LP relaxations under these more general models of data, where certain overlaps are allowed.

\section{The $k$-median problem via linear programming}
\label{sec IP}

The $k$-median problem can be formulated as an integer linear program as follows.
\begin{align}
	\label{pr IP}
	\tag{IP}
	\begin{split}
	\min \ & \sum_{p,q \in P}d(p,q)z_{pq} \\
	\stt \ & \sum_{p \in P}z_{pq} = 1 \quad \forall q \in P \\
	     \ & z_{pq} \le y_p \quad \forall p,q \in P \\
	     \ & \sum_{p \in P} y_p =k\\
	     \ & y_p,z_{pq} \in \{0,1\} \quad \forall p,q \in P.
	\end{split}
\end{align}
Here, $y_p = 1$ if and only if $p$ is a center, and $z_{pq} = 1$ if and only if $p$ is the center of $q.$ 
The first constraint says that each point is assigned to exactly one center. The second constraint says that $z_{pq} = 1$ can happen only if $p$ is a center. The third constraint says that there are exactly $k$ centers.
It is simple to check that an optimal solution to \eqref{pr IP} provides an optimal solution to the $k$-median problem.

In this paper we consider the linear programming  relaxation of \eqref{pr IP} obtained from \eqref{pr IP} by replacing the constraints $y_p,z_{pq} \in \{0,1\}$ with $y_p,z_{pq} \ge 0$.
Such a linear program, which is given below, has been used in other works in the literature including \cite{CHARIKAR2002129}.
\begin{align}
	\label{pr LP}
	\tag{LP}
	\begin{split}
	\min \ & \sum_{p,q \in P}d(p,q)z_{pq} \\
	\stt \ & \sum_{p \in P}z_{pq} = 1 \quad \forall q \in P \\
	     \ & z_{pq} \le y_p \quad \forall p,q \in P \\
	     \ & \sum_{p \in P} y_p =k\\
	     \ & y_p,z_{pq} \ge 0 \quad \forall p,q \in P.
	\end{split}
\end{align}
The linear program \eqref{pr LP} is called a linear programming relaxation of \eqref{pr IP} because each feasible solution to \eqref{pr IP} is also feasible to \eqref{pr LP}.
The main advantage of \eqref{pr LP} over \eqref{pr IP} is that the first can be solved in polynomial time, while the second is NP-hard.

\subsection{Conditions for the integrality of \eqref{pr LP}}

Let $(\bar y, \bar z)$ be a feasible solution to \eqref{pr IP}.
The main goals of this section are twofold.
First, we provide necessary and sufficient conditions for $(\bar y, \bar z)$ to be  an optimal solution to \eqref{pr LP}.
Second, we give sufficient conditions for $(\bar y, \bar z)$ to be the unique optimal solution to \eqref{pr LP}.
In particular, under these sufficient conditions the $k$-median problem is polynomially solvable.

We start by writing down the the dual linear program of \eqref{pr LP}. 
To do so, we associate 
the dual variables $\alpha_q$ $\forall q \in P$, to the first block of constraints,
the dual variables $\beta_{pq}$ $\forall p,q \in P$, to the second block of constraints,
and the dual variable $\omega$ to the single constraint $\sum_{p \in P} y_p =k$.
We obtain the dual linear program
\begin{align}
    \label{pr LP dual}
    \tag{DLP}
    \begin{split}
    \max \ & \sum_{q \in P}\alpha_q-k\omega \\
    \stt \ & \alpha_q \le \beta_{pq}+d(p,q) \quad \forall p,q \in P \\
         \ & \sum_{q \in P}\beta_{pq} \le \omega \quad \forall p \in P \\
         \ & \beta_{pq} \ge 0 \quad \forall p,q \in P.
    \end{split}
\end{align}

It is simple to see that \eqref{pr LP} always has a finite optimum, thus by the Strong Duality Theorem, so does \eqref{pr LP dual}. In particular, \eqref{pr LP dual} is always feasible.

Let $(y,z)$ be a feasible solution to \eqref{pr LP}, and let  $(\alpha,\beta,\omega)$ be a feasible solution to \eqref{pr LP dual}.
The Complementary Slackness Theorem (see, e.g., Theorem 4.5 in \cite{BerTsi97}), says that the vector $(y, z)$ is optimal to \eqref{pr LP} and $(\alpha,\beta,\omega)$ is optimal to \eqref{pr LP dual} if and only if 
\begin{align}
    & \beta_{pq} \pare{z_{pq}-y_p}=0 && \forall p,q \in P \label{eq cs1} \\
    & z_{pq} \pare{\alpha_q-\beta_{pq}-d(p,q)}=0 && \forall p,q \in P  \label{eq cs2}\\ 
    & y_p \pare{\sum_{q \in P}\beta_{pq}-\omega}=0 && \forall p \in P  \label{eq cs3}.
\end{align}

Now let $(\bar y, \bar z)$ be a feasible solution to \eqref{pr IP}.
Clearly, the vector $(\bar y, \bar z)$ is feasible to \eqref{pr LP}.
Furthermore, let $(\alpha,\beta,\omega)$ be a feasible solution to \eqref{pr LP dual}.
From complementary slackness, the vector $(\bar y, \bar z)$ is optimal to \eqref{pr LP} and $(\alpha,\beta,\omega)$ is optimal to \eqref{pr LP dual} if and only if 
\begin{align}
    & \beta_{pq}=0 && \forall p,q \in P \text{ such that }\bar y_p = 1, \bar z_{pq}=0 \label{eq cs1 int}\\ 
    & \beta_{pq}=\alpha_{q}-d(p,q) && \forall p,q \in P \text{ such that }\bar z_{pq} = 1 \label{eq cs2 int}\\
    & \sum_{q \in P}\beta_{pq}=\omega && \forall p \in P \text{ such that } \bar y_p = 1.\label{eq cs3 int}
\end{align}

Next, we provide an interpretation of the dual variables.
We can interpret $\alpha_q$ as the maximum distance a point $q$ can ``see''. 
We can then interpret $\beta_{pq}$ as the ``contribution'' from $q$ to $p$. 
The above conditions \eqref{eq cs1 int}--\eqref{eq cs3 int}, together with \eqref{pr LP dual} feasibility, can then be interpreted as follows.
When $q$ is not assigned to a center $p$, 
condition \eqref{eq cs1 int} says that $q$ does not contribute to $p$, and
the first constraint in \eqref{pr LP dual} implies that $q$ cannot see $p$.
Vice versa, when $q$ is assigned to a center $p$, 
condition \eqref{eq cs2 int} and the third constraint in \eqref{pr LP dual}, imply that $q$ can see $p$, and that $q$ contributes to $p$.
Hence, a center $p$ is seen exactly by the points in its cluster, which are also the points that contribute to $p$.
Finally, condition \eqref{eq cs3 int} says that the centers of the clusters all get the same contribution $\omega$. 

In the remainder of the paper, we denote by $t_+$ the positive part of a number $t$, i.e., $t_+ := \max\{t,0\}$.
We obtain the following observation regarding \eqref{pr LP dual}.
\begin{observation}
\label{obs dual modify}
Suppose $(\alpha,\beta,\omega)$ is a feasible solution to \eqref{pr LP dual}. 
For each $p,q \in P,$ let $\beta'_{pq}:=(\alpha_q-d(p,q))_+.$ 
Then $(\alpha,\beta',\omega)$ is a feasible solution to \eqref{pr LP dual} with the same objective value.
\end{observation}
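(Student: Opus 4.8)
The key point is that the objective function $\sum_{q \in P}\alpha_q - k\omega$ of \eqref{pr LP dual} does not involve the $\beta$-variables at all. Hence, once we know that $(\alpha,\beta',\omega)$ is feasible, the statement about the objective value is immediate: replacing $\beta$ by $\beta'$ leaves $\sum_{q \in P}\alpha_q - k\omega$ unchanged. So the entire content of the observation is the feasibility of $(\alpha,\beta',\omega)$, and the plan is to verify the three families of constraints in \eqref{pr LP dual} one at a time.

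First I would dispatch nonnegativity: $\beta'_{pq} = (\alpha_q - d(p,q))_+ \ge 0$ holds by the very definition of the positive part, for all $p,q \in P$. Next I would handle the first block of constraints $\alpha_q \le \beta'_{pq} + d(p,q)$ by a two-case analysis on the sign of $\alpha_q - d(p,q)$: if $\alpha_q - d(p,q) \ge 0$ then $\beta'_{pq} + d(p,q) = \alpha_q$, so the constraint holds with equality; if $\alpha_q - d(p,q) < 0$ then $\beta'_{pq} + d(p,q) = d(p,q) > \alpha_q$, so the constraint holds strictly. Either way the inequality is satisfied.

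The only step with any substance is the second block of constraints $\sum_{q \in P}\beta'_{pq} \le \omega$. Here the idea is to show the pointwise bound $\beta'_{pq} \le \beta_{pq}$ and then sum over $q$. To get the pointwise bound, I would use that $(\alpha,\beta,\omega)$ is feasible for \eqref{pr LP dual}: the first constraint gives $\alpha_q - d(p,q) \le \beta_{pq}$, and the third constraint gives $\beta_{pq} \ge 0$; combining these, $\beta'_{pq} = \max\{\alpha_q - d(p,q),\,0\} \le \max\{\beta_{pq},\,0\} = \beta_{pq}$. Summing over $q \in P$ and invoking $\sum_{q \in P}\beta_{pq} \le \omega$ then yields $\sum_{q \in P}\beta'_{pq} \le \omega$, as required. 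This completes the feasibility check and hence the proof.

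I do not expect any real obstacle here; the argument is short and elementary. If anything, the place to be careful is the pointwise inequality $\beta'_{pq} \le \beta_{pq}$, since it is the one spot where both the first and third constraints of the original dual solution are genuinely used, rather than just the definition of $\beta'$.
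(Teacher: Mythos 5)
Your proof is correct: the observation is stated in the paper without proof (it is treated as immediate), and your verification is exactly the intended argument, with the only substantive step being the pointwise bound $\beta'_{pq}=(\alpha_q-d(p,q))_+\le\beta_{pq}$, which you justify correctly from the first and third dual constraints before summing over $q$ and using $\sum_{q\in P}\beta_{pq}\le\omega$. Nothing is missing.
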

In particular, \cref{obs dual modify} implies that there is always an optimal solution to \eqref{pr LP dual} where $\beta_{pq}=(\alpha_q-d(p,q))_+$.
Next, we define the contribution function.
\begin{definition}[Contribution function]
\label{def contribution}
Given $\alpha \in \R^P$, the \emph{contribution function} $C^{\alpha}(z): \R^m \to \R$ is defined by
\begin{align*}
    C^\alpha(z):= \sum_{q \in P}(\alpha_q - d(z,q))_+.
\end{align*}
\end{definition}
According to \cref{obs dual modify}, the contribution function can be seen as the contribution that a point $p \in P$ gets from all points in $P$. 
We are now ready to present our main deterministic result.

\begin{theorem}
\label{th deterministic}
Let $(\bar y, \bar z)$ be a feasible solution to \eqref{pr IP}.
Let $a_i$, $i \in [k]$, be the $k$ points in $P$ such that $\bar y_{a_i} = 1.$
For every $i \in [k]$, let $A_i := \{q \in P \mid \bar z_{a_i q} = 1\}$.
Then $(\bar y, \bar z)$ is optimal to \eqref{pr LP} 
if and only if there exists $\alpha \in \R^P$ such that 
\begin{align}
    & C^\alpha(a_1)=\dots=C^\alpha(a_k) \label{eq Th1 a}\\
    & C^\alpha(q) \le C^\alpha(a_1) && \forall q \in P \setminus \{a_i\}_{i \in [k]} \label{eq Th1 b} \\
    & \alpha_q \ge d(a_i,q) && \forall i \in [k], \ \forall q \in A_i\label{eq Th1 c} \\
    & \alpha_q \le d(a_i,q) && \forall i \in [k], \ \forall q \in P \setminus A_i. \label{eq Th1 d}
\end{align}
Furthermore, if there exists $\alpha \in \R^P$ such that \eqref{eq Th1 a}, \eqref{eq Th1 c} hold, and \eqref{eq Th1 b}, \eqref{eq Th1 d} are satisfied strictly, then $(\bar y, \bar z)$ is the unique optimal solution to \eqref{pr LP}.
\end{theorem}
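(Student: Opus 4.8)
The plan is to derive everything from linear programming duality and the complementary slackness conditions \eqref{eq cs1 int}--\eqref{eq cs3 int} already recorded above, together with Observation~\ref{obs dual modify}. Since \eqref{pr LP} and \eqref{pr LP dual} have finite optima, $(\bar y,\bar z)$ is optimal to \eqref{pr LP} if and only if there is a dual feasible $(\alpha,\beta,\omega)$ satisfying complementary slackness, and by Observation~\ref{obs dual modify} we may always take $\beta_{pq}=(\alpha_q-d(p,q))_+$; for such $\beta$ one has $\sum_{q\in P}\beta_{pq}=C^\alpha(p)$, so the second block of \eqref{pr LP dual} becomes $C^\alpha(p)\le\omega$ and \eqref{eq cs3 int} becomes $C^\alpha(a_i)=\omega$. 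So the whole theorem amounts to translating the dual feasibility/complementary-slackness system into the four conditions \eqref{eq Th1 a}--\eqref{eq Th1 d} in terms of $C^\alpha$.

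For the ``if'' direction I would take $\alpha$ satisfying \eqref{eq Th1 a}--\eqref{eq Th1 d}, set $\omega:=C^\alpha(a_1)$ and $\beta_{pq}:=(\alpha_q-d(p,q))_+$, and check that $(\alpha,\beta,\omega)$ is feasible for \eqref{pr LP dual}: the first block holds because $(\alpha_q-d(p,q))_+ + d(p,q)\ge\alpha_q$ always; the second block is \eqref{eq Th1 a} at the centers and \eqref{eq Th1 b} at the non-centers; nonnegativity is automatic. Then \eqref{eq cs1 int} holds because \eqref{eq Th1 d} forces $\beta_{a_iq}=0$ for $q\notin A_i$, \eqref{eq cs2 int} holds because \eqref{eq Th1 c} forces $\beta_{a_iq}=\alpha_q-d(a_i,q)$ for $q\in A_i$, and \eqref{eq cs3 int} is exactly \eqref{eq Th1 a}; hence $(\bar y,\bar z)$ is optimal. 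For the ``only if'' direction I would start from any optimal dual solution, put $\beta$ in the form $(\alpha_q-d(p,q))_+$ via Observation~\ref{obs dual modify}, and read off: dual feasibility at the non-centers gives \eqref{eq Th1 b}; \eqref{eq cs3 int} gives \eqref{eq Th1 a}; \eqref{eq cs2 int} together with $\beta\ge0$ gives \eqref{eq Th1 c}; and \eqref{eq cs1 int} gives \eqref{eq Th1 d}.

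For uniqueness, suppose $\alpha$ additionally makes \eqref{eq Th1 b} and \eqref{eq Th1 d} strict, let $(\alpha,\beta,\omega)$ be the optimal dual solution built above, and let $(y',z')$ be any optimal solution of \eqref{pr LP}; then $(y',z')$ must satisfy \eqref{eq cs1}--\eqref{eq cs3} against $(\alpha,\beta,\omega)$. From \eqref{eq cs3} and strict \eqref{eq Th1 b}, $C^\alpha(p)<\omega$ for every non-center $p$, forcing $y'_p=0$ and hence $z'_{pq}=0$ for non-center $p$. From \eqref{eq cs2}, $z'_{a_iq}>0$ would give $\alpha_q=\beta_{a_iq}+d(a_i,q)\ge d(a_i,q)$, contradicting strict \eqref{eq Th1 d} when $q\notin A_i$, so $z'_{a_iq}=0$ for $q\notin A_i$; thus $z'$ is supported on the blocks $\{a_i\}\times A_i$, and $\sum_p z'_{pq}=1$ then forces $z'_{a_iq}=1$ for all $q\in A_i$, i.e. $z'=\bar z$. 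Finally \eqref{eq Th1 c} and strict \eqref{eq Th1 d} rule out an empty cluster (an empty $A_i$ puts $a_i$ into some $A_j$ with $j\ne i$, whence $\alpha_{a_i}\ge d(a_j,a_i)\ge0$ by \eqref{eq Th1 c} but $\alpha_{a_i}<d(a_i,a_i)=0$ by strict \eqref{eq Th1 d}), so each $A_i$ is nonempty; picking $q\in A_i$ gives $y'_{a_i}\ge z'_{a_iq}=1$, and $\sum_i y'_{a_i}=k$ then forces $y'_{a_i}=1$ for all $i$, i.e. $y'=\bar y$.

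I expect the feasibility and complementary-slackness bookkeeping in the equivalence to be routine; the one place needing genuine care is the uniqueness argument, where the strict inequalities must be exploited in the right order -- strict \eqref{eq Th1 b} to kill the spurious centers (and thereby most of $z'$), strict \eqref{eq Th1 d} with \eqref{eq cs2} to confine $z'$ to the correct blocks, and \eqref{eq Th1 c} with strict \eqref{eq Th1 d} to exclude empty clusters so that $y'$ is pinned down by $\sum_i y'_{a_i}=k$.
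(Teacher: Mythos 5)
Your proposal is correct and follows essentially the same route as the paper's own proof: LP duality with the positive-part dual form $\beta_{pq}=(\alpha_q-d(p,q))_+$ from \cref{obs dual modify}, complementary slackness in both directions of the equivalence, and the same use of the strict versions of \eqref{eq Th1 b} and \eqref{eq Th1 d} to pin down any optimal $(y',z')$. The only difference is that you explicitly rule out empty clusters $A_i$ (via \eqref{eq Th1 c} and strict \eqref{eq Th1 d}) before concluding $y'_{a_i}=1$, a corner case the paper's argument passes over silently; this is extra care within the same method, not a different approach.
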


\begin{prf}
In the first part of the proof we show the `if and only if' in the statement. 
After that, we will show the `uniqueness'.

First, we show the implication from left to right.
Assume that $(\bar y, \bar z)$ is an optimal solution to \eqref{pr LP}.
Then by Strong Duality \eqref{pr LP dual} also has an optimal solution, which we denote by $(\alpha,\beta,\omega)$.
For each $p,q \in P$, let $\beta'_{pq} := (\alpha_q-d(p,q))_+.$
According to \cref{obs dual modify}, $(\alpha,\beta',\omega)$ is also optimal to \eqref{pr LP dual}.
Complementary slackness implies that $(\bar x, \bar y)$ and $(\alpha,\beta',\omega)$ satisfy the complementary slackness conditions \eqref{eq cs1 int}--\eqref{eq cs3 int}.
Note that for every $p \in P$, we have $\sum_{q\in P}\beta'_{pq}  = \sum_{q\in P} (\alpha_q-d(p,q))_+ = C^\alpha(p).$ 
Constraints \eqref{eq Th1 a} are then implied by \eqref{eq cs3 int}, since $C^\alpha(a_i)=\omega$ for every $i \in [k].$
Constraints \eqref{eq Th1 b} are implied by \eqref{eq cs3 int} and the second constraint in \eqref{pr LP dual}. 
Constraints \eqref{eq Th1 c} are implied by \eqref{eq cs2 int} and the third constraint in \eqref{pr LP dual}.
Finally, constraints \eqref{eq Th1 d} are implied by \eqref{eq cs1 int} and the first constraint in \eqref{pr LP dual}.

Next, we show the implication from right to left.
Let $\alpha \in \R^P$ such that \eqref{eq Th1 a}--\eqref{eq Th1 d} are satisfied. 
For every $p,q \in P$, we define $\beta_{pq}:=(\alpha_q-d(p,q))_+$ and we let $\omega:= C^\alpha(a_1).$ 
From \eqref{eq Th1 b}, we know that $(\alpha,\beta,\omega)$ is feasible to \eqref{pr LP dual}. 
We can then check that $(\bar y, \bar z)$ and $(\alpha,\beta,\omega)$ satisfy the complementary slackness conditions \eqref{eq cs1 int}, \eqref{eq cs2 int}, and \eqref{eq cs3 int} due to \eqref{eq Th1 d}, \eqref{eq Th1 c}, and \eqref{eq Th1 a}, respectively. 
We conclude that $(\bar x, \bar y)$ is optimal to \eqref{pr LP}.

\smallskip

To show the `uniqueness' part of the statement, we continue the previous proof (of the implication from right to left) with the additional assumption that \eqref{eq Th1 b}, \eqref{eq Th1 d} are satisfied strictly.

From complementary slackness we also obtain that $(\alpha,\beta,\omega)$ is an optimal solution to \eqref{pr LP dual}.
Let $(y',z')$ be a feasible solution to \eqref{pr LP}. 
Applying complementary slackness to $(y',z')$ and $(\alpha,\beta,\omega)$, we obtain that $(y',z')$ is an optimal solution to \eqref{pr LP} if and only if these two vectors satisfy conditions \eqref{eq cs1}--\eqref{eq cs3}.
Thus, to prove that $(\bar y, \bar z)$ is the unique optimal solution to \eqref{pr LP}, we only need to show that if $(y',z')$ and $(\alpha,\beta,\omega)$ satisfy \eqref{eq cs1}--\eqref{eq cs3}, then $(y',z')=(\bar y, \bar z)$.

Since for every $p \in P \setminus \{a_i\}_{i \in [k]}$, we have $C^\alpha(p)=\sum_{q \in P}\beta_{pq}<\omega$, \eqref{eq cs3} implies that $y'_p=0$ for every $p \in P \setminus \{a_i\}_{i \in [k]}$. 
From the primal constraints $z'_{pq} \le y'_p$ $\forall p,q \in P$, we obtain $z'_{pq}=0$ $\forall p \in P \setminus \{a_i\}_{i \in [k]}$, $\forall q \in P$.
Since for every $i \in [k]$ and for every $q \in P \setminus A_i$, we have $\alpha_q < d(a_i,q)$,
we know from \eqref{eq cs2} that 
$z'_{a_i q}=0$ for every $i \in [k]$ and for every $q \in P \setminus A_i$. 
From the primal constraint $\sum_{p \in P} z'_{pq}=1$ $\forall q \in P$ we then obtain $z'_{a_i q}=1$ for every $i \in [k]$ and for every $q \in A_i$.  
Primal constraints $z'_{pq} \le y'_p$ $\forall p,q \in P$ and $\sum_{p \in P}y'_p=k$ imply $y'_{a_i}=1$ for every $i \in [k]$. 
We have thereby shown $(y',z')=(\bar y, \bar z)$.
\end{prf}

We remark that deterministic sufficient conditions which guarantee that an integer solution to \eqref{pr IP} is an optimal solution to \eqref{pr LP} have also been presented in \cite{NELLORE2015165,awasthi2015relax}. 
The main difference with respect to these known results is that \cref{th deterministic} provides necessary and sufficient conditions.
In this paper, we do not only use \cref{th deterministic} to prove that  \eqref{pr LP} can achieve exact recovery, but we also use it to construct examples where \eqref{pr LP} does not achieve exact recovery.

\section{Models of data and exact recovery}
\label{sec Model}

In \cref{sec IP}, we considered the $k$-median problem in a deterministic setting.
In the remainder of the paper we will instead consider a probabilistic setting.
Furthermore, our discussion of the $k$-median problem so far is very general, as it applies to any given input consisting of $n$ points in a metric space.
In the remainder of the paper, we will only consider the Euclidean space.
Thus we use $d(\cdot,\cdot)$ to denote the Euclidean distance and we use $\norm{\cdot}$ to denote the Euclidean norm. 
We also denote by $\ball_r(c)$ the closed ball of radius $r$ and center $c$ in $\R^m$ and by $\sphere_r(c)$ the sphere of radius $r$ and center $c$ in $\R^m$.
In this paper, unless otherwise stated, we always assume that the radius $r$ of balls is positive, i.e., $r \in \R_+$, where $\R_+ := \{x \in \R \mid x > 0\}$.
On the other hand we allow the radius of spheres to be nonnegative, i.e., $r \in \{x \in \R \mid x \ge 0\}$.
In particular, $\sphere_0(c)$ is the set containing only the vector $c$.

In this paper we will consider two models of data for the $k$-median problem, which are called the \emph{stochastic ball model} and the \emph{extended stochastic ball model}.
Before defining these two models of data, we first introduce our notation for basic probability theory, in particular, our notation follows~\cite{durrett2010probability}. 
Let $(\mu,\Omega,\mathcal{F})$ be a \emph{probability space}, where $\Omega$ is a set of ``outcomes'', $\mathcal{F}$ is a set of ``events'', and $\mu$ is a probability measure.  
The set $\mathcal{F}$ is a $\sigma$-algebra on $\Omega$, and in this paper we will always let $\mathcal{F}$ be the $\sigma$-algebra generated by $\Omega$.
Therefore we will refer to the probability space $(\mu,\Omega,\mathcal{F})$ by simply writing $(\mu,\Omega)$. 
If $A \in \mathcal{F}$ is a event, we use $\bar A$ to denote its complementary event.
We say $X$ is an \emph{$m$-dimensional random vector} if $X$ is a measurable map from $(\Omega,\mathcal{F})$ to $(\R^m,\mathcal{R}^m)$, where $\mathcal{R}^m$ is the $\sigma$-algebra generated by $\R^m$. 
If $m = 1$, we call $X$ a \emph{random variable}. 
In particular, if $(\mu,\Omega,\mathcal{F})$ is a probability space, $\Omega\subseteq \R^m$ and $X$ is the identity map, we say that $X$ is a random vector \emph{drawn according to $\mu.$}
If $X$ is a random variable, we define its \emph{expected value} 
to be $\Ex X=\int_\Omega X(x)d\mu(x)$.

We are now ready to define the \emph{stochastic ball model}.

\begin{definition}
[Stochastic ball model (SBM)]
\label{def SBM}
For every $i \in [k]$, let $(\mu,\ball_1(0))$ be a probability space.
For each $i \in [k]$, draw $n$ i.i.d. random vectors $v_\ell^{(i)}$, for $\ell \in [n]$, according to $\mu$.
The points from cluster $i$ are then taken to be $x^{(i)}_\ell := c_i + v_\ell^{(i)}$, for $\ell \in [n]$.
\end{definition}
Variants of the SBM have been considered in the literature, with different assumptions on the properties that the probability space $(\mu,\ball_1(0))$ should satisfy. 
We refer the reader for example to \cite{IguMixPetVil17}.

In this paper we will also consider a more general model of data, which we call the \emph{extended stochastic ball model}. 
The extended stochastic ball model is more general than the SBM in the following ways: (i) we do not require the balls to have the same radius, (ii) we do not require the probability measure on the balls to coincide, and (iii) we allow to draw different numbers of data points from different balls.

\begin{definition}[Extended stochastic ball model (ESBM)]
\label{def ESBM}
For every $i \in [k]$, let $(\mu_i,\ball_{r_i}(c_i))$ be a probability space.
For each $i \in [k]$, let $\beta_i \ge 1$ and draw $n_i := \beta_i n$ i.i.d. random vectors $x^{(i)}_\ell$, for $\ell \in [n_i]$, according to $\mu_i$.
The points from cluster $i$ are then taken to be  $x^{(i)}_\ell$, for $\ell \in [n_i]$.
\end{definition}

In this paper we will consider three different assumptions on the probability spaces of the form $(\mu_i,\ball_{r_i}(c_i))$ that we consider, namely: 
\begin{enumerate}[label=(a\arabic*)]
\item
\label{ass rotation}
The probability measure $\mu_i$ is invariant under rotations centered in $c_i$;
\item
\label{ass open}
Every open subset of $\ball_{r_i}(c_i)$ containing $c_i$ has positive probability measure;
\item
\label{ass dim}
Every subset of $\ball_{r_i}(c_i)$ with 
zero Lebesgue measure
has zero probability measure.
\end{enumerate}

In this paper we will see that in the ESBM, the linear program \eqref{pr LP} can perform very well in solving the $k$-median problem.
To formalize this notion we define next the concept of \emph{exact recovery}.
\begin{definition}[Exact recovery]
\label{def rt}
We say that \eqref{pr LP} \emph{achieves exact recovery} if it has a unique optimal solution,  such solution is also feasible 
(thus optimal) to \eqref{pr IP}, and it assigns each point to the ball from which it is drawn.
\end{definition}


The reader might wonder why in the definition of the ESBM we assume that $n_i=\beta_i n$ for $i \in [k]$, effectively requiring the $n_i$ to be of the same order.
In \cref{ex diff order} in \cref{app same order} we show that this assumption is needed in order to obtain exact recovery.

\section{Sufficient conditions for exact recovery in the ESBM}
\label{sec Probabilistic condition}

In this section, we introduce general sufficient conditions which guarantee that \eqref{pr LP} achieves exact recovery with high probability in the ESBM.
To state our results we fist introduce the contribution function in the ESBM.


In the original definition (\cref{def contribution}), we assumed that $\alpha$ is a vector in $\R^P$.
When we will consider the contribution function in the ESBM, we will always assume that for every $i \in [k]$ there exists $\alpha'_i \in \R$ such that $\forall \ell \in [n_i]$ we have $\alpha_{x^{(i)}_\ell}=\alpha'_i$. 
For ease of notation, we then define the contribution function in the ESBM.
\begin{definition}[Contribution function in the ESBM]
\label{def contribution ESBM}
Given $\alpha \in \R^k$, the \emph{contribution function in the ESBM}
$C^{\alpha}(z): \R^m \to \R$ is defined by
\begin{align*}
    C^\alpha(z):=\sum_{i \in [k]}\sum_{\ell \in [n_i]}(\alpha_i-d(z,x^{(i)}_\ell))_+.
\end{align*}
\end{definition}
Clearly, given $\alpha \in \R^P$ and $\alpha' \in \R^k$ such that $\forall i \in [k], \ell \in [n_i]$ we have $\alpha_{x^{(i)}_\ell}=\alpha'_i$, the two definitions are equivalent, i.e., $C^\alpha(z) = C^{\alpha'}(z)$ for every $z \in \R^m$. 
Since each $x^{(i)}_\ell$ is a random vector drawn according to $\mu_i$, we define $\bar \Omega:=\prod_{\ell \in [n_1]} \ball_{r_1}(0) \times \dots \times \prod_{\ell \in [n_k]} \ball_{r_k}(c_k)$ and we let $\bar \mu$ be the corresponding joint probability measure for 
$x^{(1)}_1, \dots, x^{(1)}_{n_1}, \dots, x^{(k)}_1, \dots, x^{(k)}_{n_k}$.
Then for every $z \in \R^m$ and for every $\alpha \in \R^k$, $C^\alpha(z)$ is a random variable on the probability space $(\bar \mu, \bar \Omega).$

Next, we define the function $G^\alpha(z)$, which plays a fundamental role in our sufficient conditions.

\begin{definition}
\label{def G}
Given $\alpha \in \R^k$, in the ESBM we define the function
$G^{\alpha}(z): \R^m \to \R$ as
\begin{align*}
    G^\alpha(z):=\frac{1}{n} \Ex C^\alpha(z).
\end{align*}
\end{definition}


\begin{observation} 
\label{obs G}
In the ESBM, we obtain
\begin{align*}
    G^\alpha(z) 
    & = \sum_{i \in [k]} \beta_i\int_{x \in \ball_{r_i}(c_i)}(\alpha_i-d(z,x))_+d\mu_i(x) \\
    & = \sum_{i \in [k]} \beta_i\int_{\ball_{\alpha_i}(z) \cap \ball_{r_i}(c_i)} (\alpha_i-d(z,x)) d\mu_i(x).
\end{align*} 
\end{observation}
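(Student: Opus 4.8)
The plan is to verify \cref{obs G} by a direct computation that unwinds the definitions of $G^\alpha$, $C^\alpha$, and the joint measure $\bar\mu$, using linearity of expectation. First I would write $G^\alpha(z) = \frac{1}{n}\Ex C^\alpha(z) = \frac{1}{n}\Ex \sum_{i \in [k]}\sum_{\ell \in [n_i]}(\alpha_i - d(z, x^{(i)}_\ell))_+$ and pull the (finite) sum outside the expectation. For each fixed $i$ and $\ell$, the random vector $x^{(i)}_\ell$ is drawn according to $\mu_i$ on $\ball_{r_i}(c_i)$, so by the definition of expectation $\Ex (\alpha_i - d(z, x^{(i)}_\ell))_+ = \int_{\ball_{r_i}(c_i)} (\alpha_i - d(z,x))_+\, d\mu_i(x)$, which does not depend on $\ell$. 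Summing over the $n_i = \beta_i n$ identically distributed copies contributes a factor $\beta_i n$, and dividing by $n$ leaves $\beta_i$; this yields the first displayed line.

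For the second line I would observe that the integrand $(\alpha_i - d(z,x))_+$ is supported exactly on the set of $x$ with $d(z,x) \le \alpha_i$, i.e.\ on $\ball_{\alpha_i}(z)$, and on that set the positive part equals $\alpha_i - d(z,x)$ itself. Hence $\int_{\ball_{r_i}(c_i)} (\alpha_i - d(z,x))_+\, d\mu_i(x) = \int_{\ball_{\alpha_i}(z) \cap \ball_{r_i}(c_i)} (\alpha_i - d(z,x))\, d\mu_i(x)$, since $\mu_i$ is supported on $\ball_{r_i}(c_i)$ and restricting the domain of integration to where the integrand is nonzero does not change the value. (If $\alpha_i < 0$ the ball $\ball_{\alpha_i}(z)$ is empty and both sides are $0$, so the identity still holds; one may note this edge case in passing.)

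There is essentially no obstacle here: the only points requiring a word of care are the interchange of the finite sum with the expectation (justified by linearity, no integrability subtleties since each term is bounded) and the measurability of $C^\alpha(z)$ as a random variable, which was already recorded in the text preceding the statement. The proof is two or three lines and I would present it as such.
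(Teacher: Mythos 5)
Your proof is correct and follows essentially the same route as the paper: linearity of expectation over the $n_i=\beta_i n$ i.i.d.\ draws from $\mu_i$ gives the first line, and restricting the integral to where the positive part is nonzero (i.e.\ to $\ball_{\alpha_i}(z)\cap\ball_{r_i}(c_i)$) gives the second. No gaps.
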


\begin{prf}
The expected value of the contribution function is 
\begin{align*}
    \Ex C^\alpha(z)=\sum_{i \in [k]} n_i\int_{x \in \ball_{r_i}(c_i)}(\alpha_i-d(z,x))_+d\mu_i(x).
\end{align*}
Using $n_i=\beta_i n$, for $i \in [k]$, we obtain
\begin{align*}
    G^\alpha(z) 
    = \frac{1}{n}\Ex C^\alpha(z)
    \ & = \sum_{i \in [k]}\beta_i\int_{x \in \ball_{r_i}(c_i)}(\alpha_i-d(z,x))_+d\mu_i(x)\\
    \ & = \sum_{i \in [k]}\beta_i\int_{\ball_{\alpha_i}(z) \cap \ball_{r_i}(c_i)} (\alpha_i-d(z,x)) d\mu_i(x),
\end{align*}
where the last equality holds because $\alpha_i-d(z,x) \ge 0$ if and only if $x \in \ball_{r_i}(c_i) \cap \ball_{\alpha_i}(z)$.
\end{prf}

\begin{observation}
\label{obs G property}
In the ESBM, the function from $\R^{k+m}$ to $\R$ defined by $(\alpha,z) \mapsto G^\alpha(z)$ is continuous.
\end{observation}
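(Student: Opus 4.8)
The plan is to show continuity of $(\alpha,z)\mapsto G^\alpha(z)$ by working from the first formula in \cref{obs G}, namely
\[
G^\alpha(z)=\sum_{i\in[k]}\beta_i\int_{\ball_{r_i}(c_i)}\bigl(\alpha_i-d(z,x)\bigr)_+\,d\mu_i(x),
\]
and applying a dominated-convergence argument to each of the $k$ summands separately; since a finite sum of continuous functions is continuous, it suffices to treat one term. Fix $i\in[k]$ and let $(\alpha^{(t)},z^{(t)})\to(\alpha,z)$ be an arbitrary convergent sequence in $\R^{k+m}$. First I would observe that for each fixed $x\in\ball_{r_i}(c_i)$ the integrand $(\alpha_i-d(z,x))_+$ is a continuous function of $(\alpha_i,z)$, because $z\mapsto d(z,x)$ is continuous (it is $1$-Lipschitz), subtraction is continuous, and $t\mapsto t_+=\max\{t,0\}$ is continuous; hence $(\alpha^{(t)}_i-d(z^{(t)},x))_+\to(\alpha_i-d(z,x))_+$ pointwise in $x$.

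Next I would produce an integrable dominating function that is uniform along the tail of the sequence. Since $(\alpha^{(t)},z^{(t)})$ converges, the sequence is bounded, so there is a constant $M$ with $|\alpha^{(t)}_i|\le M$ for all $t$. For every $x\in\ball_{r_i}(c_i)$ we then have $0\le(\alpha^{(t)}_i-d(z^{(t)},x))_+\le|\alpha^{(t)}_i|\le M$, so the constant function $M$ dominates all integrands; and $M$ is $\mu_i$-integrable because $\mu_i$ is a probability measure on $\ball_{r_i}(c_i)$, giving $\int_{\ball_{r_i}(c_i)}M\,d\mu_i(x)=M<\infty$. By the Dominated Convergence Theorem,
\[
\int_{\ball_{r_i}(c_i)}\bigl(\alpha^{(t)}_i-d(z^{(t)},x)\bigr)_+\,d\mu_i(x)\longrightarrow\int_{\ball_{r_i}(c_i)}\bigl(\alpha_i-d(z,x)\bigr)_+\,d\mu_i(x),
\]
i.e.\ the $i$-th summand of $G^{\alpha^{(t)}}(z^{(t)})$ converges to the $i$-th summand of $G^\alpha(z)$. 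Summing over $i\in[k]$ and multiplying by the fixed constants $\beta_i$ yields $G^{\alpha^{(t)}}(z^{(t)})\to G^\alpha(z)$. Since the sequence was arbitrary, $(\alpha,z)\mapsto G^\alpha(z)$ is continuous, as claimed. $\qed$

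The argument is essentially routine; the only point that needs a little care is the choice of dominating function, and the key observation that makes it trivial is that each $\mu_i$ is a probability measure on a \emph{bounded} set, so the crude bound $M$ (coming from boundedness of the convergent sequence $(\alpha^{(t)})$) is already integrable — there is no need to bound $d(z,x)$ in terms of $z$, since the positive part kills the contribution wherever $d(z,x)>\alpha_i$. If one instead wanted to avoid sequences, I would phrase the same estimate as: for $(\alpha',z')$ in a neighborhood of $(\alpha,z)$, $|G^{\alpha'}(z')-G^\alpha(z)|\le\sum_i\beta_i\int(|\alpha'_i-\alpha_i|+|d(z',x)-d(z,x)|)\,d\mu_i\le\sum_i\beta_i(|\alpha'_i-\alpha_i|+\|z'-z\|)$, using that $|t_+-s_+|\le|t-s|$ and the reverse triangle inequality, which in fact shows $G$ is Lipschitz on bounded sets — a slightly stronger statement than what is needed here.
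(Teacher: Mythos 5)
Your proof is correct, but it follows a genuinely different route from the paper's. The paper restricts to an arbitrary compact set $B\subseteq\R^{k+m}$, invokes the Heine--Cantor theorem to get uniform continuity of $(\alpha,z,x)\mapsto(\alpha_i-d(z,x))_+$ on $B\times\ball_{r_i}(c_i)$, and then passes the $\epsilon$--$\delta$ estimate through the integral. You instead argue termwise via dominated convergence along an arbitrary convergent sequence, with the constant dominating function $M$ made integrable by the fact that each $\mu_i$ is a probability measure; this is sound, and your identification of the domination step as the only delicate point is accurate. In fact, the closing remark of your write-up is the strongest and simplest version of the argument: the inequalities $\abs{t_+-s_+}\le\abs{t-s}$ and $\abs{d(z',x)-d(z,x)}\le\norm{z'-z}$ give directly $\abs{G^{\alpha'}(z')-G^{\alpha}(z)}\le\sum_{i\in[k]}\beta_i\pare{\abs{\alpha'_i-\alpha_i}+\norm{z'-z}}$, which is a \emph{global} Lipschitz bound (the constants do not depend on the base point), so no compactness, no uniform continuity, and no convergence theorem is needed at all. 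Compared with the paper's proof, your approach buys a quantitatively stronger conclusion (Lipschitz continuity of $(\alpha,z)\mapsto G^\alpha(z)$, which is also close in spirit to the perturbation estimate the paper later needs in the proof of \cref{lm concentration}), at the cost of nothing; the paper's Heine--Cantor argument establishes only continuity. If you keep the sequential DCT version, it would be cleaner to simply lead with the Lipschitz estimate and drop the rest.
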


\begin{prf}
To prove this observation, it suffices to show that for every compact set $B \subseteq \R^{k+m}$, the function from $B$ to $\R$ defined by $(\alpha,z) \mapsto G^\alpha(z)$ is continuous.
Therefore, let $B \subseteq \R^{k+m}$ be an arbitrary compact set.
From \cref{obs G}, $G^\alpha(z)$ can be written in the form
\begin{align*}
    G^\alpha(z)=\sum_{i \in [k]}\beta_i\int_{x \in \ball_{r_i}(c_i)}(\alpha_i-d(z,x))_+d\mu_i(x).
\end{align*}
Hence, it suffices to show that, for every $i \in [k]$, the function from $B$ to $\R$ defined by $(\alpha,z) \mapsto \int_{x \in \ball_{r_i}(c_i)}(\alpha_i-d(z,x))_+d\mu_i(x)$ is continuous. 

We know that the function from $B \times \ball_{r_i}(c_i)$ to $\R$ defined by $(\alpha,z,x) \mapsto (\alpha_i-d(z,x))_+$ is continuous.
Since $B \times \ball_{r_i}(c_i)$ is a compact set, the Heine–Cantor theorem implies that $(\alpha_i-d(z,x))_+$ is uniformly continuous over $B \times \ball_{r_i}(c_i)$. 
This implies that for every $\epsilon>0$, there is some $\delta>0$, such that for every $x \in \ball_{r_i}(c_i)$ and for every $(\alpha^1,z^1),(\alpha^2,z^2) \in B$, when $\norm{(\alpha^1,z^1)-(\alpha^2,z^2)}<\delta$, we have $\abs{(\alpha^1_i-d(z^1,x))_+-(\alpha^2_i-d(z^2,x))_+}<\epsilon$.
We obtain that
\begin{align*}
    & \abs{\int_{x \in \ball_{r_i}(c_i)}(\alpha^1_i-d(z^1,x))_+-(\alpha^2_i-d(z^2,x))_+d\mu_i(x)} \\
   \le \ & \int_{x \in \ball_{r_i}(c_i)}\abs{(\alpha^1_i-d(z^1,x))_+-(\alpha^2_i-d(z^2,x))_+}d\mu_i(x) \\
   < \ & \epsilon \Pr(x \in \ball_{r_i}(c_i)) \le \epsilon.
\end{align*}
This concludes the proof that, for every $i \in [k]$, the function from $B$ to $\R$ defined by $(\alpha,z) \mapsto \int_{x \in \ball_{r_i}(c_i)}(\alpha_i-d(z,x))_+d\mu_i(x)$ is continuous.
\end{prf}


For ease of notation, given $k$ balls $\ball_{r_i}(c_i) \subseteq \R^m$, for $i \in [k]$, throughout the paper we denote by 
\begin{align*}
D_i := \min \{d(c_i,c_j)-r_i \mid j \in [k], \ j \neq i\}.
\end{align*}
We also give the following definition in order to simplify the language in this paper.
\begin{definition}
Let $(\mu(n),\Omega(n), \mathcal{F}(n))$ be a probability space, which depends on a parameter $n$, and let $A_n \in \mathcal{F}(n)$ be an event which depends on $n$. 
We say that $A_n$ happens \emph{with high probability}, if for every $\delta \in (0,1)$, there exists $N>0$ such that when $n>N$, $\Pr (A_n)>1-\delta$.
\end{definition}
Note that, when we say with high probability, we always mean with respect to the parameter called $n$ in the probability space.
In this paper we use several times the well-known fact that, if a constant number of events happen with high probability, then they also happen together with high probability. 



We are now ready to state the main result of this section.

\begin{theorem}
\label{th prob}
Consider the ESBM.
For every $i \in [k]$, assume that the probability space $(\mu_i,\ball_{r_i}(c_i))$ satisfies \ref{ass rotation}, \ref{ass open}, \ref{ass dim}.
For every $i \in [k]$, denote by $E_i := \Ex d(x,c_i)$, where $x$ is a random vector drawn according to $\mu_i$.
Assume that there exists some $\gamma \in \R$ that satisfies $\max_{i \in [k]}\beta_i(r_i-E_i)<\gamma<\min_{i \in [k]}\beta_i(D_i-E_i)$.
For every $i \in [k]$, let $\alpha_i := E_i+\frac{\gamma}{\beta_i}$ and assume that $c_i$ is the unique point that achieves $\max \{G^{\alpha}(z) \mid z \in \ball_{r_i}(c_i)\}$.
Then \eqref{pr LP} achieves exact recovery with high probability. 
\end{theorem}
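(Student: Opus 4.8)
The plan is to apply the deterministic characterization of \cref{th deterministic}: it suffices to exhibit, with high probability, a vector $\alpha \in \R^P$ of the form $\alpha_{x^{(i)}_\ell} = \alpha_i$ for all $\ell \in [n_i]$, such that the equality \eqref{eq Th1 a} holds and the inequalities \eqref{eq Th1 b}, \eqref{eq Th1 c}, \eqref{eq Th1 d} hold strictly; then $(\bar y, \bar z)$, the ground-truth solution that picks as center of cluster $i$ the data point closest to its true $k$-median center (call it $a_i$) and assigns each sampled point to its own cluster, is the unique optimal solution to \eqref{pr LP}. Verifying that this ground-truth solution is actually feasible to \eqref{pr IP} in the sense of \cref{def rt} — i.e., that $a_i \in A_i$ for each $i$ — is part of what must be checked, but it follows from the separation hypothesis $d(c_i,c_j) > 2\max\{r_i,r_j\}$ implicit in $\gamma < \min_i \beta_i(D_i - E_i)$.

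The candidate $\alpha$ is the natural one: take $\alpha_i := E_i + \gamma/\beta_i$ as in the statement, but I would actually need to perturb each $\alpha_i$ slightly (cluster by cluster) to force the \emph{exact} equality $C^\alpha(a_1) = \cdots = C^\alpha(a_k)$, since with the nominal values these contribution functions will only be approximately equal. The key steps are then: (1) Show that for each fixed $z$, $\frac{1}{n} C^\alpha(z)$ concentrates around $G^\alpha(z)$ — this is a law-of-large-numbers / Hoeffding-type statement since $C^\alpha(z)$ is a sum of $n_i = \beta_i n$ i.i.d. bounded terms $(\alpha_i - d(z,x^{(i)}_\ell))_+ \in [0,\alpha_i]$. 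Upgrade this to uniform convergence over the relevant compact set of $z$'s (the union of the balls plus a neighborhood), using \cref{obs G property} (continuity of $G^\alpha$) together with a net argument and the uniform continuity of $z \mapsto C^\alpha(z)$, which is $1$-Lipschitz in $z$. (2) Evaluate $G^\alpha(c_i)$: by \ref{ass rotation} and \cref{obs G}, when $\alpha_i \le D_i$ the ball $\ball_{\alpha_i}(c_j)$ for $j \neq i$ does not meet $\ball_{r_j}(c_j)$... rather, one checks the cross terms vanish, so $G^\alpha(c_i) = \beta_i \int (\alpha_i - d(c_i,x))_+ d\mu_i = \beta_i(\alpha_i - E_i) = \gamma$ (using $\alpha_i = E_i + \gamma/\beta_i$ and $\alpha_i \ge r_i$, which is exactly $\gamma \ge \beta_i(r_i - E_i)$, guaranteeing the positive part is just $\alpha_i - d(c_i,x)$ on the whole support). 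Hence all the $G^\alpha(c_i)$ agree, which is why this $\gamma$ is chosen. (3) Use the hypothesis that $c_i$ is the \emph{unique} maximizer of $G^\alpha$ over $\ball_{r_i}(c_i)$ to get a strict gap: $G^\alpha(z) < \gamma$ for $z \in \ball_{r_i}(c_i) \setminus \{c_i\}$, and by compactness $G^\alpha(z) \le \gamma - \epsilon_0$ outside a small neighborhood of the $c_i$'s. Combined with uniform concentration and the fact that $a_i \to c_i$ (the empirical nearest point to the true center converges to $c_i$, using \ref{ass open}), this yields \eqref{eq Th1 a} approximately and \eqref{eq Th1 b} strictly, for points $q$ bounded away from the centers; for points $q$ near some $c_i$ one argues that $a_i$ is even closer, or handles a shrinking neighborhood directly. (4) Check \eqref{eq Th1 c} and \eqref{eq Th1 d}: for $q = x^{(i)}_\ell \in A_i$ we need $\alpha_i \ge d(a_i, x^{(i)}_\ell)$; since $d(a_i, x^{(i)}_\ell) \le d(a_i,c_i) + r_i \to r_i$ and $\alpha_i \ge r_i$ with the inequality $\gamma > \max_i \beta_i(r_i - E_i)$ being strict, this holds with room to spare (strictly, w.h.p.). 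For $q = x^{(i)}_\ell$ and $a_j$ with $j \neq i$ we need $\alpha_i < d(a_j, x^{(i)}_\ell)$; since $d(a_j, x^{(i)}_\ell) \ge d(c_i,c_j) - r_i - o(1) \ge D_i - o(1)$ while $\alpha_i = E_i + \gamma/\beta_i < D_i$ (equivalent to $\gamma < \beta_i(D_i - E_i)$), again strict w.h.p. (5) Finally, perform the perturbation: replace each $\alpha_i$ by $\alpha_i + \delta_i$ with $|\delta_i| = O(1/\sqrt{n})$ chosen so that the $C^\alpha(a_i)$ become exactly equal — continuity of $C^\alpha(a_i)$ in $\alpha_i$ (it is monotone and Lipschitz in $\alpha_i$ with slope $n_i$) plus the fact that the unperturbed values are within $O(\sqrt{n})$... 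I'd use the intermediate value theorem on the differences, noting that increasing $\alpha_i$ strictly increases $C^\alpha(a_i)$ once $\alpha_i$ exceeds the distance to the nearest sample, which happens w.h.p. Since all the strict inequalities in (3) and (4) had slack bounded below by a constant, a perturbation of size $O(1/\sqrt n)$ (in fact $o(1)$ suffices) preserves them.

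The main obstacle I expect is step (1)'s uniform concentration combined with step (3)'s transfer of the \emph{strict} uniqueness of the maximizer of $G^\alpha$ into a usable \emph{quantitative} gap near the centers. Away from the $c_i$, compactness gives a uniform gap $\epsilon_0 > 0$ for free, and concentration closes the deal. But in a shrinking neighborhood of each $c_i$ the margin in \eqref{eq Th1 b} degrades to zero, so one cannot simply invoke concentration there; instead I would argue structurally that for $q$ within that neighborhood, $C^\alpha(q) \le C^\alpha(a_i)$ holds deterministically once $a_i$ is the empirical maximizer of $C^\alpha$ on $\ball_{r_i}(c_i)$ — i.e., choose $a_i$ to be $\argmax_{q \in P \cap \text{cluster } i} C^\alpha(q)$ rather than the point closest to $c_i$, reconciling this choice of $a_i$ with feasibility to \eqref{pr IP} and with step (2). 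Getting this reconciliation clean — the center picked by the LP must simultaneously maximize the empirical contribution and be a legitimate $k$-median center of its cluster, and these must be shown compatible w.h.p. — is the delicate point, and likely where \ref{ass dim} (no atoms, so ties have probability zero, ensuring uniqueness of argmaxes) and \ref{ass rotation} (so that $c_i$ is the population maximizer) both get used.
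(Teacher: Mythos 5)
Your overall architecture is the same as the paper's: certify the ground-truth solution via \cref{th deterministic}, compare $\frac{1}{n}C^\alpha$ to $G^\alpha$ by Hoeffding-type concentration, use the separation encoded in $\gamma<\min_{i}\beta_i(D_i-E_i)$ and $\gamma>\max_i\beta_i(r_i-E_i)$ to kill the cross-ball terms and give constant margins in \eqref{eq Th1 c}--\eqref{eq Th1 d}, and perturb each $\alpha_i$ by a data-dependent $o(1)$ amount to force exact equality in \eqref{eq Th1 a}. Your IVT tuning in step (5) in fact lands on the paper's explicit choice $\epsilon_i=\OPT_i/n_i-E_i$, which makes $C^{\alpha'}(x^{(i)}_*)=\gamma n$ for every $i$ deterministically; and your uniform-over-$z$ net argument in step (1) is a legitimate alternative to the paper's device of conditioning on $x^{(i)}_w=z$ and applying Hoeffding to the remaining $n-1$ independent points (the very issue that breaks the proof in Awasthi et al.). Steps (2) and (4) are sound.

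The genuine gap is exactly the point you flag at the end and do not resolve: the choice of the centers $a_i$ and the strict inequality \eqref{eq Th1 b} for sample points $q$ in a shrinking neighborhood of $c_i$, where the population gap $G^\alpha(c_i)-G^\alpha(q)$ degenerates. Your initial choice, $a_i$ the sample point nearest to $c_i$, cannot work: with probability one it is not the median of its cluster, so the integral solution whose centers are the cluster medians has strictly smaller objective, your $(\bar y,\bar z)$ is then not even optimal to \eqref{pr LP}, and no certificate $\alpha$ can exist. Your fallback, $a_i:=\argmax_q C^\alpha(q)$ over cluster $i$, is the right object, but what remains to be proven is precisely the reconciliation you call delicate: that this argmax is the (unique) median of its cluster and lies near $c_i$. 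The paper closes this by taking $a_i:=x^{(i)}_*$, the median, and then (i) showing via \cref{lm geometry} and \cref{lm from balls to sum} that for every $z\in\intr\ball_{\tau_i}(c_i)$ and every $\alpha'$ in a fixed neighborhood of $\alpha$ one has exactly $C^{\alpha'}(z)=n_i\alpha'_i-\sum_{\ell} d(z,x^{(i)}_\ell)$, so on that inner region $C^{\alpha'}$ is maximized among cluster points precisely at the median, strictly and uniquely because the median is almost surely unique (\cref{lm one median}; note this needs more than ``no atoms'' --- it is proved through a real-analytic zero-set argument under \ref{ass dim}); (ii) showing the median lands in that region (\cref{lm improve center in one ball}) and $\OPT_i/n_i\to E_i$ (\cref{lm opt}), which controls the perturbation; and (iii) a concentration statement uniform over the $\xi$-neighborhood of $\alpha$ (\cref{lm concentration}) forcing the empirical argmax of $C^{\alpha'}$ into $\intr\ball_{\tau_i}(c_i)$, which supplies \eqref{eq Th1 b} for all remaining points. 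Without the inner-region identity and the almost-sure uniqueness of the median, your step (3) near the centers --- and hence the strictness required for uniqueness of the LP optimum --- is not established.
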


Next, we present a corollary of \cref{th prob} for the ESBM with some special structure.

\begin{corollary}
\label{cor prob}
Consider the ESBM.
For every $i \in [k]$, assume that the probability space $(\mu_i,\ball_{r_i}(c_i))$ satisfies \ref{ass rotation}, \ref{ass open}, \ref{ass dim}.  
For every $i \in [k]$, assume $n_i=n$, $r_i=1$, and denote by $E_i := \Ex d(x,c_i)$,  where $x$ is a random vector drawn according to $\mu_i$.
We further assume $E_1 = \cdots = E_k$. 
Assume that there exists some $\alpha' \in \R$ that satisfies $1<\alpha'<\min_{i\neq j}d(c_i,c_j)-1$.
For every $i \in [k]$, let $\alpha_i:=\alpha'$ and assume that $c_i$ is the unique point that achieves $\max \{G^{\alpha}(z) \mid z \in \ball_{r_i}(c_i)\}$.
Then \eqref{pr LP} achieves exact recovery with high probability.
\end{corollary}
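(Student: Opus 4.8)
The plan is to derive \cref{cor prob} as a direct specialization of \cref{th prob}, by checking that its hypotheses imply those of the theorem. So the first step is to translate the data of the corollary into the notation of \cref{th prob}: since $n_i = n$ for every $i$, we have $\beta_i = 1$ for all $i$; since $r_i = 1$, the quantity $R = \max_i r_i$ equals $1$ and, for each $i$, $D_i = \min_{j \neq i} d(c_i,c_j) - r_i = \min_{j\neq i} d(c_i,c_j) - 1$. With $\beta_i = 1$, the chain of inequalities $\max_i \beta_i(r_i - E_i) < \gamma < \min_i \beta_i(D_i - E_i)$ becomes $\max_i (1 - E_i) < \gamma < \min_i (D_i - E_i)$, and since $E_1 = \cdots = E_k =: E$, this reads $1 - E < \gamma < \min_i D_i - E$, i.e. $1 - E < \gamma < \min_{i\neq j} d(c_i,c_j) - 1 - E$.

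The second step is to produce the required $\gamma$ from the hypothesized $\alpha'$. By assumption there exists $\alpha'$ with $1 < \alpha' < \min_{i\neq j} d(c_i,c_j) - 1$. I would set $\gamma := \alpha' - E$. Then $1 - E < \gamma$ is equivalent to $1 < \alpha'$, and $\gamma < \min_{i\neq j} d(c_i,c_j) - 1 - E$ is equivalent to $\alpha' < \min_{i\neq j} d(c_i,c_j) - 1$; both hold by the hypothesis on $\alpha'$. Hence the ``there exists $\gamma$'' condition of \cref{th prob} is satisfied. Moreover, with this $\gamma$ and $\beta_i = 1$, the theorem's prescribed dual value is $\alpha_i = E_i + \gamma/\beta_i = E + (\alpha' - E) = \alpha'$ for every $i$, which is exactly the choice $\alpha_i := \alpha'$ made in the corollary. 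Thus the vector $\alpha \in \R^k$ appearing in \cref{th prob} coincides with the vector $\alpha$ appearing in \cref{cor prob}.

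The third step is to verify the remaining hypotheses of \cref{th prob}. The assumptions \ref{ass rotation}, \ref{ass open}, \ref{ass dim} on each $(\mu_i, \ball_{r_i}(c_i))$ are assumed verbatim in the corollary. The last hypothesis of \cref{th prob} — that $c_i$ is the unique maximizer of $G^\alpha(z)$ over $\ball_{r_i}(c_i)$ — is, given that the two $\alpha$ vectors agree as shown above, literally the last hypothesis of the corollary. Therefore all hypotheses of \cref{th prob} hold, and we conclude that \eqref{pr LP} achieves exact recovery with high probability, which is the assertion of \cref{cor prob}.

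Since the argument is a straightforward translation of hypotheses, there is no real obstacle; the only point requiring a moment of care is the bookkeeping of the substitution $\beta_i = 1$, $\gamma = \alpha' - E$ and the observation that this makes the theorem's $\alpha_i = E_i + \gamma/\beta_i$ reduce exactly to the corollary's $\alpha_i = \alpha'$, which crucially uses the hypothesis $E_1 = \cdots = E_k$ so that the single scalar $\gamma$ works simultaneously for all $i$.
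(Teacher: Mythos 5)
Your proposal is correct and follows essentially the same route as the paper's proof: both set $\gamma := \alpha' - E_1$, use $\beta_i = 1$, $r_i = 1$, and $E_1 = \cdots = E_k$ to reduce the condition $\max_i \beta_i(r_i - E_i) < \gamma < \min_i \beta_i(D_i - E_i)$ to $1 < \alpha' < \min_{i\neq j} d(c_i,c_j) - 1$, and observe that $\alpha_i = E_i + \gamma/\beta_i = \alpha'$ before invoking \cref{th prob}. No gaps.
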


In this paper we often use the concept of median.
Let $P$ be a finite set of points in $\R^m$. 
We say that $x_* \in P$ is a \emph{median} of $P$ if $x_* \in \argmin\{\sum_{s \in P}d(x,s) \mid x \in P \}.$

Next, we give an overview of the proof of \cref{th prob}. We first study points that are drawn from a single ball. 
The key observation is that when $n$ is large enough, the median of the points drawn from a single ball is very close to the ball center.
This allows us to characterize the solution corresponding to the ground-truth. 
Then, using Hoeffding's inequality, we prove that with high probability, when we add any small perturbation to $\alpha$, the medians still get most contribution, which in turn implies \eqref{eq Th1 b}.
The condition $\max_{i \in [k]}\beta_i(r_i-E_i)<\gamma<\min_{i \in [k]}\beta_i(D_i-E_i)$ guarantees that with high probability, when we add a very small perturbation to $\alpha$, the resulting $\alpha$ satisfies \eqref{eq Th1 c} and \eqref{eq Th1 d}. 
Finally, using Hoeffding's inequality, we can guarantee that with high probability the choice of $\alpha$ that satisfies \eqref{eq Th1 a} is very close to the parameter $\alpha$ in the statement. 
As a consequence, \eqref{pr LP} achieves exact recovery with high probability according to \cref{th deterministic}.

A careful reader may find that, if we assume that all $E_i$ are the same and all $\beta_i$ equal one, then our proof is similar to Steps 2--4 in the proof of Theorem~7 in \cite{awasthi2015relax}. 
However, we point out here an important difference. 
In Step~2, the authors show that, after adding a small perturbation to $\alpha$, the points that get most contribution in expectation will be the ball centers. 
Then in Step~3, they show that with high probability a special choice of $\alpha$ can be seen as the $\alpha$ in Step~2 plus a small perturbation. 
Finally in Step~4, they use Hoeffding's inequality to show that with high probability the $\alpha$ in Step~3 can make the median in each ball obtain most contribution. 
However, we notice that since Step~4 is conditioned on Step~3, the probability spaces considered in Step~3 and Step~4 are different, so in Step~4 the sequence of random variables considered in Hoeffding's inequality are not independent, and Hoeffding's inequality cannot be used directly.
In our proof this problem is not present.

In \cref{sec single ball,sec several balls} we prove some lemmas that will be used in the proof of \cref{th prob}, which is given in \cref{sec th prob proof}.
Then, in \cref{sec cor prob proof}, we prove \cref{cor prob}.


In this paper we use the standard notation $[a,b]$ for closed segments and $(a,b)$ for open segments in $\R$. 
Throughout the paper this notation is used only when these segments are nonempty.
Therefore, each time we write $[a,b]$ or $(a,b)$ we are also implicitly assuming $a \le b$ and $a < b$, respectively.

\subsection{Lemmas about a single ball}
\label{sec single ball}

In this section we present some lemmas that consider a single probability space of the form $(\mu,\ball_r(0))$.

\begin{lemma}
\label{lm one median}
Let $(\mu,\ball_r(0))$ be a probability space that satisfies \ref{ass dim}. 
Let $x_1,\dots,x_n$ be random vectors drawn i.i.d. according to $\mu$, where $n \ge 3$, and let $\mathcal{M}$ the set of medians of $\{x_\ell\}_{\ell \in [n]}$. 
Then $|\mathcal{M}|=1$ with probability one. 
\end{lemma}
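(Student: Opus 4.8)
The plan is to show that the "bad" event --- that there exist two distinct points among $x_1,\dots,x_n$ that are both medians --- has probability zero, by exhibiting it as a subset of finitely many events each of zero probability, and then invoking assumption \ref{ass dim}. Fix a pair of indices $s \neq t$ and consider the event $\mathcal{B}_{st}$ that $x_s$ and $x_t$ are both medians of $\{x_\ell\}_{\ell \in [n]}$, i.e.\ $\sum_{\ell \in [n]} d(x_s, x_\ell) = \sum_{\ell \in [n]} d(x_t, x_\ell)$. Since $\mathcal{M}$ is nonempty (the minimum over a finite set is attained) and $|\mathcal{M}| \ge 2$ forces at least one such pair, we have $\{|\mathcal{M}| \ge 2\} \subseteq \bigcup_{s \neq t} \mathcal{B}_{st}$, a finite union. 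So it suffices to prove $\bar\mu(\mathcal{B}_{st}) = 0$ for each fixed pair.

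To bound $\bar\mu(\mathcal{B}_{st})$, I would condition on the values of all coordinates other than $x_t$ (this is where $n \ge 3$ matters, so that there is at least one "other" point besides $x_s$ and $x_t$ --- actually $n \ge 2$ suffices for the argument, but $n\ge 3$ is harmless). Write the defining equation of $\mathcal{B}_{st}$ as
\[
  \sum_{\ell \neq t} d(x_s, x_\ell) + d(x_s, x_t) \;=\; \sum_{\ell \neq t} d(x_t, x_\ell).
\]
For fixed $x_s$ and fixed $\{x_\ell : \ell \neq s, t\}$, the left-hand side is $d(x_s, x_t)$ plus a constant $A$, and the right-hand side is $f(x_t) := \sum_{\ell \neq t} d(x_t, x_\ell)$, a function of $x_t$ alone. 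Thus, conditionally, $\mathcal{B}_{st}$ is contained in the set $\{x_t \in \ball_r(0) : d(x_s,x_t) + A = f(x_t)\}$, which is the zero set of the function $x_t \mapsto d(x_s,x_t) + A - f(x_t)$ on $\ball_r(0)$. The key geometric claim is that this zero set has zero $m$-dimensional Lebesgue measure: the function $g(w) := d(x_s,w) - \sum_{\ell \neq t} d(x_\ell, w)$ is real-analytic (indeed smooth) off the finite set $\{x_s\} \cup \{x_\ell : \ell \neq t\}$, and it is non-constant there --- for instance its gradient behaves like that of $d(x_s,\cdot)$ near $x_s$, which is a unit vector, so $g$ is not locally constant --- hence its level sets are Lebesgue-null. (Alternatively, one can argue directly: near a generic point $w$ where $g$ is differentiable with nonzero gradient, the level set $\{g = \text{const}\}$ is locally an $(m-1)$-manifold by the implicit function theorem, hence null; the exceptional set where $\nabla g = 0$ is itself null because $g$ is analytic and non-constant.) Having established that the conditional probability of $\mathcal{B}_{st}$ is zero --- here we use assumption \ref{ass dim} applied to $\mu_t = \mu$ to pass from "Lebesgue-null" to "$\mu$-null" --- Fubini/the tower property gives $\bar\mu(\mathcal{B}_{st}) = 0$.

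Finally, summing over the finitely many pairs $s \neq t$ yields $\bar\mu(|\mathcal{M}| \ge 2) = 0$, and since $\mathcal{M} \neq \emptyset$ always, we conclude $|\mathcal{M}| = 1$ with probability one.

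\medskip

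The main obstacle I anticipate is making the geometric/measure-theoretic claim fully rigorous: one must argue cleanly that the zero set of $w \mapsto d(x_s,w) - \sum_{\ell\neq t} d(x_\ell,w) - A$ inside $\ball_r(0)$ is Lebesgue-null for \emph{every} fixed configuration of the other points (with the trivial caveat that if $x_s$ happens to equal some other $x_\ell$ with probability zero, that coincidence is already absorbed into assumption \ref{ass dim}). The cleanest route is probably real-analyticity: a real-analytic function on a connected open set is either identically zero or has null zero set, and $d(p,\cdot)$ is real-analytic away from $p$; one then handles the finitely many singular points $x_\ell$ separately, noting they form a null set anyway. A slightly more elementary alternative avoids analyticity and instead conditions on $\lambda := x_t - x_s$ restricted to a generic line through the origin, reducing to the claim that a strictly convex-plus-affine function of a one-dimensional parameter has finitely many zeros --- but the analyticity argument is shorter to write and I would go with that.
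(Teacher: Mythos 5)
Your overall route is the same as the paper's: condition on all but one sample point, show that the set of positions of the remaining point for which two prescribed distance sums coincide is the zero set of a real-analytic, not-identically-zero function, conclude it is Lebesgue-null, and then use \ref{ass dim} together with Fubini. (The paper always frees $x_n$ and therefore splits into two cases according to whether $n$ belongs to the tied pair; your choice of freeing a point of the pair collapses this to one case, which is fine.) However, there is a concrete error in the step where you justify non-constancy. Since $s \neq t$, the sum $\sum_{\ell \neq t} d(x_\ell, w)$ contains the term $\ell = s$, so in $g(w) = d(x_s,w) - \sum_{\ell \neq t} d(x_\ell,w)$ the two copies of $d(x_s,w)$ cancel and in fact $g(w) = -\sum_{\ell \neq s,t} d(x_\ell,w)$. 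Consequently the gradient of $g$ near $x_s$ does not ``behave like that of $d(x_s,\cdot)$'' at all, so that justification fails as written. The same cancellation shows your parenthetical claim that $n \ge 2$ would suffice is false: for $n=2$ the tie equation reads $d(x_1,x_2)=d(x_2,x_1)$, which holds identically, so both points are always medians and the conditional ``bad'' set is all of $\ball_r(0)$, not a null set.

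The hypothesis $n \ge 3$ is precisely what rescues the argument: it guarantees that the index set $\{\ell \neq s,t\}$ is nonempty, so after the cancellation the relevant function is a (negated) nonempty sum of distance functions, which is real-analytic on the connected open set obtained by removing the finitely many points $x_\ell$, $\ell \neq s,t$, and is non-constant there (it is unbounded as $\norm{w}\to\infty$). Hence its zero set is Lebesgue-null (this is exactly the Mityagin-type fact the paper invokes), \ref{ass dim} upgrades this to $\mu$-null, and integrating over the conditioned coordinates and summing over the finitely many pairs finishes the proof. With this repair your argument is correct and essentially coincides with the paper's proof; the cancellation slip is the only genuine defect.
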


\begin{prf}
Since $\mathcal{M}$ is always non empty, in order to show that $|\mathcal{M}|=1$ with probability one, it suffices to show that we have $|\mathcal{M}| \ge 2$ with probability zero. 

Let $\bar x_1,\dots,\bar x_{n-1} \in \ball_r(0)$.
Then we have 
\begin{align*}
    \Pr(|\mathcal{M}| \ge 2) 
    = \int_{\ball_r(0)} \cdots \int_{\ball_r(0)}\Pr(|\mathcal{M}| \ge 2 \mid x_1=\bar x_1,\dots,x_{n-1}=\bar x_{n-1})d\mu(\bar x_1)\cdots d\mu(\bar x_{n-1}).
\end{align*}
Hence, to prove the lemma it suffices to show that for every $\bar x_1,\dots,\bar x_{n-1} \in \R^m$ we have
\begin{align}
\label{eq lets do it}
    \Pr(|\mathcal{M}| \ge 2 \mid x_1=\bar x_1,\dots,x_{n-1}=\bar x_{n-1})=0.
\end{align}
From \ref{ass dim}, we know that $x_1,\dots,x_{n-1}$ are different points with probability one. 
So it is sufficient to show that \eqref{eq lets do it} holds when $\bar x_1,\dots,\bar x_{n-1}$ are all different. 

Note that $|\mathcal{M}| \ge 2$ implies that there exist $u,v \in [n]$ with $u \neq v$ such that $\sum_{\ell \in [n]} d(x_u,x_\ell)=\sum_{\ell \in [n]}d(x_v,x_\ell).$
So we have
\begin{align*}
    & \Pr(|\mathcal{M}| \ge 2 \mid x_1=\bar x_1,\dots,x_{n-1}=\bar x_{n-1}) \\
    \le \ & \sum_{u,v \in [n], u \neq v} \Pr\pare{\sum_{\ell \in [n]} d(x_u,x_\ell)=\sum_{\ell \in [n]}d(x_v,x_\ell) \mid x_1=\bar x_1,\dots,x_{n-1}=\bar x_{n-1}}.
\end{align*}
Thus, to prove the lemma, it suffices to show that, for every $\bar x_1,\dots,\bar x_{n-1} \in \R^m$ all different, and for every $u,v \in [n]$ with $u \neq v$, we have
\begin{align*}
    \Pr\pare{\sum_{\ell \in [n]} d(x_u,x_\ell)
    =\sum_{\ell \in [n]}d(x_v,x_\ell) \mid x_1=\bar x_1,\dots,x_{n-1}=\bar x_{n-1}}=0.
\end{align*}

Notice that the above event only depends on the choice of $x_{n}$, since $x_1,\dots,x_{n-1}$ are fixed to $\bar x_1,\dots,\bar x_{n-1}$ respectively. 
Thus we define 
\begin{align*}
S := \bra{ x_n \in \R^m \mid \sum_{\ell \in [n]} d(x_u,x_\ell)=\sum_{\ell \in [n]}d(x_v,x_\ell), x_1 = \bar x_1,\dots,x_{n-1} = \bar x_{n-1}}.
\end{align*}
To prove the lemma, it suffices to show that the Lebesgue measure of $S$ is zero.
In fact, \ref{ass dim} then implies that 
$S$ has zero probability measure.
Hence, in the remainder of the proof we show that the Lebesgue measure of $S$ is zero.

We consider separately two cases.
In the first case we assume $u \neq n$ and $v\neq n$.
Then 
\begin{align*}
S = \bra{ x_n \in \R^m \mid 
d(\bar x_u,x_n)-d(\bar x_v,x_n)
=\sum_{\ell \in [n] \setminus \{n\}} d(\bar x_v, \bar x_\ell)-\sum_{\ell \in [n] \setminus \{n\}} d(\bar x_u,\bar x_\ell) }.
\end{align*}
We define the function $f : \R^m \to \R$ defined by
\begin{align*}
f(x_n) := 
d(\bar x_u,x_n)-d(\bar x_v,x_n)
- \sum_{\ell \in [n] \setminus \{n\}} d(\bar x_v, \bar x_\ell) + \sum_{\ell \in [n] \setminus \{n\}} d(\bar x_u,\bar x_\ell).
\end{align*}
Note that $S$ is the zero set of $f$.
The function $f(x_n)$ is a real analytic function on the connected open domain $\R^m \setminus \{\bar x_u,\bar x_v\}$ since the distance function can be written as a composition of exponential functions, logarithms and polynomials.
Furthermore, $f(x_n)$ is not identically zero, since it increases as $x_n$ moves on the segment from $\bar x_v$ to $\bar x_u$.
From Proposition~1 in \cite{Mit20}, we obtain that $S$ has zero Lebesgue measure.

In the second case we assume $u=n$ and $v \neq n$.
Then 
\begin{align*}
S = \bra{ x_n \in \R^m \mid 
\sum_{\ell \in [n]\setminus \{n\} \setminus \{v\}} d(x_n,\bar x_\ell) 
=\sum_{\ell \in [n] \setminus \{n\}} d(\bar x_v,\bar x_\ell) }.
\end{align*} 
We define the function $f : \R^m \to \R$ defined by
\begin{align*}
f(x_n) := 
\sum_{\ell \in [n]\setminus \{n\} \setminus \{v\}} d(x_n,\bar x_\ell) 
-\sum_{\ell \in [n] \setminus \{n\}} d(\bar x_v,\bar x_\ell).
\end{align*}
Also in this case $S$ is the zero set of $f$.
As in the previous case, the function $f(x_n)$ is a real analytic function on the connected open domain $\R^m \setminus \{\bar x_1,\dots,\bar x_{v-1},\bar x_{v+1},\dots,\bar x_{n-1}\}$.
Furthermore, it is not identically zero, as it increases as the norm of $x_n$ goes to infinity.
Again from Proposition~1 in \cite{Mit20}, we obtain that $S$ has zero Lebesgue measure.
So in both cases we have shown that the Lebesgue measure of $S$ is zero.
\end{prf}

The next two lemmas state that, under some assumptions on the probability space $(\mu,\ball_r(0))$, the vector $z=0$ is the unique point that achieves $\min \{\E d(z,y) \mid z \in \ball_r(0)\}$, where $y$ be a random vector drawn according to $\mu$.
In \cref{lm ball center m = 1} we consider the case $m=1$ and in \cref{lm ball center m >= 2} we study the case $m \ge 2$.

\begin{lemma}
\label{lm ball center m = 1}
Let $(\mu,B^1_r(0))$ be a probability space that satisfies \ref{ass rotation}, \ref{ass open}. 
Let $y$ be a random vector drawn according to $\mu$.
Then $z=0$ is the unique point that achieves $\min \{\E d(z,y) \mid z \in B^1_r(0)\}$.
\end{lemma}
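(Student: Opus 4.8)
The plan is to work with the one-dimensional probability space $(\mu, B^1_r(0))$ and exploit assumption \ref{ass rotation}, which in dimension one simply says that $\mu$ is symmetric under the reflection $x \mapsto -x$. Write $f(z) := \E d(z,y) = \int_{-r}^{r} |z - y| \, d\mu(y)$ for $z \in [-r,r]$. First I would observe that $f$ is a convex function of $z$, being an integral (average) of the convex functions $z \mapsto |z-y|$. Hence the set of minimizers of $f$ on $[-r,r]$ is a closed interval. By the symmetry of $\mu$ we have $f(z) = f(-z)$ for all $z$, so if $z_0$ is a minimizer then so is $-z_0$, and therefore by convexity the whole segment $[-|z_0|, |z_0|]$ consists of minimizers; in particular $z = 0$ is always \emph{a} minimizer. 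The remaining task is to upgrade this to \emph{uniqueness}, i.e.\ to rule out the possibility that $f$ is constant on some nondegenerate interval $[-\delta,\delta]$ around $0$.

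The key step is to show $f$ is strictly convex near $0$, or more directly that $f$ is not affine on any interval containing $0$ in its interior. Here is where assumption \ref{ass open} enters: every open subset of $B^1_r(0)$ containing $0$ has positive probability measure. I would compute, for $0 \le z_1 < z_2$ both in $(0, r)$ say, the quantity $f(z_2) - f(z_1)$ explicitly by splitting the integral according to whether $y < z_1$, $z_1 \le y \le z_2$, or $y > z_2$. A short computation shows the ``slope'' of $f$ satisfies
\begin{align*}
f(z_2) - f(z_1) = \int (|z_2 - y| - |z_1 - y|)\, d\mu(y) = (z_2 - z_1)\bigl(\mu(\{y < z_1\}) - \mu(\{y > z_2\})\bigr) + \int_{[z_1,z_2]} (z_2 + z_1 - 2y)\, d\mu(y),
\end{align*}
so that the one-sided derivative of $f$ at a point $z$ equals $\mu(\{y < z\}) - \mu(\{y > z\})$ (up to handling the atom at $z$). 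For $f$ to be constant on $[0,\delta]$ we would need this derivative to vanish for all $z \in [0,\delta)$, which forces $\mu(\{y < z\}) = \mu(\{y > z\})$; combined with the symmetry $\mu(\{y<z\}) = \mu(\{y > -z\})$, this yields $\mu(\{-z < y < z\}) = 0$ — but by symmetry $(-z, z)$ is an open set containing $0$, so \ref{ass open} gives $\mu((-z,z)) > 0$, a contradiction. Hence $f$ is strictly decreasing on $[-r, 0]$ and strictly increasing on $[0, r]$, and $z = 0$ is the unique minimizer.

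I expect the main obstacle to be purely technical bookkeeping: being careful about atoms of $\mu$ (points with positive measure, which \ref{ass dim} does \emph{not} forbid here since this lemma only assumes \ref{ass rotation} and \ref{ass open}) when differentiating $f$ or when comparing $f$ at two nearby points, and making sure the argument is valid all the way out to $z = r$ rather than only on the open interval. Both issues are handled by working with one-sided differences $f(z_2) - f(z_1)$ rather than derivatives, and by noting that the contradiction via \ref{ass open} only needs a single value of $z$ with $0 < z < \delta$, for which $(-z,z) \subseteq B^1_r(0)$ is automatic. So the real content of the proof is the combination convexity $+$ symmetry $+$ \ref{ass open}, and no delicate analysis is required beyond the elementary slope computation sketched above.
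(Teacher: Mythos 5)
Your argument is correct, and the atom issues you flag are indeed only bookkeeping: with the right derivative taken as $\mu(\{y\le z\})-\mu(\{y>z\})$, constancy of $f$ on $[0,\delta]$ together with the reflection symmetry from \ref{ass rotation} forces $\mu((-z,z))=0$ for $0<z<\delta$, contradicting \ref{ass open}. The paper reaches the same conclusion more directly, without convexity, minimizer intervals, or one-sided derivatives: for $z>0$ it splits $\E d(z,y)-\E d(0,y)$ over the regions $y\in[-r,-z]$, $[-z,0]$, $[0,z]$, $[z,r]$, cancels the symmetric pieces using \ref{ass rotation}, and is left with $2\int_0^z(z-y)\,d\mu(y)$, which is strictly positive by \ref{ass open}; this is a two-point comparison of $f(z)$ with $f(0)$ rather than a derivative argument, and it yields uniqueness of the minimizer in one stroke with an explicit positive gap. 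Your route buys the standard median characterization (the minimizer set of $f$ is the median set, and \ref{ass open} is exactly what rules out an interval of medians), which is a nice conceptual framing; the paper's route buys brevity and avoids any discussion of atoms or one-sided limits. At bottom both proofs rest on the same two ingredients --- symmetry of $\mu$ under $x\mapsto -x$ and positive mass of every neighborhood of $0$ --- applied to the same splitting of $|z-y|$ according to the position of $y$.
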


\begin{prf}
We show that for every $z \neq 0$, we have $\E d(z,y)>\E d(0,y)$. 
Let $z \in [-r,r] \setminus \{ 0\}$.
Then we have 
\begin{align*}
    \E d(z,y)= \int_{-r}^z (z-y)d\mu(y)+ \int_z^r(y-z)d\mu(y).
\end{align*}
Without loss of generality, we assume that $z>0$. 
We then have
\begin{align*}
 \E d(z,y)- \E d(0,y) = \int_{-r}^{-z}zd\mu(y)+\int_{-z}^0zd\mu(y)+\int_0^z (z-2y) d\mu(y)+\int_z^r-zd\mu(y).
\end{align*}
Since $\mu$ satisfies \ref{ass rotation}, we have $\int_{-r}^{-z}zd\mu(y) = \int_z^r zd\mu(y)$ and $\int_{-z}^0zd\mu(y) = \int_0^z z d\mu(y)$.
So we obtain
\begin{align*}
 \E d(z,y)- \E d(0,y) = \int_{-z}^0zd\mu(y)+\int_0^z (z-2y) d\mu(y) = 2 \int_0^z(z-y)d\mu(y)>0,
\end{align*}
where the inequality holds due to \ref{ass open}.
\end{prf}

\begin{lemma}
\label{lm ball center m >= 2}
Let $(\mu,\ball_r(0))$ be a probability space with $m \ge 2$ that satisfies \ref{ass rotation}.
Let $y$ be a random vector drawn according to $\mu$.
Then $z=0$ is the unique point that achieves $\min \{\E d(z,y) \mid z \in \ball_r(0)\}$.
\end{lemma}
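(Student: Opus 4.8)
The plan is to reduce the $m$-dimensional statement to the one-dimensional case already handled in \cref{lm ball center m = 1}, exploiting the rotational invariance \ref{ass rotation}. Fix $z \in \ball_r(0)$ with $z \neq 0$; I want to show $\E d(z,y) > \E d(0,y)$. The key idea is to condition on the distance $\rho := \norm{y}$ from the origin. By \ref{ass rotation}, conditional on $\norm{y}=\rho$, the vector $y$ is uniformly distributed on the sphere $\sphere_\rho(0)$. So it suffices to show that for every fixed $\rho \in (0,r]$ (the case $\rho = 0$ contributes $\E[\,d(z,y) - d(0,y)\mid \rho=0\,] = \norm z > 0$, and in any case \ref{ass rotation} with $m\ge 2$ forces the mass at the origin to be a single atom handled trivially), we have
\begin{align*}
\E_{y \sim \mathrm{Unif}(\sphere_\rho(0))}\, d(z,y) \ge \rho = \E_{y \sim \mathrm{Unif}(\sphere_\rho(0))}\, d(0,y),
\end{align*}
with strict inequality whenever $\rho > 0$; integrating this over the distribution of $\rho$ and using \ref{ass rotation} (which, combined with $m\ge2$, guarantees that the law of $\rho$ is not concentrated at $0$ — actually this needs care, see below) then gives the claim.

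The core inequality $\E_{y \sim \mathrm{Unif}(\sphere_\rho(0))} d(z,y) > \rho$ for $z \neq 0$ is a clean symmetry fact: pair each point $y$ on the sphere with its mirror image $y'$ across the hyperplane through $0$ orthogonal to $z$. Then $d(0,y) = d(0,y') = \rho$ and, by the triangle inequality applied to the midpoint of $y,y'$ (which lies on the line through $0$ orthogonal to $z$, hence at distance $\ge \norm z$... more precisely $d(z,y) + d(z,y') \ge 2 d(z, \text{midpoint}) \ge 2\rho$ fails in general) — instead I would argue directly: writing $z = t e$ with $e$ a unit vector and $t = \norm z > 0$, and letting $\theta$ be the angle between $y$ and $e$, we have $d(z,y)^2 = \rho^2 + t^2 - 2\rho t\cos\theta$, and by symmetry of the uniform measure on the sphere under $\theta \mapsto \pi - \theta$ we may pair $\theta$ with $\pi-\theta$; since $\sqrt{\rho^2+t^2-2\rho t c} + \sqrt{\rho^2+t^2+2\rho t c} > 2\sqrt{\rho^2+t^2 - (2\rho t c)^2/(\cdots)}$... rather, by strict convexity of $s \mapsto \sqrt{\rho^2+t^2 - s}$ we get $\tfrac12(\sqrt{\rho^2+t^2-2\rho tc}+\sqrt{\rho^2+t^2+2\rho tc}) > \sqrt{\rho^2+t^2} \ge \rho$, and this strict inequality survives taking the expectation over $\theta$. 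Either packaging works; the convexity one is cleanest.

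The step I expect to be the main obstacle — and the one the one-dimensional lemma was presumably separated out to avoid — is handling the possibility that $\mu$ places positive mass at the single point $0$, or more generally ensuring the strict inequality is not washed out. When $m \ge 2$, \ref{ass rotation} does not by itself prevent $\mu$ from being the point mass at $0$; but if $\mu = \delta_0$ then $\E d(z,y) = \norm z > 0 = \E d(0,y)$ trivially, so that case is fine. More delicate is that \ref{ass rotation} alone (without \ref{ass open}) allows all mass on $\{0\} \cup \sphere_{r}(0)$, say; the argument above still goes through since on each sphere of positive radius the strict inequality holds and on $\{0\}$ the (weak) inequality holds, and at least one sphere of positive radius must carry mass unless $\mu = \delta_0$. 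So the write-up reduces to: (i) conditioning on $\norm y$ via \ref{ass rotation}; (ii) the spherical symmetry/convexity computation giving strict inequality on each positive-radius sphere; (iii) a short case split on whether $\mu=\delta_0$. I would present (ii) as the heart of the proof and keep (i), (iii) brief.
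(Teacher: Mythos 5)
Your overall strategy --- condition on $\rho=\norm{y}$ via \ref{ass rotation}, prove a strict inequality on each sphere $\sphere_\rho(0)$ with $\rho>0$, and integrate over the law of $\rho$, with a separate trivial case for $\mu=\delta_0$ --- is sound, and it is genuinely different from the paper's proof, which instead shows that $t\mapsto \E d(tv,y)$ has strictly positive derivative on $(0,r)$ and integrates via Newton--Leibniz. However, the justification you give for the key spherical inequality is wrong. The map $s\mapsto\sqrt{\rho^2+t^2-s}$ is concave, not convex (its second derivative is $-\tfrac14(\rho^2+t^2-s)^{-3/2}<0$), so Jensen gives $\tfrac12\bigl(\sqrt{\rho^2+t^2-2\rho t c}+\sqrt{\rho^2+t^2+2\rho t c}\bigr)\le\sqrt{\rho^2+t^2}$, the opposite of what you assert; moreover the asserted pointwise strict inequality is simply false: for $c=1$ and $0<t\le\rho$ the left-hand side equals exactly $\rho$, which is strictly below $\sqrt{\rho^2+t^2}$. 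So, as written, the step you yourself call the heart of the proof does not go through.

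The inequality you actually need --- that the symmetrized average of the two distances is at least $\rho$, strictly for almost every direction --- is true and has a one-line proof: pair $y$ with its antipode $-y$ (this realizes the same pairing $\theta\leftrightarrow\pi-\theta$, since the distance depends only on $\rho$ and $\theta$) and apply the triangle inequality $d(z,y)+d(z,-y)\ge d(y,-y)=2\rho$, with equality only if $z$ lies on the segment $[-y,y]$, i.e.\ only if $y$ is parallel to $z$; for $m\ge2$ this is a null set for the uniform measure on $\sphere_\rho(0)$, so the conditional expectation is strictly larger than $\rho$ for every $z\neq 0$ and $\rho>0$. (Equivalently, square and compare: $(a+b)^2=2(\rho^2+t^2)+2\sqrt{(\rho^2+t^2)^2-4\rho^2t^2c^2}>4\rho^2$ whenever $c^2<1$.) Note that this is precisely where $m\ge2$ enters: for $m=1$ the "sphere" is two atoms, the parallel directions carry all the mass, and strictness fails, which is why \cref{lm ball center m = 1} needs \ref{ass open}. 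With this repair, together with your conditioning step and the $\mu=\delta_0$ case split, your argument is complete and is arguably more elementary than the paper's, since it avoids differentiation under the integral sign.
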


\begin{prf}
Note that we can write any $z \in \ball_r(0)$ as $z=tv$, for a unit vector $v$ and a scalar $t \in [0,r]$.
Since $\mu$ is invariant under rotations centered in the origin, to prove the lemma it suffices to show that for any fixed unit vector $v$, $t=0$ is the unique point that achieves $\min \{\E d(tv,y) \mid t \in [0,r]\}$.
To prove the lemma it is sufficient to show that 
\begin{align}
\label{eq L7 todo}
    \frac{\partial}{\partial t} \E d(tv,y)
    >0 \qquad \forall t \in (0,r).
\end{align}
In fact, we notice that $\E d(tv,y)$ is a continuous function in $t \in [0,r]$, since for every $\epsilon>0$ and for every $t,t' \in [0,r]$ with $\abs{t-t'}<\epsilon$, we have 
\begin{align*}
\abs{\E d(tv,y) - \E d(t'v,y)}
= \abs{\E( d(tv,y)-d(t'v,y))} 
\le \abs{tv-t'v} 
= \abs{t-t'} 
< \epsilon. 
\end{align*}
Hence, if \eqref{eq L7 todo} holds, then by the Newton-Leibniz formula, we have
\begin{align*}
    \E d(sv,y) - \E d(0,y) = \int_0^s  \frac{\partial}{\partial t} \E d(tv,y)dt>0 \qquad \forall s>0.
\end{align*}
Thus, in the remainder of the proof we show \eqref{eq L7 todo}.

We know that
\begin{align*}
    \E d(tv,y) = \int_{\ball_r(0)} d(tv,y) d\mu (y),
\end{align*}
thus we obtain
\begin{align}
\label{eq to be evaluated}
    \frac{\partial}{\partial t} \E d(tv,y)
    = \frac{\partial}{\partial t} \int_{\ball_r(0)} d(tv,y) d\mu (y)
    = \int_{\ball_r(0)} \frac{\partial}{\partial t} d(tv,y) d\mu (y)
    = \int_{\ball_r(0)} \frac{\inner{tv-y}{v}}{d(tv,y)} d\mu (y),
\end{align}
where $\inner{\cdot}{\cdot}$ denotes the scalar product.
In the remainder of the proof, for $s \ge 0$, we denote by $\mu^s$ the uniform probability measure with support $\sphere_s(0)$.
Since $\mu$ is invariant under rotations centered in the origin, we know that a vector $y$ with $\norm{y}=s$, $s \in [0,r]$, is drawn according to $\mu^s$.

We evaluate \eqref{eq to be evaluated} in a fixed $\bar t \in (0,r)$.
Let $\hat{\mu}$ be the probability measure of the random variable $\norm{y}$ and let $x$ be a random vector drawn according to $\mu^s$. 
We have 
\begin{align}
\label{eq L1}
    \frac{\partial}{\partial t} \E d(tv,y)\Bigr\rvert_{t=\bar t}
    = \int_{\ball_r(0)} \frac{\inner{\bar t v-y}{v}}{d(\bar t v,y)} d\mu (y)
    = \int_0^rd\hat{\mu}(s)\int_{\sphere_s(0)}\frac{\inner{\bar t v-x}{v}}{d(\bar t v,x)} d\mu^s(x).
\end{align}

Next, we study the inner integral in \eqref{eq L1} and consider two subcases.
In the first subcase we have $s \in [0,\bar t]$, and obtain
\begin{align*}
    \inner{\bar t v-x}{v}
    =\bar t-\inner{x}{v} 
    \ge \bar t-\norm{x} 
    \ge 0,
\end{align*}
where the chain of inequalities holds at equality if and only $x=\bar tv$. So we obtain that the inner integral in \eqref{eq L1} is strictly positive when $s \in (0,\bar t]$. 

In the second subcase we have $s \in (\bar t,r]$.
We define the random variable $\theta \in [0,\pi]$ to be the angle between $x$ and $v$ and we let $\Tilde{\mu}$ be its probability measure. 
We also define the random variable $\psi \in [0,\pi]$ to be the angle between $v$ and $\bar t v-x$, and the random variable $\phi \in [0,\pi)$ to be the angle between $x$ and $x-\bar t v$. 
See \cref{fig Lemma 1} for a depiction of the angles $\theta, \psi, \phi$.
Note that once $\theta$ is determined, since $\bar t$ is fixed, $\psi$ and $\phi$ are also determined. 
Therefore, we can consider the functions $\psi, \phi : [0,\pi] \to [0,\pi]$ that associate to each angle angle $\theta$, the corresponding angles $\psi(\theta)$ and $\phi(\theta)$. 
Then we know that for every $\theta \in [0,\pi]$, $\psi(\theta)=\pi-\phi(\theta)-\theta \le \pi-\theta$, and, when $\theta \in (0,\pi), \ \psi(\theta)<\pi-\theta.$
 \begin{figure}[htbp]
     \centering
     \includegraphics[scale=0.11]{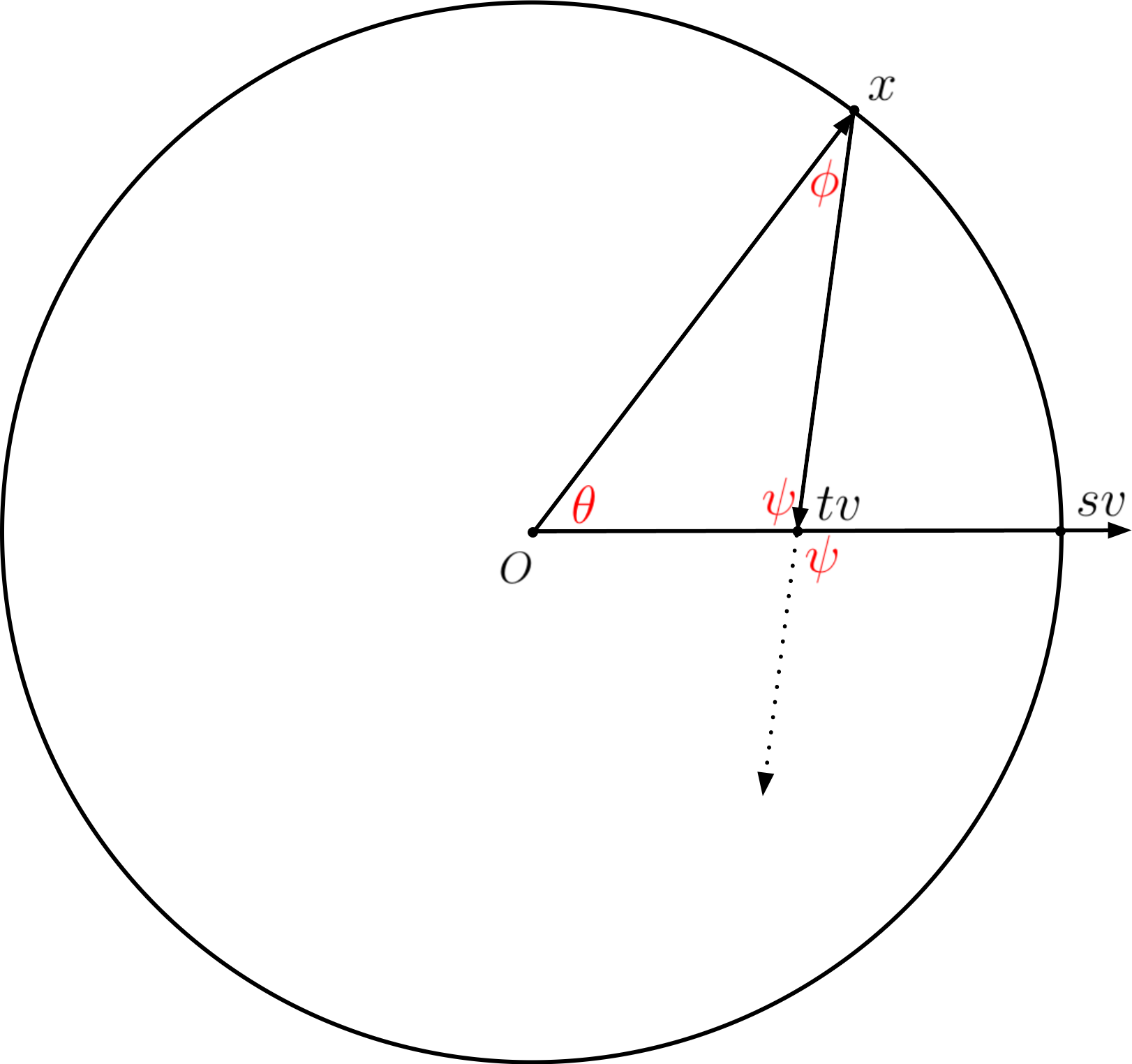}
     \caption{The angles $\theta, \psi, \phi$ in the proof of \cref{lm ball center m >= 2}. The dotted vector is $\bar t v - x$ applied to $\bar t v$.}
     \label{fig Lemma 1}
 \end{figure}
We then have
\begin{align*}
    \int_{\sphere_s(0)}\frac{\inner{\bar t v-x}{v}}{d(\bar t v,x)} d\mu^s(x)
    =\int_0^\pi\cos \psi(\theta)d\Tilde{\mu}(\theta)
    =\int_0^\frac{\pi}{2}\cos \psi(\theta)+\cos\psi(\pi-\theta)d\Tilde{\mu}(\theta) 
    > 0.
\end{align*}
In the above formula, the second equality use the fact that $\Tilde{\mu}$ is symmetric with respect to $\theta=\pi/2$, and the inequality follows because, when $m \ge 2$ and $\theta \in (0,\pi)$, we have $\psi(\theta) < \pi-\theta$ and $\psi(\pi-\theta) < \theta$, which implies $\cos \psi(\theta)+\cos\psi(\pi-\theta)> \cos (\pi-\theta) + \cos \theta= 0$, when $\theta \in (0,\pi/2)$. 
We obtain that the inner integral in \eqref{eq L1} is strictly positive when $s \in (\bar t,r]$. 
Thus we conclude that \eqref{eq L1} is positive.
\end{prf}

In the next lemma we make use of \cref{lm ball center m = 1,lm ball center m >= 2}.

\begin{lemma}
\label{lm improve center in one ball}
Let $(\mu,\ball_r(0))$ be a probability space that satisfies \ref{ass rotation}, \ref{ass open}.
Let $x_1,\dots,x_n$ be random vectors drawn i.i.d. according to $\mu$, and let $x_*$ be a median of $\{x_\ell\}_{\ell \in [n]}$.
Then $\forall \epsilon>0$, with high probability, we have $\norm{x_*}<\epsilon.$ 
\end{lemma}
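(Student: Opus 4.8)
The plan is to reduce the statement to a uniform law of large numbers for the empirical objective $\frac{1}{n}\sum_{\ell\in[n]}d(z,x_\ell)$ together with the strict minimization property established in \cref{lm ball center m = 1,lm ball center m >= 2}. Fix $\epsilon>0$. Since the sample median $x_*$ always lies in $\ball_r(0)$, which is compact, it suffices to show that with high probability every point $z\in\ball_r(0)$ with $\norm{z}\ge\epsilon$ satisfies $\sum_{\ell\in[n]}d(z,x_\ell)>\sum_{\ell\in[n]}d(0,x_\ell)$, since $x_*$ minimizes $\sum_{\ell}d(\cdot,x_\ell)$ over the sample points — and in particular, if $0$ happened to be a sample point, this immediately forces $\norm{x_*}<\epsilon$; otherwise one argues that $x_*$ cannot beat the value attained near $0$. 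To handle the second possibility cleanly, I would instead compare against the value at a fixed sample point, or simply note that $\frac1n\sum_\ell d(x_*,x_\ell)\le \frac1n\sum_\ell d(0,x_\ell)$ is false with high probability when $\norm{x_*}\ge\epsilon$, giving a contradiction.

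The key steps, in order: (1) Let $g(z):=\E\, d(z,y)$ where $y\sim\mu$; by \cref{lm ball center m = 1} (if $m=1$) or \cref{lm ball center m >= 2} (if $m\ge2$), $z=0$ is the unique minimizer of $g$ over $\ball_r(0)$. Since $g$ is continuous (Lipschitz, as in the estimate already used in the proof of \cref{lm ball center m >= 2}) and $\{z\in\ball_r(0):\norm z\ge\epsilon\}$ is compact, there is $\eta>0$ with $g(z)\ge g(0)+3\eta$ for all such $z$. (2) Establish a uniform law of large numbers: with high probability, $\sup_{z\in\ball_r(0)}\bigl|\frac1n\sum_{\ell\in[n]}d(z,x_\ell)-g(z)\bigr|<\eta$. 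This follows from Hoeffding's inequality applied at each point of a finite $\delta$-net of $\ball_r(0)$ (the summands $d(z,x_\ell)$ are bounded by $2r$), combined with the fact that $z\mapsto d(z,x_\ell)$ is $1$-Lipschitz uniformly in $\ell$, so a net of mesh $\eta/2$ and a union bound over its (finitely many, $n$-independent) points gives the uniform bound for $n$ large. (3) On the intersection of these high-probability events, for every $z$ with $\norm z\ge\epsilon$ we get $\frac1n\sum_\ell d(z,x_\ell)> g(z)-\eta\ge g(0)+2\eta > g(0)+\eta \ge \frac1n\sum_\ell d(0,x_\ell)$; since $x_*$ minimizes $\sum_\ell d(\cdot,x_\ell)$ over the sample and hence $\frac1n\sum_\ell d(x_*,x_\ell)\le\frac1n\sum_\ell d(0,x_\ell)+ (\text{nothing})$... here I would phrase it as: the true minimizer over all of $\ball_r(0)$ of $\frac1n\sum_\ell d(\cdot,x_\ell)$ lies in the $\epsilon$-ball, and $x_*$, being a minimizer over the finite sample, has objective at most that of this continuous minimizer only up to... — cleaner: directly, $\frac1n\sum_\ell d(x_*,x_\ell)\le \frac1n\sum_\ell d(0,x_\ell)$ would require $0$ not to be excluded, so I instead compare $x_*$ with the nearest sample point to $0$, or simply observe $\min_{z\in\ball_r(0)}\frac1n\sum_\ell d(z,x_\ell)\le \frac1n\sum_\ell d(0,x_\ell)$ and that any sample minimizer is within this value, forcing $\norm{x_*}<\epsilon$.

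The main obstacle is the gap between "$x_*$ minimizes over the finite sample" and "$x_*$ minimizes over all of $\ball_r(0)$": the clean inequality $\frac1n\sum_\ell d(x_*,x_\ell)\le g(0)$ need not hold on the nose. I expect the right fix is to use that, with high probability, there is a sample point within distance $\epsilon/2$ of $0$ — which holds by assumption \ref{ass open}, since the ball $\ball_{\epsilon/2}(0)\cap\ball_r(0)$ has positive $\mu$-measure, so the probability that none of the $n$ i.i.d. samples lands there is $(1-\mu(\ball_{\epsilon/2}(0)))^n\to0$. Call such a sample point $x_j$; then $\frac1n\sum_\ell d(x_*,x_\ell)\le \frac1n\sum_\ell d(x_j,x_\ell)\le \frac1n\sum_\ell d(0,x_\ell)+d(x_j,0)< g(0)+\eta + \epsilon/2$, and by choosing the net mesh and $\epsilon$-dependent constants appropriately (replace $3\eta$ by a margin exceeding $\eta+\epsilon/2$, or simply run the argument with $\epsilon/2$ in place of $\epsilon$ and absorb constants), this still beats $g(z)-\eta$ for $\norm z\ge\epsilon$, yielding the contradiction and hence $\norm{x_*}<\epsilon$ with high probability. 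Everything else is a routine Hoeffding-plus-net computation that I would not spell out in full.
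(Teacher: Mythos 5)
Your proposal is correct, and its skeleton matches the paper's: the unique-minimizer statement from \cref{lm ball center m = 1,lm ball center m >= 2}, a compactness/continuity margin separating $\{z \in \ball_r(0) : \norm{z}\ge\epsilon\}$ from a small neighborhood of the origin, the observation that \ref{ass open} guarantees a sample point near $0$ with probability $1-(1-q)^n \to 1$, and Hoeffding to control empirical averages of distances. The one genuine difference is the concentration step. You prove a uniform law of large numbers over all of $\ball_r(0)$ via a finite $\eta$-net plus the $1$-Lipschitzness of $z \mapsto \frac1n\sum_\ell d(z,x_\ell)$ and of $g(z)=\E d(z,y)$; the paper instead controls the empirical average only at the $n$ (random) sample points, conditioning on $x_w = z$ so that Hoeffding can be applied to the remaining $n-1$ independent terms, and then union-bounds over the $n$ points. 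Your net argument is slightly cleaner in that the uniform bound holds simultaneously for all $z$, random or not, so the dependence issue the paper handles by conditioning never arises; the paper's version avoids introducing a net and covering numbers. Both yield failure probabilities of the form (constant independent of $n$)$\times e^{-cn}$ plus a $(1-q)^n$ term, hence ``with high probability.''

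One small point to tighten: the radius of the ball around $0$ in which you look for a sample point should be chosen as a function of the margin (as the paper does with its $\tau$, and as you do implicitly when you say ``absorb constants''), not fixed at $\epsilon/2$ --- there is no guarantee that the margin $\inf\{g(z)-g(0) : \norm{z}\ge \epsilon\}$ exceeds $\epsilon/2$. Requiring a sample $x_j$ with $\norm{x_j}<\delta$ for a suitably small $\delta$ (e.g.\ $\delta<\eta$, still of positive measure by \ref{ass open}) makes the chain $\frac1n\sum_\ell d(x_*,x_\ell)\le \frac1n\sum_\ell d(x_j,x_\ell)\le g(x_j)+\eta\le g(0)+\delta+\eta < g(z)-\eta \le \frac1n\sum_\ell d(z,x_\ell)$ for all $\norm{z}\ge\epsilon$ go through verbatim, forcing $\norm{x_*}<\epsilon$.
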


\begin{prf}
Let $y$ be a random vector drawn according to $\mu$. 
We know from \cref{lm ball center m = 1,lm ball center m >= 2} that $x=0$ is the unique point that achieves $\min \{\E d(x,y) \mid x \in \ball_r(0)\}$.
Furthermore, since the function from $\ball_r(0)$ to $\R$ defined by $x \mapsto \E d(x,y)$ is continuous, we know that for every $\epsilon \in (0,r)$, there is some $\tau$ with $0<\tau<\epsilon<r$ and some $\xi>0$ such that for each $x \in \ball_r(0) \setminus \ball_{\epsilon}(0)$ and for each $x' \in \ball_{\tau}(0)$, we have $\E d(x,y)-\E d(x',y)>\xi.$ %
Let $x_{\min}:=\argmin \{ \norm{x} \mid x \in \{x_\ell\}_{\ell \in [n]} \}$ and notice that $x_* = \argmin \{ \sum_{\ell \in [n]}d(x,x_\ell)/n \mid x \in \{x_\ell\}_{\ell \in [n]} \}.$

We observe that to prove the lemma it suffices to show that $\norm{x_{\min}} \le \tau$, that $\sum_{\ell \in [n]}d(x_u,x_\ell)/n-\E d(x_u,y)\ge -\xi/2$ for every $u \in [n]$ with $x_u \in \ball_r(0) \setminus \ball_{\epsilon}(0)$, and that $\sum_{\ell \in [n]}d(x_v,x_\ell)/n-\E d(x_v,y) \le \xi/2$ for every $v \in [n]$ with $x_v \in \ball_\tau(0)$.
In fact, under these assumptions we obtain that for every $u \in [n]$ with $x_u \in \ball_r(0) \setminus \ball_{\epsilon}(0)$ and for every $v \in [n]$ with $x_v \in \ball_\tau(0)$, we have
\begin{align*}
    & \frac{\sum_{\ell \in [n]}d(x_{v},x_\ell)}{n}-\frac{\sum_{\ell \in [n]}d(x_u,x_\ell)}{n}= \\
    & = \pare{\frac{\sum_{\ell \in [n]}d(x_{v},x_\ell)}{n}-\E d(x_v,y)}-\pare{\frac{\sum_{\ell \in [n]}d(x_{u},x_\ell)}{n}-\E d(x_u,y)}-(\E d(x_u,y)-\E d(x_v,y)) \\
    & < \frac{\xi}{2}+\frac{\xi}{2}-\xi=0.
\end{align*}
Since $\norm{x_{\min}} \le \tau$, the above expression implies that $\norm{x_*} \le \epsilon$.

Inspired by the above observation, we define the following events.
We denote by $A$ the event that $\norm{x_*} \le \epsilon$ and we denote by $T$ the event that $\norm{x_{\min}} \le \tau$. 
For every $w \in [n]$, we denote by $M_w$ the event that at least one of the following events happens:
\begin{itemize}
    \item $x_w \in \ball_\tau(0)$ and $\sum_{\ell \in [n]}d(x_w,x_\ell)/n-\E d(x_w,y) \le \xi/2$;
    \item $x_w \in \ball_\epsilon(0)\setminus \ball_\tau(0)$;
    \item $x_w \in \ball_r(0) \setminus \ball_{\epsilon}(0)$ and $\sum_{\ell \in [n]}d(x_w,x_\ell)/n-\E d(x_w,y)\ge -\xi/2$.
\end{itemize}
In the remainder of the proof, we denote by $\bar E$ the complement of an event $E$.
We know that if $T$ and $M_w$, for all $w \in [n]$, are true then $A$ is true. 
So we get 
\begin{align}\label{eq pra}
    \Pr(A) \ge \Pr\pare{T \cap \bigcap_{w \in [n]} M_w} = 1-\Pr\pare{\bar T \cup \bigcup_{w \in [n]} \bar M_w} \ge 1-\Pr(\bar T)-\sum_{w \in [n]} \Pr(\bar M_w).
\end{align}
We next upper bound $\Pr(\bar T)$ and $\Pr(\bar M_w)$.

We define $p:=\Pr(y \in \ball_r(0) \setminus \ball_\tau(0))$ and obtain 
\begin{align}\label{eq prtb}
    \Pr(\bar T)=p^n.
\end{align}
Since $(\mu,\ball_r(0))$ satisfies \ref{ass open}, we know that $p < 1$.

For $w \in [n]$, we know that $\bar M_w$ is true if and only if at least one of the following event is true:
\begin{itemize}
    \item[$P_w$:] $x_w \in \ball_\tau(0)$ and $\sum_{\ell \in [n]}d(x_w,x_\ell)/n-\E d(x_w,y) > \xi/2$;
    \item[$Q_w$:] $x_w \in \ball_r(0) \setminus \ball_{\epsilon}(0)$ and $\sum_{\ell \in [n]}d(x_w,x_\ell)/n-\E d(x_w,y) < -\xi/2$.
\end{itemize}
Hence in the following we will upper bound separately $\Pr(P_w)$ and $\Pr(Q_w)$.

We start by analyzing $P_w$. 
For every $z \in \ball_\tau(0)$, we have 
\begin{align*}
    & \Pr\pare{\frac{\sum_{\ell \in [n]}d(z,x_\ell)}{n}-\E d(z,y) > \frac{\xi}{2} \mid x_w=z}
     =\Pr\pare{\frac{\sum_{\ell \neq w}d(z,x_\ell)}{n}-\E d(z,y) > \frac{\xi}{2} \mid x_w=z} \\
     & = \Pr \pare{\frac{\sum_{\ell \neq w}d(z,x_\ell)}{n}-\E d(z,y) > \frac{\xi}{2}} 
      = \Pr\pare{\sum_{\ell \neq w}d(z,x_\ell)-(n-1)\E d(z,y) > \frac{n\xi}{2}+\E d(z,y) } \\
     & \le \exp \pare{ -\frac{2(n\xi/2+\E d(z,y))^2}{(n-1)r^2} }
     \le \exp \pare{ -\frac{n\xi^2}{2r^2} }.
\end{align*}
Here, the second equality holds because $x_\ell$, for $\ell \in [n]$ are independent. 
In the first inequality, we use the Hoeffding's inequality and the fact that $d(z,x_\ell) \in [0,r]$.
The last inequality follows because $\E d(z,y) \ge 0$. So we get 
\begin{align*}
    \Pr(P_w)
     & = \int_{\ball_\tau(0)}\Pr\pare{\frac{\sum_{\ell \in [n]}d(z,x_\ell)}{n}-\E d(z,y) > \frac{\xi}{2} \mid x_w=z} d\mu(z) \\
     & \le \sup\bra{ \Pr\pare{\frac{\sum_{\ell \in [n]}d(z,x_\ell)}{n}-\E d(z,y) > \frac{\xi}{2} \mid x_w=z} \mid z \in \ball_\tau(0)} \le \exp \pare{ -\frac{n\xi^2}{2r^2} }.
\end{align*}

Next, we analyze in a similar way $Q_w$.
We have
\begin{align*}
    \Pr(Q_w) 
& = \int_{\ball_r(0) \setminus \ball_{\epsilon}(0)}\Pr\pare{\frac{\sum_{\ell \in [n]}d(z,x_\ell)}{n}-\E d(z,y) < - \frac{\xi}{2} \mid x_w=z} d\mu(z) \le \exp \pare{ -\frac{n\xi^2}{4r^2} },
\end{align*}
because for every $z \in \ball_r(0) \setminus \ball_{\epsilon}(0)$, we have
\begin{align*}
& \Pr \pare{\frac{\sum_{\ell \in [n]}d(z,x_\ell)}{n}-\E d(z,y) < -\frac{\xi}{2} \mid x_w=z} 
= \Pr \pare{\frac{\sum_{\ell \neq w}d(z,x_\ell)}{n}-\E d(z,y) < -\frac{\xi}{2} \mid x_w=z} \\
& = \Pr \pare{\frac{\sum_{\ell \neq w}d(z,x_\ell)}{n}-\E d(z,y) < -\frac{\xi}{2}} 
 =  \Pr \pare{\sum_{\ell \neq w}d(z,x_\ell)-(n-1)\E d(z,y) < -\pare{\frac{n\xi}{2}-\E d(z,y)}} \\
& \le \exp \pare{ -\frac{2(n\xi/2-\E d(z,y))^2}{(n-1)r^2} }
\le \exp \pare{ -\frac{2(n\xi/2-r)^2}{nr^2} }
\le \exp \pare{ -\frac{n\xi^2}{4r^2} },
\end{align*}
where the last inequality holds when $n>4r/((2-\sqrt{2})\xi)$. 

In the rest of the proof, we assume that $n>4r/((2-\sqrt{2})\xi)$.
Using the union bound, we have 
\begin{align*}
    \Pr(\bar M_w) \le \Pr(P_w) + \Pr(Q_w) 
    \le \exp \pare{ -\frac{n\xi^2}{2r^2} } +\exp \pare{ -\frac{n\xi^2}{4r^2} } \le 2\exp \pare{ -\frac{n\xi^2}{4r^2} }.
\end{align*}
Using \eqref{eq pra} and \eqref{eq prtb}, we obtain
\begin{align*}
    \Pr(A) 
    \ge 1-\Pr(\bar T)-\sum_{w \in [n]} \Pr(\bar M_w)
    \ge 1-p^n-2n\exp \pare{ -\frac{n\xi^2}{4r^2} }.
\end{align*}
The latter quantity goes to $1$ as $n$ goes to infinity because $p < 1$ and $p,\xi$ and $r$ are all parameters that do not depend on $n$. 
So with high probability we have $\norm{x_*} \le \epsilon$.
\end{prf}

In the next lemma we use \cref{lm improve center in one ball}.

\begin{lemma}
\label{lm opt}
Let $(\mu,\ball_r(0))$ be a probability space that satisfies \ref{ass rotation}, \ref{ass open}.
Let $x_1,\dots,x_n$ be random vectors drawn i.i.d. according to $\mu$, and let $x_*$ be a median of $\{x_\ell\}_{\ell \in [n]}$.
Let $E:=\E\norm{x}$, where $x$ is a random vector drawn according to $\mu$. 
Let $\OPT:=\sum_{\ell \in [n]}d(x_*,x_\ell).$ 
Then for each $\epsilon>0$, with high probability we have  $|\OPT / n - E|<\epsilon.$
\end{lemma}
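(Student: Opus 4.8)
The plan is to compare $\OPT/n = \frac{1}{n}\sum_{\ell\in[n]} d(x_*,x_\ell)$ with the "ideal" quantity $\frac{1}{n}\sum_{\ell\in[n]} d(0,x_\ell)$, and then compare the latter with $E = \Ex\norm{x}$. For the first comparison I would use \cref{lm improve center in one ball}: for any $\epsilon'>0$, with high probability $\norm{x_*}<\epsilon'$. When this holds, the triangle inequality gives $\abs{d(x_*,x_\ell) - d(0,x_\ell)} = \abs{d(x_*,x_\ell) - \norm{x_\ell}} \le \norm{x_*} < \epsilon'$ for every $\ell$, hence $\abs{\OPT/n - \frac{1}{n}\sum_{\ell\in[n]}\norm{x_\ell}} < \epsilon'$. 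For the second comparison, the $\norm{x_\ell}$ are i.i.d.\ random variables taking values in $[0,r]$ with mean $E$, so Hoeffding's inequality yields $\Pr\!\left(\abs{\frac{1}{n}\sum_{\ell\in[n]}\norm{x_\ell} - E} \ge \epsilon'\right) \le 2\exp(-2n\epsilon'^2/r^2)$, which tends to $0$ as $n\to\infty$; thus $\abs{\frac{1}{n}\sum_{\ell\in[n]}\norm{x_\ell} - E} < \epsilon'$ with high probability.

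Combining the two estimates with $\epsilon' := \epsilon/2$, the triangle inequality gives $\abs{\OPT/n - E} < \epsilon$ whenever both events hold. Since each of the two events holds with high probability, and the intersection of finitely many high-probability events again holds with high probability, we conclude that $\abs{\OPT/n - E} < \epsilon$ with high probability, as claimed.

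I do not expect any serious obstacle here: the lemma is essentially a routine assembly of \cref{lm improve center in one ball} (which does the real work of locating the empirical median near the ball center) and a one-line concentration bound for the sum of the norms. The only minor point to be careful about is that the perturbation bound on $\norm{x_*}$ and the concentration of $\frac{1}{n}\sum\norm{x_\ell}$ refer to the same underlying sample, but since we only need the union bound over these two events this causes no difficulty.
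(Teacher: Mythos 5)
Your proposal is correct and follows essentially the same route as the paper's proof: invoke \cref{lm improve center in one ball} to replace $x_*$ by the origin up to an $\epsilon/2$ error via the triangle inequality, then apply Hoeffding's inequality to $\frac{1}{n}\sum_{\ell\in[n]}\norm{x_\ell}$ and combine the two high-probability events. No gaps.
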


\begin{prf}
Let $\epsilon>0$.
We apply \cref{lm improve center in one ball} and we know that with high probability, we have $\norm{x_*} < \epsilon/2$.  
This implies that, with high probability, we have $|d(x_\ell,x_*)-d(x_\ell,0)| < \epsilon/2$ for each $\ell \in [n].$
Summing the latter $n$ inequalities, we obtain that with high probability we have 
\begin{align}
\label{eq lm opt 1}
    \abs{\frac{\OPT}{n}-\frac{\sum_{\ell \in [n]}\norm{x_\ell}}{n}} < \frac{\epsilon}{2}.
\end{align}

On the other hand, according to Hoeffding's inequality, 
\begin{align*}
    \Pr \pare{ \abs{ \frac{\sum_{\ell \in [n]}\norm{x_\ell}}{n}-E}<\frac{\epsilon}{2} } 
    > 1-2\exp \pare{-\frac{n\epsilon^2}{2r^2}}.
\end{align*}
Since $\exp (-n\epsilon^2/(2r^2))$ goes to zero as $n$ goes to $+ \infty$, with high probability we have 
\begin{align}
\label{eq lm opt 2}
\abs{\frac{\sum_{\ell \in [n]}\norm{x_\ell}}{n} - E} < \frac \epsilon 2.
\end{align}

From \eqref{eq lm opt 1} and \eqref{eq lm opt 2}, with high probability we have
\begin{align*}
\abs{\frac \OPT n - E} 
\le
\abs{\frac{\OPT}{n}-\frac{\sum_{\ell \in [n]}\norm{x_\ell}}{n}}
+
\abs{\frac{\sum_{\ell \in [n]}\norm{x_\ell}}{n} - E}
< \frac \epsilon 2 + \frac \epsilon 2
= \epsilon.
\end{align*}
\end{prf}

\subsection{Lemmas about several balls}
\label{sec several balls}

While in \cref{sec single ball} we only considered one ball $\ball_r(0)$, in the three lemmas presented in this section we will consider $k$ balls $\ball_{r_i}(c_i)$, for $i \in [k]$.

\begin{lemma}
\label{lm geometry}
Let $\ball_{r_i}(c_i)$, for $i \in [k]$, be $k$ balls in $\R^m$.
For every $i\in [k]$, assume $r_i < D_i$
and let $[a_i,b_i] \subset (r_i,D_i)$.
Then there exist $\tau_i>0$, $\forall i \in [k]$, such that $\forall i,j \in [k]$, $\forall z \in \intr \ball_{\tau_i}(c_i)$, and $\forall \alpha_j \in [a_j,b_j]$, we have
\begin{align}
\label{eq lm geometry}
    \ball_{\alpha_j}(z) \cap \ball_{r_j}(c_j) =
\begin{cases}
\ball_{r_i}(c_i) & \text{if } j=i\\
\emptyset & \text{otherwise.}
\end{cases}
\end{align}
\end{lemma}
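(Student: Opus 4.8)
\textbf{Proof plan for \cref{lm geometry}.}

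The plan is to reduce \eqref{eq lm geometry} to two geometric containment statements and to choose the $\tau_i$ so that each holds with a uniform margin over the compact parameter range $[a_j,b_j]$. For a fixed pair $i,j$ and a point $z$ near $c_i$, I want: (i) when $j=i$, $\ball_{r_i}(c_i) \subseteq \ball_{\alpha_i}(z)$, so that the intersection with $\ball_{r_i}(c_i)$ is all of $\ball_{r_i}(c_i)$; and (ii) when $j \neq i$, $\ball_{\alpha_j}(z) \cap \ball_{r_j}(c_j) = \emptyset$, i.e., the two balls are disjoint.

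For (i): if $z \in \intr \ball_{\tau_i}(c_i)$ then every $x \in \ball_{r_i}(c_i)$ satisfies $d(z,x) \le d(z,c_i) + d(c_i,x) < \tau_i + r_i$ by the triangle inequality. So it suffices to pick $\tau_i$ small enough that $\tau_i + r_i \le a_i$; since $a_i > r_i$ by hypothesis, $\tau_i := (a_i - r_i)/2 > 0$ works, and then $d(z,x) < a_i \le \alpha_i$ for all $\alpha_i \in [a_i,b_i]$, giving $x \in \ball_{\alpha_i}(z)$. For (ii): two balls $\ball_{\alpha_j}(z)$ and $\ball_{r_j}(c_j)$ are disjoint as soon as $d(z,c_j) > \alpha_j + r_j$. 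Now $d(z,c_j) \ge d(c_i,c_j) - d(z,c_i) > d(c_i,c_j) - \tau_i$, and $d(c_i,c_j) \ge D_i + r_i$ by the definition of $D_i$ (note $D_i = \min_{j \neq i} d(c_i,c_j) - r_i$, so $d(c_i,c_j) - r_i \ge D_i$ for every $j \neq i$; actually we also need to compare against $D_j$, see below). On the other side, $\alpha_j + r_j \le b_j + r_j$. So disjointness follows once $d(c_i,c_j) - \tau_i > b_j + r_j$. Here I should use that $[a_j,b_j] \subset (r_j,D_j)$, so $b_j < D_j \le d(c_i,c_j) - r_j$, hence $b_j + r_j < d(c_i,c_j)$, and there is a strictly positive gap; shrinking $\tau_i$ further (take $\tau_i$ smaller than this gap, uniformly over the finitely many $j \neq i$) absorbs the $-\tau_i$ term.

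Concretely, I would define, for each $i \in [k]$,
\begin{align*}
\tau_i := \frac{1}{2}\min\Bigl( \{a_i - r_i\} \cup \{ d(c_i,c_j) - b_j - r_j \mid j \in [k], \ j \neq i \} \Bigr),
\end{align*}
which is a minimum of finitely many strictly positive quantities, hence positive, and then verify the two cases above line by line using the triangle inequality. The only mild subtlety — and the main thing to get right rather than a real obstacle — is bookkeeping which index's $D$ bound to invoke: for the disjointness of the $j$-ball from the $\alpha_j$-ball centered near $c_i$, one uses $b_j < D_j$, which controls $d(c_i,c_j) - r_j$ from below, exactly the quantity appearing after the triangle inequality. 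Everything else is a routine estimate, and since the parameter sets $[a_j,b_j]$ are compact the required inequalities hold with a uniform margin, so a single choice of $\tau_i$ works for all $z \in \intr\ball_{\tau_i}(c_i)$ and all $\alpha_j \in [a_j,b_j]$ simultaneously.
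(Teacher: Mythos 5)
Your proposal is correct and follows essentially the same route as the paper: both cases are handled by the same triangle-inequality estimates, with $a_i>r_i$ giving the containment $\ball_{r_i}(c_i)\subseteq\ball_{\alpha_i}(z)$ and $b_j<D_j\le d(c_i,c_j)-r_j$ giving disjointness for $j\neq i$. The only (harmless) difference is that you obtain uniformity in $\alpha$ by choosing $\tau_i$ explicitly from the interval endpoints, whereas the paper first proves the statement for a fixed $\alpha$ and then takes the minimum of $\tau_i(\alpha)$ over the compact box via the extreme value theorem.
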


\begin{prf}
We first show the following claim, obtained from the statement of the lemma by fixing some $\alpha_i \in (r_i,D_i)$, for $i \in [k]$.
Let $\ball_{r_i}(c_i)$, for $i \in [k]$, be $k$ balls in $\R^m$, assume $r_i < D_i$, and let $\alpha_i \in (r_i,D_i)$.
Then, for every $i \in [k]$, there exists  $\tau_i(\alpha)>0$ such that $\forall z \in \intr \ball_{\tau_i(\alpha)}(c_i)$, we have \eqref{eq lm geometry}.

To prove the claim 
we choose, for every $i \in [k]$, $\tau_i(\alpha):=\min\{\alpha_i-r_i,D_1-\alpha_1,\dots,D_k-\alpha_k\} > 0.$
Let $z \in \intr \ball_{\tau_i(\alpha)}(c_i)$.
We first show that $\ball_{\alpha_i}(z) \cap \ball_{r_i}(c_i)=\ball_{r_i}(c_i).$
We only need to prove that for each $x \in \ball_{r_i}(c_i)$, we have $x \in \ball_{\alpha_i}(z)$, and this holds because
\begin{align*}
    d(x,z) \le d(x, c_i) + d(c_i, z) < r_i + \tau_i(\alpha) \le r_i + (\alpha_i - r_i) = \alpha_i.
\end{align*}
Next we show that we have $\ball_{\alpha_j}(z) \cap \ball_{r_j}(c_j)=\emptyset$ for $j \neq i$. 
We only need to prove that for each $x \in \ball_{r_j}(c_j)$, we have $x \not\in \ball_{\alpha_j}(z)$.
We have 
\begin{align*}
    d(x,c_i)
    \ge d(c_i,c_j) - d(x,c_j)
    & \ge d(c_i,c_j)-r_j
    \ge D_j,
\end{align*}
thus
\begin{align*}
    d(x,z) 
    & \ge d(x,c_i)-d(c_i,z) 
    \ge D_j-d(c_i,z) 
    > D_j-\tau_i (\alpha) 
    \ge \alpha_j,
\end{align*}
where the last inequality follows from the definition of $\tau_i(\alpha)$.
We have shown $d(x,z) > \alpha_j$, thus $x \not\in \ball_{\alpha_j}(z)$.
This concludes the proof of the claim.

To prove the lemma, we define the set $S:=\prod_{i \in [k]}[a_i,b_i]$ 
and take $\tau_i := \inf \{\tau_i(\alpha) \mid \alpha \in S\} = \min \{ \tau_i(\alpha) \mid \alpha \in S \} > 0$, where the equality follows from the extreme value theorem, since $S$ is compact and $\tau_i(\alpha)$ is a continuous function over $S$, for every $i \in [k]$.
\end{prf}

\begin{lemma}
\label{lm from balls to sum}
Consider the ESBM.
Let $\alpha \in \R^k$ and let $s_i \in \R^m$ for every $i \in [k]$.
Assume that we have
\begin{align}
    \label{eq from balls to sum}
    \ball_{\alpha_j}(s_i) \cap \ball_{r_j}(c_j) =
\begin{cases}
\ball_{r_i}(c_i) & \text{if } j=i\\
\emptyset & \text{otherwise}
\end{cases}
&& \forall i,j \in [k].
\end{align}
Then, we have
\begin{align*}
    & \alpha_i \ge d(s_i,x^{(i)}_\ell) && \forall i \in [k], \ \forall \ell \in [n_i] \\
    & \alpha_j < d(s_i,x^{(j)}_\ell) && \forall i,j \in [k], i \neq j, \ \forall \ell \in [n_j] \\
    & C^{\alpha}(s_i)
    =n_i \alpha_i- \sum_{\ell \in [n_i]}d(s_i,x^{(i)}_\ell)
    && \forall i \in [k].
\end{align*}
\end{lemma}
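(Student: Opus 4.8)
The plan is to simply unwind the hypothesis \eqref{eq from balls to sum} using the single fact that, in the ESBM, each sampled point lies in the ball it is drawn from. Concretely, since $x^{(i)}_\ell$ is drawn according to $\mu_i$, which is supported on $\ball_{r_i}(c_i)$, we have $x^{(i)}_\ell \in \ball_{r_i}(c_i)$ for every $i \in [k]$ and every $\ell \in [n_i]$. This is the only structural input needed; everything else is a matter of comparing distances.

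First I would prove the two displayed inequalities. For the first one, fix $i$ and apply \eqref{eq from balls to sum} with $j=i$: it gives $\ball_{\alpha_i}(s_i) \cap \ball_{r_i}(c_i) = \ball_{r_i}(c_i)$, i.e.\ $\ball_{r_i}(c_i) \subseteq \ball_{\alpha_i}(s_i)$. Since $x^{(i)}_\ell \in \ball_{r_i}(c_i)$, we get $x^{(i)}_\ell \in \ball_{\alpha_i}(s_i)$, which is exactly $d(s_i,x^{(i)}_\ell) \le \alpha_i$. For the second one, fix $i \neq j$ and apply \eqref{eq from balls to sum} with this pair: it gives $\ball_{\alpha_j}(s_i) \cap \ball_{r_j}(c_j) = \emptyset$. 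Since $x^{(j)}_\ell \in \ball_{r_j}(c_j)$, it cannot lie in $\ball_{\alpha_j}(s_i)$, so $d(s_i,x^{(j)}_\ell) > \alpha_j$.

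Finally I would derive the contribution identity directly from \cref{def contribution ESBM}. Writing
\begin{align*}
    C^{\alpha}(s_i) = \sum_{j \in [k]}\sum_{\ell \in [n_j]} \pare{\alpha_j - d(s_i,x^{(j)}_\ell)}_+,
\end{align*}
the first inequality shows that for $j = i$ each summand $\alpha_i - d(s_i,x^{(i)}_\ell)$ is nonnegative, so its positive part equals itself; the second inequality shows that for $j \neq i$ each term $\alpha_j - d(s_i,x^{(j)}_\ell)$ is negative, so its positive part is $0$. Hence only the $j=i$ block survives and
\begin{align*}
    C^{\alpha}(s_i) = \sum_{\ell \in [n_i]} \pare{\alpha_i - d(s_i,x^{(i)}_\ell)} = n_i\alpha_i - \sum_{\ell \in [n_i]} d(s_i,x^{(i)}_\ell),
\end{align*}
as claimed.

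There is no real obstacle here: the lemma is pure bookkeeping, translating the set-theoretic hypothesis \eqref{eq from balls to sum} into pointwise distance inequalities and then substituting these into the definition of $C^\alpha$. The only thing to be careful about is to invoke that sampled points belong to their support balls, which is immediate from \cref{def ESBM}.
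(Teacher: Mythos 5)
Your proposal is correct and follows essentially the same route as the paper: apply \eqref{eq from balls to sum} with $j=i$ to get $\ball_{r_i}(c_i) \subseteq \ball_{\alpha_i}(s_i)$ and with $j \neq i$ to get emptiness, then expand $C^\alpha(s_i)$ from \cref{def contribution ESBM} so that only the $j=i$ block survives with its positive parts removed. No gaps.
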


\begin{prf}
From \eqref{eq from balls to sum} with $j=i$ we obtain that for every $i \in [k]$ we have $\ball_{\alpha_i}(s_i) \cap \ball_{r_i}(c_i) = \ball_{r_i}(c_i)$ thus $\ball_{r_i}(c_i) \subseteq \ball_{\alpha_i}(s_i)$.
Since $x^{(i)}_\ell \in \ball_{r_i}(c_i)$ for every $i \in [k], \ell \in [n_i]$, we obtain
\begin{align*}
\alpha_i \ge d(s_i,x^{(i)}_\ell) && \forall i \in [k], \ \forall \ell \in [n_i].
\end{align*}
From \eqref{eq from balls to sum} with $i\neq j$, we obtain that for every $i,j \in [k]$ with $i \neq j$, we have $\ball_{\alpha_j}(s_i) \cap \ball_{r_j}(c_j) = \emptyset$.
Since $x^{(j)}_\ell \in \ball_{r_j}(c_j)$ for every $j \in [k], \ell \in [n_j]$, we obtain
\begin{align*}
\alpha_j < d(s_i,x^{(j)}_\ell) && \forall i,j \in [k], i \neq j, \ \forall \ell \in [n_j].
\end{align*}
We obtain that for every $i \in [k]$,
\begin{align*}
    C^{\alpha}(s_i)
    & =\sum_{j \in [k]}\sum_{\ell \in [n_{j}]}(\alpha_{j}-d(s_i,x^{(j)}_\ell))_+ \\
    & =\sum_{\ell \in [n_{i}]}(\alpha_i-d(s_i,x^{(i)}_\ell))_+ + \sum_{j \in [k], \ j \neq i} \sum_{\ell \in [n_{j}]}(\alpha_j-d(s_i,x^{(j)}_\ell))_+ \\
    & =\sum_{\ell \in [n_{i}]}(\alpha_i-d(s_i,x^{(i)}_\ell)) \\
    & =n_i \alpha_i - \sum_{\ell \in [n_i]} d(s_i, x^{(i)}_\ell).
\end{align*}
\end{prf}

\begin{lemma}
\label{lm concentration}
Consider the ESBM.
For every $i \in [k]$, assume that the probability space $(\mu_i,\ball_{r_i}(c_i))$ satisfies \ref{ass open}.
For every $i \in [k]$, assume $r_i < D_i$, let $\alpha_i \in (r_i,D_i)$, let $\tau_i > 0$, and assume that $c_i$ is the unique point that achieves $\max \{G^{\alpha}(z) \mid z \in \ball_{r_i}(c_i)\}$.
Then there exists $\xi>0$ such that with high probability, for every $\alpha' \in \R^k$ with $\norm{\alpha'-\alpha}_\infty \le \xi$ and for every $i \in [k]$, $\argmax \{C^{\alpha'}(z) \mid z \in \{x^{(i)}_\ell\}_{\ell \in [n_i]}\} \subseteq \intr \ball_{\tau_i}(c_i)$.
\end{lemma}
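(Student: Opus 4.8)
The plan is to establish a uniform law of large numbers that forces the sampled maximizer of $C^{\alpha'}$ to sit near $c_i$, exploiting the uniqueness of $c_i$ as the maximizer of $G^\alpha$ over the ball.

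First I would dispose of the trivial clusters: if $\tau_i>r_i$ then $\ball_{r_i}(c_i)\subseteq\intr\ball_{\tau_i}(c_i)$, so every sample point of cluster $i$ lies in $\intr\ball_{\tau_i}(c_i)$ automatically; hence assume $\tau_i\le r_i$ and set $K_i:=\ball_{r_i}(c_i)\setminus\intr\ball_{\tau_i}(c_i)$, a nonempty compact set not containing $c_i$. Since $c_i$ is the unique maximizer of the continuous function $G^\alpha$ over $\ball_{r_i}(c_i)$, the number $\eta_i:=G^\alpha(c_i)-\max_{z\in K_i}G^\alpha(z)$ is strictly positive; let $\eta:=\min_{i\in[k]}\eta_i>0$. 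Using the joint continuity of $(\alpha',z)\mapsto G^{\alpha'}(z)$ from \cref{obs G property} (hence uniform continuity on compacts) together with $\tau_i>0$, I would fix $\xi>0$ and $\rho>0$ so that for every $i$ and every $\alpha'$ with $\norm{\alpha'-\alpha}_\infty\le\xi$: $\ball_\rho(c_i)\subseteq\intr\ball_{\tau_i}(c_i)$; $\abs{G^{\alpha'}(z)-G^{\alpha}(z)}<\eta/10$ for all $z\in\ball_{r_i}(c_i)$; and $G^{\alpha'}(z)>G^\alpha(c_i)-\eta/10$ for all $z\in\ball_\rho(c_i)$.

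The core step is a uniform concentration estimate: I want $\sup \abs{C^{\alpha'}(z)/n-G^{\alpha'}(z)}<\eta/5$ with high probability, the sup being over $z\in\bigcup_i\ball_{r_i}(c_i)$ and $\alpha'$ in the box $\{\alpha':\norm{\alpha'-\alpha}_\infty\le\xi\}$. Each summand $(\alpha'_j-d(z,x^{(j)}_\ell))_+$ of $C^{\alpha'}(z)$ lies in $[0,\max_j\alpha_j+\xi]$ and is $1$-Lipschitz in $(z,\alpha'_j)$, so $C^{\alpha'}(z)/n$, which is a sum of $\sum_j n_j=(\sum_j\beta_j)n$ independent such terms divided by $n$, and likewise $G^{\alpha'}(z)=\tfrac{1}{n}\Ex C^{\alpha'}(z)$, are Lipschitz on the relevant compact set with constant $\sum_j\beta_j$, which does not depend on $n$. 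I would then pick a deterministic finite net $\mathcal N$ of that compact set, bound each pointwise deviation by $\eta/10$ via Hoeffding's inequality and a union bound (probability $1-2|\mathcal N|e^{-cn}$ for a constant $c>0$), and use the $n$-free Lipschitz constant to upgrade this to the claimed uniform bound. In parallel, since \ref{ass open} gives $p_i:=\mu_i(\intr\ball_\rho(c_i))>0$, the probability that no cluster-$i$ sample lands in $\intr\ball_\rho(c_i)$ is $(1-p_i)^{\beta_i n}\to 0$, so with high probability every cluster $i$ admits a sample point $w_i\in\intr\ball_\rho(c_i)$.

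On the intersection of these high-probability events, fix any admissible $\alpha'$ and any $i$. For $w_i$ I obtain $C^{\alpha'}(w_i)/n>G^{\alpha'}(w_i)-\eta/5>G^\alpha(c_i)-3\eta/10$, while for any cluster-$i$ sample point $z\notin\intr\ball_{\tau_i}(c_i)$, which then lies in $K_i$ so that $G^\alpha(z)\le G^\alpha(c_i)-\eta$, I obtain $C^{\alpha'}(z)/n<G^{\alpha'}(z)+\eta/5<G^\alpha(c_i)-7\eta/10<C^{\alpha'}(w_i)/n$. Hence no such $z$ attains $\max\{C^{\alpha'}(z):z\in\{x^{(i)}_\ell\}_{\ell\in[n_i]}\}$, giving $\argmax\{C^{\alpha'}(z):z\in\{x^{(i)}_\ell\}_{\ell\in[n_i]}\}\subseteq\intr\ball_{\tau_i}(c_i)$, as required. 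The main obstacle is making the concentration uniform simultaneously over the evaluation point $z$ (which is itself one of the random samples) and over all perturbations $\alpha'$ in the $\xi$-box; the decisive point is that the relevant Lipschitz constant is $n$-independent, so a single fixed finite net together with Hoeffding's inequality suffices.
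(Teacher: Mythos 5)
Your proposal is correct, and it reaches the conclusion by a genuinely different route from the paper on the key probabilistic step. The paper never proves a uniform law of large numbers: it fixes the perturbation radius explicitly as $\xi=L/(8k\beta)$ via the deterministic bound $\abs{C^{\alpha'}(z)-C^{\alpha}(z)}/n\le k\beta\norm{\alpha'-\alpha}_\infty$, and then, for each sample index $w$ of each cluster $i$, it conditions on $x^{(i)}_w=z$ and applies Hoeffding's inequality to the remaining $\sum_j n_j-1$ independent summands of $C^\alpha(z)$, assembling the conclusion from events $T_i$ (some sample falls near $c_i$) and $M_{iw}$ via a union bound over the sample points; this conditioning device is precisely how the paper avoids the independence pitfall it identifies in Awasthi et al. You instead prove $\sup\abs{C^{\alpha'}(z)/n-G^{\alpha'}(z)}<\eta/5$ uniformly over $z$ in $\bigcup_i\ball_{r_i}(c_i)$ and $\alpha'$ in the $\xi$-box, using the $n$-independent Lipschitz constant $\sum_j\beta_j$ of both $C^{\alpha'}(z)/n$ and $G^{\alpha'}(z)$, a fixed finite net, pointwise Hoeffding, and a union bound over the net; uniformity then makes the randomness of the evaluation point (itself a sample) and the simultaneity over all admissible $\alpha'$ immaterial, and your final comparison between a witness $w_i\in\intr\ball_\rho(c_i)$ (which exists with high probability by \ref{ass open}) and any sample in $\ball_{r_i}(c_i)\setminus\intr\ball_{\tau_i}(c_i)$ is sound, as is your dispatch of the trivial case $\tau_i>r_i$. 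What each approach buys: yours is the standard empirical-process argument, more modular and arguably cleaner, since one concentration statement serves all sample points and all perturbations at once; the paper's argument avoids constructing a net and stays with elementary per-point Hoeffding bounds (at the cost of the conditioning bookkeeping with events $P_{iw},Q_{iw}$), and it produces explicit constants such as $\xi=L/(8k\beta)$ tied to the gap $L$. Both yield failure probabilities decaying exponentially in $n$, so either proof establishes the lemma as stated.
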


\begin{prf}
Since $G^\alpha(z)$ is continuous in $z$
according to \cref{obs G property}, 
and $\ball_{r_i}(c_i) \setminus \intr \ball_{\tau_i}(c_i)$ is compact,
we know that for every $i \in [k]$, $\max\{G^\alpha(z) \mid z \in \ball_{r_i}(c_i) \setminus \intr \ball_{\tau_i}(c_i) \}$ is achieved.
Since, by assumption, for every $i \in [k]$, $c_i$ is the unique point that achieves $\max \{G^{\alpha}(z) \mid z \in \ball_{r_i}(c_i)\}$, we obtain that $G^\alpha(c_i)-\max\{G^\alpha(z) \mid z \in \ball_{r_i}(c_i) \setminus \intr \ball_{\tau_i}(c_i) \}>0, \forall i \in [k]$. 
Let 
$$
L:= \min_{i \in [k]} \bra{G^\alpha(c_i)-\max\{G^\alpha(z) \mid z \in \ball_{r_i}(c_i) \setminus \intr \ball_{\tau_i}(c_i) \}} > 0.
$$

Since for every $i \in [k]$, $G^\alpha(z)$ is continuous in $z=c_i$, 
we know that for every $i \in [k]$, there exists $0<\tau_i'<\tau_i$ such that for every $z \in \ball_{\tau_i'}(c_i)$, we have $G^\alpha(z)>G^\alpha(c_i)- L/2$.
Hence, for every $z \in \ball_{\tau_i'}(c_i)$, we have
\begin{align}
\label{eq thisoneoverhere}
\begin{split}
   & G^\alpha(z) - \max\{G^\alpha(z) \mid z \in \ball_{r_i}(c_i) \setminus \intr \ball_{\tau_i}(c_i) \} \\
   & \qquad > G^\alpha(c_i)-\frac{L}{2}-\max\{G^\alpha(z) \mid z \in \ball_{r_i}(c_i) \setminus \intr \ball_{\tau_i}(c_i) \} \ge L-\frac{L}{2}=\frac{L}{2}>0.
\end{split}
\end{align}
Let $\beta:=\max_{i \in [k]} \beta_i$ and let $\xi:=L/(8k\beta)$. 
Notice that for every $\alpha' \in \R^k$ with $\norm{\alpha'-\alpha}_\infty \le \xi$ and for every $z \in \R^m$, we have 
\begin{align}
\label{eq perturbation}
\begin{split}
    \abs{\frac{1}{n}C^{\alpha'}(z)-\frac{1}{n}C^\alpha(z)}& =\frac{1}{n}\abs{\sum_{i \in [k]}\sum_{\ell \in [n_i]}\pare{(\alpha'_i-d(z,x^{(i)}_\ell))_+-(\alpha_i-d(z,x^{(i)}_\ell))_+}} \\
    & \le \frac{1}{n}\sum_{i \in [k]}\sum_{\ell \in [n_i]}\abs{(\alpha'_i-d(z,x^{(i)}_\ell))_+-(\alpha_i-d(z,x^{(i)}_\ell))_+} \\
    & \le \frac{1}{n}\sum_{i \in [k]}\sum_{\ell \in [n_i]}\abs{\alpha'_i-\alpha_i} \le \frac{1}{n}\sum_{i \in [k]}\sum_{\ell \in [n_i]}\frac{L}{8k\beta} \le \frac{L}{8}.
\end{split}
\end{align}
For every $i \in [k]$, denote by $A_i$ the event that for every $\alpha' \in \R^k$ with $\norm{\alpha'-\alpha}_\infty \le \xi$, we have $\argmax \{C^{\alpha'}(z) \mid z \in \{x^{(i)}_\ell\}_{\ell \in [n_i]}\} \subseteq \intr \ball_{\tau_i}(c_i)$.
For every $i \in [k]$, denote by $T_i$ the event that there is some $w \in [n_i]$ such that $x_w^{(i)} \in \ball_{\tau'_i}(c_i)$.
For every $i \in [k]$ and $w \in [n_i]$, denote by $M_{iw}$ the event that at least one of the following event happens: 
\begin{itemize}
    \item $x^{(i)}_w \in \ball_{\tau_i'}(c_i)$ and  $\frac{1}{n}C^\alpha(x^{(i)}_w)-G^\alpha(x^{(i)}_w) \ge - L/8$;
    \item $x^{(i)}_w \in \intr \ball_{\tau_i}(c_i) \setminus \ball_{\tau_i'}(c_i)$;
    \item $x^{(i)}_w \in \ball_{r_i}(c_i) \setminus \intr \ball_{\tau_i}(c_i)$ and $\frac{1}{n}C^\alpha(x^{(i)}_w)-G^\alpha(x^{(i)}_w) \le L/8$.
\end{itemize}

Note that for every $i \in [k]$, if $T_i$ is true and $M_{iw}$ is true for every $w \in [n_i]$, then $A_i$ is true.
This is because $\ball_{\tau_i'}(c_i) \cap \{x^{(i)}_w\}_{w \in [n_i]}$ is nonempty and, for every $z \in \ball_{\tau_i'}(c_i) \cap \{x^{(i)}_w\}_{w \in [n_i]}$ and for every $z' \in (\ball_{r_i}(c_i) \setminus \intr \ball_{\tau_i}(c_i) )\cap \{x^{(i)}_w\}_{w \in [n_i]}$, we have $C^{\alpha'}(z) > C^{\alpha'}(z')$ for every $\alpha'$ with $\norm{\alpha'-\alpha}_\infty \le \xi$.
To see the last inequality, we use \eqref{eq perturbation}, the definition of the events $M_{iw}$, and \eqref{eq thisoneoverhere} to obtain 
\begin{align*}
    \frac{1}{n}C^{\alpha'}(z)-\frac{1}{n}C^{\alpha'}(z') & = \pare{\frac{1}{n}C^{\alpha'}(z)-\frac{1}{n}C^{\alpha}(z)}+ \pare{\frac{1}{n}C^\alpha(z)-G^\alpha(z)}
    -\pare{\frac{1}{n}C^\alpha(z')-G^\alpha(z')}\\
    & \hspace{0.5cm} +\pare{\frac{1}{n}C^{\alpha}(z')-\frac{1}{n}C^{\alpha'}(z')}
    +(G^\alpha(z)-G^\alpha(z')) \\
    & > -\frac{L}{8} -\frac{L}{8} -\frac{L}{8} -\frac{L}{8} + \frac{L}{2}
    =0.
\end{align*}

To prove the lemma we just need to show that the event $\bigcap_{i \in [k]}A_i$ happens with high probability.
In the remainder of the proof, we denote by $\bar E$ the complement of an event $E$.
From the above discussion, we have that for every $i \in [k]$,
\begin{align}
\label{eq nowineedthisonea}
    \Pr(A_i) 
    \ge \Pr(T_i \cap \bigcap_{w \in [n_i]} M_{iw})= 1-\Pr(\bar T_i \cup \bigcup_{w \in [n_i]} \bar M_{iw}) 
    \ge 1-\Pr(\bar T_i)-\sum_{w \in [n_i]}\Pr(\bar M_{iw}).
\end{align}
Hence, in the remainder of the proof we will provide a lower bound for $\Pr(A_i)$ by providing upper bounds for $\Pr(\bar T_i)$ and $\Pr(\bar M_{iw})$.

Let $p:= \max_{i \in [k]} \Pr(x^{(i)}_1 \not \in \ball_{\tau_i'}(c_i))$. 
Since for every $i \in [k]$, the probability space $(\mu_i,\ball_{r_i}(c_i))$ satisfies \ref{ass open}, we know that $p \in [0,1)$.
So we get 
\begin{align}
\label{eq nowineedthisoneb}
    \Pr(\bar T_i)=\prod_{\ell \in [n_i]}\Pr(x^{(i)}_\ell \not \in \ball_{\tau_i'}(c_i)) \le p^{n_i} \le p^n.
\end{align}
Next, we derive an upper bound for $\Pr(\bar M_{iw})$. 
We start by observing that for every $i \in [k]$ and $w \in [n_i]$, the event $\bar M_{iw}$ is true if and only if at least one of the events $P_{iw}$ and $Q_{iw}$ happens, where the events $P_{iw}$ and $Q_{iw}$ are defined below.
 \begin{itemize}
     \item[$P_{iw}$:] 
     $x^{(i)}_w \in \ball_{\tau_i'}(c_i)$ and  $\frac{1}{n}C^\alpha(x^{(i)}_w)-G^\alpha(x^{(i)}_w) < - L/8$;
    \item[$Q_{iw}$:] 
    $x^{(i)}_w \in \ball_{r_i}(c_i) \setminus \intr \ball_{\tau_i}(c_i)$ and $\frac{1}{n}C^\alpha(x^{(i)}_w)-G^\alpha(x^{(i)}_w) > L/8$.
 \end{itemize}

Next, we upper bound the probability of the event $P_{iw}$.
We notice that 
\begin{align}
\label{eq prtobeusedlaterforP}
\begin{split}
     \Pr (P_{iw})
     & = \int_{\ball_{\tau_i'}(c_i)}\Pr \pare{ \frac{1}{n}C^\alpha(z)-G^\alpha(z) < - \frac{L}{8} \mid x^{(i)}_w = z } d\mu_i(z). \\
     & \le \sup \bra{ \Pr \pare{ \frac{1}{n}C^\alpha(z)-G^\alpha(z) < - \frac{L}{8} \mid  x^{(i)}_w = z} \mid z \in \ball_{\tau_i'}(c_i)}.
\end{split}
\end{align}
For every $z \in \R^m$ and for every $i \in [k]$, $w \in [n_i]$,
we define the random variable $X_{iw}(z):=(\alpha_i-d(z,x^{(i)}_w))_+$. 
We know that for every $z$, $X_{iw}(z)$  
are independent random variables since $x^{(i)}_w$ are independent.
We then obtain 
\begin{align*}
    C^\alpha(z)=\sum_{i \in [k]}\sum_{w \in [n_i]}(\alpha_i-d(z,x^{(i)}_w))_+
    =\sum_{i \in [k]}\sum_{w \in [n_i]}X_{iw}(z).
\end{align*}
Note that, if we fix $z = x^{(i)}_w$, we can then rewrite $C^\alpha(z)$ in the form
\begin{align}
\label{eq Calphaz}
    C^\alpha(z)
    =\sum_{j \in [k] \setminus \{i\}}\sum_{\ell \in [n_u]}X_{j\ell}(z)+\sum_{\ell \in [n_i] \setminus \{w\}}X_{i\ell}(z)+\alpha_i.
\end{align}
Also, we have  
\begin{align}
\label{eq mean for later}
    \E\pare{ \sum_{j \in [k] \setminus \{i\}}\sum_{\ell \in [n_u]}X_{j\ell}(z)+\sum_{\ell \in [n_i] \setminus \{w\}}X_{i\ell}(z) }
    =\E (C^\alpha(z)-X_{iw}(z)) 
    = nG^\alpha(z) - I(z),
\end{align}
where $I(z):=\int_{\ball_{\alpha_i}(z) \cap \ball_{r_i}(c_i)} (\alpha_i-d(z,x)) d\mu_i(x)$ and the last equality follows from the definition of $G^\alpha(z)$ and using the same argument in the proof of \cref{obs G}.
We then define $M:=\max_{i \in [k]} \alpha_i$ and observe that $X_{j\ell}(z) \in [0,M]$ for every $j \in [k], \ell \in [n_j]$.
Let $\beta:=\max_{i \in [k]} \beta_i$. 
We obtain
\begin{align*}
    & \Pr \pare{ \frac{1}{n}C^\alpha(z)-G^\alpha(z) < - \frac{L}{8} \mid  x^{(i)}_w =z } \\
    = \ & \Pr \pare{ \frac{1}{n} \pare{ \sum_{j \in [k] \setminus \{i\}}\sum_{\ell \in [n_u]}X_{j\ell}(z)+\sum_{\ell \in [n_i] \setminus \{w\}}X_{i\ell}(z) } - \pare{ G^\alpha(z)-\frac{1}{n}I(z) } < -\frac{L}{8}+\frac{1}{n}I(z)-\frac{\alpha_i}{n} \mid x^{(i)}_w = z} \\
    = \ & \Pr \pare{ \frac{1}{n} \pare{ \sum_{j \in [k] \setminus \{i\}}\sum_{\ell \in [n_u]}X_{j\ell}(z)+\sum_{\ell \in [n_i] \setminus \{w\}}X_{i\ell}(z) } - \pare{ G^\alpha(z)-\frac{1}{n}I(z) } < -\frac{L}{8}+\frac{1}{n}I(z)-\frac{\alpha_i}{n} } \\
    = \ & \Pr \pare{ \pare{ \sum_{j \in [k] \setminus \{i\}}\sum_{\ell \in [n_u]}X_{j\ell}(z)+\sum_{\ell \in [n_i] \setminus \{w\}}X_{i\ell}(z) } - \pare{ nG^\alpha(z)-I(z)} < - \pare{ \frac{nL}{8}-I(z)+\alpha_i} }
    \\
    \le \ & \exp \pare{ -\frac{(nL-8I(z)+8\alpha_i)^2}{32M^2(\sum_{i \in [k]}n_i-1)} }
    \le \exp \pare{ -\frac{(nL)^2}{32M^2(\sum_{i \in [k]}n_i-1)}}
    \le \exp \pare{ -\frac{nL^2}{32k\beta M^2}}.
\end{align*}
The first equality follows from \eqref{eq Calphaz} and by adding on both sides $\frac{1}{n}I(z)$.
In the second equality, we use the fact that for every $z$, $X_{j\ell}(z)$ are independent. 
In the first inequality, we use Hoeffding's inequality and \eqref{eq mean for later}. 
The second inequality holds because $\alpha_i-I(z) \ge 0$ and the last inequality follows by the definition of $\beta$.
Using \eqref{eq prtobeusedlaterforP}, we obtain the following upper bound on the probability of the event $P_{iw}$.
\begin{align*}
    \Pr (P_{iw}) 
    \le \sup\bra{ \Pr \pare{ \frac{1}{n}C^\alpha(z)-G^\alpha(z) < - \frac{L}{8} \mid  x^{(i)}_w = z }
     \mid z \in \ball_{\tau_i'}(c_i) } 
     \le \exp \pare{ -\frac{nL^2}{32k\beta M^2} }.
\end{align*}

In a similar fashion, we now obtain an upper bound on the probability of the event $Q_{iw}$.
We have
\begin{align*}
    & \Pr \pare{ \frac{1}{n}C^\alpha(z)-G^\alpha(z) >  \frac{L}{8} \mid  x^{(i)}_w = z } \\
    = \ & \Pr \pare{ \frac{1}{n} \pare{ \sum_{j \in [k] \setminus \{i\}}\sum_{\ell \in [n_u]}X_{j\ell}(z)+\sum_{\ell \in [n_i] \setminus \{w\}}X_{i\ell}(z) } - \pare{ G^\alpha(z)-\frac{1}{n}I(z) } > \frac{L}{8}+\frac{1}{n}I(z)-\frac{\alpha_i}{n} \mid  x^{(i)}_w = z } \\
    = \ & \Pr \pare{ \frac{1}{n} \pare{ \sum_{j \in [k] \setminus \{i\}}\sum_{\ell \in [n_u]}X_{j\ell}(z)+\sum_{\ell \in [n_i] \setminus \{w\}}X_{i\ell}(z) } - \pare{ G^\alpha(z)-\frac{1}{n}I(z) } > \frac{L}{8}+\frac{1}{n}I(z)-\frac{\alpha_i}{n} } \\
    = \ & \Pr \pare{ \pare{ \sum_{j \in [k] \setminus \{i\}}\sum_{\ell \in [n_u]}X_{j\ell}(z)+\sum_{\ell \in [n_i] \setminus \{w\}}X_{i\ell}(z) } -(nG^\alpha(z)-I(z)) >  \frac{nL}{8}+I(z)-\alpha_i }
    \\
    \le \ & \exp \pare{ -\frac{(nL+8I(z)-8\alpha_i)^2}{32M^2(\sum_{i \in [k]}n_i-1)} }
    \le \exp \pare{ -\frac{(nL-8\alpha_i)^2}{32M^2(\sum_{i \in [k]}n_i-1)}}
    \le \exp \pare{ -\frac{(nL-8\alpha_i)^2}{32k\beta n M^2}},
\end{align*}
where the second inequality holds because $I(z) \ge 0$.
We obtain the following upper bound on the probability of the event $Q_{iw}$.
\begin{align*}
    \Pr (Q_{iw})
    & \le \sup \bra{ \Pr \pare{ \frac{1}{n}C^\alpha(z)-G^\alpha(z) >  \frac{L}{8} \mid  x^{(i)}_w = z} \mid z \in \ball_{r_i}(c_i) \setminus \intr \ball_{\tau_i}(c_i)} \\
    & \le \exp \pare{ -\frac{(nL-8\alpha_i)^2}{32k\beta n M^2}} \le \exp \pare{ -\frac{nL^2}{64k\beta M^2}},
\end{align*}
where the last inequality holds when $n>16\alpha_{\max}/((2-\sqrt{2})L)$, and $\alpha_{\max} :=\max_{i \in [k]} \alpha_i$.

Using the union bound, when $n>16\alpha_{\max}/((2-\sqrt{2})L)$, we have 
\begin{align*}
    \Pr(\bar M_{iw}) 
    \le \Pr(P_{iw}) + \Pr(Q_{iw})
    \le \exp \pare{ -\frac{nL^2}{32k\beta M^2}}+\exp \pare{ -\frac{nL^2}{64k\beta M^2}} \le 2\exp \pare{ -\frac{nL^2}{64k\beta M^2}}.
\end{align*}
Using \eqref{eq nowineedthisonea} and \eqref{eq nowineedthisoneb}, when $n>16\alpha_{\max}/((2-\sqrt{2})L)$, we have 
\begin{align*}
    \Pr(A_i) 
    \ge 1-p^n-2n_i\exp \pare{ -\frac{nL^2}{64k\beta M^2} } 
    \ge 1-p^n-2\beta n\exp \pare{ -\frac{nL^2}{64k\beta M^2} }.
\end{align*}
The latter quantity goes to $1$ as $n$ goes to infinity because $p<1$ and $p,k,\beta,L,M$ are all parameters that do not depend on $n$.
Hence each event $A_i$, for $i \in [k]$, happens with high probability.
Therefore, also $\bigcap_{i \in [k]}A_i$ happens with high probability.
So with high probability, for every $\alpha'$ with $\norm{\alpha'-\alpha}_\infty \le \xi$ and for every $i \in [k]$, we have that $\argmax \{C^{\alpha'}(z) \mid z \in \{x^{(i)}_\ell\}_{\ell \in [n_i]}\} \subseteq \intr \ball_{\tau_i}(c_i)$.
\end{prf}

We are now ready to prove \cref{th prob}.
In the proof we use \cref{th deterministic} and \cref{lm one median,lm improve center in one ball,lm opt,lm geometry,lm from balls to sum,lm concentration}.

\subsection{Proof of \cref{th prob}}
\label{sec th prob proof}

In this proof, for every $i \in [k]$, we denote by $x_*^{(i)}$ a median of $\{x^{(i)}_\ell\}_{\ell \in [n_i]}$.
Let $(\bar y, \bar z)$ be the feasible solution to \eqref{pr IP} that assigns each point $x^{(i)}_\ell$ to the ball $\ball_{r_i}(c_i)$ from which it is drawn. 
In particular, in this solution we have $y_p=1$ if and only if $p \in \{x^{(i)}_* \mid i \in [k]\}$. 
Furthermore, we have $z_{pq}=1$ if and only if $y_p=1$ and $p,q$ are drawn from the same ball.

To prove the theorem, we show that $(\bar y,\bar z)$ is the unique optimal solution to \eqref{pr LP} with high probability.
Clearly, $(\bar y,\bar z)$ is a feasible solution to \eqref{pr IP}.
We know from \cref{th deterministic} that $(\bar y,\bar z)$ is the unique optimal solution to \eqref{pr LP} if there exists $\Tilde{\alpha} \in \R^P$ such that
\begin{align}
    & C^{\Tilde\alpha}(a_1)=\dots=C^{\Tilde\alpha}(a_k) \label{eq Th1 a inproof}\\
    & C^{\Tilde\alpha}(q) < C^{\Tilde\alpha}(a_1) && \forall q \in P \setminus \{a_i\}_{i \in [k]} \label{eq Th1 b inproof} \\
    & \Tilde{\alpha}_q \ge d(a_i,q) && \forall i \in [k], \ \forall q \in A_i\label{eq Th1 c inproof} \\
    & \Tilde{\alpha}_q < d(a_i,q) && \forall i \in [k], \ \forall q \in P \setminus A_i. \label{eq Th1 d inproof}
\end{align}

Let $\gamma, \alpha$ as in the statement of \cref{th prob}. 
For all $i \in [k]$, we obtain $r_i < D_i$, and using the definition of $\alpha_i$, we obtain $\alpha_i \in (r_i,D_i)$.
Hence there exists $\xi_1 >0$ such that $[\alpha_i-\xi_1,\alpha_i+\xi_1] \subset(r_i,D_i)$ for all $i \in [k]$. 
From \cref{lm geometry} with $a_i=\alpha_i-\xi_1$ and $b_i=\alpha_i+\xi_1$, we obtain that 
there exist $\tau_i>0$, $\forall i \in [k]$, such that $\forall i,j \in [k]$, $\forall z \in \intr \ball_{\tau_i}(c_i)$, and $\forall \alpha_j' \in [\alpha_j-\xi_1,\alpha_j+\xi_1]$, we have
\begin{align}
\label{eq lm geometry in T2p}
    \ball_{\alpha_j'}(z) \cap \ball_{r_j}(c_j) =
\begin{cases}
\ball_{r_i}(c_i) & \text{if } j=i\\
\emptyset & \text{otherwise.}
\end{cases}
\end{align}
Since the assumptions of \cref{lm concentration} are satisfied, there exists $\xi_2>0$ such that with high probability, for every $\alpha' \in \R^k$ with $\norm{\alpha'-\alpha}_\infty \le \xi_2$ and for every $i \in [k]$, $\argmax \{C^{\alpha'}(z) \mid z \in \{x^{(i)}_\ell\}_{\ell \in [n_i]}\} \subseteq \intr \ball_{\tau_i}(c_i)$. 
Let $\xi:= \min \{\xi_1,\xi_2\}$. 
For every $i \in [k]$, let $\OPT_i:=\sum_{\ell \in [n_i]}d(x_*^{(i)},x^{(i)}_\ell).$ 
We then know from \cref{lm opt} that with high probability, for every $i \in [k]$, we have $\abs{\OPT_i/n_i-E_i}<\xi$.
From \cref{lm improve center in one ball}, we know that with high probability, for every $i \in [k]$, we have $x^{(i)}_* \in \intr \ball_{\tau_i}(c_i)$.

For every $i \in [k]$, fix $\alpha'_i:=\alpha_i+\epsilon_i$, where $\epsilon_i := \OPT_i/n_i-E_i.$ 
For every $q \in P$, we set $\Tilde{\alpha}_q := \alpha'_i$, where $i$ is the unique index in $[k]$ with $q \in \ball_{r_i}(c_i)$. 
We next show that, with this choice of $\Tilde{\alpha}$, \eqref{eq Th1 a inproof}--\eqref{eq Th1 d inproof} are satisfied with high probability. 
Using the definition of $\Tilde{\alpha}$, it suffices to show that
\begin{align}
& C^{\alpha'}(x^{(1)}_*)=\dots=C^{\alpha'}(x^{(k)}_*) \label{eq Th1 a inproof v2} \\
& C^{\alpha'}(x^{(i)}_\ell) < C^{\alpha'}(x^{(i)}_*) && \forall i \in [k], \ \forall \ell \in [n_i] \text{ with } \ x^{(i)}_\ell \neq x^{(i)}_* \label{eq Th1 b inproof v2} \\
& \alpha'_i \ge d(x_*^{(i)},x^{(i)}_\ell) && \forall i \in [k], \ \forall \ell \in [n_i] \label{eq Th1 c inproof v2} \\
& \alpha'_i < d(x_*^{(j)},x^{(i)}_\ell) && \forall i,j \in [k], i \neq j, \ \forall \ell \in [n_i]. \label{eq Th1 d inproof v2}
\end{align}
Since for every $i \in [k]$, we have $\abs{\epsilon_i}<\xi$, we know that $\alpha'_i \in (r_i, D_i)$. 
Since for every $i \in [k]$, we have $x^{(i)}_* \in \intr \ball_{\tau_i}(c_i)$, we have that \eqref{eq lm geometry in T2p} holds with $z = x^{(i)}_*$.
Thus from \cref{lm from balls to sum} (with $s_i = x^{(i)}_*$) we obtain \eqref{eq Th1 c inproof v2}, \eqref{eq Th1 d inproof v2}, and 
\begin{align*}
    C^{\alpha'}(x^{(i)}_*)
    & =n_i \alpha'_i- \sum_{\ell \in [n_i]}d(x_*^{(i)},x^{(i)}_\ell) 
     =n_i (\alpha_i + \epsilon_i) - \OPT_i \\
    & =n_i \pare{ \frac{\gamma}{\beta_i}+ \frac{\OPT_i}{n_i}} -\OPT_i
    =\gamma n+ \OPT_i-\OPT_i=\gamma n
    && \forall i \in [k].
\end{align*}
which implies \eqref{eq Th1 a inproof v2}. 
For every $i \in [k]$, let $s_i \in \intr \ball_{\tau_i} (c_i) \cap \{x^{(i)}_\ell\}_{\ell \in [n_i]}$. 
Since $s_i \in \intr \ball_{\tau_i} (c_i)$, for every $i,j \in [k]$, we have that $z = s_i$ satisfies \eqref{eq lm geometry in T2p}. 
From \cref{lm from balls to sum} we obtain 
\begin{align*}
    C^{\alpha'}(s_i)
    =n_i\alpha'_i-\sum_{\ell \in [n_i]} d(s_i, x^{(i)}_\ell) \le n_i\alpha'_i -\sum_{\ell \in [n_i]} d(x^{(i)}_*, x^{(i)}_\ell)
    =C^{\alpha'}(x^{(i)}_*)
    && \forall i \in [k].
\end{align*}
Since from \cref{lm one median}, the vector $x_*^{(i)}$ is the unique median of $\{x^{(i)}_\ell\}_{\ell \in [n_i]}$ with probability one when $n_i \ge 3$, the above inequality achieves equality if and only if $s_i=x_*^{(i)}$. 
Since $\norm{\alpha'-\alpha}_\infty = \norm{\epsilon}_\infty < \xi \le \xi_2$, for every $i \in [k]$ we have $\argmax \{C^{\alpha'}(z) \mid z \in \{x^{(i)}_\ell\}_{\ell \in [n_i]}\} \subseteq \intr \ball_{\tau_i}(c_i)$. Thus, we know that $x^{(i)}_*$ is the unique point that achieves $\max \{C^{\alpha'}(z) \mid z \in \{x^{(i)}_\ell\}_{\ell \in [n_i]}\}.$ This implies \eqref{eq Th1 b inproof v2}.

Hence $(\bar y,\bar z)$ is the unique optimal optimal solution to \eqref{pr LP} with high probability.
\qed

\subsection{Proof of \cref{cor prob}}
\label{sec cor prob proof}

Let $\gamma:=\alpha'-E_1$. 
Since for every $i \in [k]$, $\beta_i=1$, $r_i=1$, and $E_1 = \cdots = E_k$, we have $\max_{i \in [k]}\beta_i(r_i-E_i)=1-E_1$ and $\min_{i \in [k]}\beta_i(D_i-E_i) = \min_{i \neq j} d(c_i,c_j)-1-E_1$. 
From $1<\alpha'<\min_{i \neq j} d(c_i,c_j)-1$ we then obtain $\max_{i \in [k]}\beta_i(r_i-E_i)<\gamma<\min_{i \in [k]}\beta_i(D_i-E_i)$. 
Notice that for every $i \in [k]$ we have
$$
\alpha_i=\alpha'=E_i+\gamma =E_i+\frac{\gamma}{\beta_i}.
$$ 
Since for $i \in [k]$, $c_i$ is the unique point that achieves $\max \{G^{\alpha}(z) \mid z \in \ball_{r_i}(c_i)\}$, \cref{th prob} implies that \eqref{pr LP} achieves exact recovery with high probability.
\qed

\section{Exact recovery in the ESBM}
\label{sec truth ESBM}

In this section, we present our recovery results for the ESBM. 
We also show that for the ESBM with some special structure, including the SBM, \eqref{pr LP} can perform even better.

For completeness, we start with the simple case $m=1$.
The next two theorems can be seen as a corollaries of \cref{th prob}.

\begin{theorem}
\label{th one dim gen}
Consider the ESBM with $m=1$.
For every $i \in [k]$, assume that the probability space $(\mu_i,\ball_{r_i}(c_i))$ satisfies \ref{ass rotation}, \ref{ass open}, \ref{ass dim}. 
Let $R:=\max_{i \in [k]} r_i$ and $\beta:=\max_{i \in [k]} \beta_i$.
If for every $i \neq j$ we have $d(c_i,c_j)>r_i+r_j+(1+2\beta)R$, then \eqref{pr LP} achieves exact recovery with high probability. 
\end{theorem}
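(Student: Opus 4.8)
The plan is to obtain \cref{th one dim gen} as a special case of \cref{th prob}: I will exhibit a single scalar $\gamma$, set $\alpha_i := E_i + \gamma/\beta_i$ as in that theorem, and check its hypotheses. Throughout, $E_i := \Ex d(x,c_i)$ for $x$ drawn according to $\mu_i$; note $0 < E_i \le r_i$, where the upper bound is clear since $\mu_i$ is supported on $\ball_{r_i}(c_i)$ and the lower bound holds because \ref{ass dim} forces $\mu_i(\{c_i\})=0$. It is convenient to set $\tilde D_j := \min_{i \neq j}\pare{d(c_i,c_j)-r_i}$ for $j \in [k]$. Since all radii are positive, the hypothesis $d(c_i,c_j) > r_i + r_j + (1+2\beta)R$ gives $\tilde D_j > r_j + (1+2\beta)R$ and $D_j > (1+2\beta)R$ for every $j$.

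The heart of the argument is three estimates. First, $\max_i \beta_i(2r_i - E_i) < 2\beta R$, because $\beta_i(2r_i-E_i) = 2\beta_i r_i - \beta_i E_i < 2\beta_i r_i \le 2\beta R$ using $E_i > 0$, $\beta_i \le \beta$, $r_i \le R$. Second, $\min_j \beta_j(\tilde D_j - r_j - E_j) > 2\beta R$: here the cancellation of $r_j$ is essential, since $\tilde D_j - r_j - E_j > (r_j + (1+2\beta)R) - r_j - E_j = (1+2\beta)R - E_j \ge (1+2\beta)R - R = 2\beta R$, and then $\beta_j \ge 1$ finishes it. Third, by exactly the same computation, $\min_i \beta_i(D_i - E_i) > 2\beta R$. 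Since moreover $\max_i\beta_i(2r_i - E_i) > \max_i \beta_i(r_i - E_i)$ (termwise, as $r_i>0$), I take $\gamma := \max_i \beta_i(2r_i - E_i)$, which satisfies $\max_i\beta_i(r_i-E_i) < \gamma < 2\beta R < \min_i\beta_i(D_i - E_i)$; this is the scalar required by \cref{th prob}, and I set $\alpha_i := E_i + \gamma/\beta_i$.

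It remains to verify the geometric hypothesis of \cref{th prob}, that for each $i$ the point $c_i$ uniquely maximizes $G^\alpha$ over $\ball_{r_i}(c_i)$. From $\gamma < \min_j\beta_j(\tilde D_j - r_j - E_j)$ we get $\alpha_j < \tilde D_j - r_j$ for every $j$, so for any $z \in \ball_{r_i}(c_i)$, any $j \neq i$, and any $x \in \ball_{r_j}(c_j)$ we have $d(z,x) \ge d(c_i,c_j) - r_i - r_j \ge \tilde D_j - r_j > \alpha_j$, i.e.\ $\ball_{\alpha_j}(z) \cap \ball_{r_j}(c_j) = \emptyset$; by \cref{obs G} the cross terms vanish and $G^\alpha(z) = \beta_i\int_{\ball_{r_i}(c_i)} (\alpha_i - d(z,x))_+ \, d\mu_i(x)$. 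From $\gamma \ge \beta_i(2r_i - E_i)$ we get $\alpha_i \ge 2r_i$, so every $z,x \in \ball_{r_i}(c_i)$ satisfy $d(z,x) \le 2r_i \le \alpha_i$, the positive part is superfluous, and $G^\alpha(z) = \beta_i\pare{\alpha_i - \Ex_{x\sim\mu_i} d(z,x)}$. Maximizing this over $z \in \ball_{r_i}(c_i)$ is the same as minimizing $\Ex_{x\sim\mu_i} d(z,x)$, and since $m=1$ and (after recentring at the origin) $\mu_i$ satisfies \ref{ass rotation} and \ref{ass open}, \cref{lm ball center m = 1} gives that $z = c_i$ is the unique minimizer. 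With all hypotheses of \cref{th prob} checked, \eqref{pr LP} achieves exact recovery with high probability.

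The only delicate point I anticipate is the second estimate: one must keep the $r_j$ sitting inside $\tilde D_j > r_j + (1+2\beta)R$ rather than bound $\tilde D_j$ crudely by $(1+2\beta)R$, since subtracting $r_j + E_j$ (which can be as large as $2R$) from $(1+2\beta)R$ would leave only $(2\beta-1)R$, not enough to beat the $2\beta R$ coming from the $\alpha_i \ge 2r_i$ requirement. Conceptually, it is precisely this requirement $\alpha_i \ge 2r_i$ — as opposed to the weaker $\alpha_i > r_i$ that suffices at the center itself — that forces the constant $(1+2\beta)$: without a unimodality assumption on the $\mu_i$, the center need not maximize $z \mapsto \Ex[(\alpha_i - d(z,x))_+]$ once the radius $\alpha_i$ is too small to span the whole ball, and balancing this lower bound on $\alpha_i$ against the upper bound on $\alpha_j$ needed to keep $\ball_{\alpha_j}(z)$ away from $\ball_{r_j}(c_j)$ is where the separation hypothesis is spent.
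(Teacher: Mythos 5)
Your proposal is correct and takes essentially the same route as the paper's own proof: the identical choice $\gamma = \max_{i}\beta_i(2r_i - E_i)$ and $\alpha_i = E_i + \gamma/\beta_i$, the same verification that $\gamma$ sits strictly between $\max_i\beta_i(r_i-E_i)$ and $\min_i\beta_i(D_i-E_i)$, the same geometric check that $\alpha_i \ge 2r_i$ covers ball $i$ while $\ball_{\alpha_j}(z)$ misses every other ball, and the same conclusion via \cref{lm ball center m = 1} and \cref{th prob}. The only divergence is bookkeeping (your intermediate quantity $\tilde D_j$ versus the paper's direct chain of inequalities), which does not change the argument.
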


\begin{prf}
It suffices to check that all assumptions of \cref{th prob} are satisfied.
For every $i \in [k]$, denote by $E_i := \Ex d(x,c_i)$, where $x$ is a random vector drawn according to $\mu_i$. 
Let $\gamma:=\max_{i \in [k]} \beta_i(2r_i-E_i)$. 
Using the fact that $r_i \in (0,R]$, $\beta_i \in [1,\beta]$, and $E_i \in (0,R]$,
we can bound $\gamma$ and obtain
\begin{align*}
    \max_{i \in [k]} \beta_i(r_i-E_i)
    < \gamma
    < 2\beta R 
    < \min_{j \in [k]}r_j + (1+2\beta)R -R
    < \min_{i \neq j} d(c_i,c_j)-r_i-R
    \le \min_{i \in [k]}\beta_i(D_i-E_i).
\end{align*}

Let $\alpha_i := E_i+\frac{\gamma}{\beta_i}$ for every $i \in [k]$.
It remains to show that for every $i \in [k]$, $c_i$ is the unique point that achieves $\max \{G^{\alpha}(z) \mid z \in \ball_{r_i}(c_i)\}$.
For every $i \in [k]$, from the definition of $\gamma$ we have $\alpha_i \ge 2r_i$, thus for every $z \in \ball_{r_i}(c_i)$, we have $\ball_{\alpha_i}(z) \cap \ball_{r_i}(c_i)=\ball_{r_i}(c_i)$.
For every $i \in [k]$ we have $\alpha_i \le R+\gamma < (1+2\beta)R$ which implies $\ball_{\alpha_j}(z) \cap \ball_{r_j}(c_j)=\emptyset$ for every $z \in \ball_{r_i}(c_i)$ and every $j \in [k] \setminus \{i\}$. 
From \cref{obs G}, we know that for every $i \in [k]$ and for every $z \in \ball_{r_i}(c_i)$ with $z \neq c_i$, we have 
\begin{align*}
    G^\alpha(z)= \int_{-r_i}^{r_i} (\alpha_i -d(z,x)) d \mu_i(x) = \alpha_i - \E d(z,x) < \alpha_i - \E d(c_i,x) = G^\alpha(c_i),
\end{align*}
where the inequality follows from \cref{lm ball center m = 1}.

The assumptions of \cref{th prob} are satisfied, and so \eqref{pr LP} achieves exact recovery with high probability.
\end{prf}

\begin{theorem}
\label{th one dim 1}
Consider the ESBM with $m=1$.
For every $i \in [k]$, assume that the probability space $(\mu_i,\ball_{r_i}(c_i))$ satisfies \ref{ass rotation}, \ref{ass open}, \ref{ass dim}.
For every $i \in [k]$, assume $n_i=n$, $r_i=1$, and denote by $E_i := \Ex d(x,c_i)$, where $x$ is a random vector drawn according to $\mu_i$. 
We further assume $E_1=\cdots=E_k$.
If for every $i \neq j$ we have $d(c_i,c_j)>2+2$, then \eqref{pr LP} achieves exact recovery with high probability. 
\end{theorem}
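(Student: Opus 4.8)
The plan is to derive this theorem from \cref{cor prob}, following the same template as the proof of \cref{th one dim gen}, but taking advantage of the extra hypothesis $E_1 = \cdots = E_k$ to obtain the improved constant. The assumptions \ref{ass rotation}, \ref{ass open}, \ref{ass dim}, as well as $n_i = n$, $r_i = 1$, and $E_1 = \cdots = E_k$, are already in place, so the only thing left to supply is a scalar $\alpha'$ with $1 < \alpha' < \min_{i \neq j} d(c_i,c_j) - 1$ such that, setting $\alpha_i := \alpha'$ for all $i \in [k]$, each center $c_i$ is the unique maximizer of $G^\alpha$ over $\ball_1(c_i)$.

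I would take $\alpha' := 2$. Since $d(c_i,c_j) > 2 + 2 = 4$ for all $i \neq j$, we have $\min_{i\neq j} d(c_i,c_j) - 1 > 3$, so indeed $1 < \alpha' < \min_{i\neq j} d(c_i,c_j) - 1$; equivalently, one may pick any $\alpha'$ in the nonempty open interval $(2, \min_{i\neq j} d(c_i,c_j) - 2)$, which avoids the boundary value altogether.

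Next I would verify the maximizer condition by a short geometric computation. Fix $i \in [k]$ and $z \in \ball_1(c_i)$. For every $x \in \ball_1(c_i)$ we have $d(z,x) \le d(z,c_i) + d(c_i,x) \le 2 = \alpha'$, so $\ball_1(c_i) \subseteq \ball_{\alpha'}(z)$; and for $j \neq i$ and $x \in \ball_1(c_j)$ we have $d(z,x) \ge d(c_i,c_j) - d(z,c_i) - d(c_j,x) \ge d(c_i,c_j) - 2 > 2 = \alpha'$, so $\ball_{\alpha'}(z) \cap \ball_1(c_j) = \emptyset$. Substituting these two facts into the formula of \cref{obs G} gives $G^\alpha(z) = \int_{\ball_1(c_i)}(\alpha' - d(z,x))\, d\mu_i(x) = \alpha' - \E d(z,x)$, where $x$ is drawn according to $\mu_i$. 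Since $\mu_i$ satisfies \ref{ass rotation} and \ref{ass open}, \cref{lm ball center m = 1} says that $z = c_i$ is the unique point of $\ball_1(c_i)$ minimizing $\E d(z,x)$, hence the unique maximizer of $G^\alpha$ over $\ball_1(c_i)$. All the hypotheses of \cref{cor prob} now hold, and it yields that \eqref{pr LP} achieves exact recovery with high probability.

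I do not expect any genuine obstacle here: the argument is a direct specialization of \cref{cor prob}, and the only point requiring care is the bookkeeping at the boundary value $\alpha' = 2$, where the inclusion $\ball_1(c_i) \subseteq \ball_{\alpha'}(z)$ uses that the balls are closed — picking $\alpha'$ strictly inside $(2, \min_{i\neq j} d(c_i,c_j) - 2)$ removes even that concern. The conceptual takeaway worth stressing is that the equal-expectation hypothesis $E_1 = \cdots = E_k$ is precisely what permits a single value $\alpha' = 2$ to serve all $k$ balls simultaneously in \cref{cor prob}; without it one is forced back to the coarser per-ball choice of \cref{th one dim gen} and the weaker requirement $d(c_i,c_j) > r_i + r_j + (1+2\beta)R$.
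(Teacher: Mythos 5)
Your proposal is correct and follows essentially the same route as the paper: verify the hypotheses of \cref{cor prob} with a common value $\alpha'$ slightly above $2$ (the paper takes $\alpha' = 2+\epsilon$ with $\epsilon \in (0,\Theta-2)$, matching your interior-choice remark), then use \cref{obs G} to write $G^\alpha(z)=\alpha'-\E d(z,x)$ on each ball and invoke \cref{lm ball center m = 1} for uniqueness of the maximizer. Your boundary choice $\alpha'=2$ also works since the balls are closed, so this is only a cosmetic difference.
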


\begin{prf}
It suffices to check that all assumptions of \cref{cor prob} are satisfied.
Let $\Theta := \min_{i \neq j}d(c_i,c_j) - 2>2$, let $\epsilon \in (0,\Theta-2)$, and let $\alpha':=2 + \epsilon$. 
Note that we have $2 < \alpha' <\Theta$, which in particular implies 
\begin{align*}
1 < \alpha' < \min_{i \neq j}d(c_i,c_j) - 1.
\end{align*}

Let $\alpha_i := \alpha'$ for every $i \in [k]$.
It remains to show that for every $i \in [k]$, $c_i$ is the unique point that achieves $\max \{G^{\alpha}(z) \mid z \in \ball_{1}(c_i)\}$.
For every $i \in [k]$ and for every $z \in \ball_{1}(c_i)$, $\alpha' > 2$ implies $\ball_{\alpha_i}(z) \cap \ball_{1}(c_i)=\ball_{1}(c_i)$ and $\alpha' <\Theta$ implies $\ball_{\alpha_j}(z) \cap \ball_{1}(c_j)=\emptyset$ for every $j \in [k] \setminus \{i\}$.
From \cref{obs G}, we know that for every $i \in [k]$ and for every $z \in \ball_1(c_i)$ with $z \neq c_i$, we have 
\begin{align*}
    G^\alpha(z)
    = \int_{-1}^1 (\alpha_i -d(z,x)) d \mu_i(x) = \alpha_i - \E d(z,x) < \alpha_i - \E d(c_i,x) = G^\alpha(c_i),
\end{align*}
where the inequality follows from \cref{lm ball center m = 1}.

The assumptions of \cref{cor prob} are satisfied, and so \eqref{pr LP} achieves exact recovery with high probability.
\end{prf}

\cref{th one dim 1} implies that in the SBM with $m=1$, a sufficient condition for \eqref{pr LP} to achieve exact recovery is that the distance between any pair of points from the same ball is always smaller than the distance between any pair of points from different balls. 
We remark that under this assumption a simple threshold algorithm can also achieve exact recovery. 
It is currently unknown if in the SBM with $m=1$ a pairwise distance smaller than 4 may be sufficient to guarantee exact recovery.

Next, we present our most interesting results, which consider exact recovery for the ESBM and the SBM with $m \ge 2$.

\begin{theorem}
\label{th main2}
Consider the ESBM with $m \ge 2$. 
For every $i \in [k]$, assume that the probability space $(\mu_i,\ball_{r_i}(c_i))$ satisfies \ref{ass rotation}, \ref{ass open}, \ref{ass dim}. 
Let $\beta,r,R \in \R$ such that for every $i \in [k]$ we have $r_i \in [r,R]$ and $\beta_i \le \beta$.
Then there is a function $\epsilon(k,m)=C\sqrt{k \log m/m}$, where $C$ is a positive constant, such that, 
if for every $i \neq j$ we have $d(c_i,c_j)>(1+\beta)R+\max\{r_i,r_j\}+\epsilon(k,m)$, then $\eqref{pr LP}$ achieves exact recovery with high probability. 
\end{theorem}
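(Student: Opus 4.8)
The plan is to deduce \cref{th main2} from \cref{th prob}: we must produce a scalar $\gamma$ with $\max_{i\in[k]}\beta_i(r_i-E_i)<\gamma<\min_{i\in[k]}\beta_i(D_i-E_i)$ and, setting $\alpha_i:=E_i+\gamma/\beta_i$, verify that for every $i$ the centre $c_i$ is the unique maximiser of $G^\alpha$ over $\ball_{r_i}(c_i)$. I would first fix a gauge $\delta$ of order $R/\sqrt m$ and take $\gamma:=\max_{i'\in[k]}\beta_{i'}(r_{i'}-E_{i'}+\delta)$; then $\gamma/\beta_i\ge r_i-E_i+\delta$, so $\alpha_i\ge r_i+\delta>r_i$, and using $0<E_i\le r_i\le R$, $1\le\beta_i\le\beta$ one gets the crude bound $\alpha_i\le(1+\beta)R+\beta\delta$. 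Since $D_i=\min_{j\neq i}d(c_i,c_j)-r_i$, the inequality $\gamma<\beta_i(D_i-E_i)$ is exactly $\alpha_i+r_i<d(c_i,c_j)$ for all $j\neq i$, and this follows from the hypothesis $d(c_i,c_j)>(1+\beta)R+\max\{r_i,r_j\}+\epsilon(k,m)$ together with $\alpha_i+r_i\le(1+\beta)R+\beta\delta+\max\{r_i,r_j\}$, provided the constant $C$ in $\epsilon(k,m)=C\sqrt{k\log m/m}$ is taken large enough that $\beta\delta<\epsilon(k,m)$. So the first block of hypotheses of \cref{th prob} holds, and the whole content of the theorem is the maximisation statement.

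To analyse $G^\alpha$ on $\ball_{r_i}(c_i)$ I would split it, via \cref{obs G}, into the \emph{own} term $\beta_i\int(\alpha_i-d(z,x))_+\,d\mu_i(x)$ and the \emph{foreign} terms $\sum_{j\neq i}\beta_j\int(\alpha_j-d(z,x))_+\,d\mu_j(x)$, noting that at $z=c_i$ the foreign terms vanish (each $j\neq i$ has $\alpha_j<D_j\le d(c_j,c_i)-r_j$) while the own term equals $\beta_i(\alpha_i-E_i)=\gamma$ because $\alpha_i>r_i\ge\norm{x-c_i}$. The heart of the argument is a \emph{quantitative} version of \cref{lm ball center m >= 2}: for $m\ge2$ and $\alpha_i\ge r_i+\delta$, and for $z$ with $\norm{z-c_i}=s$, one has $\beta_i\int(\alpha_i-d(z,x))_+\,d\mu_i(x)\le\gamma-\Lambda_i(s)$ with $\Lambda_i(s)>0$ for all $s\in(0,r_i]$. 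To establish this I would write the rotation-invariant $\mu_i$ as a mixture of uniform measures on concentric spheres $\sphere_\sigma(c_i)$, and on each such sphere use $d(z,x)^2=s^2+\sigma^2-2s\sigma\inner{u}{(x-c_i)/\sigma}$ (with $u$ the unit vector towards $z$) together with the fact that the first coordinate of the uniform measure on $\sphere_1(0)\subset\R^m$ is sub‑Gaussian with parameter $O(1/\sqrt m)$; this makes $d(z,x)$ concentrate about $\sqrt{s^2+\sigma^2}>\sigma$, and after separately bounding the clipped mass $\int(d(z,x)-\alpha_i)_+$ (again by this concentration, which keeps it $O(R/\sqrt m)$ even in the critical range $s\approx\sqrt{\alpha_i^2-r_i^2}$ where clipping switches on) one obtains a strictly positive $\Lambda_i(s)$ that is $\gtrsim s^2/R$ in the no‑clipping core and $\gtrsim\delta-O(R/\sqrt m)\gtrsim R/\sqrt m$ in the clipped shell. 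Uniqueness at $s=0$ is \cref{lm ball center m >= 2} on the no‑clipping core, and assumptions \ref{ass open}, \ref{ass dim} rule out the atomic behaviour (e.g.\ a two‑atom measure on $\sphere_{r_i}(c_i)$) that would otherwise let a boundary point beat the centre.

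For the foreign terms I would show that, uniformly over $z\in\ball_{r_i}(c_i)$, $d(z,c_j)\ge d(c_i,c_j)-r_i>(1+\beta)R+\epsilon(k,m)\ge\alpha_j+\tfrac12\epsilon(k,m)$, so the event $d(z,x)<\alpha_j$ for $x\sim\mu_j$ forces $\inner{u}{x-c_j}$ to exceed a threshold of order $\epsilon(k,m)$; the same sub‑Gaussian estimate gives $\int(\alpha_j-d(z,x))_+\,d\mu_j(x)\le\alpha_j\exp(-c\,m\,\epsilon(k,m)^2/R^2)$, hence a total foreign contribution at most $k\beta(1+\beta)R\exp(-c\,m\,\epsilon(k,m)^2/R^2)$. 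Since $\Lambda_i(s)$ can be small only for $s$ so small that the foreign terms already vanish, it suffices to arrange that this exponentially small foreign bound is below $\min_{s\ge s_0}\Lambda_i(s)$, where $s_0$ is of order $\epsilon(k,m)$; from the previous paragraph this minimum is of order $R/\sqrt m$ in the worst case, so the balance closes precisely when $m\,\epsilon(k,m)^2/R^2\gtrsim\log k+\log m$, i.e.\ for $\epsilon(k,m)=C\sqrt{k\log m/m}$ with $C=C(r,R,\beta)$ (using $\log k+\log m\le k\log m$ for $m\ge3$ and absorbing the finitely many small $m$ into $C$). Adding the own and foreign estimates yields $G^\alpha(z)<\gamma=G^\alpha(c_i)$ for every $z\in\ball_{r_i}(c_i)\setminus\{c_i\}$, so all hypotheses of \cref{th prob} are met and exact recovery occurs with high probability.

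\textbf{Main obstacle.} Everything hinges on the quantitative sphere‑by‑sphere estimate for the own term: unlike \cref{lm ball center m >= 2}, which only needs the qualitative monotonicity $\tfrac{\partial}{\partial t}\Ex d(tv,y)>0$, here one must bound $\int(\alpha_i-d(z,x))_+\,d\mu_i$ above by $\gamma$ with a gap that does not collapse in the critical range $s\approx\sqrt{\alpha_i^2-r_i^2}$, where the positive part is just becoming active and where clipping tends to \emph{increase} the integral; controlling the clipped mass there down to $O(R/\sqrt m)$, and then balancing this $\sqrt m$‑sized gap against the $k$ exponentially small foreign contributions, is exactly what pins the rate at $\sqrt{k\log m/m}$ and is what forces $m\ge2$ — for $m=1$ the first coordinate of $\sphere_1(0)$ is $\pm1$, no concentration is available, and the estimate genuinely fails, consistent with the weaker one‑dimensional result \cref{th one dim 1}.
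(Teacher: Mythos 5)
Your overall architecture is the paper's: verify the hypotheses of \cref{th prob}, split $G^\alpha(c_i)-G^\alpha(z)$ into the own-ball term and the foreign terms, lower-bound the former by a quantitative spherical-concentration estimate (the paper does this via \cref{lm distribution}, \cref{lem T2} and \cref{lm bound1}, built on the explicit density $p^{(m)}(\theta)\propto\sin^{m-2}\theta$ and \cref{lem T6}; you use the sub-Gaussian first coordinate of the uniform sphere, which is the same fact in different clothing), upper-bound the latter by a spherical-cap estimate (the paper's \cref{lm residual}; your cap bound is slightly weaker but still sufficient), and balance to get $\epsilon(k,m)=C\sqrt{k\log m/m}$. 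Your feasibility check for $\gamma$ and the identity $G^\alpha(c_i)=\gamma$ are fine.

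The genuine gap is in your choice of the overshoot. You fix $\delta:=\alpha_i-r_i$ of order $R/\sqrt m$, decoupled from the separation surplus, and then claim the own-ball gap in the clipped shell is $\delta-O(R/\sqrt m)\gtrsim R/\sqrt m$. This is a constant-versus-constant fight that you never resolve, and for small $m$ it is genuinely false unless the hidden constant in $\delta$ is large: e.g.\ for $m=2$, $r_i=R=1$ and $z$ on the boundary, the worst-case (sphere-measure) gap is $T^{(1+\delta,2)}(z)\approx\delta-0.16$, so the centre is \emph{not} the maximiser for a small gauge constant -- this is exactly the low-dimensional phenomenon behind the paper's counterexample to Awasthi et al. Your proposed remedy, ``absorb the finitely many small $m$ into $C$,'' does not work as stated, because $C$ never enters your own-ball estimate: enlarging the separation does nothing for the positivity of $\int(\alpha_i-d(z,x))_+d\mu_i$ versus its value at $c_i$. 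To close the argument you must either let the overshoot grow with the separation surplus -- this is what the paper does, taking $\gamma=\beta R+\Theta$ with $2\Theta=d(c_i,c_j)-(1+\beta)R-\max\{r_i,r_j\}$, so that $\alpha_i-r_i\gtrsim\Theta r_i/(\beta R)\ge C'\sqrt{k\log m/m}$ and the final choice of a large $C$ simultaneously fixes every dimension $m\ge2$ through the $F(C,m)$ analysis -- or take the gauge constant in $\delta$ large (depending on $r,R,\beta$), re-verify the feasibility constraint $\beta\delta<\epsilon(k,m)$, and then still supply a separate low-dimensional verification of the clipped-shell positivity (a computation of the flavour of the proof of \cref{th main3}), which your sketch does not contain. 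A secondary, minor slip: your claim that $\min_{s\ge s_0}\Lambda_i(s)$ is of order $R/\sqrt m$ is off (for large $m$ it is of order $\epsilon(k,m)^2/R$, coming from the core bound $s_0^2/R$), though the balance condition you then impose happens to be the right one.
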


\begin{theorem}
\label{th main3}
Consider the ESBM with $m \ge 2$.
For every $i \in [k]$, assume that the probability space $(\mu_i,\ball_{r_i}(c_i))$ satisfies \ref{ass rotation}, \ref{ass open}, \ref{ass dim}.
For every $i \in [k]$, assume $n_i=n$, $r_i=1$, and denote by $E_i := \Ex d(x,c_i)$, where $x$ is a random vector drawn according to $\mu_i$. 
We further assume $E_1=\cdots=E_k$. 
If for every $i \neq j$ we have $d(c_i,c_j)>2+1.29$,
then \eqref{pr LP} achieves exact recovery with high probability. 
\end{theorem}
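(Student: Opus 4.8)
The plan is to obtain \cref{th main3} as an instance of \cref{cor prob}. The structural hypotheses of \cref{cor prob} (assumptions \ref{ass rotation}--\ref{ass dim}, $n_i=n$, $r_i=1$, and $E_1=\cdots=E_k$) are exactly the ones assumed here, so the only thing left to exhibit is a scalar $\alpha'$ with $1<\alpha'<\min_{i\neq j}d(c_i,c_j)-1$ such that, setting $\alpha_i:=\alpha'$ for every $i$, each $c_i$ is the \emph{unique} maximizer of $G^\alpha(z)$ over $z\in\ball_1(c_i)$. Since $\min_{i\neq j}d(c_i,c_j)>3.29$, the admissible interval for $\alpha'$ contains $2$, and I would take $\alpha'=2$ (or a value slightly larger): this is the smallest admissible choice, which is the useful one because, as explained next, it kills as much as possible of the ``cross'' contribution of the other balls to $G^\alpha$ while leaving the contribution of ball $i$ to itself untouched.

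With $\alpha'=2$, \cref{obs G} yields the decomposition $G^\alpha(z)=G^\alpha_i(z)+\sum_{j\neq i}G^\alpha_j(z)$, where $G^\alpha_j(z):=\int_{\ball_1(c_j)}(2-d(z,x))_+\,d\mu_j(x)$. For $z\in\ball_1(c_i)$ every point of $\ball_1(c_i)$ is within distance $2$ of $z$, so $G^\alpha_i(z)=2-\Ex_{\mu_i}d(z,x)$, and since $m\ge 2$, \cref{lm ball center m >= 2} tells us that this is uniquely maximized at $z=c_i$. Writing $g_i(z):=G^\alpha_i(c_i)-G^\alpha_i(z)=\Ex_{\mu_i}d(z,x)-E_i$, which is positive for $z\neq c_i$, the task reduces to the purely deterministic inequality $\sum_{j\neq i}\bigl(G^\alpha_j(z)-G^\alpha_j(c_i)\bigr)<g_i(z)$ for every $i\in[k]$ and every $z\in\ball_1(c_i)\setminus\{c_i\}$.

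I would prove this inequality in two stages. First, a worst-case reduction over the probability measures: decomposing each rotation-invariant $\mu_i$ as a mixture of uniform measures on concentric spheres $\sphere_s(c_i)$, $s\in[0,1]$, one sees that moving mass outward simultaneously decreases $g_i$ and increases every $G^\alpha_j-G^\alpha_j(c_i)$; as the constraint $E_1=\cdots=E_k$ is compatible with taking every $\mu_i$ uniform on $\sphere_1(c_i)$ (then $E_i=1$), it suffices to prove the inequality in that case. Second, the explicit geometric estimate: writing $z=c_i+tv$ with $\norm v=1$ and $t\in(0,1]$, one lower bounds $g_i(t)$ (a rotationally symmetric integral of $\norm{tv-u}-1$ over $u\in\sphere_1(0)$) and, for each neighbour $c_j$, upper bounds $G^\alpha_j(c_i+tv)-G^\alpha_j(c_i)$ by an integral over a spherical cap of $\sphere_1(c_j)$ whose angular size is controlled by $d(c_i,c_j)$ and by the angle between $v$ and the direction from $c_i$ to $c_j$. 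Summing over $j$ uses a packing bound: the condition $d(c_i,c_j)>3.29$ limits how close the directions $c_i\to c_j$ can be to one another and hence, together with the fast decay of the cap integrals, keeps $\sum_{j\neq i}\bigl(G^\alpha_j-G^\alpha_j(c_i)\bigr)$ below $g_i(t)$ uniformly; the precise threshold at which this balance first succeeds is $d(c_i,c_j)>2+1.29$, and one checks that $m=2$ is the binding dimension.

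The hard part is exactly this balancing in the second stage. When $\mu_i$ is supported on the boundary sphere, $g_i(t)$ vanishes to second order as $t\to 0$, so one needs a genuinely sharp lower bound for it; at the same time the cap bound for the cross terms must be strong enough that, even after summing over all neighbours within the relevant range (of which there may be many when $m$ is large), the total stays under $g_i(t)$ for every $t\in(0,1]$ and for every admissible configuration of centers. Making the measure-theoretic worst-case reduction of the first stage rigorous, and dealing with points $z$ that do not lie on a line through any $c_j$, are the remaining technical points.
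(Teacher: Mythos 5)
Your overall frame (reduce to \cref{cor prob}, exhibit a common $\alpha'$, argue the worst case is the boundary-uniform measure, treat low dimension as binding) matches the paper's, but your choice $\alpha'=2$ diverges from it at exactly the point that carries the proof. The paper takes $\alpha':=1.29$: since $d(c_i,c_j)>3.29$, every $z\in\ball_1(c_i)$ and $x\in\ball_1(c_j)$ with $j\neq i$ satisfy $d(z,x)>d(c_i,c_j)-2>1.29$, so $\ball_{1.29}(z)\cap\ball_1(c_j)=\emptyset$ and the cross terms in the decomposition of \cref{obs G} vanish \emph{identically}, for every $k$, every $m$, and every admissible measure. What is left is a single-ball statement about the truncated contribution, $T^{(1.29,m)}(z-c_i)>0$, which the paper gets from \cref{lm distribution} (boundary-uniform is worst), \cref{lem T4} (monotone in $m$, so $m=2$ is binding), \cref{lem T1} for $\norm{z-c_i}\le 0.29$, and an explicit two-dimensional computation for $\norm{z-c_i}\in(0.29,1]$; the constant $1.29$ is precisely the threshold at which this one-ball inequality holds. (Your parenthetical that $2$ is ``the smallest admissible choice'' is also off: \cref{cor prob} only requires $1<\alpha'<\min_{i\neq j}d(c_i,c_j)-1$, so values below $2$, in particular $1.29$, are allowed, and a smaller $\alpha'$ suppresses the cross contribution more, not less — at the price of truncating the self term, which is the trade-off the paper resolves.)

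With $\alpha'=2$ the cross terms do not vanish (they would only if $d(c_i,c_j)>4$), and the whole weight of the proof falls on the ``second stage'' that you explicitly leave undone: a sharp lower bound on $g_i$, an upper bound on each spherical-cap integral, the monotonicity claim that pushing mass of $\mu_j$ outward increases the cross contribution (asserted, not proved), and a packing argument summing over all neighbours with $d(c_i,c_j)<4$. This is a genuine gap rather than a routine verification: the number of such neighbours can grow exponentially with $m$ (they only need pairwise distance greater than $3.29$ inside a ball of radius $4$ around $c_i$), so it is not clear — and you do not show — that the summed cap contributions stay below $g_i(t)$ uniformly in $k$ and $m$, nor that ``$m=2$ is the binding dimension'': that claim is correct for the self term (via \cref{lem T4}), but on the cross side larger $m$ brings more neighbours, and in the paper the only results obtained by actually estimating nonvanishing cross terms are the $k$-dependent conditions of the form $2+O(\sqrt{k\log m/m})$ in \cref{th main2,th main4}. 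The $k$- and $m$-independent constant $3.29$ is available precisely because the cross terms are killed exactly, so as written your argument is a plan whose decisive quantitative step is missing, and it is doubtful it reproduces the threshold $2+1.29$ without further ideas; choosing $\alpha'=1.29$ removes the difficulty entirely.
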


\begin{theorem}
\label{th main4}
Consider the ESBM with $m \ge 2$.
For every $i \in [k]$, assume that the probability space $(\mu_i,\ball_{r_i}(c_i))$ satisfies \ref{ass rotation}, \ref{ass open}, \ref{ass dim}.
For every $i \in [k]$, assume $n_i=n$, $r_i=1$, and denote by $E_i := \Ex d(x,c_i)$, where $x$ is a random vector drawn according to $\mu_i$. 
We further assume $E_1=\cdots=E_k$. 
Then there is a function $\epsilon(k,m)=C\sqrt{k \log m/m}$, where $C$ is a positive constant, such that, if for every $i \neq j$ we have $d(c_i,c_j)>2+\epsilon(k,m)$, then $\eqref{pr LP}$ achieves exact recovery with high probability.  
\end{theorem}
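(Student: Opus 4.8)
The plan is to derive the theorem from \cref{cor prob}. Since $d(c_i,c_j)>2+\epsilon(k,m)$ for all $i\ne j$, the interval $\bigl(1,\min_{i\ne j}d(c_i,c_j)-1\bigr)$ contains $1+\tfrac14\epsilon(k,m)$, so I would fix $\alpha':=1+\tfrac14\epsilon(k,m)$ and $\alpha_i:=\alpha'$ for all $i\in[k]$; by \cref{cor prob} it then suffices to show that for every $i$ the center $c_i$ is the unique maximizer of $G^{\alpha}$ over $\ball_1(c_i)$. Fixing $i$ and translating so that $c_i=0$, \cref{obs G} (with $\beta_i=1$, $r_i=1$) gives $G^{\alpha}(z)=h_i(z)+\sum_{j\ne i}h_j(z)$, where $h_j(z):=\int_{\ball_1(c_j)}(\alpha'-d(z,x))_+\,d\mu_j(x)$; using $(\alpha'-t)_+=\alpha'-\min(\alpha',t)$ we get $h_i(0)=\alpha'-E_i$ and $h_i(0)-h_i(z)=\E_{\mu_i}\bigl[\min(\alpha',d(z,\cdot))\bigr]-E_i$.

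I would split on $\rho:=\|z\|$. For $\rho\le\alpha'-1=\tfrac14\epsilon(k,m)$: then $d(z,c_j)\ge d(c_i,c_j)-\rho>2+\tfrac34\epsilon(k,m)>\alpha'+1$, so $\ball_{\alpha'}(z)\cap\ball_1(c_j)=\emptyset$ for $j\ne i$, whence $G^{\alpha}(z)=h_i(z)=\alpha'-\E_{\mu_i}d(z,\cdot)<\alpha'-E_i=G^{\alpha}(0)$ for $z\ne 0$ by \cref{lm ball center m >= 2}. The remaining, and main, case is $\alpha'-1<\rho\le 1$, where the balls $j\ne i$ may now contribute, and I would lower bound $h_i(0)-h_i(z)$ and upper bound $\sum_{j\ne i}h_j(z)$, showing the former wins.

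For the deficit I would use rotation invariance of $\mu_i$: condition on $\|x\|=s$, write $d(z,x)^2=\rho^2+s^2-2\rho s\cos\theta$, and call $x$ \emph{typical} if $|\cos\theta|\le\rho/(4s)$. For typical $x$, $d(z,x)\ge\sqrt{s^2+\rho^2/2}$, so $\min(\alpha',d(z,x))-s\ge\min\bigl(\alpha'-1,\sqrt{s^2+\rho^2/2}-s\bigr)\ge\min\bigl(\tfrac14\epsilon(k,m),\,c_0\rho^2\bigr)$ for a universal $c_0>0$; the atypical $x$ have $\mu_i$-mass $\le e^{-\Omega(m\rho^2)}$, and there $\min(\alpha',d(z,x))-s\ge-1$. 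Integrating over $s$ and using $\rho>\tfrac14\epsilon(k,m)$ gives $h_i(0)-h_i(z)\ge c_1\,\epsilon(k,m)^2$ for a universal $c_1>0$. For the cross terms, $h_j(z)\ne0$ forces $d(z,c_j)<\alpha'+1$; combined with $d(z,c_j)\ge d(c_i,c_j)-\rho>1+\epsilon(k,m)$ and $\alpha'=1+\tfrac14\epsilon(k,m)$, every $x\in\ball_1(c_j)$ with $d(z,x)<\alpha'$ satisfies $\inner{x-c_j}{\widehat{z-c_j}}>\tfrac14\epsilon(k,m)$, so rotation invariance of $\mu_j$ together with the sub-Gaussian bound $\Pr(\inner{u}{\omega}>t)\le e^{-\Omega(mt^2)}$ (valid for any fixed unit $u$ and $\omega$ uniform on $\sphere_1(0)$) gives $h_j(z)\le\alpha'\,e^{-\Omega(m\,\epsilon(k,m)^2)}$; since at most $k-1$ indices $j$ are active for a given $z$, $\sum_{j\ne i}h_j(z)\le k\,e^{-\Omega(m\,\epsilon(k,m)^2)}$. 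Taking the constant $C$ in $\epsilon(k,m)=C\sqrt{k\log m/m}$ large enough makes $c_1\epsilon(k,m)^2=c_1C^2k\log m/m$ exceed $k\,e^{-\Omega(C^2k\log m)}=k\,m^{-\Omega(C^2k)}$ for all sufficiently large $m$, so $G^{\alpha}(0)>G^{\alpha}(z)$; combined with the first case this shows $c_i$ uniquely maximizes $G^{\alpha}$ over $\ball_1(c_i)$, and \cref{cor prob} finishes the proof.

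The step I expect to be the main obstacle is the quantitative lower bound on the single-ball deficit $h_i(0)-h_i(z)$: it must hold uniformly over the (otherwise arbitrary) radial law of $\mu_i$ and over $z$ all the way to the boundary of $\ball_1(c_i)$, and must beat the cross-ball contributions even in the worst case $\rho\approx\tfrac14\epsilon(k,m)$, where the deficit is only of order $\epsilon(k,m)^2$. The delicate point is that one cannot simply bound the deficit as (gain in $\E_{\mu_i}d(z,\cdot)$) minus (loss from the positive part), since these two quantities are of the same order $\rho^2$; instead one must control $\E_{\mu_i}\min(\alpha',d(z,\cdot))-E_i$ directly and use high-dimensional concentration to make the atypical directions negligible. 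This is precisely where the hypothesis $m\ge 2$ and the rate $\sqrt{\log m/m}$ enter.
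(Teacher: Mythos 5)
Your proposal is correct and follows essentially the same route as the paper: reduce to \cref{cor prob}, split on whether $\norm{z-c_i}$ is below or above $\alpha'-1$, lower bound the single-ball deficit by $\Omega(\epsilon(k,m)^2)$ while upper bounding the cross-ball contributions by $k\,e^{-\Omega(m\,\epsilon(k,m)^2)}$, and then choose the constant $C$ large (with the same implicit interplay that whenever the second case is nonempty one has $\epsilon(k,m)<4$, forcing $m$ large). The only difference is presentational: your direct typical/atypical angular split and spherical-cap bound do by hand what the paper delegates to \cref{lm H} (via \cref{lm distribution,lm bound1}) and to \cref{lm residual} (via \cref{lem T6}).
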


\begin{theorem}
\label{th main5}
Consider the SBM with $m \ge 2$. 
Assume that the probability space $(\mu,\ball_1(0))$ satisfies \ref{ass rotation}, \ref{ass open}, \ref{ass dim}. 
Assume that $\mu$ has a density function $p(x)$ and assume that, for $x_1,x_2 \in \ball_1(0)$ with $\norm{x_1} < \norm{x_2}$ we have $p(x_1) > p(x_2)$.
If for every $i \neq j$ we have $d(c_i,c_j)>2$, then \eqref{pr LP} achieves exact recovery with high probability.
\end{theorem}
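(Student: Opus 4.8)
The plan is to derive the statement from \cref{cor prob}. The SBM is the special case of the ESBM in which, for every $i \in [k]$, $r_i = 1$, $n_i = n$, and $\mu_i$ is the translate of $\mu$ by $c_i$; thus each $(\mu_i,\ball_{r_i}(c_i))$ satisfies \ref{ass rotation}, \ref{ass open}, \ref{ass dim}, and $E_i = \Ex_{v\sim\mu}\norm{v} =: E$ is the same for all $i$. Put $\Delta := \min_{i\neq j}d(c_i,c_j)$. Since $\Delta>2$, the interval $(1,\Delta-1)$ is nonempty, so we may fix some $\alpha' \in (1,\Delta-1)$ and set $\alpha := (\alpha',\dots,\alpha')$. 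Every hypothesis of \cref{cor prob} then holds except possibly the last, so it remains to prove that for every $i \in [k]$, the center $c_i$ is the \emph{unique} maximizer of $G^\alpha$ over $\ball_1(c_i)$.

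Fix $i$ and translate so $c_i = 0$. For $z \in \ball_1(0)$, write $G^\alpha(z) = g(z) + \sum_{j\neq i}h_j(z)$, where $g(z) := \Ex_{v\sim\mu}[(\alpha'-\norm{z-v})_+]$ is ball $i$'s self-contribution and $h_j(z) := \Ex_{v\sim\mu}[(\alpha'-\norm{z-c_j-v})_+] \geq 0$ is the ``pull'' from ball $j$. As $\alpha' < \Delta - 1$, every $x \in \ball_1(c_j)$ has $d(0,x) \geq d(0,c_j) - 1 \geq \Delta - 1 > \alpha'$, so $h_j(0) = 0$; and $\norm{v}\leq 1<\alpha'$ gives $g(0)=\alpha'-E$. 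Hence $G^\alpha(0) = \alpha' - E$, and the claim amounts to
\begin{align*}
g(0) - g(z) > \sum_{j\neq i}h_j(z) \qquad \forall z \in \ball_1(0)\setminus\{0\}.
\end{align*}
By rotation invariance of $\mu$, $g(z)$ depends only on $t := \norm{z}$; setting $\phi(t) := \Ex_{v\sim\mu}\norm{te-v}$ for a unit vector $e$ (the value is independent of $e$) and $O(z) := \Ex_{v\sim\mu}[(\norm{z-v}-\alpha')_+]\geq 0$, we get $g(z) = \alpha' - \phi(t) + O(z)$, whence $g(0)-g(z) = (\phi(t)-E) - O(z)$. By \cref{lm ball center m >= 2}, $\phi(t) > \phi(0) = E$ for all $t \in (0,1]$; also $\phi(t) \geq \norm{\Ex_{v\sim\mu}(te-v)} = t$ by Jensen's inequality. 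The triangle inequality gives $O(z) \leq H(\alpha'-t)$ and $h_j(z) \leq H(d(0,c_j)-t-\alpha') \leq H(\Delta-t-\alpha')$, where $H(\rho) := \Ex_{v\sim\mu}[(\norm{v}-\rho)_+]$ is nonnegative, continuous, convex, nonincreasing, and identically $0$ for $\rho\geq 1$.

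When $t \leq \min\{\alpha'-1,\;\Delta-1-\alpha'\}$ the bounds on $O(z)$ and on every $h_j(z)$ both vanish, so the displayed inequality reduces to $\phi(t) - E > 0$, which holds. For larger $t$, I would use the strict radial monotonicity of the density $p$ to control the right-hand side. Radial monotonicity forces the outer shell $\ball_1(0)\setminus\ball_{1-w}(0)$ to carry little mass (a computation using $\int p = 1$ gives a bound of order $mw/(1-w)^m$), and integrating this yields $H(1-w) = O(w^2)$, i.e. $H$ vanishes to second order at $\rho = 1$. Because $d(z,c_j) \geq \Delta - t$, the portion of ball $j$ within distance $\alpha'$ of $z$ sits in the outer shell $\ball_1(c_j)\setminus\ball_{\Delta-t-\alpha'}(c_j)$, so $h_j(z) \leq H(\Delta-t-\alpha') = O\bigl((t-(\Delta-1-\alpha'))_+^2\bigr)$, and similarly $O(z) \leq H(\alpha'-t) = O\bigl((t-(\alpha'-1))_+^2\bigr)$. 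Combining these $O(\cdot^2)$ estimates with the lower bound $\phi(t)-E \geq \phi\bigl(\min\{\alpha'-1,\Delta-1-\alpha'\}\bigr)-E>0$ for $t$ bounded away from $0$ (and $\phi(t)-E \geq 1-E$ near $t=1$, from $\phi(t)\geq t$), a summation over the balls $c_j$ that can contribute (those within distance $\alpha'+1$ of $z$, controlled by a packing/volume argument), and an appropriate choice of $\alpha'\in(1,\Delta-1)$ — e.g. the balanced choice $\alpha'=\Delta/2$ making $\alpha'-1$ and $\Delta-1-\alpha'$ equal — one verifies the displayed inequality for all remaining $t\in(0,1]$. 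The main obstacle is precisely this last quantitative comparison: showing, uniformly over $z\in\ball_1(c_i)$, that the restoring force $\phi(t)-E$ arising from the rotational concentration of ball $i$'s mass at $c_i$ dominates the combined self-overflow $O(z)$ and total pull $\sum_{j\neq i}h_j(z)$. This is genuinely delicate when $\Delta$ is only slightly above $2$ — the regime where the interest of the theorem lies — and it is exactly there that the hypothesis of a strictly decreasing density cannot be dispensed with.
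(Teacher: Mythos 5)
Your reduction to \cref{cor prob} and the treatment of the easy regime are fine, but the proof has a genuine gap exactly where you acknowledge it: the ``quantitative comparison'' for $t>\min\{\alpha'-1,\Delta-1-\alpha'\}$ is not carried out, and the estimates you sketch cannot carry it. Your bound $O(z)\le H(\alpha'-t)$ comes from the worst-case triangle inequality $\norm{z-v}\le t+\norm{v}$; for $t$ near $1$ and $\Delta$ close to $2$ (so $\alpha'-1$ is tiny) this gives $H(\alpha'-t)\approx H(0)=E$, which is not small at all, while your lower bound on the restoring force is only $\phi(t)-E$ with $\phi(t)\ge t$. So the claimed domination $\phi(t)-E> O(z)+\sum_{j\ne i}h_j(z)$ is simply not implied by these inequalities (e.g.\ for a measure concentrated near the sphere, $E\approx 1$ and $\phi(1)\approx\sqrt2$ in high dimension, and the right-hand side of your bound is already of order $E$ before even adding the pulls). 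The quadratic decay $H(1-w)=O(w^2)$ is a property of $H$ near $\rho=1$ and does not help in this regime, and nothing in your argument uses the \emph{strict} radial monotonicity of $p$ in a way that yields a uniform strict inequality over $z\in\ball_1(c_i)$; you only use it to bound shell mass, which the hypothesis \ref{ass dim}-type uniform measure would already give. Finally, your suggestion that any $\alpha'\in(1,\Delta-1)$ (e.g.\ $\alpha'=\Delta/2$) works is unsupported; the freedom in $\alpha'$ is not free in this theorem.

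For comparison, the paper closes this step by a completely different mechanism: a reflection (symmetrization) argument plus a compactness-based choice of $\alpha'$ close to $1$. The monotone density is used to prove $\int_{\ball_{\tilde\alpha}(z)\cap\ball_1(c_i)}(d(z,x)-d(c_i,x))\,d\mu_i(x)>0$ for all $\tilde\alpha>1$, by pairing each point with its mirror image across the hyperplane through $\sphere_1(z)\cap\sphere_1(c_i)$ and exploiting $p(x_+)>p(x_-)$. Separately, a purely geometric claim (proved by the same reflection plus continuity and compactness over $i$ and $z$) produces one $\epsilon>0$ such that, with $\alpha':=1+\epsilon$, the overlap regions $\ball_{\alpha'}(z)\cap\ball_1(c_j)$ can be copied by rotations about $c_j$ and translations $c_j-c_i$ into pairwise disjoint subsets of $\ball_1(c_i)$; comparing $G^\alpha(c_i)$ with these copies and then invoking the reflection inequality gives $G^\alpha(c_i)>G^\alpha(z)$ for all $z\ne c_i$. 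If you want to salvage your route, you would have to replace the triangle-inequality bounds by an argument that genuinely exploits the decreasing density to compare mass near $z$ with mass near $c_i$ uniformly in $z$ — which is essentially what the paper's symmetrization does.
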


For the SBM, \cref{th main3} gives the best known sufficient condition for exact recovery, which does not depend on $k$ or $m$. 
Furthermore, if $k$ does not grow fast, \cref{th main4} gives a near optimal condition for exact recovery in high dimension.
Furthermore, \cref{th main5} corrects the result in \cite{awasthi2015relax} by adding assumptions on the probability measure. 
Beyond the SBM, \cref{th main2} shows that if we consider the much more general ESBM, exact recovery still happens, as long as the numbers of points drawn from each ball 
have the same order.
We already discussed in \cref{sec Model} that this assumption is necessary and cannot be dropped (see \cref{ex diff order} in \cref{app same order}).

The remainder of the section is devoted to proving \cref{th main2,th main3,th main4,th main5}.

\subsection{Analysis of $G^\alpha(z)$}
\label{sec analysis G}

According to \cref{sec Probabilistic condition}, we know that exact recovery is closely related to the function $G^\alpha(z)$. 
In this section, we present an in-depth study of the function $G^\alpha(z)$. 
These results on $G^\alpha(z)$ will be used to prove our exact recovery results in dimension $m \ge 2$, i.e., \cref{th main2,th main3,th main4,th main5}.
This is why in several results in this section we assume $m \ge 2$.

\subsubsection{The random variable $\theta$}
\label{sec theta}

In the following, for $r \ge 0$, we denote by $\mu^r$ the uniform probability measure with support $\sphere_r(0)$.
Let $v$ be a fixed unit vector in $\R^m$ and let $x$ be a random vector in $\R^m$ drawn according to $\mu^1$.
We define the random variable $\theta(x)$ to be the angle between $v$ and $x$.
Since both $v$ and $x$ are unit vectors we can write
\begin{align*}
    \theta(x):=\arccos\inner{v}{x}
    \in [0,\pi].
\end{align*}
We can then use $\Tilde{\mu}(\theta)$ to denote the probability measure of $\theta$.
In the next observation we show that the probability measure $\Tilde{\mu}(\theta)$ also arises from probability measures more general than $\mu^1$.
We recall that two random variables $A,B$ have the same probability measure if for every $\psi \in \R$, we have $\Pr(A \le \psi) = \Pr(B \le \psi)$.

\begin{observation}
\label{obs link}
Let $(\mu,\ball_r(0))$ be a probability space that satisfies \ref{ass rotation}, \ref{ass dim}.
Let $v$ be a fixed unit vector in $\R^m$, and let $x$ be a random vector in $\R^m$ drawn according to $\mu$.
We define the random variable $\theta'(x)$ to be the angle between $v$ and $x$ if $x \neq 0$, and $\theta'(x) := \pi/2$ if $x=0$.
Then $\theta'$ has the same probability measure as $\theta$.
\end{observation}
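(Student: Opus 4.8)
The plan is to reduce the statement to the elementary fact that the uniform measure on a sphere is the unique rotation-invariant probability measure supported there. First I would use assumption \ref{ass dim} to dispose of the anomalous case $x=0$: the singleton $\{0\}$ has zero Lebesgue measure, hence zero $\mu$-measure, so $\Pr(x=0)=0$ and the convention $\theta'(x):=\pi/2$ for $x=0$ is irrelevant to the law of $\theta'$. Thus, outside a $\mu$-null event, $\theta'(x)$ equals the angle between $v$ and the normalized vector $x/\norm{x}$, i.e.\ $\theta'(x)=\arccos\inner{v}{x/\norm{x}}$.

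Next I would let $\nu$ be the push-forward of $\mu$ under the map $x \mapsto x/\norm{x}$, which is defined $\mu$-almost everywhere by the previous paragraph and takes values in $\sphere_1(0)$. Since this normalization map commutes with every orthogonal transformation of $\R^m$ (orthogonal maps preserve norms), and $\mu$ is invariant under such transformations by \ref{ass rotation}, the measure $\nu$ is also invariant under all orthogonal transformations of $\R^m$. The sphere $\sphere_1(0)$ carries a unique probability measure invariant under the orthogonal group, namely $\mu^1$; hence $\nu=\mu^1$. Therefore $\theta'(x)$ has the same law as the angle between $v$ and a vector drawn according to $\mu^1$, which is exactly $\theta$. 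Concretely, for every $\psi \in \R$,
\begin{align*}
    \Pr(\theta' \le \psi)
    = \Pr\pare{x \ne 0,\ \arccos \inner{v}{x/\norm{x}} \le \psi}
    = \nu\pare{\bra{u \in \sphere_1(0) \mid \arccos \inner{v}{u} \le \psi}}
    = \Pr(\theta \le \psi).
\end{align*}

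Alternatively, and more in the spirit of the proof of \cref{lm ball center m >= 2}, one can condition on the value $s$ of $\norm{x}$: writing $\hat\mu$ for the law of $\norm{x}$, for $\hat\mu$-almost every $s>0$ rotation invariance gives that, conditionally on $\norm{x}=s$, the vector $x$ is distributed according to $\mu^s$, and since angles are unchanged by positive scaling the conditional law of $\theta'$ is that of $\theta$ for every such $s$; integrating over $s$ (the set $\{s=0\}$ being $\hat\mu$-null by \ref{ass dim}) yields the claim. Either way, the only point requiring a word of justification is the uniqueness of the rotation-invariant probability measure on the sphere, which is standard — it is the normalized image of Haar measure under the transitive action of the orthogonal group and coincides with the ``uniform'' measure $\mu^1$ — and I expect this, together with the routine verification that the reduction to $x \ne 0$ is almost sure, to be the only mild subtlety.
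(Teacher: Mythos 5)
Your proposal is correct and follows essentially the same route as the paper: use \ref{ass dim} to make the event $x=0$ negligible, use \ref{ass rotation} to conclude that $x/\norm{x}$ is distributed according to $\mu^1$, and compare the distribution functions of $\theta'$ and $\theta$. The only difference is that you spell out (via uniqueness of the rotation-invariant probability measure on the sphere, or by conditioning on $\norm{x}$) the step that the paper simply asserts as a consequence of rotation invariance.
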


\begin{prf}
Since $v$ is a unit vector we have for every $x \neq 0$,
\begin{align*}
    \theta'(x) 
    =\arccos \inner{v}{\frac{x}{\norm{x}}}
    \in [0,\pi].
\end{align*}
If $x=0$, we have $\theta'(0)=\pi/2$.

Since $(\mu,\ball_r(0))$ satisfies \ref{ass dim}, we have 
$\Pr(x=0)=0$.
Since $(\mu,\ball_r(0))$ satisfies \ref{ass rotation}, we have that, for $x \neq 0$, $x/\norm{x}$ is a random vector drawn according to $\mu^1$. 
So for every $\psi \in [0,\pi]$, we have
\begin{align*}
    \Pr(\theta'(x) \le \psi)
    & = \Pr(\theta'(x) \le \psi, \ x \neq 0)+\Pr(\theta'(x) \le \psi, \ x = 0) \\
    & = \Pr(\theta'(x) \le \psi \mid x \neq 0) \ \Pr(x \neq 0) 
    = \Pr(\theta(x) \le \psi).
\end{align*}
We then obtain that $\theta'$ and $\theta$ have the same probability measure.
\end{prf}

When $m \ge 2$ we note that the random variable $\theta$ has a density function and we denote it by $p^{(m)}(\theta) := d\Tilde{\mu}/d\theta$.
In the remainder of this section we study the density function $p^{(m)}(\theta)$ thus we always assume $m \ge 2$.
In the following, we denote by $\Gamma(x)$ the gamma function.

\begin{observation}
\label{obs T3}
Let $m \ge 2$.
We have 
\begin{align*}
p^{(m)}(\theta)=\frac{1}{\sqrt{\pi}}\frac{\Gamma(\frac{m}{2})}{\Gamma(\frac{m-1}{2})}\sin^{m-2}\theta.
\end{align*}
\end{observation}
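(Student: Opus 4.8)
The plan is to compute $p^{(m)}(\theta)$ directly from the geometry of the sphere $\sphere_1(0)$, in two steps: first I would identify the unnormalized density as a constant multiple of $\sin^{m-2}\theta$ using the standard ``onion'' decomposition of the uniform measure along the axis $v$, and then I would pin down the normalizing constant by evaluating a Wallis-type integral in terms of the gamma function.

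For the first step, write every unit vector $x \in \sphere_1(0)$ uniquely (up to a null set) as $x = (\cos\theta)\,v + (\sin\theta)\,u$, where $\theta \in [0,\pi]$ is the angle between $v$ and $x$, and $u$ is a unit vector in the hyperplane $v^\perp$, which we may identify with $\sphere_1(0) \cap v^\perp \cong S^{m-2}$. Under this change of coordinates the uniform surface measure on $\sphere_1(0)$ factors as a product of a measure in $\theta$ and the uniform measure on $S^{m-2}$; the one-dimensional marginal in $\theta$ has density proportional to the $(m-2)$-dimensional surface area of the sphere of radius $\sin\theta$ inside $v^\perp$, i.e.\ proportional to $\sin^{m-2}\theta$. (When $m=2$ the factor $S^{m-2}=S^0$ consists of two points and the same computation shows that $\theta$ is uniform on $[0,\pi]$, consistently with $\sin^{0}\theta\equiv 1$.) Hence $p^{(m)}(\theta) = c_m \sin^{m-2}\theta$ on $[0,\pi]$ for some constant $c_m>0$ independent of $\theta$.

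For the second step, $c_m$ is forced by $\int_0^\pi p^{(m)}(\theta)\,d\theta = 1$, so $c_m = \bigl(\int_0^\pi \sin^{m-2}\theta\,d\theta\bigr)^{-1}$. Using the symmetry of $\sin\theta$ about $\theta = \pi/2$ and the Beta-function identity $\int_0^{\pi/2}\sin^{a}\theta\,d\theta = \tfrac12 B\!\left(\tfrac{a+1}{2},\tfrac12\right) = \frac{\Gamma(\tfrac{a+1}{2})\,\Gamma(\tfrac12)}{2\,\Gamma(\tfrac a2 + 1)}$ with $a = m-2$ and $\Gamma(\tfrac12) = \sqrt\pi$, I obtain $\int_0^\pi \sin^{m-2}\theta\,d\theta = \sqrt\pi\,\Gamma(\tfrac{m-1}{2})/\Gamma(\tfrac m2)$, whence $c_m = \tfrac{1}{\sqrt\pi}\,\Gamma(\tfrac m2)/\Gamma(\tfrac{m-1}{2})$, which is exactly the claimed formula.

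The main obstacle is making the first step rigorous rather than heuristic: one must justify that the pushforward of the normalized surface measure of $\sphere_1(0)$ under $x \mapsto \theta(x)$ has the stated density. This can be done cleanly either via the coarea formula applied to the smooth map $x\mapsto \arccos\inner{v}{x}$ on $\sphere_1(0)$, by computing the Jacobian of this map (which equals $\sin\theta$ up to the codimension-one volume of the level sphere), or by invoking the classical fact that if $g$ is a standard Gaussian vector in $\R^m$ then $g/\norm{g}$ is uniform on $\sphere_1(0)$, reducing the claim to the known distribution of $\inner{v}{g}/\norm{g}$, a symmetric Beta variate after an elementary change of variables. Everything else is routine computation; the degenerate case $m=2$ should be double-checked against the formula as indicated above.
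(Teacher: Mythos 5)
Your proposal is correct and follows essentially the same route as the paper: the paper likewise identifies the level set $\{x \in \sphere_1(0) \mid \theta(x)=\psi\}$ as an $(m-2)$-dimensional sphere of radius $\sin\psi$, integrates its $(m-2)$-volume to get a marginal density proportional to $\sin^{m-2}\theta$, and normalizes using $\int_0^\pi \sin^{m-2}\theta\,d\theta = \sqrt{\pi}\,\Gamma(\tfrac{m-1}{2})/\Gamma(\tfrac{m}{2})$. The alternative justifications you mention (coarea formula, Gaussian representation) are fine but not needed beyond what the paper's volume computation already provides.
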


\begin{prf}
Let $\psi$ be a fixed angle in $[0,\pi]$.
We know that those $x \in \sphere_1(0)$ such that $\theta(x)=\psi$ form a $m-2$ dimensional sphere in $\R^m$ centered at $v \cos\psi$ with radius $\sin \psi$, which we denote by $S^{m-2}_{\sin \psi}(v \cos \psi)$. 
Formally, we define
\begin{align*}
S^{m-2}_{\sin \psi}(v \cos \psi) :=
    \{x\in \sphere_1(0)\mid\theta(x) = \psi\}.
\end{align*}
In the following we denote by $\lambda^{m-1}(\cdot)$ the $(m-1)$-dimensional volume and by $\lambda^{m-2}(\cdot)$ the $(m-2)$-dimensional volume.
Then 
\begin{align*}
    \lambda^{m-1}(\{x\in \sphere_1(0) \mid \theta(x) \le \psi\})=\int_0^\psi \lambda^{m-2}(S^{m-2}_{\sin \theta}(v \cos \theta ))d\theta
    =\lambda^{m-2}(S^{m-2}_1(0))\int_0^\psi \sin^{m-2}\theta d\theta.
\end{align*}
In particular, 
\begin{align*}
    \lambda^{m-1}(\sphere_1(0))
    & =\lambda^{m-1}(\{x\in \sphere_1(0)\mid \theta(x) \le \pi\}) 
     =\lambda^{m-2}(S^{m-2}_1(0))\int_0^\pi \sin^{m-2}\theta d\theta \\
    & =\lambda^{m-2}(S^{m-2}_1(0))\sqrt{\pi}\frac{\Gamma(\frac{m-1}{2})}{\Gamma(\frac{m}{2})}.
\end{align*}
Since $x$ is drawn uniformly from $\sphere_1(0)$, we know that 
\begin{align*}
    \Pr (\theta \le \psi)
    = \frac{\lambda^{m-1}(\{x\in \sphere_1(0)\mid \theta(x) \le \psi\})}{\lambda^{m-1}(\sphere_1(0))} = \frac{1}{\sqrt{\pi}}\frac{\Gamma(\frac{m}{2})}{\Gamma(\frac{m-1}{2})}\int_0^\psi \sin^{m-2}\theta d\theta .
\end{align*}
Thus, we obtain
\begin{align*}
    p^{(m)}(\theta)=\frac{1}{\sqrt{\pi}}\frac{\Gamma(\frac{m}{2})}{\Gamma(\frac{m-1}{2})}\sin^{m-2}\theta.
\end{align*}
\end{prf}

\begin{observation}
\label{obs ptheta}
Let $m \ge 2$.
Then there exists a threshold $s_m \in (0,1)$ such that 
\begin{align*}
p^{(m)}(\theta)-p^{(m+1)}(\theta)
\begin{cases}
\ge 0 & \text{if } 0 \le \sin \theta \le s_m, \\
< 0 & \text{if } s_m < \sin \theta \le 1.
\end{cases}
\end{align*}
\end{observation}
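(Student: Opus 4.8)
The plan is to analyze the ratio $p^{(m)}(\theta)/p^{(m+1)}(\theta)$ as a function of $\sin\theta$ and show that the difference $p^{(m)}(\theta)-p^{(m+1)}(\theta)$ changes sign exactly once. By \cref{obs T3}, we have
\begin{align*}
p^{(m)}(\theta)-p^{(m+1)}(\theta)
=\frac{1}{\sqrt{\pi}}\frac{\Gamma(\frac{m}{2})}{\Gamma(\frac{m-1}{2})}\sin^{m-2}\theta
-\frac{1}{\sqrt{\pi}}\frac{\Gamma(\frac{m+1}{2})}{\Gamma(\frac{m}{2})}\sin^{m-1}\theta
=\frac{\sin^{m-2}\theta}{\sqrt{\pi}}\frac{\Gamma(\frac{m}{2})}{\Gamma(\frac{m-1}{2})}
\pare{1-\kappa_m\sin\theta},
\end{align*}
where $\kappa_m:=\dfrac{\Gamma(\frac{m+1}{2})\Gamma(\frac{m-1}{2})}{\Gamma(\frac{m}{2})^2}$. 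Since $\sin^{m-2}\theta\ge 0$ and the prefactor is positive, the sign of the difference is exactly the sign of $1-\kappa_m\sin\theta$.

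First I would set $s_m:=1/\kappa_m$; then $1-\kappa_m\sin\theta\ge 0$ iff $\sin\theta\le s_m$, and $<0$ iff $\sin\theta>s_m$, which is precisely the claimed dichotomy (the case $\sin\theta=0$ falls in the first branch as an equality). The only remaining thing to verify is that $s_m\in(0,1)$, equivalently $\kappa_m>1$. For this I would invoke the strict log-convexity of the gamma function (Bohr–Mollerup): since $\Gamma$ is log-convex and the three points $\frac{m-1}{2},\frac{m}{2},\frac{m+1}{2}$ are equally spaced with midpoint $\frac{m}{2}$, strict log-convexity gives $\Gamma(\frac{m}{2})^2<\Gamma(\frac{m-1}{2})\Gamma(\frac{m+1}{2})$, hence $\kappa_m>1$ and so $s_m<1$. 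Positivity $s_m>0$ is immediate since $\kappa_m$ is finite. (Alternatively, one can avoid quoting log-convexity by using the Legendre duplication formula or the explicit beta-integral identity $\int_0^\pi\sin^{m-2}\theta\,d\theta=\sqrt\pi\,\Gamma(\frac{m-1}{2})/\Gamma(\frac{m}{2})$ together with the strict inequality $\int_0^\pi\sin^{m-1}\theta\,d\theta<\int_0^\pi\sin^{m-2}\theta\,d\theta$, which holds because $\sin\theta\le 1$ with equality only at $\theta=\pi/2$; this directly yields $\kappa_m>1$.)

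The computation is entirely routine; the only ``obstacle'' is choosing the cleanest justification that $\kappa_m>1$, and the log-convexity argument (or the elementary comparison of the two $\sin$-integrals) handles it in one line. No part of the argument requires $m\ge 2$ beyond making $p^{(m)}$ a genuine density, which is already assumed in the statement.
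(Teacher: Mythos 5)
Your proposal is correct and follows essentially the same route as the paper: both factor the difference as a positive multiple of $\sin^{m-2}\theta$ times a linear expression in $\sin\theta$, take $s_m=\Gamma(\tfrac{m}{2})^2/\bigl(\Gamma(\tfrac{m-1}{2})\Gamma(\tfrac{m+1}{2})\bigr)$ (your $1/\kappa_m$), and invoke strict logarithmic convexity of the gamma function to conclude $s_m\in(0,1)$. Nothing further is needed.
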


\begin{prf}
Using \cref{obs T3} we can write
\begin{align}
\label{eq T3 twice}
\begin{split}
    p^{(m)}(\theta)-p^{(m+1)}(\theta)
    &=\frac{1}{\sqrt{\pi}}\frac{\Gamma(\frac{m}{2})}{\Gamma(\frac{m-1}{2})}\sin^{m-2}\theta-\frac{1}{\sqrt{\pi}}\frac{\Gamma(\frac{m+1}{2})}{\Gamma(\frac{m}{2})}\sin^{m-1}\theta \\
    & = \pare{\frac{\Gamma(\frac{m}{2})}{\Gamma(\frac{m-1}{2})}-\frac{\Gamma(\frac{m+1}{2})}{\Gamma(\frac{m}{2})} \sin\theta}\frac{1}{\sqrt{\pi}}\sin^{m-2}\theta.
\end{split}
\end{align}
We set 
\begin{align*}
s_m := \frac{\Gamma(\frac{m}{2})^2}{\Gamma(\frac{m-1}{2})\Gamma(\frac{m+1}{2})}.
\end{align*}
Since $\Gamma(x)$ is a positive strictly logarithmically convex function for $x \in (0,\infty)$, we have $s_m \in (0,1)$.
We note that if $\sin \theta = s_m$, then $p^{(m)}(\theta)-p^{(m+1)}(\theta) = 0$.
Since the gamma function is positive, when $0 \le \sin \theta<s_m$, from \eqref{eq T3 twice} we obtain
\begin{align*}
    p^{(m)}(\theta)-p^{(m+1)}(\theta)
    \ge 0. 
\end{align*}
On the other hand, when $s_m < \sin \theta \le 1$, from \eqref{eq T3 twice} we obtain
\begin{align*}
    p^{(m)}(\theta)-p^{(m+1)}(\theta)
    < 0.
\end{align*}
\end{prf}

\begin{observation}
\label{obs monotone}
Let $m \ge 2$ and let $\bar \theta \le \frac{\pi}{2}$. 
Let $g(\theta)$ be a nonnegative decreasing function on $(0,\bar \theta)$.
Then we have $\int_0^{\bar \theta}g(\theta)(p^{(m)}(\theta)-p^{(m+1)}(\theta))d\theta \ge 0$.
\end{observation}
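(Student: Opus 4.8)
The plan is to reduce everything to the single-crossing structure of $p^{(m)}-p^{(m+1)}$ recorded in \cref{obs ptheta}. Since $s_m\in(0,1)$ and $\sin$ is strictly increasing on $[0,\pi/2]$, I would set $\theta_m:=\arcsin s_m\in(0,\pi/2)$, so that $p^{(m)}(\theta)-p^{(m+1)}(\theta)\ge 0$ for $\theta\in[0,\theta_m]$ and $p^{(m)}(\theta)-p^{(m+1)}(\theta)<0$ for $\theta\in(\theta_m,\pi/2]$. If $\bar\theta\le\theta_m$, then on $(0,\bar\theta)$ the integrand $g(\theta)(p^{(m)}(\theta)-p^{(m+1)}(\theta))$ is a product of two nonnegative functions and the claim is immediate; so from here on I assume $\bar\theta\in(\theta_m,\pi/2]$.

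The key auxiliary fact I would establish is that $\int_0^{\bar\theta}\bigl(p^{(m)}(\theta)-p^{(m+1)}(\theta)\bigr)\,d\theta\ge 0$. Here I use that $p^{(\ell)}(\theta)$ is a constant multiple of $\sin^{\ell-2}\theta$ (by \cref{obs T3}), which is symmetric about $\theta=\pi/2$ and has total integral $1$ over $[0,\pi]$; hence $\int_0^{\pi/2}p^{(m)}(\theta)\,d\theta=\int_0^{\pi/2}p^{(m+1)}(\theta)\,d\theta=\tfrac12$. Consequently $\int_0^{\bar\theta}\bigl(p^{(m)}-p^{(m+1)}\bigr)\,d\theta=-\int_{\bar\theta}^{\pi/2}\bigl(p^{(m)}-p^{(m+1)}\bigr)\,d\theta$, and on $(\bar\theta,\pi/2)$ we have $\sin\theta>s_m$, so the integrand is negative there; this yields $\int_0^{\bar\theta}\bigl(p^{(m)}-p^{(m+1)}\bigr)\,d\theta\ge 0$.

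The final step is a Chebyshev-type rearrangement comparing $g$ against the constant $g(\theta_m)$. For $\theta\in(0,\theta_m)$ monotonicity gives $g(\theta)\ge g(\theta_m)$ while $p^{(m)}(\theta)-p^{(m+1)}(\theta)\ge 0$; for $\theta\in(\theta_m,\bar\theta)$ we have $g(\theta)\le g(\theta_m)$ while $p^{(m)}(\theta)-p^{(m+1)}(\theta)<0$. In both ranges $g(\theta)\bigl(p^{(m)}(\theta)-p^{(m+1)}(\theta)\bigr)\ge g(\theta_m)\bigl(p^{(m)}(\theta)-p^{(m+1)}(\theta)\bigr)$, so integrating over $(0,\bar\theta)$ gives $\int_0^{\bar\theta}g(\theta)\bigl(p^{(m)}(\theta)-p^{(m+1)}(\theta)\bigr)\,d\theta\ge g(\theta_m)\int_0^{\bar\theta}\bigl(p^{(m)}(\theta)-p^{(m+1)}(\theta)\bigr)\,d\theta\ge 0$, using $g(\theta_m)\ge 0$ together with the auxiliary fact. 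I do not expect a genuine obstacle here; the only points requiring a little care are the mass identity $\int_0^{\pi/2}p^{(m)}(\theta)\,d\theta=\tfrac12$ and the implicit integrability of $g$ against $p^{(m)}-p^{(m+1)}$ near $\theta=0$, which holds since $g$ is monotone and the density difference vanishes like $\sin^{m-2}\theta$.
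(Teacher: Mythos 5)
Your proof is correct and follows essentially the same route as the paper's: split at the crossing angle $\theta_m$ (the paper's $\psi$ with $\sin\psi=s_m$), compare $g$ against its value there, and reduce to $g(\theta_m)\int_0^{\bar\theta}\bigl(p^{(m)}(\theta)-p^{(m+1)}(\theta)\bigr)d\theta\ge 0$ via the half-mass identity $\int_0^{\pi/2}p^{(m)}=\int_0^{\pi/2}p^{(m+1)}=\tfrac12$. The only cosmetic difference is that you phrase the comparison as a single pointwise Chebyshev-type inequality, while the paper bounds the two subintegrals by $g(\psi)$ separately; the content is identical.
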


\begin{prf}
Let $s_m \in (0,1)$ be the threshold for $p^{(m)}-p^{(m+1)}$ from \cref{obs ptheta}.
Let $\psi \in (0,\pi/2)$ such that $\sin \psi=s_m$.
We then have
\begin{align*}
p^{(m)}(\theta)-p^{(m+1)}(\theta)
\begin{cases}
\ge 0 & \text{if } 0 \le \theta \le \psi, \\
< 0 & \text{if } \psi < \theta \le \pi/2.
\end{cases}
\end{align*}

We consider separately two cases.
In the first case we assume $\bar \theta \le \psi$.
Since $p^{(m)}(\theta)-p^{(m+1)}(\theta) \ge 0$ when $\theta \in (0,\bar \theta)$ and since $g(\theta)$ is a nonnegative decreasing function on $(0,\bar \theta)$, 
we obtain
\begin{align*}
    \int_0^{\bar \theta}g(\theta)(p^{(m)}(\theta)-p^{(m+1)}(\theta))d\theta \ge g(\bar \theta) \int_0^{\bar \theta} (p^{(m)}(\theta)-p^{(m+1)}(\theta))d\theta \ge 0.
\end{align*}

In the second case we assume $\psi<\bar \theta \le \frac{\pi}{2}$.
We have 
\begin{align*}
&    \int_0^{\bar \theta}g(\theta)(p^{(m)}(\theta)- p^{(m+1)}(\theta))d\theta \\
    = \ & \int_0^{\psi}g(\theta)(p^{(m)}(\theta)-p^{(m+1)}(\theta))d\theta+\int_{\psi}^{\bar \theta}g(\theta)(p^{(m)}(\theta)-p^{(m+1)}(\theta))d\theta \\
    \ge \ & g(\psi) \int_0^{\psi}(p^{(m)}(\theta)-p^{(m+1)}(\theta))d\theta+g(\psi) \int_{\psi}^{\bar \theta}(p^{(m)}(\theta)-p^{(m+1)}(\theta))d\theta \\
    = \ & g(\psi) \int_0^{\bar \theta}(p^{(m)}(\theta)-p^{(m+1)}(\theta))d\theta  
    = -g(\psi) \int_{\bar \theta}^{\frac{\pi}{2}}(p^{(m)}(\theta)-p^{(m+1)}(\theta))d\theta \ge 0.
\end{align*}
The last equality follows from the fact that $\int_0^{\frac{\pi}{2}}p^{(m)}(\theta)d\theta = \int_0^{\frac{\pi}{2}}p^{(m+1)}(\theta)d\theta = \frac{1}{2}.$ 
The last inequality uses the fact that $g(\psi) \ge 0$ and $\int_{\bar \theta}^{\frac{\pi}{2}}(p^{(m)}(\theta)-p^{(m+1)}(\theta))d\theta \le 0.$
\end{prf}


\begin{lemma}
\label{lem T6}
Let $m \ge 2$ and 
let $[\phi_1,\phi_2] \subseteq [0,\pi]$ such that $\frac{\pi}{2} \not\in [\phi_1,\phi_2]$. 
Denote by $\phi$ an angle $\theta \in \{\phi_1,\phi_2\}$ for which $\sin \theta$ is the largest.
Then $\Pr (\theta \in [\phi_1,\phi_2]) <\frac{\sqrt{\pi}}{2}\sqrt{\frac{m}{2}}\sin^{m-2}\phi$.
\end{lemma}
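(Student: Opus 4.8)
The plan is to evaluate $\Pr(\theta \in [\phi_1,\phi_2])$ as an integral of the density $p^{(m)}$, bound the integrand by its largest value on the interval, and then estimate the resulting ratio of gamma functions. Recall that for $m\ge 2$ the random variable $\theta$ admits the density $p^{(m)}$.

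First, by \cref{obs T3},
\[
\Pr(\theta \in [\phi_1,\phi_2]) = \int_{\phi_1}^{\phi_2} p^{(m)}(\theta)\,d\theta = \frac{1}{\sqrt{\pi}}\,\frac{\Gamma(\frac{m}{2})}{\Gamma(\frac{m-1}{2})}\int_{\phi_1}^{\phi_2}\sin^{m-2}\theta\,d\theta.
\]
Since $\frac{\pi}{2}\notin[\phi_1,\phi_2]$ and $[\phi_1,\phi_2]$ is an interval, it is contained either in $[0,\frac{\pi}{2}]$ or in $[\frac{\pi}{2},\pi]$; on either of these sets the function $\theta\mapsto\sin^{m-2}\theta$ is monotone, so it attains its maximum over $[\phi_1,\phi_2]$ at the endpoint $\phi$, and moreover $\phi_2-\phi_1\le\frac{\pi}{2}$. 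Hence $\int_{\phi_1}^{\phi_2}\sin^{m-2}\theta\,d\theta\le\frac{\pi}{2}\sin^{m-2}\phi$, which gives
\[
\Pr(\theta\in[\phi_1,\phi_2])\le\frac{\sqrt{\pi}}{2}\,\frac{\Gamma(\frac{m}{2})}{\Gamma(\frac{m-1}{2})}\,\sin^{m-2}\phi.
\]

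It then remains to check that $\Gamma(\frac{m}{2})/\Gamma(\frac{m-1}{2})<\sqrt{m/2}$. Setting $x:=\frac{m-1}{2}>0$, so that $\frac{m}{2}=x+\tfrac12=\tfrac12 x+\tfrac12(x+1)$, strict logarithmic convexity of $\Gamma$ on $(0,\infty)$ — the same property already invoked in the proof of \cref{obs ptheta} — yields $\Gamma(x+\tfrac12)<\Gamma(x)^{1/2}\,\Gamma(x+1)^{1/2}=\sqrt{x}\,\Gamma(x)$, using $\Gamma(x+1)=x\Gamma(x)$. Therefore $\Gamma(\frac{m}{2})/\Gamma(\frac{m-1}{2})<\sqrt{(m-1)/2}<\sqrt{m/2}$, and combining this with the previous display proves the claim (we may assume $\phi_1<\phi_2$, as otherwise $\Pr(\theta\in[\phi_1,\phi_2])=0$).

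I do not expect a real obstacle here: once \cref{obs T3} is available the estimate is essentially a one-line computation, and the only mildly delicate point is producing a bound on the ratio of gamma functions that is both strict and sharp enough, which strict log-convexity of $\Gamma$ handles cleanly.
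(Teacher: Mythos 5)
Your proposal is correct and follows essentially the same route as the paper's proof: integrate the density from \cref{obs T3}, bound $\sin^{m-2}\theta$ by its value at the endpoint $\phi$ and the interval length by $\pi/2$, and bound $\Gamma(\frac{m}{2})/\Gamma(\frac{m-1}{2})$ by $\sqrt{m/2}$. The only divergence is how that gamma-ratio bound is obtained — you use strict log-convexity of $\Gamma$ (which even gives the slightly sharper $\sqrt{(m-1)/2}$ and covers $m=2$ without a separate check), whereas the paper applies Gautschi's inequality for $m \ge 3$ and verifies $m=2$ by hand — a cosmetic variation.
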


\begin{prf}
Gautschi's inequality implies that for every $x>0$ and every $s \in (0,1)$, the following inequality holds
\begin{align*}
      \frac{\Gamma(x+1)}{\Gamma(x+s)} < (x+1)^{1-s}.
\end{align*}
We apply Gautschi's inequality with $x = m/2-1$, for $m \ge 3$, and $s = 1/2$.
Hence, when $m \ge 3$ we have
\begin{align*}
    \frac{\Gamma(\frac{m}{2})}{\Gamma(\frac{m-1}{2})}
    < \sqrt{\frac{m}{2}}.
\end{align*}
Notice that the above inequality also holds for $m=2$ since 
\begin{align*}
    \frac{\Gamma(1)}{\Gamma(\frac{1}{2})}
    =\frac{1}{\sqrt{\pi}} < 1 = \sqrt{\frac{2}{2}}.
\end{align*}
Using \cref{obs T3} we obtain
\begin{align*}
 \Pr (\theta \in [\phi_1,\phi_2])
 & =\int_{\phi_1}^{\phi_2}p^{(m)}(\theta)d\theta 
 = \int_{\phi_1}^{\phi_2} \frac{1}{\sqrt{\pi}}\frac{\Gamma(\frac{m}{2})}{\Gamma(\frac{m-1}{2})}\sin^{m-2}\theta d\theta \\
 & \le \frac{1}{\sqrt{\pi}}\sqrt{\frac{m}{2}} (\phi_2-\phi_1) \sin^{m-2}\phi  <\frac{\sqrt{\pi}}{2}\sqrt{\frac{m}{2}}\sin^{m-2}\phi,
\end{align*}
where the last inequality holds because $\phi_2-\phi_1<\frac{\pi}{2}$.
\end{prf}

\subsubsection{Three functions related to $G^\alpha(z)$}
\label{sec three functions}

According to \cref{obs G}, we have 
\begin{align*}
    G^\alpha(z) = \sum_{i \in [k]} \beta_i\int_{\ball_{\alpha_i}(z) \cap \ball_{r_i}(c_i)} (\alpha_i-d(z,x)) d\mu_i(x).
\end{align*}
Then, for every vector $z \in \R^m$, the function $G^\alpha(z)$ can be seen as the sum of the contributions that $z$ gets from each singular ball $\ball_{r_i}(c_i)$.
Motivated by this observation, in this section we will analyze $G^\alpha(z)$ by defining three new functions.
The first function can be seen as $G^\alpha(c_1) - G^\alpha(z)$ for $z \in \ball_{r_1}(c_1)$, in the case $k=1$, $c_1=0$, $\alpha_1 > r_1$, and $\beta_1 = 1$.
\begin{definition}
\label{def H}
Let $(\mu,\ball_r(0))$ be a probability space that satisfies \ref{ass rotation} and let $\alpha > r$.
We define the function $H^{(\alpha,\mu,m)}(z) : \ball_r(0) \to \R$ as
\begin{align*}
    H^{(\alpha,\mu,m)}(z):=
    \int_{\ball_{r}(0)}(\alpha-\norm{x})d\mu(x)
    -\int_{\ball_{\alpha}(z)\cap \ball_{r}(0)} (\alpha-d(z,x))d\mu(x).
\end{align*}
\end{definition}

The second function is the special case of $H^{(\alpha,\mu,m)}$ where $\mu = \mu^r$.
\begin{definition}
\label{def T}
Let $r,\alpha \in \R_+$ with $\alpha > r$.
We define the function $T^{(\alpha,m)}(z) : \ball_{r}(0) \to \R$ as
\begin{align*}
    T^{(\alpha,m)}(z) := 
    \int_{\ball_{r}(0)}(\alpha-\norm{x})d\mu^{r}(x) -\int_{\ball_{\alpha}(z)\cap \ball_{r}(0)} (\alpha-d(z,x))d\mu^{r}(x).
\end{align*}
\end{definition}

The third function can be seen as the part of $G^\alpha(z)$ for $z \notin \ball_{r_1}(c_1)$, coming from the ball $1$, in the case $c_1=0$, $\alpha_1 \ge r_1$, $\beta_1 = 1$.
\begin{definition}
\label{def R}
Let $(\mu,\ball_r(0))$ be a probability space that satisfies \ref{ass rotation} and let $\alpha > r$. 
We define the function $R^{(\alpha,\mu,m)}(z) : \R^m \setminus \ball_r(0) \to \R$ as
\begin{align*}
    R^{(\alpha,\mu,m)}(z):=
    \int_{\ball_{\alpha}(z)\cap \ball_{r}(0)} (\alpha-d(z,x))d\mu(x).
\end{align*}
\end{definition}

Since the probability measures considered in \cref{def H,def T,def R} are invariant under rotations centered in the origin, we obtain that $H^{(\alpha,\mu,m)}(z)$, $T^{(\alpha,m)}(z)$, and $R^{(\alpha,\mu,m)}(z)$ are also invariant under rotations centered in the origin.
Therefore, in some parts of this section, we fix a unit vector $v$, and we study the three above functions evaluated in points of the form $z=tv$, where $t = \norm{z} \ge 0$.

The rest of the section is devoted to deriving bounds for $H^{(\alpha,\mu,m)}$, $T^{(\alpha,m)}$, and $R^{(\alpha,\mu,m)}$.
We start with an observation that will be used several times in the analysis of $H^{(\alpha,\mu,m)}$ and $T^{(\alpha,m)}$.

\begin{observation}
\label{obs H form}
Let $r,\alpha \in \R_+$ with $\alpha > r$ and let $s \in [0,r]$.
Let $v$ be a unit vector in $\R^m$ and let $t \in [0,r]$. 
Then $H^{(\alpha,\mu^s,m)}(tv)$ can be written in the form
\begin{align*}
    H^{(\alpha,\mu^s,m)}(tv)=
    \begin{cases}
    t & \text{if } s = 0 \\
    \alpha-s-\int_0^\pi\pare{\alpha-\sqrt{s^2+t^2-2st\cos\theta}}d\Tilde{\mu}(\theta)=\E d(tv,x)-s & \text{if } 0 < s \le \alpha-t \\
    \alpha -s-\int_0^{\bar \theta}\pare{\alpha-\sqrt{s^2+t^2-2st\cos\theta}}d\Tilde{\mu}(\theta) & \text{if } s \ge \alpha-t.
    \end{cases}
\end{align*}
In the second case $x$ is a random vector drawn according to $\mu^s$.
In the third case 
\begin{align*}
    \bar \theta:=\arccos\frac{s^2+t^2-\alpha^2}{2st} \le \pi.
\end{align*}
\end{observation}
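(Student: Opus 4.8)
The plan is to unwind \cref{def H} in the case $\mu=\mu^s$ and to convert the integral over the sphere $\sphere_s(0)$ into a one-dimensional integral in the angular variable, using \ref{ass rotation} together with the angle random variable $\theta$ with law $\Tilde\mu$ introduced in \cref{sec theta}.

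First I would dispose of the degenerate case $s=0$: here $\mu^0$ is the point mass at the origin, so $\int_{\ball_r(0)}(\alpha-\norm x)\,d\mu^0(x)=\alpha$; since $t\le r<\alpha$ the origin lies in $\ball_\alpha(tv)$, hence $\int_{\ball_\alpha(tv)\cap\ball_r(0)}(\alpha-d(tv,x))\,d\mu^0(x)=\alpha-t$, and subtracting gives $H^{(\alpha,\mu^0,m)}(tv)=t$.

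Assume now $s>0$. Because $\mu^s$ is supported on $\sphere_s(0)\subseteq\ball_r(0)$, every $x$ in its support has $\norm x=s$, so the first integral in \cref{def H} is $\alpha-s$, and it remains to evaluate $I:=\int_{\ball_\alpha(tv)\cap\sphere_s(0)}(\alpha-d(tv,x))\,d\mu^s(x)$. Writing a generic support point as $x=su$ with $u$ uniform on $\sphere_1(0)$, the angle $\theta$ between $v$ and $x$ has law $\Tilde\mu$, and the law of cosines gives $d(tv,x)=\sqrt{s^2+t^2-2st\cos\theta}$. Hence the condition $x\in\ball_\alpha(tv)$, i.e. $d(tv,x)\le\alpha$, is equivalent to $\cos\theta\ge\frac{s^2+t^2-\alpha^2}{2st}$, and since $\cos$ is decreasing on $[0,\pi]$ this describes an interval $\theta\in[0,\bar\theta]$. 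The key bookkeeping step is to check the range of $\frac{s^2+t^2-\alpha^2}{2st}$: it is always $\le 1$ because $|s-t|\le r<\alpha$, while it is $\ge-1$ exactly when $(s+t)^2\ge\alpha^2$, i.e. $s\ge\alpha-t$; this is precisely the dichotomy in the statement and shows that $\bar\theta=\arccos\frac{s^2+t^2-\alpha^2}{2st}\le\pi$ is well defined in the third case (with $\bar\theta=\pi$ at the boundary $s=\alpha-t$).

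Finally I would assemble the two non-degenerate cases. When $0<s\le\alpha-t$, every support point satisfies $d(tv,x)\le s+t\le\alpha$, so $I=\int_0^\pi(\alpha-\sqrt{s^2+t^2-2st\cos\theta})\,d\Tilde\mu(\theta)$; since $\Tilde\mu$ is a probability measure this equals $\alpha-\E d(tv,x)$, whence $\alpha-s-I=\E d(tv,x)-s$, which gives both displayed forms in the second case. When $s\ge\alpha-t$ the support splits at $\theta=\bar\theta$, so $I=\int_0^{\bar\theta}(\alpha-\sqrt{s^2+t^2-2st\cos\theta})\,d\Tilde\mu(\theta)$, which is the third case. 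I do not expect a genuine obstacle here; the only points needing care are the well-definedness of $\bar\theta$ discussed above and the trivial remark that in the third case $s>0$ forces $t>0$ (otherwise $s\ge\alpha>r$), so that the substitution $x=su$ and the division by $2st$ are legitimate.
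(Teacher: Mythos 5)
Your proof is correct and follows essentially the same route as the paper's: reduce to the integral over $\sphere_s(0)$, handle $s=0$ separately, use the law of cosines and the identification of the angle law with $\Tilde\mu$ (the paper's \cref{obs link}), and split according to whether $\sphere_s(0)\subseteq\ball_\alpha(tv)$ or the cap cuts off at $\bar\theta$. Your explicit check that the argument of $\arccos$ lies in $[-1,1]$ precisely when $s\ge\alpha-t$ (and that $t>0$ there) is a slightly more careful justification of the well-definedness of $\bar\theta$ than the paper gives, but it is the same argument.
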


\begin{prf}
Note that we can write $H^{(\alpha,\mu^s,m)}(tv)$ in the form
\begin{align}
\label{eq H form inproof}
    H^{(\alpha,\mu^s,m)}(tv)
    =\alpha-s-\int_{\ball_\alpha(tv) \cap \sphere_s(0)}\pare{\alpha-d(tv,x)} d\mu^s(x).
\end{align}

If $s=0$ we have $\sphere_0(0)\subseteq \ball_\alpha(tv)$ and we obtain
\begin{align*}
    H^{(\alpha,\mu^0,m)}(tv) =\alpha-\int_{\sphere_0(0)}\pare{\alpha-d(tv,x)} d\mu^0(x) = \alpha - \alpha + t = t.
\end{align*}

In the rest of the proof we assume $s > 0$.
For $x \in \sphere_s(0)$, we have $d(tv,x)=\sqrt{s^2+t^2-2st\cos \theta'}$, where $\theta'$ is the angle between $v$ and $x$. 
This implies that the function under the integral sign in \eqref{eq H form inproof} can be written as a function of $\theta'$. 
Let $x$ be a random vector in $\R^m$ drawn according to $\mu^s$ and denote by $\hat{\mu}$ the probability measure of $\theta'$. According to \cref{obs link}, the random variable $\theta'$ has the same probability measure as the random variable $\theta$ studied in \cref{sec theta}.

We now consider separately two cases. 
In the first case we assume $0 < s \le \alpha-t$. 
We then have $\sphere_s(0)\subseteq \ball_\alpha(tv)$ and from \eqref{eq H form inproof} we obtain
\begin{align}
\label{eq H form inproof case a}
    H^{(\alpha,\mu^s,m)}(tv)
    & =\alpha-s-\int_{\sphere_s(0)}\pare{\alpha-d(tv,x)} d\mu^s(x),
\end{align}
thus $H^{(\alpha,\mu^s,m)}(tv) = \E d(tv,x)-s$, where $x$ is a random vector drawn according to $\mu^s$.
From \eqref{eq H form inproof case a} we continue
\begin{align*}
    H^{(\alpha,\mu^s,m)}(tv)
    & =\alpha-s-\int_0^\pi \pare{\alpha-\sqrt{s^2+t^2-2st\cos\theta'}}d\hat{\mu}(\theta') \\
    & =\alpha-s-\int_0^\pi \pare{\alpha-\sqrt{s^2+t^2-2st\cos\theta}}d\Tilde{\mu}(\theta).
\end{align*}

In the second case we assume $s \ge \alpha-t$.
We define the angle
$$
\bar \theta:=\arccos\frac{s^2+t^2-\alpha^2}{2st},
$$ 
and observe that $\bar \theta \le \pi$.
Then we get
\begin{align*}
     H^{(\alpha,\mu^s,m)}(tv) & = \alpha -s-\int_0^{\bar \theta}\pare{\alpha-\sqrt{s^2+t^2-2st\cos\theta'}}d\hat{\mu}(\theta') \\
     & = \alpha -s-\int_0^{\bar \theta}\pare{\alpha-\sqrt{s^2+t^2-2st\cos\theta}}d\Tilde{\mu}(\theta).
\end{align*}
\end{prf}

\paragraph{Analysis of the function $T^{(\alpha,m)}$.}

Our goal in the next lemmas is to study the properties of $T^{(\alpha,m)}(z)$ in order to obtain a lower bound for it.

\begin{lemma}
\label{lem T1}
Let $r,\alpha \in \R_+$ with $\alpha > r$.
Let $z \in \ball_{r}(0) \setminus \{0\}$ with $\norm{z} \le \alpha-r$.
Then we have $T^{(\alpha,m)}(z)> 0$.
\end{lemma}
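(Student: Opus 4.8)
The plan is to use the explicit form of $T^{(\alpha,m)}(z) = H^{(\alpha,\mu^r,m)}(z)$ provided by \cref{obs H form} with $s = r$. Since both $H^{(\alpha,\mu^r,m)}$ and the underlying measure $\mu^r$ are invariant under rotations centered at the origin, I would write $z = tv$ with $t = \norm{z} \in (0,\alpha - r]$ for a fixed unit vector $v$, so that the hypothesis $\norm{z} \le \alpha - r$ puts us exactly in the second case of \cref{obs H form} (namely $0 < s = r \le \alpha - t$). That case states $T^{(\alpha,m)}(tv) = H^{(\alpha,\mu^r,m)}(tv) = \E d(tv,x) - r$, where $x$ is a random vector drawn according to $\mu^r$, i.e.\ drawn uniformly from the sphere $\sphere_r(0)$. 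Hence it suffices to show $\E d(tv,x) > r = \E d(0,x)$ for every $t \in (0, \alpha-r]$; equivalently, that the origin strictly minimizes $z \mapsto \E d(z,x)$ over $\ball_r(0)$ when $x \sim \mu^r$.

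The key step is then an appeal to \cref{lm ball center m = 1,lm ball center m >= 2} (or a direct argument). The uniform measure $\mu^r$ on $\sphere_r(0)$ satisfies \ref{ass rotation}; for $m \ge 2$ it also satisfies \ref{ass open} in the appropriate sense required by \cref{lm ball center m >= 2}, so that lemma gives that $z = 0$ is the \emph{unique} minimizer of $\E d(z,x)$ over $\ball_r(0)$, hence $\E d(tv,x) > \E d(0,x) = r$ for all $t \in (0,r]$, and in particular for $t \in (0,\alpha-r]$ (note $\alpha - r$ may exceed $r$, but $z \in \ball_r(0)$ already forces $t \le r$, so the relevant range is $(0,\min\{r,\alpha-r\}]$, which is covered). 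For $m = 1$ the support $\sphere_r(0) = \{-r, r\}$ does not satisfy \ref{ass open}, so \cref{lm ball center m = 1} does not apply verbatim; instead I would check directly that for $t \in (0,r]$ one has $\E d(tv, x) = \tfrac12(r - t) + \tfrac12(r + t) = r$ only when $t=0$ — wait, this equals $r$ identically on $[0,r]$. So in dimension $1$ the claimed strict inequality is false; hence the lemma must be implicitly assuming $m \ge 2$ (consistent with the standing assumption announced at the start of \cref{sec analysis G}), and I would state this assumption explicitly and invoke \cref{lm ball center m >= 2}.

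The main obstacle I anticipate is the boundary bookkeeping between the cases of \cref{obs H form}: one must confirm that $\norm{z} \le \alpha - r$ together with $z \in \ball_r(0)$ genuinely lands in the ``$0 < s \le \alpha - t$'' branch (it does, since $s = r$ and $t \le \alpha - r$ give $t \le \alpha - s$), and that $z \ne 0$ is what makes the inequality strict rather than an equality. Once the reduction to ``$\E d(tv,x) - r > 0$'' is in place, the result is immediate from \cref{lm ball center m >= 2} applied to $(\mu^r, \ball_r(0))$, since $\mu^r$ is rotation-invariant and $m \ge 2$; no further computation is needed.
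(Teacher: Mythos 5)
Your proof is correct and follows essentially the same route as the paper: apply \cref{obs H form} with $s=r$ to write $T^{(\alpha,m)}(z)=\E d(z,x)-r$, where $x$ is drawn according to $\mu^r$, and then invoke the ball-center lemmas to get strict positivity for $z\neq 0$. Your observation about $m=1$ is a genuine refinement rather than a flaw in your argument: since $\mu^r$ is supported on $\{-r,r\}$ it fails \ref{ass open}, and indeed $\E d(z,x)\equiv r$ so $T^{(\alpha,1)}\equiv 0$, whereas the paper's proof cites \cref{lm ball center m = 1} at this point without the caveat; the lemma should thus be read with the standing assumption $m\ge 2$ of this section, under which it is actually applied and where \cref{lm ball center m >= 2} (needing only rotation invariance) suffices.
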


\begin{prf}
From \cref{obs H form} with $s=r$ and $tv=z$ we have $T^{(\alpha,m)}(z) = \E d(z,x) - r = \E d(z,x) - \E \norm{x}$, where $x$ is a random vector drawn according to $\mu^r$.
Since $z \neq 0$, from \cref{lm ball center m = 1,lm ball center m >= 2}, we obtain $\E d(z,x) - \E \norm{x}>0$.
\end{prf}


\begin{lemma}
\label{lem T2}
Let $m \ge 2$ and let $r \in \R_+$.
Let $z \in \ball_r(0)$.
Then $T^{(\alpha,m)}(z)$ is strictly increasing in $\alpha$ when $\alpha \in (r, r+\norm{z})$ and is constant in $\alpha$ when $\alpha \ge r+\norm{z}$.
\end{lemma}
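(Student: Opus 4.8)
The plan is to reduce the statement to the explicit formula for $T^{(\alpha,m)}$ furnished by \cref{obs H form} applied with $s=r$, recalling that $T^{(\alpha,m)}=H^{(\alpha,\mu^r,m)}$ by \cref{def H,def T}. First I would dispose of the degenerate case $z=0$: then $\ball_\alpha(0)\cap\sphere_r(0)=\sphere_r(0)$ and $d(0,x)=r$ on $\sphere_r(0)$, so $T^{(\alpha,m)}(0)=(\alpha-r)-(\alpha-r)=0$ for every $\alpha>r$; the ``strictly increasing'' claim is vacuous on the empty interval $(r,r)$, and the constant $0$ is indeed constant, so the lemma holds. Hence assume $z\neq 0$, write $t:=\norm{z}\in(0,r]$ and $z=tv$ for a unit vector $v$.

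By \cref{obs H form} with $s=r$, we land in the second case exactly when $\alpha\ge r+t$ and in the third case when $r<\alpha\le r+t$. In the second case the formula gives $T^{(\alpha,m)}(tv)=\E\, d(tv,x)-r$ with $x$ drawn according to $\mu^r$, and the right-hand side does not involve $\alpha$; hence $T^{(\alpha,m)}(z)$ is constant for $\alpha\ge r+t=r+\norm{z}$, which is the second assertion.

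For the first assertion I would work on the open interval $\alpha\in(r,r+t)$ with the third-case formula
\begin{align*}
    T^{(\alpha,m)}(tv)=\alpha-r-\int_0^{\bar\theta(\alpha)}\bigl(\alpha-g(\theta)\bigr)p^{(m)}(\theta)\,d\theta,
\end{align*}
where $g(\theta):=\sqrt{r^2+t^2-2rt\cos\theta}$, where $p^{(m)}$ is the density from \cref{obs T3}, and where $\bar\theta(\alpha):=\arccos\frac{r^2+t^2-\alpha^2}{2rt}\in(0,\pi)$; the strict inequality $\bar\theta(\alpha)<\pi$ is precisely where $\alpha<r+t$ is used, since $\frac{r^2+t^2-\alpha^2}{2rt}>-1$ is equivalent to $\alpha<r+t$. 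The one algebraic fact to record is $g(\bar\theta(\alpha))=\alpha$, so the integrand vanishes at the upper limit $\theta=\bar\theta(\alpha)$. Differentiating in $\alpha$ under the integral sign, the moving-endpoint term $\bigl(\alpha-g(\bar\theta(\alpha))\bigr)p^{(m)}(\bar\theta(\alpha))\,\bar\theta'(\alpha)$ therefore drops out, leaving
\begin{align*}
    \frac{\partial}{\partial\alpha}T^{(\alpha,m)}(tv)=1-\int_0^{\bar\theta(\alpha)}p^{(m)}(\theta)\,d\theta=\int_{\bar\theta(\alpha)}^{\pi}p^{(m)}(\theta)\,d\theta.
\end{align*}
By \cref{obs T3}, $p^{(m)}(\theta)=\frac{1}{\sqrt\pi}\frac{\Gamma(m/2)}{\Gamma((m-1)/2)}\sin^{m-2}\theta>0$ for all $\theta\in(0,\pi)$ when $m\ge2$, and since $\bar\theta(\alpha)<\pi$ the last integral is strictly positive. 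Thus $T^{(\alpha,m)}(z)$ has everywhere-positive derivative on $(r,r+t)$, hence is strictly increasing there.

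I expect the only delicate point to be the differentiation step with a variable upper limit: one must justify differentiating under the integral sign — the integrand and its $\alpha$-partial are continuous in $(\alpha,\theta)$ on compact sub-rectangles and $\bar\theta(\cdot)$ is $C^1$ on $(r,r+t)$ — and then notice that the boundary contribution is annihilated by the identity $g(\bar\theta(\alpha))=\alpha$. Everything else is routine bookkeeping with \cref{obs H form,obs T3}.
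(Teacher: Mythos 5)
Your proposal is correct and follows essentially the same route as the paper's proof: apply \cref{obs H form} with $s=r$, observe that the formula for $\alpha\ge r+\norm{z}$ does not involve $\alpha$, and for $\alpha\in(r,r+\norm{z})$ differentiate the third-case formula, noting that the moving-endpoint term vanishes because the integrand is zero at $\bar\theta$, so the derivative is $1-\Pr(\theta\le\bar\theta)>0$ since $\bar\theta<\pi$. The only differences are cosmetic (your explicit treatment of $z=0$ and the justification of differentiation under the integral sign, which the paper leaves implicit).
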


\begin{prf}
Let $v$ be a unit vector in $\R^m$.
Since $T^{(\alpha,m)}(z)$ is invariant under rotations centered in the origin, it suffices to consider vectors $z \in \ball_r(0)$ of the form $z=tv$, for $t \in [0,r]$.

Consider first the case $\alpha \ge r+ t$.
From \cref{obs H form} with $s=r$ we have $T^{(\alpha,m)}(tv)=\E d(tv,x) - r$, where $x$ is a random vector drawn according to $\mu^r$. 
Hence in this case $T^{(\alpha,m)}(tv)$ is constant in $\alpha$.

Next, consider the case $\alpha \in (r, r+t)$. 
From \cref{obs H form} with $s=r$ we have
\begin{align*}
    T^{(\alpha,m)}(tv) & =\alpha -r-\int_0^{\bar \theta}\pare{\alpha-\sqrt{r^2+t^2-2rt\cos\theta}}d\Tilde{\mu}(\theta) \\
    & =\alpha -r-\int_0^{\bar \theta}\pare{\alpha-\sqrt{r^2+t^2-2rt\cos\theta}} p^{(m)}(\theta) d\theta,
\end{align*}
where 
\begin{align*}
    \bar \theta:=\arccos\frac{r^2+t^2-\alpha^2}{2rt}<\pi.
\end{align*}
We derive with respect to the variable $\alpha$ and obtain 
\begin{align*}
    \frac{\partial T^{(\alpha,m)}}{\partial \alpha}(tv) 
    & = 1-\int_{0}^{\bar \theta}d\Tilde{\mu}(\theta)-\pare{\alpha-\sqrt{r^2+t^2-2rt\cos\bar \theta}}p^{(m)}(\bar \theta) \frac{\partial\bar \theta}{\partial \alpha
    } \\
    & = 1-\int_{0}^{\bar \theta}d\Tilde{\mu}(\theta) 
    = 1-P(\theta \le \bar \theta) > 0.
\end{align*}
Here, the second equality holds because $\alpha-\sqrt{r^2+t^2-2rt\cos\bar \theta} = 0$ and the second equality holds because $\bar \theta < \pi$.
Hence in this case $T^{(\alpha,m)}(tv)$ is strictly increasing in $\alpha$.
\end{prf}

\begin{lemma}
\label{lem T4}
Let $m \ge 2$ and let $r,\alpha \in \R_+$ with $\alpha > r$.
Let $z \in \ball_{r}(0)$ and $z' \in B^{m+1}_{r}(0)$ with $\norm{z} = \norm{z'}$.
Then we have $T^{(\alpha,m+1)}(z') \ge T^{(\alpha,m)}(z)$.
\end{lemma}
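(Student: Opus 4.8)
The plan is to reduce the claimed inequality to a one‑dimensional comparison of angular integrals and then apply \cref{obs monotone}. Since $T^{(\alpha,m)}=H^{(\alpha,\mu^r,m)}$ and both $T^{(\alpha,m)}$ and $T^{(\alpha,m+1)}$ are invariant under rotations centered at the origin, it suffices to compare $T^{(\alpha,m)}(tv)$ with $T^{(\alpha,m+1)}(tv')$ for $t:=\norm{z}=\norm{z'}\in[0,r]$ and unit vectors $v\in\R^m$, $v'\in\R^{m+1}$. Applying \cref{obs H form} with $s=r$ — treating together the case $t\le\alpha-r$ and the case $t\ge\alpha-r$, and recalling that the angle $\theta$ has density $p^{(m)}$ in dimension $m$ — gives the uniform formula
\[
 T^{(\alpha,m)}(tv)=\alpha-r-\int_0^\pi g(\theta)\,p^{(m)}(\theta)\,d\theta,\qquad g(\theta):=\bigl(\alpha-\sqrt{r^2+t^2-2rt\cos\theta}\bigr)_+,
\]
where, crucially, $g$ does not depend on the dimension. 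The same identity holds in dimension $m+1$ with $p^{(m+1)}$ in place of $p^{(m)}$, so
\[
 T^{(\alpha,m+1)}(z')-T^{(\alpha,m)}(z)=\int_0^\pi g(\theta)\bigl(p^{(m)}(\theta)-p^{(m+1)}(\theta)\bigr)\,d\theta .
\]

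I would then symmetrize about $\theta=\pi/2$. By \cref{obs T3} the densities $p^{(m)},p^{(m+1)}$ are proportional to powers of $\sin\theta$, hence $p^{(m)}(\pi-\theta)=p^{(m)}(\theta)$; substituting $\theta\mapsto\pi-\theta$ on $[\pi/2,\pi]$ rewrites the last integral as $\int_0^{\pi/2}G(\theta)\bigl(p^{(m)}(\theta)-p^{(m+1)}(\theta)\bigr)\,d\theta$ with $G(\theta):=g(\theta)+g(\pi-\theta)$. If $G$ is nonnegative and decreasing on $(0,\pi/2)$, then \cref{obs monotone} (applied with $\bar\theta=\pi/2$, and with $G$ playing the role of $g$ there) shows that this integral is $\ge 0$, which is exactly the assertion.

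The heart of the proof is thus the monotonicity of $G$, and I expect it to be the main obstacle, since the two summands $g(\theta)$ and $g(\pi-\theta)$ move in opposite directions as $\theta$ grows and one has to track the kinks where $\sqrt{r^2+t^2-2rt\cos\theta}$ or its reflected version crosses the level $\alpha$. The clean way to handle this is to write $g(\theta)=\alpha-\min\{f(\theta),\alpha\}$ with $f(\theta):=\sqrt{r^2+t^2-2rt\cos\theta}$, so $G(\theta)=2\alpha-h(\theta)-h(\pi-\theta)$ with $h:=\min\{f,\alpha\}$, and reduce to showing $h(\theta)+h(\pi-\theta)$ is increasing on $(0,\pi/2)$. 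There $f$ is increasing on $[0,\pi]$ and $\theta\le\pi-\theta$, so $f(\theta)\le f(\pi-\theta)$; combined with $\sin\theta=\sin(\pi-\theta)$ and $f'(\theta)=rt\sin\theta/f(\theta)$ this yields the pointwise inequality $h'(\theta)\ge h'(\pi-\theta)$ (with $h'(\pi-\theta)=0$ whenever $f(\pi-\theta)\ge\alpha$), hence $\tfrac{d}{d\theta}\bigl(h(\theta)+h(\pi-\theta)\bigr)\ge 0$. Nonnegativity of $G$ is immediate, so this completes the argument.
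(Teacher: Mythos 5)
Your proof is correct, and it follows the same basic strategy as the paper's (rewrite the difference as an angular integral against $p^{(m)}-p^{(m+1)}$ via \cref{obs H form} with $s=r$, fold the interval $[\pi/2,\pi]$ onto $[0,\pi/2]$ using the symmetry of $p^{(m)}$, and finish with \cref{obs monotone}), but your execution is genuinely leaner. By keeping the positive part inside the integrand, i.e.\ $g(\theta)=(\alpha-\sqrt{r^2+t^2-2rt\cos\theta})_+$, you get a single formula valid for all $t$, so the difference $T^{(\alpha,m+1)}(z')-T^{(\alpha,m)}(z)$ equals \emph{exactly} $\int_0^{\pi/2}G(\theta)\bigl(p^{(m)}(\theta)-p^{(m+1)}(\theta)\bigr)\,d\theta$ with $G(\theta)=g(\theta)+g(\pi-\theta)$. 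The paper instead truncates the integral at the angle $\bar\theta$ where the integrand changes sign, and consequently must split into the cases $\bar\theta\le\pi/2$ and $\bar\theta>\pi/2$, prove separately that the tail $\int_{\bar\theta}^{\pi}$ contributes nonpositively (itself requiring the threshold from \cref{obs ptheta} and another application of \cref{obs monotone}), and then argue nonnegativity of the unclipped $f+\hat f$ via $2\alpha-2\sqrt{r^2+t^2}\ge 0$; all of that disappears in your version, where $G\ge 0$ is automatic. The price you pay is the kink of $h=\min\{f,\alpha\}$ at the level $f=\alpha$, but your pointwise comparison $h'(\theta)\ge h'(\pi-\theta)$ on $(0,\pi/2)$ (using $f(\theta)\le f(\pi-\theta)$ and $f'=rt\sin\theta/f$) is the same derivative computation the paper performs for $f+\hat f$, and since $h$ is Lipschitz and, for $t>0$, nondifferentiable at most at one angle, the almost-everywhere inequality does give monotonicity of $h(\theta)+h(\pi-\theta)$, hence that $G$ is nonnegative and (weakly) decreasing, which is all \cref{obs monotone} needs.
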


\begin{prf}
Let $z=tv$ and $z'=tv'$, where $v$ is a unit vector in $\R^m$ and $v'$ is a unit vector in $\R^{m+1}$. 
Then according to \cref{obs H form} with $s=r$, we have 
\begin{align*}
    T^{(\alpha,m+1)}(tv')-T^{(\alpha,m)}(tv)
    = \int_0^{\bar \theta}\pare{\alpha-\sqrt{r^2+t^2-2rt\cos\theta}}(p^{(m)}(\theta)-p^{(m+1)}(\theta))d\theta,
\end{align*}
where 
\begin{align*}
\bar \theta:=
\begin{cases}
\pi & \text{if } t \le \alpha-r, \\
\arccos\frac{r^2+t^2-\alpha^2}{2rt} < \pi & \text{if } t>\alpha-r.
\end{cases}
\end{align*}
We let $f(t,\theta):=\alpha-\sqrt{r^2+t^2-2rt\cos\theta}$ and write
\begin{align}
\label{eq L22 start}
    T^{(\alpha,m+1)}(tv')-T^{(\alpha,m)}(tv)
    = \int_0^{\bar \theta} f(t,\theta)(p^{(m)}(\theta)-p^{(m+1)}(\theta))d\theta.
\end{align}
We define $\hat{f}(t,\theta):=f(t,\pi-\theta)=\alpha-\sqrt{r^2+t^2+2rt\cos\theta}.$ 
It can be checked that $f(t,\theta)$ is a decreasing function in $\theta$, when $\theta \in (0,\pi)$ and $t$ is fixed in $[0,r]$.
Furthermore, we have $f(t,\bar \theta) \ge 0$.
Thus $f(t,\theta) \ge 0$ when $\theta \le \bar \theta$.
Next, we will discuss several cases for $\bar \theta$.

In the first case we assume $\bar \theta \le \frac{\pi}{2}$.
From \eqref{eq L22 start} and \cref{obs monotone}, we obtain
$
    T^{(\alpha,m+1)}(tv')-T^{(\alpha,m)}(tv) 
    \ge 0.
$

In the second case we assume $\bar \theta > \frac{\pi}{2}$.
Let $s_m \in (0,1)$ be the threshold for $p^{m}(\theta)-p^{(m+1)}(\theta)$ from \cref{obs ptheta}.
Let $\psi \in (\pi/2,\pi)$ such that $\sin \psi=s_m$.

We first show that 
\begin{align}
\label{eq L22 target 2}
\int_{\bar\theta}^\pi f(t,\theta)(p^{(m)}(\theta)-p^{(m+1)}(\theta))d\theta \le 0. 
\end{align}
If $\bar\theta =\pi$, \eqref{eq L22 target 2} obviously hold, so we assume $\bar\theta < \pi$.
Now assume $\bar\theta \in [\psi,\pi)$.
We have $f(t,\bar\theta)=0$, thus $f(t,\theta) \le 0$ for every $\theta \in [\bar\theta,\pi]$.
On the other hand we have $p^{(m)}(\theta)-p^{(m+1)}(\theta) \ge 0$ for every $\theta \in [\bar\theta,\pi]$.
Hence \eqref{eq L22 target 2} holds also in this case.
So we now assume $\frac{\pi}{2}<\bar\theta<\psi$. 
We notice that 
\begin{align*}
    \int_{\bar\theta}^\pi f(t,\theta)(p^{(m)}(\theta)-p^{(m+1)}(\theta))d\theta 
    & =-\int_{\bar\theta}^\pi -f(t,\theta)(p^{(m)}(\theta)-p^{(m+1)}(\theta))d\theta \\
    & =-\int_{\pi-\bar\theta}^0 -f(t,\pi-\xi)(p^{(m)}(\pi-\xi)-p^{(m+1)}(\pi-\xi))d(\pi-\xi) \\
    & =-\int_{0}^{\pi-\bar\theta} -\hat{f}(t,\xi)(p^{(m)}(\xi)-p^{(m+1)}(\xi))d\xi.
\end{align*}
Here, in the second equality we perform the change of variable $\theta = \pi - \xi$ and in the third equality we use the fact that $p^{(m)}(\xi)=p^{(m)}(\pi-\xi)$ for every $m$ and every $\xi$.
We observe that, for $\xi \in [0,\pi-\bar\theta]$, $-\hat{f}(t,\xi)$ is a decreasing function and 
$$
-\hat{f}(t,\xi)
=\sqrt{r^2+t^2+2rt\cos \xi}-\alpha 
\ge \sqrt{r^2+t^2+2rt\cos (\pi- \bar\theta)}-\alpha 
=\sqrt{r^2+t^2-2rt\cos \bar\theta}-\alpha
= 0.
$$ 
By \cref{obs monotone}, we conclude that $\int_{0}^{\pi-\bar\theta} -\hat{f}(t,\xi)(p^{(m)}(\xi)-p^{(m+1)}(\xi))d\xi \ge 0$, thus \eqref{eq L22 target 2} holds.
This concludes the proof of \eqref{eq L22 target 2}.

Next, we show
\begin{align}
\label{eq L22 target}
    T^{(\alpha,m+1)}(tv')-T^{(\alpha,m)}(tv) \ge \int_0^{\frac{\pi}{2}}(f(t,\theta)+\hat{f}(t,\theta))(p^{(m)}(\theta)-p^{(m+1)}(\theta))d\theta.
\end{align}
From \eqref{eq L22 start} we have
\begin{align*}
    T^{(\alpha,m+1)}(tv')-T^{(\alpha,m)}(tv) 
    & = \int_0^{\bar\theta}f(t,\theta)(p^{(m)}(\theta)-p^{(m+1)}(\theta))d\theta \\
    & \hspace{-1in} = \int_0^{\frac{\pi}{2}}f(t,\theta)(p^{(m)}(\theta)-p^{(m+1)}(\theta))d\theta+\int_{\frac{\pi}{2}}^{\bar\theta}f(t,\theta)(p^{(m)}(\theta)-p^{(m+1)}(\theta))d\theta.
\end{align*}
Now note that
\begin{align*}
    \int_{\frac{\pi}{2}}^{\bar\theta}f(t,\theta)(p^{(m)}(\theta)-p^{(m+1)}(\theta))d\theta
    & \ge \int_{\frac{\pi}{2}}^{\pi}f(t,\theta)(p^{(m)}(\theta)-p^{(m+1)}(\theta))d\theta \\
    & = \int_{\frac{\pi}{2}}^{0}f(t,\pi-\xi)(p^{(m)}(\pi-\xi)-p^{(m+1)}(\pi-\xi))d(\pi-\xi) \\
    & = \int_{0}^{\frac{\pi}{2}}f(t,\pi-\xi)(p^{(m)}(\xi)-p^{(m+1)}(\xi))d\xi.
\end{align*}
Here, in the inequality we use \eqref{eq L22 target 2}, in the first equality we perform the change of variable $\theta = \pi - \xi$, and in the last equality, we use the fact that $p^{(m)}(\theta)=p^{(m)}(\pi-\theta)$ for every $m$ and every $\theta$.
Thus we continue
\begin{align*}
    T^{(\alpha,m+1)}(tv')-T^{(\alpha,m)}(tv) 
    & \ge \int_0^{\frac{\pi}{2}}f(t,\theta)(p^{(m)}(\theta)-p^{(m+1)}(\theta))d\theta \\ 
    & \qquad +\int_{0}^{\frac{\pi}{2}}f(t,\pi-\xi)(p^{(m)}(\xi)-p^{(m+1)}(\xi))d\xi \\
    & = \int_0^{\frac{\pi}{2}}(f(t,\theta)+\hat{f}(t,\theta))(p^{(m)}(\theta)-p^{(m+1)}(\theta))d\theta,
\end{align*}
where in the equality we use the fact that $\hat{f}(t,\theta)=f(t,\pi-\theta).$
This concludes the proof of \eqref{eq L22 target}.

To finish the proof it suffices to show that $f(t,\theta)+\hat{f}(t,\theta)$ is a nonnegative decreasing function for $\theta \in (0,\frac{\pi}{2})$.
In fact, using \eqref{eq L22 target} and \cref{obs monotone}, we can then conclude that $T^{(\alpha,m+1)}(tv')-T^{(\alpha,m)}(tv) \ge 0.$

We derive $f(t,\theta)+\hat{f}(t,\theta)$ with respect to the variable $\theta$ and obtain 
\begin{align*}
    \frac{\partial (f+\hat{f})}{\partial\theta} (t,\theta) & =\frac{\partial\pare{2\alpha-\sqrt{r^2+t^2-2rt\cos\theta}-\sqrt{r^2+t^2+2rt\cos\theta}}}{\partial\theta} \\
    & = r t\sin \theta\pare{\frac{1}{\sqrt{r^2+t^2+2rt\cos \theta}}-\frac{1}{\sqrt{r^2+t^2-2rt\cos\theta}}}.
\end{align*}
Hence the derivative is nonpositive for $\theta \in (0,\frac{\pi}{2})$ and so $f(t,\theta)+\hat{f}(t,\theta)$ is decreasing for $\theta \in (0,\frac{\pi}{2})$.
This implies that, for $\theta \in (0,\frac{\pi}{2})$, we have $f(t,\theta)+\hat{f}(t,\theta) \ge f(t,\frac{\pi}{2})+\hat{f}(t,\frac{\pi}{2})=2\alpha-2\sqrt{r^2+t^2}$.
The latter quantity is nonnegative.
In the case $t>\alpha-r$, this is because $\cos \bar\theta =\frac{r^2+t^2-\alpha^2}{2rt} < 0$ when $\bar\theta > \frac{\pi}{2}.$
In the case $t\le\alpha-r$, this is because we have $\alpha^2 \ge (r+t)^2 \ge r^2 + t^2$.
\end{prf}

In the next lemma, we use \cref{lem T6} to bound the function $T^{(\alpha,m)}(z)$.

\begin{lemma}\label{lm bound1}
Let $m \ge 2$, let $r \in \R_+$, let $\epsilon \in (0, 1)$, and let $\alpha = r(1+\epsilon)$. 
Let $z \in \ball_r(0)$ with $\norm{z} \ge \epsilon r$.
Then we have
\begin{align*}
    T^{(\alpha,m)}(z) \ge \frac{r \epsilon^2}{8}-r \sqrt{\frac{\pi m}{2}}\pare{1-\frac{\epsilon^2}{16}}^{\frac{m-2}{2}}.
\end{align*}
\end{lemma}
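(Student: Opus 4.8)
The plan is to reduce $T^{(\alpha,m)}(z)$ to a one‑dimensional integral, estimate the integrand on a suitably chosen spherical cap, and control the measure of the small angles with \cref{lem T6}. Since $T^{(\alpha,m)}$ is invariant under rotations centered in the origin, I would write $z=tv$ for a unit vector $v$ and $t=\norm{z}\in[\epsilon r,r]$. Because $t\ge \epsilon r=\alpha-r$, the third case of \cref{obs H form} (with $s=r$) applies and yields
\[
T^{(\alpha,m)}(tv)=r\epsilon-\int_0^{\bar\theta}\pare{\alpha-d(\theta)}\,p^{(m)}(\theta)\,d\theta,
\qquad d(\theta):=\sqrt{r^2+t^2-2rt\cos\theta},
\]
where $\cos\bar\theta=\tfrac{r^2+t^2-\alpha^2}{2rt}$ and $\bar\theta\le\pi$. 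On $(0,\pi)$ the function $d(\theta)$ is increasing, with $d(0)=r-t$ and $d(\bar\theta)=\alpha$, so $0\le \alpha-d(\theta)\le r\epsilon+t\le 2r$ throughout $[0,\bar\theta]$. It then suffices to upper bound the integral by $r\epsilon\pare{1-\tfrac{\epsilon}{8}}+r\sqrt{\tfrac{\pi m}{2}}\pare{1-\tfrac{\epsilon^2}{16}}^{\frac{m-2}{2}}$.

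Next I would introduce the threshold angle $\theta_0:=\arccos(\epsilon/4)\in(0,\pi/2)$, for which $\sin\theta_0=\sqrt{1-\epsilon^2/16}$, and split the integral at $\theta_1:=\min\{\theta_0,\bar\theta\}$. On $[0,\theta_1]$ I bound $\alpha-d(\theta)\le 2r$ and, since $\theta_1\le\theta_0<\pi/2$, apply \cref{lem T6} to the interval $[0,\theta_1]$ to get $\Pr(\theta\le\theta_1)<\tfrac{\sqrt\pi}{2}\sqrt{\tfrac m2}\sin^{m-2}\theta_1\le\tfrac12\sqrt{\tfrac{\pi m}{2}}\pare{1-\tfrac{\epsilon^2}{16}}^{\frac{m-2}{2}}$, which bounds the first piece by $r\sqrt{\tfrac{\pi m}{2}}\pare{1-\tfrac{\epsilon^2}{16}}^{\frac{m-2}{2}}$. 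On $[\theta_1,\bar\theta]$ (which is nonempty only when $\theta_1=\theta_0<\bar\theta$) monotonicity of $d$ gives $\alpha-d(\theta)\le\alpha-d(\theta_0)$, so the second piece is at most $\alpha-d(\theta_0)$; and from $d(\theta_0)^2=r^2+\pare{t^2-\tfrac{r\epsilon}{2}t}$, using that $t\mapsto t^2-\tfrac{r\epsilon}{2}t$ is increasing for $t\ge\tfrac{r\epsilon}{4}$ and hence is at least its value $\tfrac{r^2\epsilon^2}{2}$ at $t=\epsilon r$, one gets $d(\theta_0)^2\ge r^2\pare{1+\tfrac{\epsilon^2}{2}}\ge r^2\pare{1+\tfrac{\epsilon^2}{8}}^2$ (the last step is $\epsilon^2\le16$), i.e.\ $\alpha-d(\theta_0)\le r\epsilon\pare{1-\tfrac{\epsilon}{8}}$. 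Summing the two pieces and substituting into the displayed identity gives $T^{(\alpha,m)}(tv)\ge r\epsilon-r\epsilon\pare{1-\tfrac{\epsilon}{8}}-r\sqrt{\tfrac{\pi m}{2}}\pare{1-\tfrac{\epsilon^2}{16}}^{\frac{m-2}{2}}=\tfrac{r\epsilon^2}{8}-r\sqrt{\tfrac{\pi m}{2}}\pare{1-\tfrac{\epsilon^2}{16}}^{\frac{m-2}{2}}$.

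The routine parts are the reduction (via \cref{obs H form}), the elementary estimates $r\epsilon+t\le2r$, $\epsilon^2\le16$, $r\epsilon\ge\tfrac{r\epsilon^2}{8}$, and the minimization of the quadratic in $t$; all of these use only $\epsilon\in(0,1)$ and $t\in[\epsilon r,r]$. The only place requiring care — and the step I would check most carefully — is the calibration of the threshold $\epsilon/4$: it must be small enough that $\cos\theta_0=\epsilon/4$ makes $\sin^{m-2}\theta_0$ equal to $\pare{1-\tfrac{\epsilon^2}{16}}^{\frac{m-2}{2}}$ (matching the claimed bound), yet large enough that the hypothesis $t\ge\epsilon r$ forces $d(\theta_0)\ge r\pare{1+\tfrac{\epsilon^2}{8}}$, so that the tail contribution $\alpha-d(\theta_0)$ stays below $r\epsilon\pare{1-\tfrac{\epsilon}{8}}$. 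I would also record that $\bar\theta$ is well defined, i.e.\ $\tfrac{r^2+t^2-\alpha^2}{2rt}\in[-1,1]$, which amounts to $|r-t|\le\alpha\le r+t$ and holds precisely because $t\in[\epsilon r,r]$ with $\alpha=r(1+\epsilon)$.
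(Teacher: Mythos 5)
Your proposal is correct and follows essentially the same route as the paper's proof: reduce to the one-dimensional integral via \cref{obs H form} with $s=r$, split at the threshold angle $\arccos(\epsilon/4)$, control the small-angle cap with \cref{lem T6}, and lower bound $d(\theta_0)$ by $r\sqrt{1+\epsilon^2/2}\ge r(1+\epsilon^2/8)$ using $t\ge\epsilon r$. The only differences are bookkeeping (you split uniformly at $\min\{\theta_0,\bar\theta\}$ and bound the cap integrand by $2r$ and the tail mass by $1$, whereas the paper distinguishes the two cases $\bar\theta\le\theta_\epsilon$ and $\bar\theta>\theta_\epsilon$ and keeps the factor $\alpha$ and $1-\Pr$), and both yield the same final constants.
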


\begin{prf}
Let $v$ be a unit vector in $\R^m$.
Since $T^{(\alpha,m)}(z)$ is invariant under rotations centered in the origin, it suffices to consider vectors $z \in \ball_r(0)$ of the form $z=tv$, for $t \in [\epsilon r,r]$.
We define 
\begin{align*}
\theta_\epsilon := \arccos\frac{\epsilon}{4}< \frac{\pi}{2}, 
\qquad 
\bar \theta:=\arccos\frac{r^2+t^2-\alpha^2}{2rt}<\pi.
\end{align*}

We consider two cases. 
In the first case we assume $\bar \theta\le\theta_\epsilon$.
Then, according to Observation~\ref{obs H form}, we have 
\begin{align*}
    T^{(\alpha,m)}(tv) & 
    = \alpha -r-\int_0^{\bar\theta}\pare{\alpha-\sqrt{r^2+t^2-2rt\cos\theta}}p^{(m)}(\theta) d\theta \\ 
    & \ge \alpha(1-\Pr (\theta \in (0,\bar\theta)))-r 
    \ge \alpha(1-\Pr (\theta \in (0,\theta_\epsilon)))-r  \\
    & \ge r \sqrt{1+\frac{\epsilon^2}{2}}(1-\Pr (\theta \in (0,\theta_\epsilon)))-r
    \ge r \sqrt{1+\frac{\epsilon^2}{2}}\pare{1-\frac{\sqrt{\pi}}{2}\sqrt{\frac{m}{2}}\sin^{m-2}\theta_\epsilon}-r \\
    & =r \sqrt{1+\frac{\epsilon^2}{2}}\pare{1-\frac{\sqrt{\pi}}{2}\sqrt{\frac{m}{2}}\pare{1-\frac{\epsilon^2}{16}}^{\frac{m-2}{2}}}-r.
\end{align*}
The third inequality holds because $\alpha = r(1+\epsilon)>r\sqrt{1+\epsilon^2/2},$ 
and the fourth inequality follows from \cref{lem T6}.

In the second case we assume $\bar \theta>\theta_\epsilon$.
Then, according to Observation \ref{obs H form}, we have 
\begin{align*}
        T^{(\alpha,m)}(tv) & 
        =\alpha -r-\int_0^{\bar\theta}\pare{\alpha-\sqrt{r^2+t^2-2rt\cos\theta}}p^{(m)}(\theta) d\theta \\ 
        & = \alpha-r-\int_0^{\theta_\epsilon}\pare{\alpha-\sqrt{r^2+t^2-2rt\cos\theta}}p^{(m)}(\theta) d\theta \\
        & \qquad -\int_{\theta_\epsilon}^{\bar\theta}\pare{\alpha-\sqrt{r^2+t^2-2rt\cos\theta}}p^{(m)}(\theta) d\theta  \\ 
        & \ge \alpha-r-\alpha \Pr (\theta \in (0,\theta_\epsilon))-\int_{\theta_\epsilon}^{\bar \theta}\pare{\alpha-\sqrt{r^2+t^2-2rt\cos\theta}}p^{(m)}(\theta) d\theta   \\
        & \ge \alpha-r-\alpha \Pr (\theta \in (0,\theta_\epsilon))-\pare{\alpha-r\sqrt{1+\frac{\epsilon^2}{2}}}\Pr (\theta \in (\theta_\epsilon,\bar\theta)) \\
         & \ge \alpha-r-\alpha \Pr (\theta \in (0,\theta_\epsilon))-\pare{\alpha-r\sqrt{1+\frac{\epsilon^2}{2}}}(1-\Pr (\theta \in (0,\theta_\epsilon))) \\
        & = r \sqrt{1+\frac{\epsilon^2}{2}}\pare{1-\Pr (\theta \in (0,\theta_\epsilon))}-r 
        \ge r \sqrt{1+\frac{\epsilon^2}{2}}\pare{1-\frac{\sqrt{\pi}}{2}\sqrt{\frac{m}{2}}\sin^{m-2}\theta_\epsilon} -r \\
         & =r \sqrt{1+\frac{\epsilon^2}{2}}\pare{1-\frac{\sqrt{\pi}}{2}\sqrt{\frac{m}{2}}\pare{1-\frac{\epsilon^2}{16}}^{\frac{m-2}{2}}}-r.
\end{align*}
Here, the second inequality holds because 
$$
\sqrt{r^2+t^2-2rt\cos\theta}
\ge \sqrt{r^2+t^2-2rt\cos\theta_\epsilon} 
\ge \sqrt{r^2+\epsilon^2r^2-2\epsilon r^2\cos\theta_\epsilon}
= r \sqrt{1+\epsilon^2/2}
$$ 
when $\theta \ge \theta_\epsilon$ and $t \ge \epsilon r$ and the last inequality follows from \cref{lem T6}.

Since $ (1+\epsilon^2/8) \le \sqrt{1+\epsilon^2/2} \le 2$, we obtain
\begin{align*}
    T^{(\alpha,m)}(z) 
    \ge r \sqrt{1+\frac{\epsilon^2}{2}}\pare{1-\frac{\sqrt{\pi}}{2}\sqrt{\frac{m}{2}}\pare{1-\frac{\epsilon^2}{16}}^{\frac{m-2}{2}}}-r 
    \ge \frac{r \epsilon^2}{8}-r\sqrt{\frac{\pi m}{2}}\pare{1-\frac{\epsilon^2}{16}}^{\frac{m-2}{2}}.
\end{align*}
\end{prf}

\paragraph{Analysis of the function $H^{(\alpha,\mu,m)}$.}

Our next goal is to derive a lower bound on $H^{(\alpha,\mu,m)}$ using the lower bound for $T^{(\alpha,m)}$ given in \cref{lm bound1}.

\begin{lemma}
\label{lm distribution}
Let $(\mu,\ball_{r}(0))$ be a probability space with $m \ge 2$ that satisfies \ref{ass rotation} and let $\alpha > r$. 
Let $z \in \ball_{r}(0)$.
Then we have $H^{(\alpha,\mu,m)}(z) \ge T^{(\alpha,m)}(z)$.
\end{lemma}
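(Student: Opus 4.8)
The plan is to reduce the arbitrary rotation-invariant measure $\mu$ to the uniform measures $\mu^s$ on concentric spheres, and then compare each of those against the boundary sphere $\mu^r$, which is precisely the measure defining $T^{(\alpha,m)}$. First I would use \ref{ass rotation} to disintegrate $\mu$ over the radius: letting $\hat\mu$ be the probability measure of the random variable $\norm{x}$ for $x$ drawn according to $\mu$, rotation invariance forces the conditional law of $x$ given $\norm{x}=s$ to be $\mu^s$, so $\int_{\ball_r(0)} f\,d\mu=\int_0^r\big(\int_{\sphere_s(0)} f\,d\mu^s\big)\,d\hat\mu(s)$ for every bounded measurable $f$ on $\ball_r(0)$ --- the same disintegration already used in the proof of \cref{lm ball center m >= 2}. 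Applying this to the two integrands in \cref{def H} yields $H^{(\alpha,\mu,m)}(z)=\int_0^r H^{(\alpha,\mu^s,m)}(z)\,d\hat\mu(s)$, where $H^{(\alpha,\mu^s,m)}$ is understood as in \cref{def H} with the measure $\mu^s$ on the ball $\ball_r(0)$; also $T^{(\alpha,m)}(z)=H^{(\alpha,\mu^r,m)}(z)$ by comparing \cref{def T} and \cref{def H}. Since $\hat\mu$ is a probability measure on $[0,r]$, it now suffices to prove the pointwise inequality $H^{(\alpha,\mu^s,m)}(z)\ge H^{(\alpha,\mu^r,m)}(z)$ for every $s\in[0,r]$.

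For this pointwise step, write $H^{(\alpha,\mu^s,m)}(z)=\int_{\sphere_s(0)}\big[(\alpha-s)-(\alpha-d(z,x))_+\big]\,d\mu^s(x)$, using $\norm{x}=s$ on $\sphere_s(0)$. For $s\in(0,r]$ I would use the radial dilation $\Phi_s(x):=(r/s)x$, which maps $\sphere_s(0)$ onto $\sphere_r(0)$ with $d(x,\Phi_s(x))=r-s$ and, being a similarity (equivalently, by rotation invariance of its pushforward), carries $\mu^s$ to $\mu^r$. Since $u\mapsto(\alpha-u)_+$ is $1$-Lipschitz and $\abs{d(z,x)-d(z,\Phi_s(x))}\le d(x,\Phi_s(x))=r-s$, we get $(\alpha-d(z,x))_+-(\alpha-d(z,\Phi_s(x)))_+\le r-s=(\alpha-s)-(\alpha-r)$ for every $x\in\sphere_s(0)$, i.e. $(\alpha-s)-(\alpha-d(z,x))_+\ge(\alpha-r)-(\alpha-d(z,\Phi_s(x)))_+$; integrating against $\mu^s$ and using that $\Phi_s$ pushes $\mu^s$ forward to $\mu^r$ gives $H^{(\alpha,\mu^s,m)}(z)\ge H^{(\alpha,\mu^r,m)}(z)$.

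It remains to treat $s=0$, which matters only if $\mu$ has an atom at the origin. There $\mu^0=\delta_0$, so $H^{(\alpha,\mu^0,m)}(z)=\alpha-(\alpha-\norm{z})_+=\norm{z}$ because $\norm{z}\le r<\alpha$; and for every $y\in\sphere_r(0)$ the triangle inequality $d(z,y)\le\norm{z}+r$ gives $(\alpha-r)-(\alpha-d(z,y))_+\le\norm{z}$ (considering separately $d(z,y)\le\alpha$ and $d(z,y)>\alpha$), so averaging over $y$ drawn according to $\mu^r$ yields $T^{(\alpha,m)}(z)=H^{(\alpha,\mu^r,m)}(z)\le\norm{z}=H^{(\alpha,\mu^0,m)}(z)$. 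Combining the two cases, $H^{(\alpha,\mu^s,m)}(z)\ge T^{(\alpha,m)}(z)$ for all $s\in[0,r]$, and integrating against $\hat\mu$ proves the lemma. The analytic content (the Lipschitz/triangle bound) is elementary; the part that needs care is the disintegration of $\mu$ and the bookkeeping around the $s=0$ atom and the pushforward of $\mu^s$ under $\Phi_s$.
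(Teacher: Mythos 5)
Your proof is correct, but the heart of it differs from the paper's argument. Both proofs start identically: rotation invariance is used to disintegrate $\mu$ over the radius, giving $H^{(\alpha,\mu,m)}(z)=\int_0^r H^{(\alpha,\mu^s,m)}(z)\,d\nu(s)$, so that everything reduces to comparing $H^{(\alpha,\mu^s,m)}(z)$ with $H^{(\alpha,\mu^r,m)}(z)=T^{(\alpha,m)}(z)$ for each fixed $s$. The paper handles this comparison by proving the stronger statement that $s\mapsto H^{(\alpha,\mu^s,m)}(z)$ is decreasing on $(0,r]$: it invokes the angular representation of \cref{obs H form}, differentiates under the integral sign with respect to $s$ in the two regimes $s\le\alpha-\norm{z}$ and $s>\alpha-\norm{z}$, and checks the sign of the derivative, treating $s=0$ separately. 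You instead compare each sphere directly to the boundary sphere via the radial dilation $\Phi_s(x)=(r/s)x$, which pushes $\mu^s$ forward to $\mu^r$ and moves each point by exactly $r-s$; the $1$-Lipschitz property of $u\mapsto(\alpha-u)_+$ together with the triangle inequality then gives the pointwise bound $(\alpha-s)-(\alpha-d(z,x))_+\ge(\alpha-r)-(\alpha-d(z,\Phi_s(x)))_+$, and integrating yields $H^{(\alpha,\mu^s,m)}(z)\ge T^{(\alpha,m)}(z)$; your separate treatment of the atom at $s=0$ is also sound (and could even be absorbed into the same Lipschitz estimate by integrating directly against $\mu^r$). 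The trade-off: the paper's route establishes full monotonicity in $s$ (which is not used elsewhere) at the cost of the angular change of variables, differentiation under the integral, and a case analysis on $\bar\theta$, while your coupling argument is more elementary, avoids \cref{obs H form} and any calculus, and in fact does not use $m\ge 2$ at all, so it proves the inequality in every dimension.
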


\begin{prf}
Let $x$ be a random vector drawn according to $\mu$. 
Since $(\mu,\ball_{r}(0))$ satisfies \ref{ass rotation}, we know that, conditioned on the event that $\norm{x}=s$, $x$ is drawn according to $\mu^s$.

In the following, we let $D := \ball_{\alpha}(z)\cap \ball_{r}(0),$ we denote by $I_D(x)$ the indicator function of $D$, and by $\nu$ be the probability measure of $\norm{x}$.
Then we have 
\begin{align*}
    H^{(\alpha,\mu,m)}(z) 
    & =\int_{\ball_{r}(0)} \pare{\alpha-\norm{x}-(\alpha-d(z,x))I_D(x)}d \mu (x)\\
    & = \int_0^r d\nu(s) \int_{\sphere_s(0)} \pare{\alpha-\norm{x}-(\alpha-d(z,x))I_D(x)} d\mu^s(x) \\
    & = \int_0^r H^{(\alpha,\mu^s,m)}(z) d\nu(s).
\end{align*}
To complete the proof of the lemma it suffices to show that the scalar $r$ achieves
\begin{align}
\label{eq H min}
\min \bra{H^{(\alpha,\mu^s,m)}(z) \mid s \in [0,r]}.
\end{align}
In fact, this implies
\begin{align*}
    H^{(\alpha,\mu,m)}(z) \ & 
    = \int_0^r H^{(\alpha,\mu^s,m)}(z) d\nu(s)
    \ge H^{(\alpha,\mu^r,m)}(z)
    = T^{(\alpha,m)}(z).
\end{align*}



Let $v$ be a unit vector in $\R^m$.
Since $H^{(\alpha,\mu^s,m)}(z)$ is invariant under rotations centered in the origin, it suffices to consider vectors $z \in \ball_r(0)$ of the form $z=tv$, for $t \in [0,r]$.

If $s=0$, then from \cref{obs H form} we have $H^{(\alpha,\mu^s,m)}(tv)=t$. 
If $0< s \le \alpha-t$, \cref{obs H form} implies  $H^{(\alpha,\mu^s,m)}(tv)=\E d(tv,x)-s=\E d(tv,x)- \E \norm{x}=\E (d(tv,x)-\norm{x}) \le \E t = t$, where $x$ is a random vector drawn according to $\mu^s$.
So we only need to show \eqref{eq H min} for $s \in (0,r]$ rather than $s \in [0,r]$. 

We now consider separately two cases. 
In the first case we assume $s \in (0, \alpha-t]$.
From \cref{obs H form} we can write
\begin{align*}
    H^{(\alpha,\mu^s,m)}(tv)
    =\alpha-s-\int_0^\pi \pare{\alpha-\sqrt{s^2+t^2-2st\cos\theta}}d\Tilde{\mu}(\theta).
\end{align*}
We derive with respect to the variable $s$ and obtain 
\begin{align*}
    \frac{\partial H^{(\alpha,\mu^s,m)}}{\partial s} (tv) 
    = \int_0^{\pi} \frac{s-t\cos \theta}{\sqrt{s^2+t^2-2st\cos\theta}} d\Tilde{\mu}(\theta)-1 
    \le 0,
\end{align*}
because $(s-t\cos \theta)/\sqrt{s^2+t^2-2st\cos\theta} \le 1$. 
This implies that the function $H^{(\alpha,\mu^s,m)}(tv)$ is decreasing in $s$, when $s \in (0, \alpha-t]$.

In the second case we assume $s \in (\alpha-t,r]$.
From \cref{obs H form} we can write
\begin{align*}
     H^{(\alpha,\mu^s,m)}(tv) 
     & =\alpha -s-\int_0^{\bar \theta}\pare{\alpha-\sqrt{s^2+t^2-2st\cos\theta}}d\Tilde{\mu}(\theta) \\
     & =\alpha -s-\int_0^{\bar \theta}\pare{\alpha-\sqrt{s^2+t^2-2st\cos\theta}}p^{(m)}(\theta)d\theta,
\end{align*}
where
$$
\bar \theta:=\arccos\frac{s^2+t^2-\alpha^2}{2st} < \pi.
$$ 
We derive with respect to the variable $s$ and obtain 
\begin{align*}
    \frac{\partial H^{(\alpha,\mu^s,m)}}{\partial s} (tv) 
    & = -1 +\int_0^{\bar \theta}\frac{s-t\cos \theta}{\sqrt{s^2+t^2-2st\cos\theta}}d\Tilde{\mu}(\theta)-\pare{\alpha-\sqrt{s^2+t^2-2st\cos\bar \theta}}p^{(m)}(\bar \theta) \frac{\partial\bar \theta}{\partial s
    } \\
    \ & = -1 +\int_0^{\bar \theta}\frac{s-t\cos \theta}{\sqrt{s^2+t^2-2st\cos\theta}}d\Tilde{\mu}(\theta) 
    \le -1 +\Pr(\theta \le \bar \theta) < 0.
\end{align*}
Here, the second equality holds because $\alpha-\sqrt{s^2+t^2-2st\cos\bar \theta}=0$ and the first inequality holds because $(s-t\cos \theta)/\sqrt{s^2+t^2-2st\cos\theta} \le 1$ and $\bar \theta < \pi$.
So we conclude that $H^{(\alpha,\mu^s,m)}(tv)$ is also decreasing in $s$, when $s \in (\alpha-t,r]$.

The above two cases imply that  $H^{(\alpha,\mu^s,m)}(tv)$ is decreasing in $s$, when $s \in (0,r]$.
Thus, for every $z \in \ball_{r}(0)$, the scalar $r$ achieves \eqref{eq H min}.
\end{prf}

According to \cref{lm distribution}, we know that every lower bound for $T^{(\alpha,m)}(z)$ is also a lower bound for $H^{(\alpha,\mu,m)}(z)$.

\begin{lemma}\label{lm H}
Let $(\mu,\ball_{r}(0))$ be a probability space with $m \ge 2$ that satisfies \ref{ass rotation}, let $\epsilon \in (0,1)$, and let $\alpha = r(1+\epsilon)$.
Let $z \in \ball_{r}(0)$ with $\norm{z} \ge \epsilon r$.
Then we have
\begin{align*}
    H^{(\alpha,\mu,m)}(z) 
    \ge \frac{r\epsilon^2}{8}-r\sqrt{\frac{\pi m}{2}}\pare{1-\frac{\epsilon^2}{16}}^{\frac{m-2}{2}}.
\end{align*}
\end{lemma}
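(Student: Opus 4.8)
The plan is to obtain this as an immediate consequence of the two preceding results, \cref{lm distribution} and \cref{lm bound1}. First I would check that the hypotheses line up: the probability space $(\mu,\ball_r(0))$ satisfies \ref{ass rotation} and, since $\epsilon \in (0,1)$, we have $\alpha = r(1+\epsilon) > r$, so both \cref{lm distribution} and \cref{lm bound1} apply to the point $z \in \ball_r(0)$.

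Next I would invoke \cref{lm distribution}, which gives $H^{(\alpha,\mu,m)}(z) \ge T^{(\alpha,m)}(z)$; this is the step where the rotational invariance of $\mu$ is used to ``push mass outward'' and reduce to the uniform measure on the boundary sphere $\sphere_r(0)$. Then, because we have the additional hypothesis $\norm{z} \ge \epsilon r$ and $\alpha = r(1+\epsilon)$, I would apply \cref{lm bound1} to conclude
\begin{align*}
    T^{(\alpha,m)}(z) \ge \frac{r \epsilon^2}{8} - r\sqrt{\frac{\pi m}{2}}\pare{1-\frac{\epsilon^2}{16}}^{\frac{m-2}{2}}.
\end{align*}
Chaining the two inequalities yields exactly the claimed bound on $H^{(\alpha,\mu,m)}(z)$.

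There is no real obstacle here — the lemma is a corollary of the machinery already developed in this subsection, and the only things to verify are the side conditions ($\epsilon \in (0,1)$, $\alpha > r$, $\norm{z} \ge \epsilon r$), all of which are given in the statement. The substantive work was already done in establishing \cref{lm distribution} (reducing $H^{(\alpha,\mu,m)}$ to $T^{(\alpha,m)}$ by monotonicity in the radius $s$) and \cref{lm bound1} (the angular concentration estimate via \cref{lem T6}).
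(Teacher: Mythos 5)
Your proposal is correct and matches the paper's proof, which is exactly the one-line chaining of \cref{lm distribution} (giving $H^{(\alpha,\mu,m)}(z) \ge T^{(\alpha,m)}(z)$) with \cref{lm bound1} (the lower bound on $T^{(\alpha,m)}(z)$ under $\norm{z}\ge \epsilon r$ and $\alpha = r(1+\epsilon)$). The side conditions you verify are indeed all that is needed.
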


\begin{prf}
Directly from \cref{lm distribution,lm bound1}.
\end{prf}


\paragraph{Analysis of the function $R^{(\alpha,\mu,m)}$.}


In the next lemma, we will provide an upper bound for $R^{(\alpha,\mu,m)}(z)$

\begin{lemma}
\label{lm residual}
Let $(\mu,\ball_{r}(0))$ be a probability space with $m \ge 2$ that satisfies \ref{ass rotation}, \ref{ass dim} and let $\alpha > r$. 
Let $z \in \R^m$ with $\norm{z} \in (\alpha,\alpha+r)$. 
Then we have
\begin{align*}
    R^{(\alpha,\mu,m)}(z)\le (\alpha+r-\norm{z})\frac{\sqrt{\pi}}{2}\sqrt{\frac{m}{2}}\pare{\frac{\alpha}{\norm{z}}}^{m-2}.
\end{align*}
\end{lemma}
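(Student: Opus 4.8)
The plan is to reduce the bound on $R^{(\alpha,\mu,m)}(z)$ to a one–parameter family of spherical estimates by disintegrating $\mu$ along the radius $\norm{x}$, exactly as in the proof of \cref{lm distribution}. Let $\nu$ denote the law of $\norm{x}$ when $x$ is drawn according to $\mu$; by \ref{ass rotation} (with \ref{ass dim} ensuring $\Pr(x=0)=0$), conditioned on $\norm{x}=s$ the vector $x$ is distributed according to $\mu^s$, so
\[
R^{(\alpha,\mu,m)}(z)=\int_0^r\pare{\int_{\sphere_s(0)}(\alpha-d(z,x))_+\,d\mu^s(x)}\,d\nu(s).
\]
It therefore suffices to bound the inner integral, uniformly over $s\in[0,r]$, by the quantity $(\alpha+r-\norm{z})\frac{\sqrt{\pi}}{2}\sqrt{m/2}(\alpha/\norm{z})^{m-2}$, which does not depend on $s$.

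Fix $s$. Since the inner integral is rotation invariant, write $z=\norm{z}v$ for a unit vector $v$ and parametrize $x\in\sphere_s(0)$ by the angle $\theta$ between $x$ and $v$, as in \cref{obs H form}; then $d(z,x)=\sqrt{\norm{z}^2+s^2-2\norm{z}s\cos\theta}$, and by \cref{obs link} the angle $\theta$ has the density $p^{(m)}$ of \cref{sec theta}. The set $\ball_\alpha(z)\cap\sphere_s(0)$ equals $\{\theta\le\bar\theta_s\}$ with $\bar\theta_s:=\arccos\frac{\norm{z}^2+s^2-\alpha^2}{2\norm{z}s}$; if this set is empty or a single point (which happens precisely when $s\le\norm{z}-\alpha$, in particular for $s=0$ since $\norm{z}>\alpha$), then the inner integral vanishes and there is nothing to prove, so assume $s>\norm{z}-\alpha$, so that $\bar\theta_s$ is well defined and positive. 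On the cap we use the crude estimate $\alpha-d(z,x)\le\alpha-(\norm{z}-s)\le\alpha+r-\norm{z}$ (valid since $s\le r$ and $\norm{z}<\alpha+r$), which gives
\[
\int_{\sphere_s(0)}(\alpha-d(z,x))_+\,d\mu^s(x)\le(\alpha+r-\norm{z})\,\Pr(\theta\le\bar\theta_s).
\]
This produces the linear prefactor; it remains to estimate $\Pr(\theta\le\bar\theta_s)$.

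Here I would invoke \cref{lem T6} with $[\phi_1,\phi_2]=[0,\bar\theta_s]$. The hypothesis $\norm{z}>\alpha$ forces $\cos\bar\theta_s=\frac{\norm{z}^2+s^2-\alpha^2}{2\norm{z}s}>0$, hence $\bar\theta_s<\pi/2$, so $\pi/2\notin[0,\bar\theta_s]$ and $\sin$ attains its maximum on this interval at the endpoint $\bar\theta_s$; \cref{lem T6} then yields $\Pr(\theta\in[0,\bar\theta_s])<\frac{\sqrt{\pi}}{2}\sqrt{m/2}\sin^{m-2}\bar\theta_s$. Finally I claim $\sin\bar\theta_s\le\alpha/\norm{z}$: writing $u:=\norm{z}^2-\alpha^2>0$, one has $\sin^2\bar\theta_s=1-\frac{(u+s^2)^2}{4\norm{z}^2 s^2}$, and $\sin^2\bar\theta_s\le\alpha^2/\norm{z}^2$ is equivalent to $4us^2\le(u+s^2)^2=(u-s^2)^2+4us^2$, which holds (this is also the law of sines in the triangle with vertices $0,x,z$ at the boundary case $d(z,x)=\alpha$). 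Since $\bar\theta_s<\pi/2$ and $m\ge2$, it follows that $\sin^{m-2}\bar\theta_s\le(\alpha/\norm{z})^{m-2}$. Chaining the three displayed bounds and integrating against $d\nu(s)$ gives the claimed inequality.

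The argument is essentially elementary and I do not expect a genuine obstacle. The points requiring care are the bookkeeping of the degenerate radii $s\le\norm{z}-\alpha$ (for which the cap is too small to contribute) and the verification that \cref{lem T6} applies, which is precisely where the hypothesis $\norm{z}>\alpha$ enters, via $\bar\theta_s<\pi/2$. The crude bound $\alpha-d(z,x)\le\alpha+r-\norm{z}$ on the cap is what gives the linear prefactor $(\alpha+r-\norm{z})$, while \cref{lem T6} together with $\sin\bar\theta_s\le\alpha/\norm{z}$ yields the geometric decay $(\alpha/\norm{z})^{m-2}$.
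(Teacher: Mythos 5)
Your proof is correct and follows essentially the same route as the paper: bound the integrand on the cap by $\alpha+r-\norm{z}$ using $d(z,x)\ge\norm{z}-r$, then control the cap's probability through \cref{lem T6} with the sine of the cap angle at most $\alpha/\norm{z}$. The only difference is organizational: you disintegrate $\mu$ over the radius and compute the cap angle sphere by sphere via the law of cosines, whereas the paper treats all of $\ball_{\alpha}(z)\cap\ball_{r}(0)$ at once by projecting $z$ onto the line through $0$ and $x$ (giving $\sin\theta'(x)\le d(z,x)/\norm{z}\le\alpha/\norm{z}$) and invoking \cref{obs link} a single time.
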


\begin{prf}
For every $x \in \ball_r(0)$, let $\theta'(x)$ be the angle between $x$ and $z$.
Let $D:=\ball_{\alpha}(z)\cap \ball_{r}(0)$. For every $x \in D$, we denote by $\Pi_x(z)$ the orthogonal projection of $z$ on the line containing $0$ and $x$. 
Then we know that for every $x \in D$ we have
\begin{align*}
   \sin \theta'(x)=\frac{d(z,\Pi_x(z))}{\norm{z}} \le \frac{d(z,x)}{\norm{z}} \le \frac{\alpha}{\norm{z}}<1. 
\end{align*}
Let $\theta^*:=\arcsin(\alpha/\norm{z}) \in (0,\pi/2)$. 
Thus we obtain $D\subseteq \{x \in \ball_r(0) \mid \theta'(x) \le \theta^*\}$.

According to \cref{obs link}, the random variable $\theta'$ has the same probability measure as the random variable $\theta$ studied in \cref{sec theta}.
So we obtain
\begin{align*}
    R^{(\alpha,\mu,m)}(z) 
    & =\int_{\ball_{\alpha}(z)\cap \ball_{r}(0)} (\alpha-d(z,x))d\mu(x)
    \le (\alpha+r-\norm{z}) \Pr(x \in D) \\
    & \le (\alpha+r-\norm{z}) \Pr(\theta \le \theta^*)
    \le (\alpha+r-\norm{z})\frac{\sqrt{\pi}}{2}\sqrt{\frac{m}{2}}\pare{\frac{\alpha}{\norm{z}}}^{m-2},
\end{align*}
where the first inequality holds because $d(z,x) \ge \norm{z}-r$ and the last inequality holds by \cref{lem T6}.
\end{prf}

\subsection{Proof of \cref{th main2}}

It suffices to check that all assumptions of \cref{th prob} are satisfied.
For every $i \neq j$, we define $\Theta_{ij} >0$ so that  $d(c_i,c_j)=(1+\beta)R+\max\{r_i,r_j\}+2\Theta_{ij}$. 
We also define $\Theta:=\min_{i \neq j}\Theta_{ij}$ and $\gamma:=\beta R+\Theta$. 
For every $i \in [k]$, denote by $E_i := \Ex d(x,c_i)$, where $x$ is a random vector drawn according to $\mu_i$. 
We can then bound $\gamma$ as follows. 
\begin{align*}
\max_{i \in [k]}\beta_i(r_i-E_i)
& <\beta R
<\gamma
<(1+\beta)R+2\Theta-R \\
& \le \min_{i \neq j} d(c_i,c_j) - \max\{r_i,r_j\} - R
\le\min_{i \in [k]}\beta_i(D_i-E_i).
\end{align*}

For every $i \in [k]$, we define $\alpha_i:=E_i+\frac{\gamma}{\beta_i}$. 
It remains to show that for every $i \in [k]$, $c_i$ is the unique point that achieves $\max \{G^{\alpha}(z) \mid z \in \ball_{r_i}(c_i)\}$.
Using the fact that $r_i \in [r,R]$, $\beta_i \in [1,\beta]$, and $E_i \in (0,R]$ for $i \in [k]$, we obtain 
\begin{align}
\label{eq alpha bound}
    r_i
    < R+\frac{\Theta}{\beta} 
    = \frac \gamma \beta
    < \alpha_i 
    \le R + \gamma
    = (1+\beta)R+\Theta< \min_{j 
    \in [k] \setminus \{i\}}d(c_i,c_j)-r_i=D_i .
\end{align}
\cref{lm geometry} implies that $\ball_{\alpha_j}(c_i) \cap \ball_{r_j}(c_j) = \emptyset$ for every $j \in [k]\setminus \{i\}$. From \cref{obs G}, we know that for every $i \in [k]$,
\begin{align*}
    G^\alpha(c_i)=\ \beta_i \int_{\ball_{r_i}(c_i)} (\alpha_i-d(c_i,x))d\mu_i(x).
\end{align*}

From \cref{obs G} we obtain that for every $z \in \ball_{r_i}(c_i)$, 
\begin{align}
\label{eq certificate}
\begin{split}
    G^\alpha(c_i)-G^\alpha(z) & = \beta_i \pare{\int_{\ball_{r_i}(c_i)} (\alpha_i-d(c_i,x))d\mu_i(x)- \int_{\ball_{\alpha_i}(z) \cap \ball_{r_i}(c_i)} (\alpha_i-d(z,x)) d\mu_i(x)}\\
    & \qquad -\sum_{j \in [k] \setminus \{i\}}\beta_j\int_{\ball_{\alpha_j}(z) \cap \ball_{r_j}(c_j)} (\alpha_j-d(z,x)) d\mu_j(x).
\end{split}
\end{align}
It then suffices to show that, when $\Theta$ is large, the right hand side of \eqref{eq certificate} is positive for every $z \in \ball_{r_i}(c_i) \setminus \{c_i\}$.
So we now fix a vector $z$ in $\ball_{r_i}(c_i) \setminus \{c_i\}$.

From \eqref{eq alpha bound} we obtain
\begin{align}\label{eq alphai}
    \alpha_i 
    > R+\frac{\Theta}{\beta}
    =\pare{1+\frac{\Theta}{\beta R}}R 
    \ge \pare{1+\frac{\Theta}{\beta R}}r_i. 
\end{align}
We now consider separately two cases.

In the first case we assume $d(c_i,z)\le \Theta r_i/(\beta R)$.
Notice that under this assumption, for every $j \in [k] \setminus \{i\}$ and for every $x \in \ball_{\alpha_j}(z),\  y \in \ball_{r_j}(c_j)$,
we have 
\begin{align*}
    d(x,y) & \ge d(z,c_j)-r_j-\alpha_j \ge d(c_i,c_j)-d(z,c_i)-r_j-\alpha_j \\
    & \ge d(c_i,c_j)-d(z,c_i)-r_j-(1+\beta)R-\Theta \\
    & \ge 2\Theta_{ij}-\Theta-d(z,c_i)
    \ge \Theta-d(z,c_i)> 0,
\end{align*}
where the third inequality follows from \eqref{eq alpha bound}.
So we must have $\ball_{\alpha_j}(z) \cap \ball_{r_j}(c_j)=\emptyset$ for $j \in [k] \setminus \{i\}$. 
Therefore, from \eqref{eq certificate} we have
\begin{align*}
    G^\alpha(c_i)-G^\alpha(z) \ & =\beta_i\pare{\int_{\ball_{r_i}(c_i)} (\alpha_i-d(c_i,x))d\mu_i(x)- \int_{\ball_{\alpha_i}(z) \cap \ball_{r_i}(c_i)} (\alpha_i-d(z,x)) d\mu_i(x)} \\
    \ & = \beta_i H^{(\alpha_i,\mu'_i,m)}(z-c_i) \ge \beta_i T^{(\alpha_i,m)}(z-c_i)>0,
\end{align*}
where $\mu'_i$ is the image of $\mu_i$ under the translation $x'=x-c_i$.
The first inequality above follows from \cref{lm distribution} and the last inequality follows from \cref{lem T1} because from \eqref{eq alphai} we have $d(c_i,z)\le \Theta r_i/(\beta R) < \alpha_i - r_i$.
Thus, in the first case \cref{th prob} implies that \eqref{pr LP} achieves exact recovery with high probability.

In the remainder of the proof we only need to consider the second case, where we assume $d(c_i,z) > \Theta r_i/(\beta R)$. 
We notice that in this case $d(c_i,z) \le r_i$ implies  $\Theta/(\beta R) < 1$. 
We first show that for every $j \in [k] \setminus \{i\}$, we have 
\begin{align}
\label{eq letsdoit}
    \int_{\ball_{\alpha_j}(z) \cap \ball_{r_j}(c_j)} (\alpha_j-d(z,x)) d\mu_j(x)
    \le R\frac{\sqrt{\pi}}{2}\sqrt{\frac{m}{2}}\pare{1-\frac{\Theta}{(1+\beta)R+2\Theta}}^{m-2}.
\end{align}
If $d(z,c_j) \ge \alpha_j+r_j$, then $\ball_{r_j}(c_j) \cap \ball_{\alpha_j}(z) $ contains at most one point and \eqref{eq letsdoit} clearly holds because \ref{ass dim} implies
\begin{align*}
    \int_{\ball_{\alpha_j}(z) \cap \ball_{r_j}(c_j)} (\alpha_j-d(z,x)) d\mu_j(x)=0.
\end{align*}
If $d(z,c_j) < \alpha_j+r_j$, we can apply \cref{lm residual} to $z$, since we also have 
\begin{align}
\label{eq I will need this}
    d(z,c_j) 
    \ge d(c_i,c_j)-r_i 
    \ge (1+\beta) R+ 2\Theta
    \ge \alpha_j + \Theta
    >\alpha_j,
\end{align}
where the last inequality follows by \eqref{eq alpha bound}.
If we denote by $\mu'_j$ the image of $\mu_j$ under the translation $x'=x-c_j$, we then obtain
\begin{align*}
  \int_{\ball_{\alpha_j}(z) \cap \ball_{r_j}(c_j)} & (\alpha_j-d(z,x)) d\mu_j(x) 
  = R^{(\alpha_j,\mu'_j,m)}(z-c_j) \\
  & \le (\alpha_j+r_j-d(z,c_j))\frac{\sqrt{\pi}}{2}\sqrt{\frac{m}{2}}\pare{\frac{\alpha_j}{d(z,c_j)}}^{m-2}
  \le (r_j-\Theta)\frac{\sqrt{\pi}}{2}\sqrt{\frac{m}{2}}\pare{\frac{\alpha_j}{\alpha_j+\Theta}}^{m-2} \\
  & = (r_j-\Theta)\frac{\sqrt{\pi}}{2}\sqrt{\frac{m}{2}}\pare{1-\frac{\Theta}{\alpha_j+\Theta}}^{m-2}
  \le R\frac{\sqrt{\pi}}{2}\sqrt{\frac{m}{2}}\pare{1-\frac{\Theta}{(1+\beta)R+2\Theta}}^{m-2},
\end{align*}
where in the second inequality we use $d(z,c_j) \ge \alpha_j + \Theta$ from \eqref{eq I will need this} and the last inequality follows because $\alpha_j \le (1+\beta)R+\Theta$ from \eqref{eq alpha bound}.
This concludes the proof of \eqref{eq letsdoit}.

From \eqref{eq letsdoit}
we obtain
\begin{align}
\label{eq put together a}
\begin{split}
    \sum_{j \in [k] \setminus \{i\}}\beta_j\int_{\ball_{\alpha_j}(z) \cap \ball_{r_j}(c_j)} (\alpha_j-d(z,x)) d\mu_j(x) & \le k\beta R\frac{\sqrt{\pi}}{2}\sqrt{\frac{m}{2}}\pare{1-\frac{\Theta}{(1+\beta)R+2\Theta}}^{m-2} \\
    & \le k\beta R\frac{\sqrt{\pi}}{2}\sqrt{\frac{m}{2}}\exp\pare{-\frac{(m-2)\Theta}{(1+\beta)R+2\Theta}},
\end{split}
\end{align}
where the last inequality we use the fact that $1-x \le e^{-x}$ for every $x$.

Now let $\alpha_i':= r_i(1+\frac{\Theta}{\beta R})$. We know from \eqref{eq alphai} that $\alpha_i>\alpha_i'$.
If we denote by $\mu'_i$ the image of $\mu_i$ under the translation $x'=x-c_i$, we obtain
\begin{align}
\label{eq put together b}
\begin{split}
  \ &  \int_{\ball_{r_i}(c_i)} (\alpha_i-d(c_i,x))d\mu_i(x)- \int_{\ball_{\alpha_i}(z) \cap \ball_{r_i}(c_i)} (\alpha_i-d(z,x)) d\mu_i(x)=H^{(\alpha_i,\mu'_i,m)}(z-c_i) \\ 
  \ge \ & T^{(\alpha_i,m)}(z-c_i) \ge T^{(\alpha_i',m)}(z-c_i) \ge \frac{r_i \Theta^2}{8\beta^2R^2}-r_i\sqrt{\frac{\pi m}{2}}\pare{1-\frac{\Theta^2}{16\beta^2R^2}}^{\frac{m-2}{2}} \\ 
  \ge \ & \frac{r_i\Theta^2}{8\beta^2R^2}-r_i\sqrt{\frac{\pi m}{2}}\exp\pare{-\frac{(m-2)\Theta^2}{32\beta^2R^2}}.
  \end{split}
\end{align}
The first inequality holds by \cref{lm distribution}, the second inequality holds by \cref{lem T2}, and the third inequality holds by \cref{lm bound1} with $\epsilon := \Theta / (\beta R)$ which satisfies $\epsilon \in (0,1)$.
In the last inequality we use $1-x \le e^{-x}$ for every $x$.

To show $G^\alpha(c_i)-G^\alpha(z)>0$, it is sufficient to show $(G^\alpha(c_i)-G^\alpha(z))/(\beta_ir_i)>0.$
From \eqref{eq certificate}, \eqref{eq put together a}, and \eqref{eq put together b} we then obtain 
\begin{align}\label{eq G alpha}
\begin{split}
 \frac{G^\alpha(c_i)-G^\alpha(z)}{\beta_ir_i} & 
    \ge \frac{\Theta^2}{8\beta^2R^2}-\sqrt{\frac{\pi m}{2}}\exp\pare{-\frac{(m-2)\Theta^2}{32\beta^2R^2}}-\frac{k\beta R}{\beta_ir_i}\frac{\sqrt{\pi}}{2}\sqrt{\frac{m}{2}}\exp\pare{-\frac{(m-2)\Theta}{(1+\beta)R+2\Theta}} \\
    & \ge \frac{\Theta^2}{8\beta^2R^2}-\sqrt{\frac{\pi m}{2}}\exp\pare{-\frac{(m-2)\Theta^2}{32\beta^2R^2}}-\frac{k\beta R}{r}\frac{\sqrt{\pi}}{2}\sqrt{\frac{m}{2}}\exp\pare{-\frac{(m-2)\Theta}{(1+\beta)R+2\Theta}} \\
    & \ge \frac{\Theta^2}{8\beta^2R^2}-k\sqrt{\frac{\pi m}{2}}\exp\pare{-\frac{(m-2)\Theta^2}{32\beta^2R^2}}-\frac{k\beta R}{r}\frac{\sqrt{\pi}}{2}\sqrt{\frac{m}{2}}\exp\pare{-\frac{(m-2)\Theta}{(1+\beta)R+2\Theta}}.
\end{split}
\end{align}
Note that the lower bound on $(G^\alpha(c_i)-G^\alpha(z))
/(\beta_ir_i)$ obtained in \eqref{eq G alpha} does not depend on the index $i$ and is an increasing function in $\Theta$. 
Next we show that $(G^\alpha(c_i)-G^\alpha(z))
/(\beta_ir_i)$ is positive when $\Theta>C\sqrt{k \log m /m}$, where $C$ is a large constant.
To do so we use the lower bound in \eqref{eq G alpha} and the fact that $\beta,r,R$ are fixed constants. 
We have 
\begin{align*}
& \frac{G^\alpha(c_i)-G^\alpha(z)}{\beta_ir_i}
\\ & \ge 
    \frac{\Theta^2}{8\beta^2R^2}-k\sqrt{\frac{\pi m}{2}}\exp\pare{-\frac{(m-2)\Theta^2}{32\beta^2R^2}}-\frac{k\beta R}{r}\frac{\sqrt{\pi}}{2}\sqrt{\frac{m}{2}}\exp\pare{-\frac{(m-2)\Theta}{(1+\beta)R+2\Theta}} \\
   &> k \pare{\frac{C^2\log m}{8m\beta^2R^2}-\sqrt{\frac{\pi m}{2}}\exp\pare{-\frac{C^2 (m-2)k\log m}{32m\beta^2R^2}}-\frac{\beta R}{r}\frac{\sqrt{\pi}}{2}\sqrt{\frac{m}{2}}\exp\pare{-\frac{C(m-2)\sqrt{k \log m /m}}{(1+\beta)R+2C\sqrt{k \log m /m}}}} \\
   & \ge k \pare{\frac{C^2\log m}{8m\beta^2R^2}-\sqrt{\frac{\pi m}{2}}\exp\pare{-\frac{C^2 (m-2)\log m}{32m\beta^2R^2}}-\frac{\beta R}{r}\frac{\sqrt{\pi}}{2}\sqrt{\frac{m}{2}}\exp\pare{-\frac{C(m-2)\sqrt{ \log m /m}}{(1+\beta)R+2C\sqrt{ \log m /m}}}} \\
   & = \frac{k \log m}{m} \pare{\frac{C^2}{8\beta^2R^2}-F(C,m)},
   \end{align*}
where, to simplify the notation, we let
\begin{align*}
    F(C,m)& := \frac{m}{\log m}\sqrt{\frac{\pi m}{2}}\exp\pare{-\frac{C^2 (m-2)\log m}{32m\beta^2R^2}} \\
   & \qquad +\frac{m}{\log m}\frac{\beta R}{r}\frac{\sqrt{\pi}}{2}\sqrt{\frac{m}{2}}\exp\pare{-\frac{C(m-2)\sqrt{ \log m /m}}{(1+\beta)R+2C\sqrt{ \log m /m}}}.
\end{align*}
It then suffices to show that, for every $m \ge 2$, we have $C^2/(8\beta^2R^2) > F(C,m)$ for some constant $C$ large enough.
It can be checked that for every $m \ge 2$, $F(C,m)$ is a decreasing function in $C$. 
Also it can be checked that there is some threshold $C'>0$ such that if $C \ge C'$ then $\lim_{m \to \infty}F(C,m)=0$. 
This implies that $\sup \{F(C,m) \mid C \ge C',m\ge 2\} = \sup \{F(C',m) \mid m\ge 2\} <\infty$. 
Therefore it suffices to choose $C> C'$ large enough so that $C^2/(8\beta^2R^2)>\sup \{F(C,m) \mid C \ge C',m\ge 2\}$.
\qed

\subsection{Proof of \cref{th main3}}
It suffices to check that all assumptions of \cref{cor prob} are satisfied.
Let $\Theta := \min_{i \neq j}d(c_i,c_j) - 2>1.29$, $\alpha':=1.29$, and let $\alpha_i := \alpha'$ for every $i \in [k]$. 
It remains to show that for every $i \in [k]$, $c_i$ is the unique point that achieves $\max \{G^\alpha(z) \mid z \in \ball_{1}(c_i)\}$.

Since $\min_{i \neq j}d(c_i,c_j)=2+\Theta>3.29$, we know that for every $i \in [k]$ and for every $z \in \ball_1(c_i)$, we have $\ball_{1.29}(z) \cap \ball_1(c_j)=\emptyset$ for every $j \in [k]$ with $j \neq i$.
Thus according to \cref{obs G}, for every $i \in [k]$ and for every $z \in \ball_1(c_i)$ we have 
$$
G^\alpha(z) = \int_{\ball_{1.29}(z)\cap \ball_1(c_i)}  (1.29-d(z,x))d\mu_i(x).
$$ 
Now we fix $i \in [k]$ and $z \in \ball_1(c_i) \setminus \{c_i\}$.
Then we have
\begin{align*}
    G^{\alpha}(c_i)-G^{\alpha}(z) = H^{(1.29,\mu'_i,m)}(z-c_i) \ge T^{(1.29,m)}(z-c_i) \ge T^{(1.29,2)}(z-c_i),
\end{align*}
where $\mu'_i$ is the image of $\mu_i$ under the translation $x'=x-c_i$.
The first inequality follows by \cref{lm distribution} and the second inequality follows by \cref{lem T4}. 
Since $T^{(1.29,2)}$ is invariant under rotations centered in $c_i$, we define a unit vector $v \in \R^m$ and a scalar $t \in (0,1]$ such that $z=tv$.
If $t \le 0.29$, then \cref{lem T1} implies $T^{(1.29,2)} > 0.$ 
Hence, in the remainder of the proof we assume $t>0.29$. According to \cref{obs T3}, we know that $p^{(2)}(\theta)=1/\pi$. So applying \cref{obs H form} with $r=s=1$ and $\alpha=1.29$, we get
\begin{align*}
    T^{(1.29,2)}(tv) = H^{(1.29,\mu^1,2)}(tv)
    =0.29-\frac{1}{\pi}\int_0^{\bar \theta}\pare{1.29-\sqrt{1+t^2-2t\cos\theta}}d\theta,
\end{align*}
where 
\begin{align*}
    \bar \theta= \arccos\frac{1+t^2-1.29^2}{2t}.
\end{align*}
Using the above formula it can be checked that $T^{(1.29,2)}(z) > 0$ for every $t \in (0.29,1].$
The graph of the function $T^{(1.29,2)}(z)$ can be seen in \cref{fig: 1.29}.
\begin{figure}[htbp]
    \centering
    \includegraphics[scale=0.5]{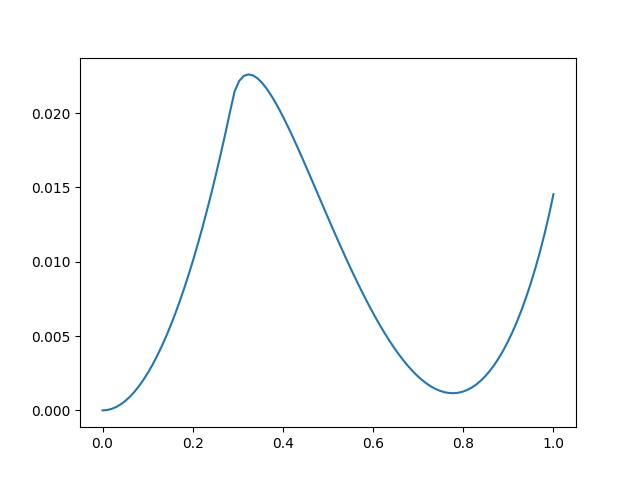}
    \caption{The graph of the function $T^{(1.29,2)}(z)$ in the proof of \cref{th main3}.}
    \label{fig: 1.29}
\end{figure}
Thus, for every $i \in [k]$, $c_i$ is the unique point that achieves $\max \{G^\alpha(z) \mid z \in \ball_{1}(c_i)\}$. 
\qed

\subsection{Proof of \cref{th main4}}




It suffices to check that all assumptions of \cref{cor prob} are satisfied.
Let $\Theta := \min_{i \neq j}d(c_i,c_j) - 2$, $\alpha':=1+\Theta/2 \in (1,1+\Theta)$, and let $\alpha_i:=\alpha'$ for every $i \in [k]$. 
It remains to show that for every $i \in [k]$, $c_i$ is the unique point that achieves $\max \{G^\alpha(z) \mid z \in \ball_{1}(c_i)\}$.

For every $i \in [k]$, from \cref{obs G} and 
\cref{lm geometry} (with $a_i = b_i = \alpha_i$) we obtain
\begin{align*}
    G^\alpha(c_i)
    = \sum_{j \in [k]} \int_{\ball_{\alpha_j}(c_i) \cap \ball
    _1(c_j)} (\alpha_j-d(c_i,x)) d\mu_j(x)
    = \int_{\ball
    _1(c_i)} (\alpha_i-d(c_i,x)) d\mu_i(x).
\end{align*}
So for every $i \in [k]$ and for every $z \in \ball_1(c_i)$ we have 
\begin{align}
\label{eq certificate 2}
\begin{split}
    G^\alpha(c_i)-G^\alpha(z) & =  \pare{\int_{\ball_1(c_i)} (\alpha_i-d(c_i,x))d\mu_i(x)- \int_{\ball_{\alpha_i}(z) \cap \ball_1(c_i)} (\alpha_i-d(z,x)) d\mu_i(x)} \\
    & \qquad -\sum_{j \in [k] \setminus \{i\}}\int_{\ball_{\alpha_j}(z) \cap \ball_1(c_j)} (\alpha_j-d(z,x)) d\mu_j(x).
\end{split}
\end{align}
We will show that,
under the assumptions of the theorem,
the right hand side of \eqref{eq certificate 2} is positive for every $z \in \ball_1(c_i)\setminus\{c_i\}$.

We now fix $i \in [k]$ and $z \in \ball_1(c_i) \setminus \{c_i\}$. 
If $d(c_i,z) \le \Theta/2$, then for every $j \in [k]\setminus \{i\}$, the set $\ball_{\alpha_j}(z) \cap \ball_{1}(c_j)$ contains at most one point. 
In this case, \ref{ass dim} implies 
\begin{align*}
    G^\alpha(c_i)-G^\alpha(z)=H^{(\alpha_i,\mu'_i,m)}(z-c_i) \ge T^{(\alpha_i,m)}(z-c_i)>0,
\end{align*}
where $\mu'_i$ is the image of $\mu_i$ under the translation $x'=x-c_i$.
The first inequality follows by \cref{lm distribution} and the last inequality follows by \cref{lem T1}.
Hence, in the remainder of the proof we assume  $d(c_i,z) > \Theta/2$. 
Then, we must have $\Theta/2<1$, since $1 \ge d(c_i,z) > \Theta/2$.

Next, we show that for every $j \in [k] \setminus \{i\}$ we have
\begin{align}
\label{eq letsdothisnow}
    \int_{\ball_{\alpha_j}(z) \cap \ball_{1}(c_j)} (\alpha_j-d(z,x)) d\mu_j(x) 
    \le \frac{\sqrt{\pi}}{2}\sqrt{\frac{m}{2}}\pare{1-\frac{\Theta}{2(1+\Theta)}}^{m-2}.
\end{align}
First consider the case $d(c_j,z) \ge \alpha_j+1$.
Then $\ball_{\alpha_j}(z) \cap \ball_{1}(c_j)$ contains at most one point and \ref{ass dim} implies 
\begin{align*}
    \int_{\ball_{\alpha_j}(z) \cap \ball_{1}(c_j)} (\alpha_j-d(z,x)) d\mu_j(x)=0 \le \frac{\sqrt{\pi}}{2}\sqrt{\frac{m}{2}}\pare{1-\frac{\Theta}{2(1+\Theta)}}^{m-2}.
\end{align*}
Now consider the case $d(c_j,z) < \alpha_j+1$. 
We obtain
\begin{align*}
    \int_{\ball_{\alpha_j}(z) \cap \ball_{1}(c_j)} (\alpha_j-d(z,x)) d\mu_j(x) 
    & =  R^{(\alpha_j,\mu'_j,m)}(z-c_j) \\
&\hspace{-1in} \le (\alpha_j+1-d(c_j,z))\frac{\sqrt{\pi}}{2}\sqrt{\frac{m}{2}}\pare{\frac{\alpha_j}{d(c_j,z)}}^{m-2}  \\ 
&\hspace{-1in} \le \pare{1-\frac{\Theta}{2}}\frac{\sqrt{\pi}}{2}\sqrt{\frac{m}{2}}\pare{\frac{1+\frac{\Theta}{2}}{1+\Theta}}^{m-2} 
\le \frac{\sqrt{\pi}}{2}\sqrt{\frac{m}{2}}\pare{1-\frac{\Theta}{2(1+\Theta)}}^{m-2},
\end{align*}
where $\mu'_j$ is the image of $\mu_j$ under the translation $x'=x-c_j$.
The first inequality follows from \cref{lm residual} since $d(c_j,z) \ge 1+\Theta>\alpha_j$ and in the second inequality we use $d(c_j,z) \ge 1+\Theta$.
This concludes the proof of \eqref{eq letsdothisnow}.

On the other hand, we know from \cref{lm H}, with $\epsilon:=\Theta/2 \in (0,1)$, that 
\begin{align}
\label{eq betterusethistoo}
\begin{split}
    \int_{\ball_{1}(c_i)} (\alpha_i-d(c_i,x))d\mu_i(x)- \int_{\ball_{\alpha_i}(z) \cap \ball_{1}(c_i)} (\alpha_i-d(z,x)) d\mu_i(x) & 
    = H^{(\alpha_i,\mu'_i,m)}(z-c_i) \\
    & \hspace{-1in} \ge \frac{\Theta^2}{32}-\sqrt{\frac{\pi m}{2}}\pare{1-\frac{\Theta^2}{64}}^{\frac{m-2}{2}}.
\end{split}
\end{align}

From \eqref{eq certificate 2}, \eqref{eq letsdothisnow}, and \eqref{eq betterusethistoo}, we obtain
\begin{align}
\label{eq G alpha 2}
\begin{split}
    G^\alpha(z)-G^\alpha(c_i) 
    & \ge \frac{\Theta^2}{32}-\sqrt{\frac{\pi m}{2}}\pare{1-\frac{\Theta^2}{64}}^{\frac{m-2}{2}}- k\frac{\sqrt{\pi}}{2}\sqrt{\frac{m}{2}}\pare{1-\frac{\Theta}{2(1+\Theta)}}^{m-2} \\
    \ & \ge \frac{\Theta^2}{32}-\sqrt{\frac{\pi m}{2}}\exp\pare{-\frac{(m-2)\Theta^2}{128}}- k\frac{\sqrt{\pi}}{2}\sqrt{\frac{m}{2}}\exp\pare{-\frac{(m-2)\Theta}{2(1+\Theta)}}.
\end{split}
\end{align}
Notice that the lower bound on $G^\alpha(z)-G^\alpha(c_i)$ obtained in \eqref{eq G alpha 2} is an increasing function in $\Theta$. 
We next show that $G^\alpha(z)-G^\alpha(c_i)$ is positive when $\Theta>C\sqrt{k \log m /m}$, where $C$ is a large constant. 
We have 
\begin{align*}
    & G^\alpha(z)-G^\alpha(c_i)\\
 \ge \ &   \frac{\Theta^2}{32}-\sqrt{\frac{\pi m}{2}}\exp\pare{-\frac{(m-2)\Theta^2}{128}}- k\frac{\sqrt{\pi}}{2}\sqrt{\frac{m}{2}}\exp\pare{-\frac{(m-2)\Theta}{2(1+\Theta)}} \\ 
 > \ &  k\pare{\frac{C^2 \log m}{32m}-\frac 1k \sqrt{\frac{\pi m}{2}}\exp\pare{-\frac{C^2(m-2)k\log m}{128m}}- \frac{\sqrt{\pi}}{2}\sqrt{\frac{m}{2}}\exp\pare{-\frac{(m-2)C\sqrt{k \log m /m}}{2(1+C\sqrt{k \log m /m})}}} \\
 \ge \ & k\pare{\frac{C^2 \log m}{32m}-\sqrt{\frac{\pi m}{2}}\exp\pare{-\frac{C^2(m-2)\log m}{128m}}- \frac{\sqrt{\pi}}{2}\sqrt{\frac{m}{2}}\exp\pare{-\frac{(m-2)C\sqrt{ \log m /m}}{2(1+C\sqrt{ \log m /m})}}} \\
 = \ & k \frac{\log m}{m} \pare{\frac{C^2}{32}-F(C,m)},
\end{align*}
where, to simplify the notation, we let
\begin{align*}
    F(C,m):= \frac{m}{\log m}\sqrt{\frac{\pi m}{2}}\exp\pare{-\frac{C^2(m-2)\log m}{128m}}+\frac{m}{\log m} \frac{\sqrt{\pi}}{2}\sqrt{\frac{m}{2}}\exp\pare{-\frac{(m-2)C\sqrt{ \log m /m}}{2(1+C\sqrt{ \log m /m})}}.
\end{align*}
It then suffices to show that, for every $m \ge 2$, we have $C^2/32 > F(C,m)$ for some constant $C$ large enough.
It can be checked that for every $m \ge 2$, $F(C,m)$ is a decreasing function in $C$. 
Also it can be checked that there is some threshold $C'>0$  such that if $C \ge C'$ then $\lim_{m \to \infty}F(C,m)=0$. 
This implies that $\sup \{F(C,m) \mid C \ge C',m\ge 2\} = \sup \{F(C',m) \mid m\ge 2\}<\infty$. 
Therefore it suffices to choose $C> C'$ large enough so that $C^2/32>\sup \{F(C,m) \mid C \ge C',m\ge 2\}$.
\qed

\subsection{Proof of \cref{th main5}}

For every $i \in [k]$, let $\mu_i := \mu + c_i$.
We first show that for every $i \in [k]$ and for every $z \in \bigcup_{j \in [k]} \ball_1(c_j) \setminus \{c_j\}_{j \in [k]}$ such that $\ball_1(z) \cap \ball_1(c_i)$ has positive measure, we have 
\begin{align}
\label{eq first target}
    \int_{\ball_1(z) \cap \ball_1(c_i)} (d(z,x)-d(c_i,x)) d\mu_i(x)>0.
\end{align}
Let $H$ be the unique hyperplane that contains $\sphere_1(z) \cap \sphere_1(c_i)$ (see \cref{fig: tech}).
We obtain that the balls $\ball_1(z)$ and $\ball_1(c_i)$ are the reflection of each other with respect to $H$. 
\begin{figure}[htbp]
    \centering
    \includegraphics[scale=0.11]{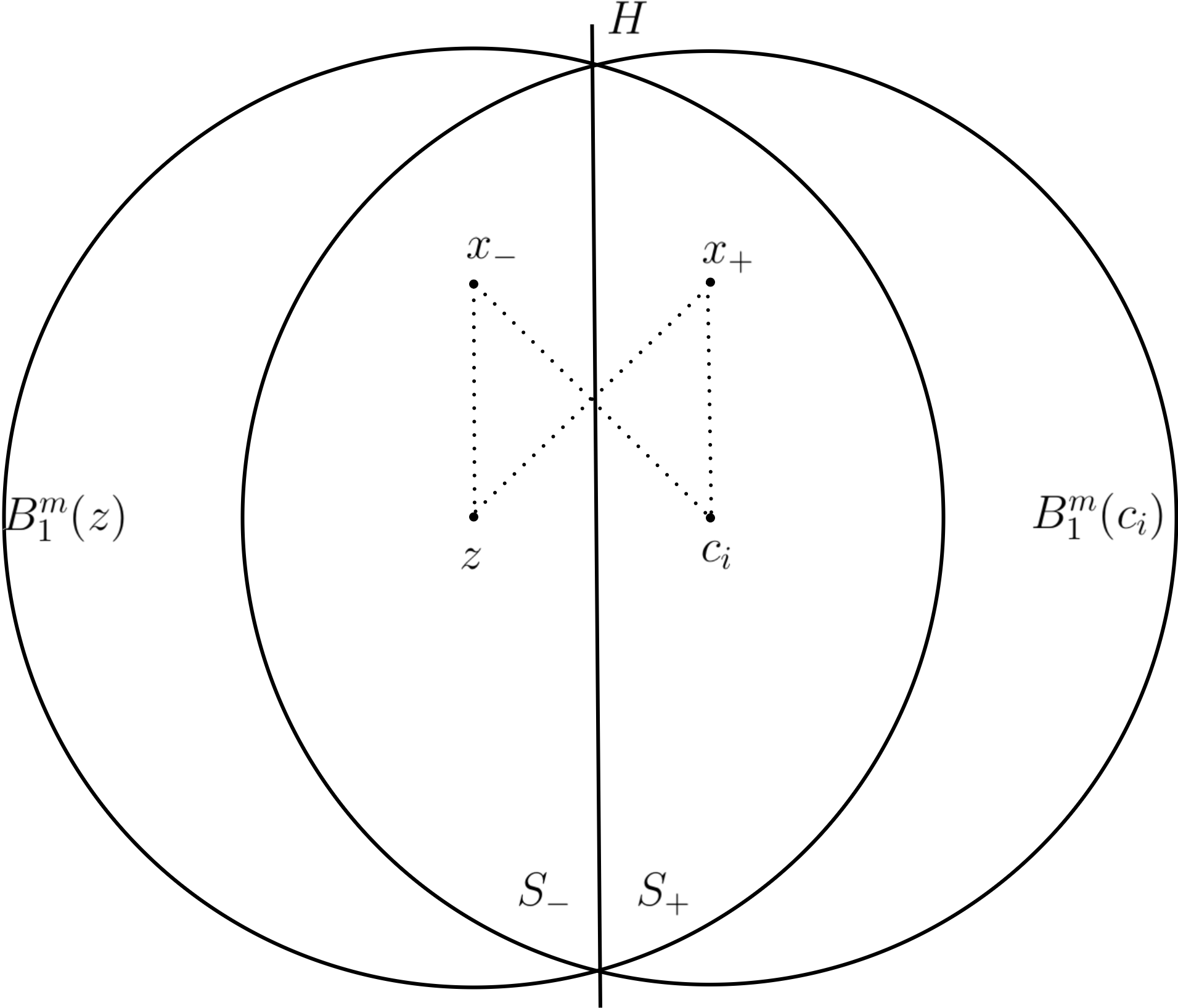}
    \caption{The balls $\ball_1(z)$ and $\ball_1(c_i)$ and the hyperplane $H$ in the proof of \cref{th main5}.}
    \label{fig: tech}
\end{figure}
Let $f(x): \ball_1(z) \cap \ball_1(c_i) \to \ball_1(z) \cap \ball_1(c_i)$ be the function that reflects $x$ with respect to $H$.
Let $S_+:= \{x \in \ball_1(z) \cap \ball_1(c_i) \mid d(z,x)-d(c_i,x) > 0\}$ and $S_-:=\{x \in \ball_1(z) \cap \ball_1(c_i) \mid d(z,x)-d(c_i,x) < 0\}$.
Let $x_+ \in S_+$ and let $x_-:=f(x)$.
We then have $x_- \in S_-$ since $d(z,x_-) - d(c_i,x_-)=d(x_+,c_i)-d(x_+,z)<0$. 
Let $p_i(x)$ be the density function of $\mu_i(x)$.
Since $d(x_+,c_i)<d(c_i,x_-)$, the assumption of the theorem on $p(x)$ implies that we have $p_i(x_+)>p_i(x_-)$. 
We obtain
\begin{align*}
    & \int_{\ball_1(z) \cap \ball_1(c_i)} (d(z,x)-d(c_i,x)) d\mu_i(x) \\
    = \ & \int_{S_+} \pare{d(z,x)-d(c_i,x)}p_i(x)dx+\int_{S_-} \pare{d(z,x)-d(c_i,x)}p_i(x)dx  \\
    = \ & \int_{S_+} \pare{d(z,x)-d(c_i,x)}(p_i(x)-p_i(f(x)))dx>0.
\end{align*}
This concludes the proof of \eqref{eq first target}.

Next we show that for every $i \in [k]$ and for every $z \in \bigcup_{j \in [k]} \ball_1(c_j) \setminus \{c_j\}_{j \in [k]}$ such that $\ball_1(z) \cap \ball_1(c_i)$ has positive measure, we have
\begin{align}
\label{eq second target}
\int_{\ball_{\Tilde{\alpha}}(z) \cap \ball_1(c_i)} (d(z,x)-d(c_i,x)) d\mu_i(x)>0 \qquad \forall \Tilde{\alpha}>1.
\end{align}
Let $\Tilde{S}_+:= \{x \in \ball_{\Tilde{\alpha}}(z) \cap \ball_1(c_i) \mid d(z,x)-d(c_i,x) > 0\}$ and $\Tilde{S}_-:= \{x \in \ball_{\Tilde{\alpha}}(z) \cap \ball_1(c_i) \mid d(z,x)-d(c_i,x) < 0\}$.
Furthermore, let $S_+$ and $S_-$ be defined as in the proof of \eqref{eq first target}.
Clearly we have $S_+ \subseteq \Tilde{S}_+$. 
We observe that $S_-=\Tilde{S}_-$. 
This is because $S_- \subseteq \Tilde{S}_-$ and for every $x \in \ball_1(c_i)$ with $d(z,x)>1$ we must have $d(z,x)-d(c_i,x)>0$, which implies $x \not \in \Tilde{S}_-$.
Then we have 
\begin{align*}
     & \int_{\ball_{\Tilde{\alpha}}(z) \cap \ball_1(c_i)} (d(z,x)-d(c_i,x)) d\mu_i(x) \\
     = \ & \int_{\Tilde{S}_+} \pare{d(z,x)-d(c_i,x)}p_i(x)dx+\int_{S_-} \pare{d(z,x)-d(c_i,x)}p_i(x)dx \\
     \ge \ & \int_{S_+} \pare{d(z,x)-d(c_i,x)}p_i(x)dx+\int_{S_-} \pare{d(z,x)-d(c_i,x)}p_i(x)dx \\
     = \ & \int_{\ball_1(z) \cap \ball_1(c_i)} (d(z,x)-d(c_i,x)) d\mu_i(x)>0,
\end{align*}
where the last inequality holds by \eqref{eq first target}.
This concludes the proof of \eqref{eq second target}.

Next, we claim that there exists $\epsilon \in (0, \min_{i \neq j}d(c_i,c_j)-2)$ such that, for every $i \in [k]$ and for every $z \in \ball_1(c_i)$, there exists a set $D_j$, for each $j \in [k]$, obtained from $\ball_{1+\epsilon}(z) \cap \ball_1(c_j)$ via a rotation centered in $c_j$ followed by the translation $c_j - c_i$, such that the sets $D_j$, for $j \in [k]$, do not intersect.
We now prove our claim.
Let $i \in [k]$ and $z \in \ball_1(c_i)$.
Note that, since the balls $\ball_1(c_j)$, for $j \in [k]$, do not intersect, we have that the sets $\ball_1(z) \cap \ball_1(c_j)$, for $j \in [k]$, do not intersect.
Let $H$ be the unique hyperplane that contains $\sphere_1(z) \cap \sphere_1(c_i)$.
It follows that also the reflections with respect to $H$ of the sets $\ball_1(z) \cap \ball_1(c_j)$, for $j \in [k]$, do not intersect.
Note that the reflection with respect to $H$ of each set $\ball_1(z) \cap \ball_1(c_j)$ can be seen as the set obtained from $\ball_1(z) \cap \ball_1(c_j)$ by first applying a rotation centered in $c_j$ and then the translation $c_j - c_i$.
Hence, we have shown that there exists a set $D_j$, for each $j \in [k]$, obtained from $\ball_1(z) \cap \ball_1(c_j)$ via a rotation centered in $c_j$ followed by the translation $c_j - c_i$, such that the sets $D_j$, for $j \in [k]$, do not intersect.
By continuity, for every $i \in [k]$ and for every $z \in \ball_1(c_i)$, there exists $\epsilon_{i,z}>0$ small enough such that there exists a set $D_j$, for each $j \in [k]$, obtained from $\ball_{1+\epsilon_{i,z}}(z) \cap \ball_1(c_j)$ via a rotation centered in $c_j$ followed by the translation $c_j - c_i$, such that the sets $D_j$, for $j \in [k]$, do not intersect.
Since $\cup_{i \in [k]} \ball_1(c_i)$ is a compact set, we can define $\epsilon:= \min \{\epsilon_{i,z} \mid i \in [k], z \in \ball_1(c_i)\} >0$. 
By eventually decreasing $\epsilon$, we can also assume $\epsilon < \min_{i \neq j}d(c_i,c_j)-2$, and this concludes the proof of our claim.

Let $\alpha':=1+\epsilon<\min_{i \neq j}d(c_i,c_j)-1$ and define $\alpha_i:=\alpha'$ for every $i \in [k]$. 
In order to apply \cref{cor prob}, it remains to show that for every $i \in [k]$, $c_i$ is the unique point that achieves $\max \{G^\alpha(z) \mid z \in \ball_{1}(c_i)\}$.  
We now fix $i \in [k]$ and $z \in \ball_1(c_i) \setminus \{c_i\}$.
For every $j \in [k]$, let $D_j$ be the set obtained from $\ball_{\alpha'}(z) \cap \ball_1(c_j)$ as stated in the previous claim.
Note that $D_j \subseteq \ball_1(c_i)$. 
Since $\mu_i$ is a translation of $\mu_j$, we know that 
\begin{align}
\label{eq lastonetouse}
    \int_{\ball_{\alpha'}(z) \cap \ball_1(c_j)} d(c_j,x) d\mu_j(x)= \int_{D_j} d(c_i,x) d\mu_i(x).
\end{align}
We obtain
\begin{align}
\label{eq trulylastone}
\begin{split}
    G^\alpha(c_i)&
    = \sum_{j \in [k]} \int_{\ball_{\alpha'}(c_i) \cap \ball_1(c_j)} (\alpha'-d(c_i,x)) d\mu_j(x)
    =\int_{\ball
    _1(c_i)} (\alpha'-d(c_i,x)) d\mu_i(x) \\
    & > \sum_{j \in [k]} \int_{D_j} (\alpha'- d(c_i,x)) d\mu_i(x)
    = \sum_{j \in [k]} \int_{\ball_{\alpha'}(z) \cap \ball_1(c_j)} (\alpha'-d(c_j,x)) d\mu_j(x).
\end{split}
\end{align}
In the first equality we use \cref{obs G}, in the second equality \cref{lm geometry} (with $a_i=b_i=\alpha'$ and $z=c_i$), in the inequality we use the fact that the sets and $D_j$, for $j \in [k]$ are disjoint subsets of $\ball_1(c_i)$, and in the last equality we use \eqref{eq lastonetouse}.
Therefore from \eqref{eq trulylastone} and \cref{obs G} we obtain
\begin{align*}
    G^\alpha(c_i)-G^\alpha(z)
    >\sum_{j \in [k]} \int_{\ball_{\alpha'}(z) \cap \ball_1(c_j)} (d(z,x)-d(c_j,x)) d\mu_j(x)>0,
\end{align*}
where the inequality follows from \eqref{eq second target}.
\qed

	\section{Numerical experiments}
	\label{sec num}
	In this section, we perform two sets of numerical experiments to illustrate the empirical performance of \eqref{pr LP} under the SBM and the ESBM.

	In the first set of experiments, we consider the ESBM. Our goal is to show under the ESBM, even if balls have different radii and different probability measures, exact recovery can still happen.
	We draw $N=20$ data points $\{x^{(1)}_i\}_{i=1}^N$ uniformly from $B^2_1(0)$ and $N$ data points $\{x^{(2)}_i\}_{i=1}^N$ uniformly from $B^2_R(\Delta,0)$. We take $P=\{x^{(i)}_j \mid i \in \{1,2\}, \ j \in [N]\}$ as the set of data points and perform an experiment by solving the corresponding \eqref{pr LP}. We say that an experiment succeeds, if the \eqref{pr LP} achieves exact recovery. In our experiments, $\Delta \in [2, 4]$ and $R \in [1, 3]$. For each fixed pair of parameters $(\Delta,R)$, we perform $10$ independent experiments and compute the empirical probability of success.
	\begin{figure}[htbp]
		\centering
		\includegraphics[scale=0.7]{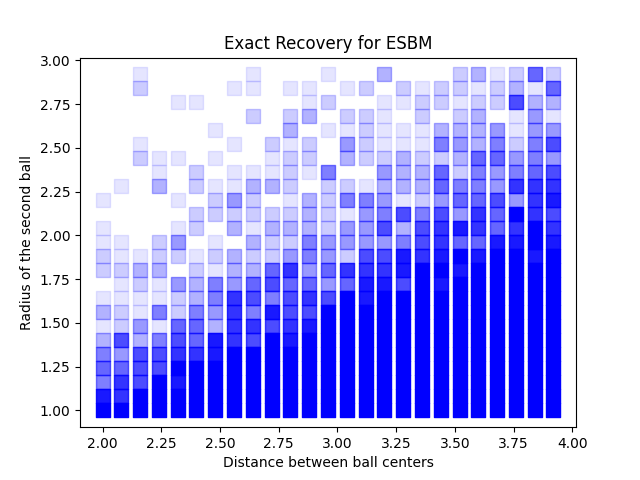}
		\caption{Numerical experiments for \cref{th main2}. We plot the empirical probability of success of \eqref{pr LP} under the ESBM for two balls, with parameters $N=20$, $\Delta \in [2, 4]$, and $R \in [1, 3]$. Deeper color represents higher probability. }
		\label{fig Experiment 2}
	\end{figure}
	\cref{fig Experiment 2} shows the change of empirical probability of success according to the parameter pairs $(\Delta,R)$.
	In \cref{fig Experiment 2}, each point below the diagonal of the figure corresponds to a pair of parameters $(\Delta,R)$ such that the two balls $B^2_1(0)$ and $B^2_R(\Delta,0)$ are separated. In such case, it is clear from \cref{fig Experiment 2} that \eqref{pr LP} achieves exact recovery with high probability. Also,
	when we fix the parameter $\Delta$, the probability of success becomes lower as $R$ increases, which implies that a larger separation is needed when the radii of the two balls are significantly different.

	In the second set of experiments, we consider the SBM. Our goal is to show that, when the balls are significantly separated, \eqref{pr LP} achieves exact recovery with high probability; furthermore, such a significant separation is also necessary.  We construct a probability measure $\mu$ over $B^2_{1}(0)$, that is invariant under rotations centered in $0$. We let $x$ be a point drawn according to $\mu$. With probability $0.9$, $\norm{x}$ is uniformly distributed over $[0.99,1]$, and with probability $0.1$, $\norm{x}$ is uniformly distributed over $[0,0.99]$. We first draw $N$ points $\{x^{(0)}_i\}_{i=1}^N$ according to $\mu$. For $i \in [N]$, we take $x^{(1)}_i:=x^{(0)}_i+(\Delta,0)$ as an input data point in $B^2_1(\Delta,0)$. Next, we draw $N$ more input points $\{x^{(2)}_i\}_{i=1}^N$ according to $\mu$. We take $P=\{x^{(i)}_j \mid i \in \{1,2\}, \ j \in [N]\}$ as the set of input data points and we perform an experiment by solving the corresponding \eqref{pr LP} as we discussed above. Let $\Delta \in [2, 4]$ and $N \in [5, 35]$. For each fixed pair of parameters $(\Delta,N)$, we perform $10$ independent experiments and we compute the empirical probability of success. 
	\begin{figure}[htbp]
		\centering
		\includegraphics[scale=0.7]{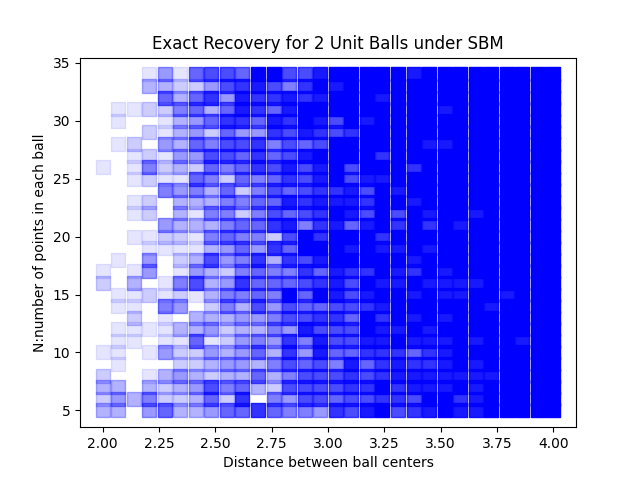}
		\caption{Numerical experiments for \cref{th main3}. We plot the empirical probability of success of \eqref{pr LP} under the SBM for two unit balls, with parameters $\Delta \in [2, 4]$ and $N \in [5, 35]$. Deeper color represents higher probability. }
		\label{fig Experiment 1}
	\end{figure}
	We can see from \cref{fig Experiment 1} that, as stated in \cref{th main3}, when $\Delta>3.29$, \eqref{pr LP} achieves exact recovery with high probability and almost all experiments succeed. However, exact recovery does not happen very often if two balls are not separated significantly. In fact, in \cref{fig Experiment 1}, a phase transition happens around $\Delta\approx 2.25$. When $\Delta>2.25$, as $N$ increases, the probability of success becomes higher. On the contrary, when $\Delta<2.25$, as $N$ increases, the probability of success becomes even lower. In particular, we can see that no experiment succeeds when $\Delta<2.25$ and $N>30$. This implies that for \eqref{pr LP} to succeed with high probability, a significant separation between ball centers is not only sufficient but also necessary.

\ifthenelse {\boolean{MPA}}
{
\bibliographystyle{spmpsci}
}
{
\bibliographystyle{plain}
}

\bibliography{biblio}

\newpage

\begin{appendices}
\crefalias{section}{appendix}

\noindent
{\LARGE\bfseries Online Supplemental Material}

\section{On the assumption $n_i = \beta_i n$ in the ESBM}
\label{app same order}

In this section, we present an example which justifies the assumption that in the ESBM the number $n_i$ of points drawn from each ball $i \in [k]$ satisfies $n_i = \beta_i n$.
In fact, \cref{ex diff order} shows that if in the SBM we allow to draw different numbers $n_i$ of data points from different balls, and the $n_i$ are of different orders, then with high probability \eqref{pr LP} does not achieve exact recovery, no matter how distant the balls are.
In the following example we denote by $e_1, \dots, e_m$ the vectors of the standard basis of $\R^m$.

\begin{example}
\label{ex diff order}
Consider the SBM with $k=2$.
Let $c_1:=0$ and let $c_2:=d e_1$ where $d>2$. 
Let $\mu$ be the uniform probability measure on $\ball_1(0)$. 
For each $i \in [2]$, we draw $n_i(n)$ random vectors instead of $n$ as in the definition of the SBM, and we 
assume that $\lim_{n\to \infty}n_1/n_2=\infty.$ 
Then with high probability every feasible solution to \eqref{pr IP} that assigns each point to the ball from which it is drawn is not optimal to \eqref{pr IP}.
\end{example}

\begin{prf}
For every $i \in [k]$, we denote by $x^{(i)}_*$ the median of $\{x^{(i)}_\ell\}_{\ell \in [n_i]}$.
Among all the feasible solutions $(y,z)$ to \eqref{pr IP} that assign each point to the ball from which it is drawn, the ones with the smaller objective function have the property that, for every $i \in [k]$, the component of the vecror $y^*$ corresponding to $x^{(i)}_*$ is equal to one.
Let $(y^*,z^*)$ be such a solution.
It then suffices to show that with high probability $(y^*,z^*)$ is not optimal to \eqref{pr IP}.

We first evaluate the objective value $\obj^*$ of $(y^*,z^*).$ 
We have 
\begin{align*}
    \frac{\obj^*}{n_1+n_2} 
    & =\frac{\sum_{\ell \in [n_1]}d(x^{(1)}_\ell,x^{(1)}_*)}{n_1}\frac{n_1}{n_1+n_2}+\frac{\sum_{\ell \in [n_2]}d(x^{(2)}_\ell,x^{(2)}_*)}{n_2}\frac{n_2}{n_1+n_2} \\
    & >\frac{\sum_{\ell \in [n_1]}d(x^{(1)}_\ell,x^{(1)}_*)}{n_1}\frac{n_1}{n_1+n_2}.
\end{align*}
Let $x$ be a random vector drawn according to $\mu$. 
Since $\mu$ is the uniform probability measure, we know that $\E \norm{x} \in (0,1)$. 
Let $\epsilon \in (0,1)$ be a small number.
From \cref{lm opt}, we know that with high probability we have   $\abs{\sum_{\ell \in [n_1]}d(x^{(1)}_\ell,x^{(1)}_*)/n_1-\E \norm{x}}<\epsilon$. 
Since $\lim_{n \to \infty}n_1/n_2=\infty$, we know that when $n$ is large enough, with high probability, we have 
\begin{align*}
    \frac{\obj^*}{n_1+n_2}>\frac{\sum_{\ell \in [n_1]}d(x^{(1)}_\ell,x^{(1)}_*)}{n_1}\frac{n_1}{n_1+n_2}>\E \norm{x}-2\epsilon.
\end{align*}

Consider now the point $s:=-e_1/2 \in \ball_1(0)$, and define the sets $S_1:=\{x \in \ball_1(0) \mid x_1 \le -\frac{1}{2}\}$ and $S_2:=\{x \in \ball_1(0) \mid x_1 > -\frac{1}{2}\}$. 
Let $x$ be a random vector drawn according to $\mu$. 
We know that when $x \in S_1$, we have $d(s,x)<\norm{x}$. 
To simplify the notation, let $\xi:= \min\{\norm{x}-d(s,x) \mid x \in S_1\}>0$. 
Then we have 
\begin{align}
\label{eq expect}
\begin{split}
    \E \min\{d(s,x), \norm{x}\}\ & =\int_{\ball_1(0)}\min\{d(s,x),\norm{x}\} d\mu(x) \le \int_{S_1}d(s,x)d\mu(x)+ \int_{S_2}\norm{x}d\mu(x) \\
    \ & \le \int_{\ball_1(0)}\norm{x}d\mu(x)-\xi \Pr(x \in S_1)= \E \norm{x}-\xi \Pr(x \in S_1).
\end{split}
\end{align}
Note that $\min\{d(x^{(1)}_\ell,s),\norms{x^{(1)}_\ell}\}$, for $\ell \in [n_1]$, are independent random variables bounded by the interval $[0,1]$.
From Hoeffding's inequality, with high probability we have 
\begin{align*}
    \abs{\frac{\sum_{\ell \in [n_1]}\min\{d(x^{(1)}_\ell,s),\norms{x^{(1)}_\ell}\}}{n_1}-\E \min\{d(s,x), \norm{x}\}}<\epsilon.
\end{align*}
So with high probability, we obtain 
\begin{align}
\label{eq to be used later}
    \frac{\sum_{\ell \in [n_1]}\min\{d(x^{(1)}_\ell,s),\norms{x^{(1)}_\ell}\}}{n_1}
    <\E \min\{d(s,x), \norm{x}\}+\epsilon
    <\E \norm{x}-\xi \Pr(x \in S_1)+\epsilon,
\end{align}
where the last inequality follows from \eqref{eq expect}.
Since $\mu$ is the uniform probability measure on $\ball_1(0)$, we know that with high probability there is some point $\Tilde{x} \in \ball_{\epsilon}(s)\cap \{x^{(1)}_\ell\}_{\ell \in [n_1]}$ and there is some point $\Bar{x} \in \ball_{\epsilon}(0)\cap \{x^{(1)}_\ell\}_{\ell \in [n_1]}$. 
Now we construct a feasible solution $(y',z')$ to \eqref{pr IP} with objective value $\obj'$ such that $\obj'<\obj^*$. 
We choose $\Tilde{x}$ and $\Bar{x}$ as the centers of the two clusters.
We then assign each point $x^{(i)}_\ell$, for $i \in [2]$ and  $\ell \in [n_i]$, to the cluster with the closest center.
Let $(y',z')$ be the feasible solution to \eqref{pr IP}  corresponding to this choice.
Then we have
\begin{align*}
    \frac{\obj'}{n_1+n_2}\ & =\frac{\sum_{\ell \in [n_1]}\min\{d(x^{(1)}_\ell,\Tilde{x}),d(x^{(1)}_\ell,\Bar{x})\}}{n_1}\frac{n_1}{n_1+n_2}+\frac{\sum_{\ell \in [n_2]}\min\{d(x^{(2)}_\ell,\Tilde{x}),d(x^{(2)}_\ell,\Bar{x})\}}{n_2}\frac{n_2}{n_1+n_2} \\
    \ & \le \frac{\sum_{\ell \in [n_1]}\min\{d(x^{(1)}_\ell,\Tilde{x}),d(x^{(1)}_\ell,\Bar{x})\}}{n_1}\frac{n_1}{n_1+n_2}+(d+2)\frac{n_2}{n_1+n_2}.
\end{align*}
Here, the inequality follows because $\min\{d(x^{(2)}_\ell,\Tilde{x}),d(x^{(2)}_\ell,\Bar{x})\} \le d+2$ for every $\ell \in [n_2]$. 
Next, we use the fact that when $n$ is large enough $(d+2)n_2/(n_1+n_2)$ can be arbitrarily small and thus can be bounded by $\epsilon$.
So with high probability we obtain
\begin{align*}
    \frac{\obj'}{n_1+n_2}\ & \le \frac{\sum_{\ell \in [n_1]}\min\{d(x^{(1)}_\ell,\Tilde{x}),d(x^{(1)}_\ell,\Bar{x})\}}{n_1}+ \epsilon
    \le \frac{\sum_{\ell \in [n_1]}\min\{d(x^{(1)}_\ell,s),\norms{x^{(1)}_\ell}\}+\epsilon}{n_1}+ \epsilon \\
     \ & \le \E \norm{x}-\xi \Pr(x \in S_1)+3\epsilon,
\end{align*}
where the second inequality follows by the triangle inequality, and in the last inequality we use \eqref{eq to be used later}.

Notice that when $\epsilon<\xi\Pr(x \in S_1)/5$, we have $\obj'<\obj^*$, which implies that with high probability, $(y^*,z^*)$ is not an optimal solution to \eqref{pr IP}.
\end{prf}

\section{Counterexample to Theorem~7 in \cite{awasthi2015relax}}
\label{app counter}

In this section we present an example which shows that Theorem~7 in \cite{awasthi2015relax} is false. 
In \cref{ex counter}, we construct a probability measure that satisfies all assumptions in the statement of Theorem~7 in \cite{awasthi2015relax} and $\min_{i \neq j}d(c_i,c_j)=2.2$.
We then show that with high probability \eqref{pr LP} does not achieve exact recovery. 
The key problem in the proof of Theorem~7 in \cite{awasthi2015relax} is discussed in \cref{sec problem}.

\begin{example}
\label{ex counter}
There is an instance of the SBM with $m=2$, $k=7$, $\min_{i \neq j} d(c_i,c_j) = 2 + 0.2$, where $\mu$ has a continuous density function and the probability space $(\mu,\ball_1(0))$ satisfies \ref{ass rotation}, \ref{ass open}, such that with high probability \eqref{pr LP} does not achieve exact recovery.
\end{example}


\begin{prf}
Let $\mu$ be a probability measure that has a continuous density function and the probability space $(\mu,\ball_1(0))$ satisfies \ref{ass rotation}, \ref{ass open}.
Let $\epsilon \in (0,1)$ be a small number. 
We further assume that $\mu$ and $\epsilon$ satisfy
\begin{align}
\label{eq ex2 assumption}
(0.292-8\epsilon) \Pr(\norm{x} \ge 1-\epsilon)>0.279+6\epsilon+(3+2\epsilon)\Pr(\norm{x}<1-\epsilon).
\end{align}
Note that assumption \eqref{eq ex2 assumption} can be fulfilled as long as $\Pr(\norm{x}<1-\epsilon)$ and $\epsilon$ are small enough.
We define $c_1:=0$ and, using polar coordinates, $c_i:=(2.2,-(i-2)\pi/3)$ for every $i \in [7] \setminus \{1\}$ (see \cref{fig new counter example 2}). 
\begin{figure}[htbp]
    \centering
    \includegraphics[scale=0.13]{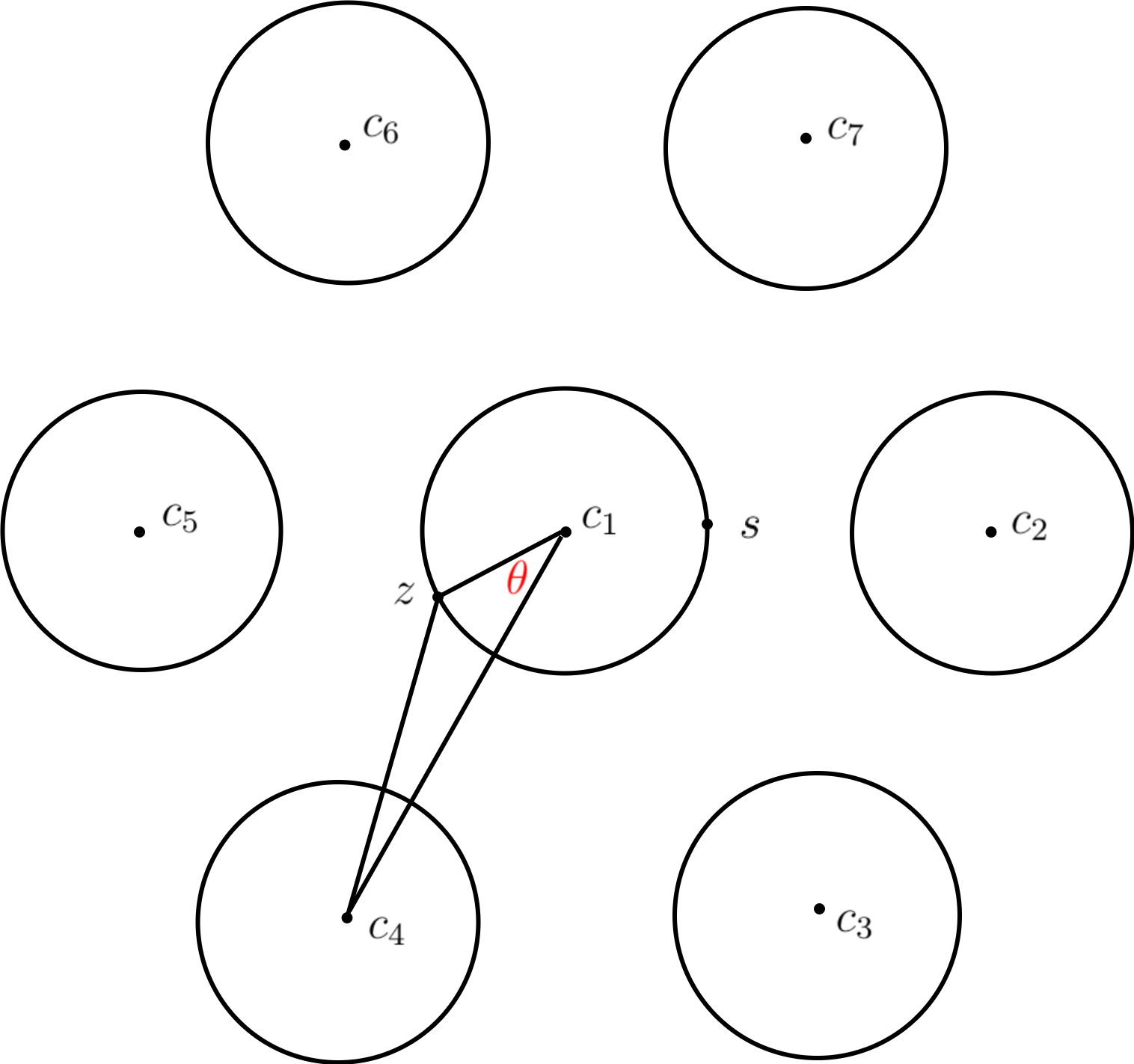}
    \caption{Instance of the SBM considered in \cref{ex counter}.}
    \label{fig new counter example 2}
\end{figure}
In particular, for every $i, j \in [7]$ with $i \neq j$, we have $d(c_i,c_j)=2.2$. 
This concludes the description of the instance of the SBM that we consider.
In the remainder of the example we show that with high probability \eqref{pr LP} does not achieve exact recovery.

For every $z \in \ball_1(0)$, let $c(z)$ be a point among $c_2,\dots,c_7$ that is closest to $z$ and we define $f(z):=d(z,c(z))$.
For every $z \in \ball_1(0)$, let $\theta(z)$ be the angle between the vectors $z$ and $c(z)$. 
Clearly, for every $z \in \ball_1(0)$, we have $\theta(z) \in [0,\pi/6]$. 
Let $z$ be a random vector drawn according to $\mu$.
Since $\mu$ satisfies \ref{ass rotation}, we know that the random variable $\theta(z)$ is uniform on $[0,\pi/6]$, thus its density function is constant on $[0,\pi/6]$ and equal to $6/\pi$. 

Next, we show the upper bound
\begin{align}
\label{eq ex2 ub}
\int_{\ball_1(0)} (f(x)+2\epsilon-\norm{x}) d\mu(x)
< 0.279+4\epsilon+(3+2\epsilon)\Pr(\norm{x}<1-\epsilon).
\end{align}
Let $z \in \sphere_1(0)$, then $f(z)=\sqrt{(2.2)^2+1-4.4\cos\theta}.$ Let $L:=\ball_1(0) \setminus \ball_{1-\epsilon}(0)$. 
The triangle inequality implies that for every $z \in L$ we have
\begin{align}\label{eq point in L}
    f(z)<\sqrt{(2.2)^2+1-4.4\cos\theta(z)}+\epsilon, 
    \qquad d(z,0) \ge 1-\epsilon.
\end{align}
So we obtain
\begin{align*}
    \int_{\ball_1(0)} & (f(x)+2\epsilon-\norm{x}) d\mu(x)
    =  \int_{\ball_{1-\epsilon}(0)}(f(x)+2\epsilon-\norm{x})d\mu(x)+ \int_{L}(f(x)+2\epsilon-\norm{x})d\mu(x) \\ 
    & \le (3+2\epsilon)\Pr(\norm{x}<1-\epsilon)+ \int_{L}(f(x)+2\epsilon-\norm{x})d\mu(x) \\
    & \le (3+2\epsilon)\Pr(\norm{x}<1-\epsilon)+  \int_L\pare{\sqrt{(2.2)^2+1-4.4\cos\theta(x)}+\epsilon+2\epsilon-(1-\epsilon)}d\mu(x) \\
    & \le  (3+2\epsilon)\Pr(\norm{x}<1-\epsilon)+\frac{6}{\pi}  \int_0^{\frac{\pi}{6}}\pare{\sqrt{(2.2)^2+1-4.4\cos\theta}+\epsilon+2\epsilon-(1-\epsilon)}d\theta \\
    & = (3+2\epsilon)\Pr(\norm{x}<1-\epsilon)+\frac{6}{\pi}\int_0^{\frac{\pi}{6}}\pare{\sqrt{(2.2)^2+1-4.4\cos\theta}-1} d\theta+4\epsilon \\
    & < 0.279+4\epsilon+(3+2\epsilon)\Pr(\norm{x}<1-\epsilon).
\end{align*}
Here, the first inequality uses the fact that $f(x) \le 3$ for every $x \in \ball_{1-\epsilon}(0)$ and the second inequality holds because of \eqref{eq point in L}. 
The third inequality follows by the fact that $\theta(x)$ does not depend on $\norm{x}$ and has a density function $\pi/6$.
In the last inequality, we use the fact that 
\begin{align*}
    \frac{6}{\pi}\int_0^{\frac{\pi}{6}}\pare{\sqrt{(2.2)^2+1-4.4\cos\theta}-1}d\theta < 0.279.
\end{align*}
This concludes the proof of \eqref{eq ex2 ub}.

Let $s:=e_1$, where $e_1$ is the first vector of the standard basis of $\R^m$, and let $\mu_i := \mu + c_i$ for every $i \in [k]$.
Next, we prove the lower bound 
\begin{align}
\label{eq ex2 lb}
\sum_{i=1}^7\int_{\ball_1(c_i)}(d(x,c_i)-2\epsilon-d(x,s))_+d\mu_i(x)
>  (0.292-8\epsilon) \Pr(\norm{x} \ge 1-\epsilon). 
\end{align}
For ease of notations we give the following definitions.
For every $x \in \ball_1(0)$, let $\psi(x)$ be the angle between the vectors $x$ and $s$. 
For every $x \in \ball_1(c_2)$, let $\phi(x)$ be the angle between $x-c_2$ and $s-c_2$.
We also define $L_1:=\{x \in \ball_1(0) \mid \psi(x) \le \pi/3, \norm{x} \ge 1-\epsilon\}$ and $L_2:=\{x \in \ball_1(c_2) \mid \phi(x) \le \theta', d(x,c_2) \ge 1-\epsilon\},$ where $\theta':=\arccos{0.6}$.

Notice that for every $x \in \sphere_1(0)$, we have $d(x,s)=\sqrt{2-2\cos\psi(x)}$ and for every $x \in \sphere_1(c_2)$, we have $d(x,s)=\sqrt{(1.2)^2+1-2.4\cos\phi(x)}$. 
Using the triangle inequality, we obtain
\begin{align}
\label{eq L11}
    d(x,s) & \le \sqrt{2-2\cos\psi(x)}+\epsilon & \forall x \in L_1, \\
\label{eq L21}
    d(x,s) & \le \sqrt{(1.2)^2+1-2.4\cos\phi(x)}+\epsilon & \forall x \in L_2.
\end{align}
We obtain
\begin{align*}
& \sum_{i=1}^7\int_{\ball_1(c_i)} (d(x,c_i) -2\epsilon-d(x,s))_+d \mu_i(x) \\
\ge \ & \int_{L_1} (\norm{x}-2\epsilon-d(x,s))_+  d\mu(x) + \int_{L_2}(d(x,c_2)-2\epsilon-d(x,s))_+  d\mu_2(x)\\
\ge \ & \int_{L_1} (1-3\epsilon-d(x,s))  d\mu(x) + \int_{L_2}(1-3\epsilon-d(x,s)  )d\mu_2(x) \\
\ge \ & \int_{L_1}\pare{1-4\epsilon-\sqrt{2-2\cos\psi(x)}} d\mu(x) + \int_{L_2}\pare{1-4\epsilon-\sqrt{(1.2)^2+1-2.4\cos\phi(x)}} d\mu_2(x) \\
= \ & \Pr(\norm{x} \ge 1-\epsilon)\frac{1}{\pi}\int_0^{\frac{\pi}{3}}\pare{1-4\epsilon-\sqrt{2-2\cos\psi}} d\psi \\
& \qquad + \Pr(\norm{x} \ge 1-\epsilon) \frac{1}{\pi}\int_0^{\theta'}\pare{1-4\epsilon-\sqrt{(1.2)^2+1-2.4\cos\phi} }d\phi \\
> \ & (0.292-8\epsilon) \Pr(\norm{x} \ge 1-\epsilon).
\end{align*}
Here, the second inequality follows from the definition of $L_1$ and $L_2$. The third inequality follows by \eqref{eq L11} and \eqref{eq L21}. 
The equality holds because $\psi(x)$ does not depend on $\norm{x}$ and $\phi(x)$ does not depend on $d(c_2,x)$. 
The last inequality holds because
\begin{align*}
\frac{1}{\pi}\int_0^{\frac{\pi}{3}}\pare{1-\sqrt{2-2\cos\psi}}d\psi + \frac{1}{\pi}\int_0^{\theta'}\pare{1-\sqrt{(1.2)^2+1-2.4\cos\phi}}d\phi
>0.292.
\end{align*}
This completes the proof of \eqref{eq ex2 lb}.

Using Hoeffding's inequality, with high probability we have
\begin{align*}
    \frac{1}{n}\sum_{\ell\in[n]}(f(x_\ell^{(1)})+2\epsilon-\norms{x_\ell^{(1)}}) - \int_{\ball_1(0)} \pare{f(x)+2\epsilon-\norm{x}} d\mu(x)
    < \epsilon,
\end{align*}
and using \eqref{eq ex2 ub} with high probability we have
\begin{align}
\label{eq ex2 H1}
\begin{split}
    \frac{1}{n}\sum_{\ell\in[n]}(f(x_\ell^{(1)})+2\epsilon-\norms{x_\ell^{(1)}})
    & < \int_{\ball_1(0)} \pare{f(x)+2\epsilon-\norm{x}} d\mu(x)+\epsilon \\
    & < 0.279+5\epsilon+(3+2\epsilon)\Pr(\norm{x}<1-\epsilon).
\end{split}
\end{align}
Using Hoeffding's inequality, with high probability we have 
\begin{align*}    
    \sum_{i=1}^7\int_{\ball_1(c_i)}(d(x,c_i)-2\epsilon-d(x,s))_+d\mu_i(x)
    - \frac{1}{n}\sum_{i \in [7]}\sum_{\ell\in[n]}(d(x^{(i)}_\ell,c_i)-2\epsilon-d(x^{(i)}_\ell,s))_+ 
    <\epsilon,
\end{align*}
and using \eqref{eq ex2 lb} with high probability we have
\begin{align}
\label{eq ex2 H2}
\begin{split}
    \frac{1}{n}\sum_{i \in [7]}\sum_{\ell\in[n]}(d(x^{(i)}_\ell,c_i)-2\epsilon-d(x^{(i)}_\ell,s))_+ 
    & > \sum_{i=1}^7\int_{\ball_1(c_i)}(d(x,c_i)-2\epsilon-d(x,s))_+d\mu_i(x)-\epsilon \\
    & > (0.292-8\epsilon)\Pr(\norm{x} \ge 1-\epsilon)-\epsilon.
\end{split}
\end{align}

For every $i \in [k]$, we denote by $x^{(i)}_*$ the median of $\{x^{(i)}_\ell\}_{\ell \in [n_i]}$.
Among all the feasible solutions $(y,z)$ to \eqref{pr IP} that assign each point to the ball from which it is drawn, the ones with the smaller objective function have the property that, for every $i \in [k]$, the component of the vecror $y^*$ corresponding to $x^{(i)}_*$ is equal to one.
Let $(y^*,z^*)$ be such a solution.
It then suffices to show that with high probability $(y^*,z^*)$ is not optimal to \eqref{pr LP}.

We know from \cref{lm improve center in one ball} that with high probability, for every $i \in [7]$, we have $d(x^{(i)}_*,c_i)<\epsilon$.
Next, we show that we can use \cref{th deterministic} to prove that $(y^*,z^*)$ is not optimal to \eqref{pr LP} with high probability.
To do so, we just need to show that there is no $\alpha$ that satisfies conditions \eqref{eq Th1 a}--\eqref{eq Th1 d}.

For ease of notation we denote by $\alpha_\ell^{(i)}$ the component of $\alpha$ corresponding to the point $x_\ell^{(i)}$.
Suppose that $\alpha$ satisfies \eqref{eq Th1 c} and \eqref{eq Th1 d}, thus $d(x^{(i)}_*,x^{(i)}_\ell) \le \alpha_\ell^{(i)} \le d(x^{(j)}_*,x^{(i)}_\ell)$ for every $i,j \in [k]$ with $i \neq j$ and for every $\ell \in [n]$. 
Then we have 
\begin{align}
\label{eq ex2 C1}
\begin{split}
    \frac{1}{n}C^\alpha(x^{(1)}_*)
    = \frac{1}{n}\sum_{\ell \in [n]} \pares{\alpha^{(1)}_\ell-d(x^{(1)}_*,x^{(1)}_\ell)} \ & \le \frac{1}{n}\sum_{\ell\in[n]}\pares{f(x_\ell^{(1)})+2\epsilon-\norms{x_\ell^{(1)}}} \\
    \ & < 0.279+5\epsilon+(3+2\epsilon)\Pr(\norm{x}<1-\epsilon),
\end{split}
\end{align}
where in the first inequality we use the fact that $\alpha_\ell^{(1)} \le d(x^{(j)}_*,x^{(1)}_\ell) \le d(c_j,x^{(1)}_\ell)+\epsilon$ for every $j \in [7] \setminus \{1\}$  and $d(x^{(1)}_*,x^{(1)}_\ell) \ge \norms{x_\ell^{(1)}}-\epsilon$, and the second inequality follows from \eqref{eq ex2 H1}.

Let $N:=\ball_{\epsilon}(s) \cap L$ and note that the assumption \eqref{eq ex2 assumption} on $\mu$ imply that with high probability there exists a point $x' \in N \cap \{x^{(i)}_\ell\}_{\ell \in [n_i]} $.
We have 
\begin{align}
\label{eq ex2 C2}
\begin{split}
    \frac{1}{n}C^\alpha(x')= \frac{1}{n}\sum_{i \in [7]}\sum_{\ell \in [n]} (\alpha^{(i)}_\ell-d(x',x^{(i)}_\ell))_+ 
    & \ge \frac{1}{n}\sum_{i \in [7]}\sum_{\ell\in[n]}(d(c_i,x_\ell^{(i)})-2\epsilon-d(s,x^{(i)}_\ell))_+ \\
    & > (0.292-8\epsilon)\Pr(\norm{x} \ge 1-\epsilon)-\epsilon,
\end{split}
\end{align}
where in the first inequality we use that for every $i \in [7]$ we have $\alpha^{(i)}_\ell \ge d(x^{(i)}_*,x^{(i)}_\ell) \ge d(c_i,x_\ell^{(i)})-\epsilon$ and $d(x',x_\ell^{(i)}) \ge d(s,x_\ell^{(i)})-\epsilon$, and the second inequality follows from \eqref{eq ex2 H2}.
The inequalities \eqref{eq ex2 C1} and \eqref{eq ex2 C2} imply $C^\alpha(x')>C^\alpha(x^{(1)}_*)$ due to assumption \eqref{eq ex2 assumption}.
This implies that conditions \eqref{eq Th1 a},\eqref{eq Th1 b} cannot hold.
Thus, according to \cref{th deterministic}, with high probability $\eqref{pr LP}$ does not achieve exact recovery.
\end{prf}

\section{Problem in the proof of Theorem~7 in \cite{awasthi2015relax}}
\label{sec problem}

In this section we point out the key problem in the proof of Theorem~7 in \cite{awasthi2015relax}.
To prove this theorem, the authors introduce two conditions: the separation condition and the central dominance condition.
When the two conditions happen together, then \eqref{pr LP} achieves exact recovery.
We refer the reader to \cite{awasthi2015relax} for more details about these two conditions. 
In the proof of Theorem~7 the authors show that the separation condition happens with high probability according to the law of large number, while the central dominance condition happens in expectation and thus happens with high probability. 
Formally, the authors prove the following lemma about the central dominance condition.
\begin{lemma}
[Lemma 13 in \cite{awasthi2015relax}]
\label{lem wrong}
In the hypothesis of Theorem 7, there exists $\alpha>1$ such that for all $j \in [k]$, $\E P^{(\alpha,\dots,\alpha)}(z)$ restricted to $z \in \ball_1(c_j)$ attains its maximum in $z=c_j$.
\end{lemma}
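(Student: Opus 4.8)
The plan is to follow the template of the proofs of \cref{th main3,th main4,th main5}. Since $\Ex P^{(\alpha,\dots,\alpha)}$ equals $n\,G^{(\alpha,\dots,\alpha)}$ (see \cref{obs G}) and $G^\alpha$ is invariant under translations and rotations centered at each $c_j$, it suffices to fix one ball, say $\ball_1(c_1)$, and to exhibit a single $\alpha>1$ with $G^\alpha(c_1)-G^\alpha(z)>0$ for every $z\in\ball_1(c_1)\setminus\{c_1\}$ (and likewise for the other balls). Choosing $\alpha\in(1,\min_{i\neq j}d(c_i,c_j)-1)$ so that $\ball_\alpha(c_1)$ misses every $\ball_1(c_i)$ with $i\neq 1$, one obtains, exactly as in the proof of \cref{th main4},
\[
  G^\alpha(c_1)-G^\alpha(z)=H^{(\alpha,\mu'_1,m)}(z-c_1)-\sum_{i\neq 1}R^{(\alpha,\mu'_i,m)}(z-c_i),
\]
where $\mu'_i$ is the translate of $\mu_i$ centered at the origin, $H^{(\alpha,\mu'_1,m)}$ is the ``own-ball deficit'' of \cref{def H}, and each $R^{(\alpha,\mu'_i,m)}$ is a ``cross-ball residual'' as in \cref{def R}.

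The first step would be to bound the own-ball term below: \cref{lm distribution} gives $H^{(\alpha,\mu'_1,m)}(z-c_1)\ge T^{(\alpha,m)}(z-c_1)$, \cref{lem T1} already yields $T^{(\alpha,m)}(z-c_1)>0$ when $\norm{z-c_1}\le\alpha-1$, and for $z$ near the unit sphere one would try to keep $H^{(\alpha,\mu'_1,m)}(z-c_1)$ bounded away from zero using \cref{lm ball center m >= 2} and the explicit estimates in \cref{lm bound1,lm H}. The second step would be to bound each residual above: since the balls are pairwise disjoint and $\alpha<\min_{i\neq j}d(c_i,c_j)-1$, \cref{lm residual} (together with the trivial bound $0$ when the balls are too far apart) gives $R^{(\alpha,\mu'_i,m)}(z-c_i)\le(\alpha+1-d(z,c_i))_+\tfrac{\sqrt\pi}{2}\sqrt{m/2}\,(\alpha/d(z,c_i))^{m-2}$. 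The third and final step would be to pick $\alpha\in(1,\min_{i\neq j}d(c_i,c_j)-1)$ so that the Step~1 lower bound strictly dominates the sum over $i\neq 1$ of the Step~2 upper bounds, uniformly over $z$, and then to repeat this for every ball.

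The third step is the crux, and I expect it cannot be carried out --- this is exactly the gap in the original argument. When $\min_{i\neq j}d(c_i,c_j)$ is only slightly larger than $2$, the admissible window $(1,\min_{i\neq j}d(c_i,c_j)-1)$ is arbitrarily short, yet for a point $z$ on $\sphere_1(c_1)$ facing a neighbor $c_i$ the residual $R^{(\alpha,\mu'_i,m)}(z-c_i)$ does not shrink to $0$: as soon as $\alpha>1$, the slice of $\ball_1(c_i)$ lying within distance $\alpha$ of $z$ carries a fixed positive $\mu_i$-mass. At the same time the own-ball deficit $H^{(\alpha,\mu'_1,m)}(z-c_1)$ can be made arbitrarily small by concentrating $\mu_1$ near $\sphere_1(c_1)$ and retaining only a vanishing mass near $c_1$ --- which is all that \ref{ass open} demands. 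Hence no single $\alpha>1$ works for every $z$ and every ball, so the plan cannot be completed and, indeed, the statement is false; \cref{ex counter} realizes this for $m=2$, $k=7$, and $\min_{i\neq j}d(c_i,c_j)=2.2$. (A second, independent difficulty is that an $\alpha$ large enough to make the own-ball deficit dominate would typically violate the cross-cluster constraints \eqref{eq Th1 c}--\eqref{eq Th1 d} of \cref{th deterministic} at points near a ball boundary, so this lemma --- even where it holds --- does not by itself yield a valid dual certificate.)
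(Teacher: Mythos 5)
This lemma is exactly the false claim of Awasthi et al.\ that the paper refutes rather than proves, and your conclusion agrees with the paper: you correctly argue that the statement cannot be established and is in fact false. Your diagnosis --- concentrating $\mu$ near the unit sphere makes the own-ball deficit arbitrarily small while the cross-ball slice retains a fixed positive mass --- is precisely the anti-concentration failure the paper identifies in \cref{sec problem} and realizes in \cref{ex counter}, and your closing remark that $\alpha$ must also be bounded above matches the paper's observation that the requirement $\alpha<1+\Theta$ was omitted from the original lemma.
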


In the proof of \cref{lem wrong}, the goal of the authors is to obtain some $\alpha>1$ such that $c_i$ achieves $\max \{\E P^{(\alpha,\dots,\alpha)}(z) \mid z \in \ball_1(c_i)\}$ for every $i \in [k]$, where 
\begin{align*}
    \E P^{(\alpha,\dots,\alpha)}(z)=\sum_{i \in [k]} \int_{x \in \ball_{1}(c_i)}(\alpha-d(z,x))_+d\mu_i(x).
\end{align*}
In order to do so, they select some $\alpha>1$ such that for every $i \in [k]$ and for every $z \in \ball_1(c_i)$, the sets $\ball_\alpha(z) \cap \cup_{j \neq i} \ball_1(c_i)$, for $j \in [k] \setminus \{i\}$, can be copied isometrically inside $\ball_1(c_i)$ along the boundary without intersecting each other. 
Their goal is to use the fact that $\ball_1(c_i)$ contains all these copies to show that $\E P^{(\alpha,\dots,\alpha)}(c_i) > \E P^{(\alpha,\dots,\alpha)}(z)$. 
The problem is that, although the copies have the same area of the original sets, the density function may differ from a point $x \in \ball_\alpha(z) \cap \cup_{j \neq i} \ball_1(c_i)$ to the corresponding point $x' \in \ball_1(c_i)$ with $d(z,x)=d(c_i,x')$. 
If the probability measure is anti-concentrated, which means that the area near the boundary of each ball has a very large probability, then the choice of $\alpha$ given by the authors may cause $\E P^{(\alpha,\dots,\alpha)}(c_i) < \E P^{(\alpha,\dots,\alpha)}(z)$ for some $z \in \ball_1(c_i) \setminus\{c_i\}$. 

We also remark that there is also a requirement omitted in the statement of \cref{lem wrong}.
In fact, in the statement the authors require $\alpha>1$.
However, in order to satisfy the central dominance condition, $\alpha$ cannot be chosen too large.
In particular, the requirement $\alpha<1+\Theta$, where $\Theta= \min_{j\neq i}d(c_i,c_j)-2$, should be added to the lemma.

\end{appendices}

\nocite{*}

\end{document}